\numberwithin{equation}{section}
\newtheorem{Theorem}{Theorem}[section]
\newtheorem{Corollary}[Theorem]{Corollary}
\newtheorem{Lemma}[Theorem]{Lemma}
\newtheorem{Proposition}[Theorem]{Proposition}
\newtheorem*{claim*}{Claim}
 { \theoremstyle{definition}
\newtheorem{Definition}[Theorem]{Definition}
\newtheorem{Example}[Theorem]{Example}
\newtheorem{Remark}[Theorem]{Remark} }
\newcommand{\op}[1]{\operatorname{#1}}
\newcommand{\acou}[2]{\ensuremath{\left\langle #1 , #2 \right\rangle}}
\newcommand{\scal}[2]{\ensuremath{\left\langle #1 | #2 \right\rangle}}
\newcommand{\Tr}{\ensuremath{\op{Tr}}}
\newcommand{\TR}{\ensuremath{\op{TR}}}
\newcommand{\Res}{\ensuremath{\op{Res}}}
\def\XXint#1#2#3{{\setbox0=\hbox{$#1{#2#3}{\int}$}
\vcenter{\hbox{$#2#3$}}\kern-.5\wd0}}
\newcommand{\C}{\ensuremath{\mathbb{C}}}
\newcommand{\N}{\ensuremath{\mathbb{N}}}
\newcommand{\R}{\ensuremath{\mathbb{R}}}
\newcommand{\bS}{\ensuremath{\mathbb{S}}}
\newcommand{\T}{\ensuremath{\mathbb{T}}}
\newcommand{\Z}{\ensuremath{\mathbb{Z}}}
\newcommand{\CZ}{\ensuremath{\mathbb{C}\setminus\mathbb{Z}}}
\newcommand{\Rn}{\ensuremath{\R^{n}}}
\newcommand{\Ca}[1]{\ensuremath{\mathscr{#1}}}
\newcommand{\cA}{\Ca{A}}
\newcommand{\cH}{\ensuremath{\mathscr{H}}}
\newcommand{\cL}{\ensuremath{\mathscr{L}}}
\newcommand{\cS}{\ensuremath{\mathscr{S}}}
\newcommand{\scL}{\mathscr{L}}
\newcommand{\psido}{$\Psi$DO}
\newcommand{\psidos}{$\Psi$DOs}
\newcommand{\stS}{\mathbb{S}}
\def\dba{{\mathchar'26\mkern-12mu {\rm d}}}
\newcommand{\dbar}{{\, \dba}}
\newcommand{\ord}{{\op{ord}}}
\newcommand{\supp}{\op{supp}}
\newcommand{\bt}{\bullet}
\newcommand{\HH}{\op{HH}}
\newcommand{\subsubset}{\subset\!\subset}
\begin{document}
\allowdisplaybreaks

\newcommand{\arXivNumber}{1911.12164}

\renewcommand{\thefootnote}{}

\renewcommand{\PaperNumber}{061}

\FirstPageHeading

\ShortArticleName{Noncommutative Residue and Canonical Trace on Noncommutative Tori}

\ArticleName{Noncommutative Residue and Canonical Trace\\ on Noncommutative Tori. Uniqueness Results\footnote{This paper is a~contribution to the Special Issue on Noncommutative Manifolds and their Symmetries in honour of Giovanni Landi. The full collection is available at \href{https://www.emis.de/journals/SIGMA/Landi.html}{https://www.emis.de/journals/SIGMA/Landi.html}}}

\Author{Rapha\"el PONGE}

\AuthorNameForHeading{R.~Ponge}

\Address{School of Mathematics, Sichuan University, Chengdu, China}
\Email{\href{mailto:ponge.math@icloud.com}{ponge.math@icloud.com}}
\URLaddress{\url{https://raphaelponge.org}}

\ArticleDates{Received January 09, 2020, in final form June 15, 2020; Published online July 05, 2020}

\Abstract{In this paper we establish uniqueness theorems for the noncommutative residue and the canonical trace on pseudodifferential operators on noncommutative tori of arbitrary dimension. The former is the unique trace up to constant multiple on integer order pseudodifferential operators.The latter is the unique trace up to constant multiple on non-integer order pseudodifferential operators. This improves previous uniqueness results by Fathizadeh--Khalkhali, Fathizadeh--Wong, and L\'evy--Neira--Paycha.}

\Keywords{noncommutative residue; canonical trace; noncommutative tori; pseudodifferential operators}

\Classification{58J42; 58B34; 58J40}

\renewcommand{\thefootnote}{\arabic{footnote}}
\setcounter{footnote}{0}

\section{Introduction}
The noncommutative residue trace of Guillemin~\cite{Gu:AIM85} and Wodzicki~\cite{Wo:HDR, Wo:NCR} is a trace on the algebra of integer order pseudodifferential operators (\psidos) on a closed manifold. Its importance in noncommutative geometry mostly stems from its relationships with the noncommutative integral, the local index formula in noncommutative geometry, and the spectral action principle (see~\cite{Co:NCG, CM:AMS08, CM:GAFA95}). The noncommutative residue appears as the residual functional induced by the analytic extension of the ordinary trace to non-integer \psidos. This analytic extension gives rise to a trace, which is called the canonical trace~\cite{Gu:AIM85, KV:GDEO}. The noncommutative residue and cano\-nical trace provide us with a neat approach to the meromorphic extension of spectral functions attached to elliptic operators such as zeta and eta functions.

A striking feature shared by the noncommutative residue and canonical trace is uniqueness. Namely, by a well known result of Wodzicki~\cite{Wo:HDR} on any connected closed manifold of dimension~$n\geq 2$ the noncommutative residue is the unique trace up to constant multiple on integer order \psidos\ (see also~\cite{Le:AGAG99, LN:JNCG13, LP:PLMS07, Po:JAM10}). This result was extended to Fourier integral operators by Guillemin~\cite{Gu:JFA93} and to \psidos\ with log-polyhomogeneous symbols by Lesch~\cite{Le:AGAG99}. It was also extended to Heisenberg \psidos\ by the author~\cite{Po:JFA07}. The uniqueness of the canonical trace was established by Maniccia--Seiler--Schrohe~\cite{MSS:PAMS08} (see also~\cite{LN:JNCG13, Po:JAM10}).

The noncommutative residue and the canonical trace have been extended to numerous classes of \psidos\ in various geometric settings. This paper focuses on their versions on noncommutative tori. The noncommutative tori are arguably the most well known examples of noncommutative spaces. In particular, 2-dimensional noncommutative tori naturally arise from actions of $\Z$ on the circle $\bS^1$ by irrational rotations. More generally, an $n$-dimensional noncommutative tori $\T^n_\theta$ is generated by unitaries $U_1, \ldots, U_n$ subject to the relations,
\begin{gather*}
 U_lU_j = {\rm e}^{2{\rm i}\pi \theta_{jl}}U_jU_l, \qquad j,l=1,\ldots, n,
\end{gather*}
where $\theta=(\theta_{jl})$ is a given real anti-symmetric matrix. The pseudodifferential calculus on NC tori of Connes~\cite{Ba:CRAS88-I,Ba:CRAS88-II,Co:CRAS80, HLP:Part1, HLP:Part2} has been receiving a great deal of attention recently thanks to its role in the spectral theoretic approach to curvature on noncommutative tori following the seminal work of Connes--Tretkoff~\cite{CT:Baltimore11} and Connes--Moscovici~\cite{CM:JAMS14} (see, e.g.,~\cite{DS:SIGMA15, Fa:JMP15, FK:JNCG12, FK:JNCG13, FK:JNCG15, FGK:JNCG19, IM:JGP18, LM:GAFA16, Liu:JNCG18, Liu:arXiv18b, Liu:arXiv20, Po:JMP20, SZ:arXiv19};
see also~\cite{Co:Survey19, FK:Survey19, LM:Survey19} for recent surveys).

The noncommutative residue for NC tori was defined in dimension $n=2$ and $n=4$ by Fathizadeh--Wong~\cite{FW:JPDOA11} and Fathizadeh--Khalkhali~\cite{FK:JNCG15}, respectively. It was extended to any dimension~$n\geq 2$ by L\'evy--Neira--Paycha~\cite{LNP:TAMS16} who also constructed the canonical trace. We refer to~\cite{Fa:JMP15, Po:JMP20} for the relationship between noncommutative residue and scalar curvature on NC tori.

Uniqueness results for the noncommutative residue and the canonical trace are established under some conditions. It is shown in~\cite{FW:JPDOA11} (resp., \cite{FK:JNCG15}) that the noncommutative residue is the unique \emph{exotic} trace up to constant multiple in dimension~$n\geq 2$ (resp., $n=4$), where by an exotic trace it is meant a trace that vanishes on \psidos\ of sufficiently negative order. This result was subsequently improved in~\cite{LNP:TAMS16}, where it is shown that the noncommutative residue is the unique \emph{singular} trace up to constant multiple in any dimension~$n\geq 2$. In the terminology of~\cite{LNP:TAMS16} a functional on \psidos\ is singular when it vanishes on smoothing operators (and so any exotic trace is singular).

As for the canonical trace, in~\cite{LNP:TAMS16} it is also shown to be the unique \emph{$\scL^1$-continuous} trace up to constant multiple on non-integer order \psidos.
A functional on \psidos\ is $\scL^1$-continuous when its restriction to \psidos\ of order~$<-n$ is continuous with respect to the topology induced by the trace-class norm.

The aim is of this paper is to remove all the above restrictions on the trace classes in the aforementioned uniqueness theorems. Our first uniqueness result states that the noncommutative residue is the unique trace on integer order \psidos\ on NC tori of any dimension~$n\geq 2$ (Theorem~\ref{thm:Uniqueness.NCR}). This implies that the first Hochschild cohomology group has dimension~1 (Corollary~\ref{cor:Uniqueness.HH0}). Our second uniqueness result asserts that
 the canonical trace is the unique trace on non-integer order \psidos\ on NC tori of any dimension~$n\geq 2$ (Theorem~\ref{thm:Uniqueness.TR}).
As a by-product the canonical trace is the unique trace on non-integer order \psidos\ that agrees with the ordinary trace on smoothing operators (Corollary~\ref{cor:Uniqueness.TR-Tr}).

The approach of this paper is similar to the approaches used in~\cite{Le:AGAG99, Po:JFA07, Po:JAM10}. In particular, we show that the noncommutative residue (resp., the canonical trace) is the unique obstruction to being a sum of integer order (resp., non-integer order) \psido\ commutators (see Propositions~\ref{prop:Uniqueness.NCR-Com} and~\ref{prop:Uniqueness.TR-Com} for the precise statements). These results are obtained in two main steps. In the first step we show that a \psido\ is a sum of commutators modulo smoothing operators if and only if either it has non-integer order, or it has integer order and zero noncommutative residue (Proposition~\ref{Prop3}). This step is enough to recover the uniqueness results of~\cite{FK:JNCG15, FW:JPDOA11, LNP:TAMS16} for the noncommutative residue (see Corollary~\ref{cor:Comm-mod-smooth.unique-singular-trace}).

The second step is twofold. First, we apply a result of Guillemin~\cite{Gu:JFA93} to show that any smoothing operator with zero trace is the sum of two commutators of smoothing operators (Proposition~\ref{lemF}). Combining this with the first step gives the uniqueness of the canonical trace. Second, we show that any smoothing operator is a sum of commutators of integer order \psidos\ (Proposition~\ref{Prop4}). Combining this with the first step establishes the uniqueness of the noncommutative residue.

The noncommutative residue of~\cite{FK:JNCG15, FW:JPDOA11} is defined in dimension $n=2$ and $n=4$ only. In~\cite{LNP:TAMS16} the noncommutative residue and the canonical trace are defined in any dimension. However, the authors actually consider toroidal \psidos.
We obtain traces on our class of \psidos\ by using the equivalence between these classes established in~\cite{HLP:Part1}.
Furthermore, the arguments of~\cite{LNP:TAMS16} involves considerations on extension maps that are not needed for our purpose.
For sake of simplicity we present a direct construction of the noncommutative residue and canonical trace for our class of \psidos\ in Section~\ref{sec:NCR-TR}.

In addition, the proof of the trace property of the canonical trace in~\cite{LNP:TAMS16} is not complete (see Remark~\ref{rmk:Uniqueness.TR-proof} on this point). For sake of completeness we provide a proof of this result by another approach at the end of Section~\ref{sec:NCR-TR}. The approach is similar to the classical approach of~\cite{Gu:AIM85, KV:GDEO}.

The paper is organized as follows. In Section~\ref{sec:NCtori}, we review the main definitions and properties of noncommutative tori.
In Section~\ref{sec:PsiDOs}, we recall the main definitions and properties of pseudo\-differential operators on noncommutative tori.
In Section~\ref{sec:NCR-TR}, we review the constructions of the noncommutative residue on integer-order \psidos\ and of the canonical trace on non-integer-order \psidos\ on NC tori.
In Section~\ref{sec:commutators-mod-smoothing}, we derive the decomposition of \psidos\ as sums of \psido\ commutators modulo smoothing operators.
In Section~\ref{sec:commutator-smoothing}, we study the commutator structure of smoothing operators.
In Section~\ref{sec:Uniqueness}, we prove our main uniqueness results for the noncommutative residue and the canonical trace.
The last two sections, Section~\ref{sec:lemD} and Section~\ref{sec:lemN5}, are devoted to the proofs of Lemmas~\ref{lemD} and~\ref{lemN5}, respectively.

It would be interesting to try to extend the results of this paper in two directions. The first direction concerns traces on log-polyhomogeenous symbols on NC tori. We should expect to have analogues for NC tori of the results of Lesch~\cite{Le:AGAG99}. The other direction concerns pre-traces and hyper-traces on \psidos\ of a fixed given order (see~\cite{LN:JNCG13} for the corresponding results on ordinary manifolds). There are results in this direction in~\cite{LNP:TAMS16}, but they are established under some continuity conditions that it would be desirable to remove.

\section{Noncommutative tori} \label{sec:NCtori}
In this section, we review the main definitions and properties of noncommutative $n$-tori, $n\geq 2$. We refer to~\cite{Co:NCG, HLP:Part1,Ri:PJM81, Ri:CM90}, and the references therein, for a more comprehensive account.

Throughout this paper, we let $\theta =(\theta_{jk})$ be a real anti-symmetric $(n\times n)$-matrix. Denote by $\theta_1, \ldots, \theta_n$ its column vectors. We also let $L^2(\T^n)$ be the Hilbert space of $L^2$-functions on the ordinary torus $\T^n=\R^n\slash 2\pi \Z^n$ equipped with the inner product,
\begin{gather} \label{eq:NCtori.innerproduct-L2}
 \scal{\xi}{\eta}= (2\pi)^{-n} \int_{\T^n} \xi(x)\overline{\eta(x)}\dbar x, \qquad \xi, \eta \in L^2(\T^n).
\end{gather}
 For $j=1,\ldots, n$, let $U_j\colon L^2(\T^n)\rightarrow L^2(\T^n)$ be the unitary operator defined by
 \begin{gather*}
( U_j\xi)(x)= {\rm e}^{{\rm i}x_j} \xi( x+\pi \theta_j), \qquad \xi \in L^2(\T^n).
\end{gather*}
 We then have the relations,
 \begin{gather} \label{eq:NCtori.unitaries-relations}
 U_kU_j = {\rm e}^{2{\rm i}\pi \theta_{jk}} U_jU_k, \qquad j,k=1, \ldots, n.
\end{gather}

The \emph{noncommutative torus} $A_\theta=C(\T^n_\theta)$ is the $C^*$-algebra generated by the unitary operators $U_1, \ldots, U_n$. For $\theta=0$ we obtain the $C^*$-algebra $C(\T^n)$ of continuous functions on the ordinary $n$-torus $\T^n$. Note that~(\ref{eq:NCtori.unitaries-relations}) implies that $A_\theta$ is the closure in $\cL\big(L^2(\T^n)\big)$ of the linear span of the unitary operators,
 \begin{gather*}
 U^k:=U_1^{k_1} \cdots U_n^{k_n}, \qquad k=(k_1,\ldots, k_n)\in \Z^n.
\end{gather*}

 Let $\tau\colon \cL(L^2(\T^n))\rightarrow \C$ be the state defined by the constant function~$1$, i.e.,
 \begin{gather*}
 \tau (T)= \scal{T1}{1}=\int_{\T^n} (T1)(x) \dbar x, \qquad T\in \cL\big(L^2(\T^n)\big).
\end{gather*}
This induces a continuous tracial state on the $C^*$-algebra $A_\theta$ such that $\tau(1)=1$ and $\tau\big(U^k\big)=0$ for $k\neq 0$.
 The GNS construction (see, e.g., \cite{Ar:Springer81}) then allows us to associate with $\tau$ a~$*$-re\-presentation of $A_\theta$ as follows.

 Let $\scal{\cdot}{\cdot}$ be the sesquilinear form on $A_\theta$ defined by
\begin{gather}
 \scal{u}{v} = \tau\left( uv^* \right), \qquad u,v\in A_\theta.
 \label{eq:NCtori.cAtheta-innerproduct}
\end{gather}
Note that the family $\big\{ U^k; \, k \in \Z^n\big\}$ is orthonormal with respect to this sesquilinear form. We let $\cH_\theta=L^2(\T^n_\theta)$ the Hilbert space arising from the completion of $A_\theta$ with respect to the pre-inner product~(\ref{eq:NCtori.cAtheta-innerproduct}). The action of $A_\theta$ on itself by left-multiplication uniquely extends to a~$*$-representation of~$A_\theta$ in $\cH_\theta$. When $\theta=0$ we recover the Hilbert space $L^2(\T^n)$ with the inner product~(\ref{eq:NCtori.innerproduct-L2}) and the representation of $C(\T^n)$ by bounded multipliers. In addition, as $(U^k)_{k \in \Z^n}$ is an orthonormal basis of $\cH_\theta$, every $u\in \cH_\theta$ can be uniquely written as
\begin{gather} \label{eq:NCtori.Fourier-series-u}
 u =\sum_{k \in \Z^n} u_k U^k, \qquad u_k:=\big\langle u\,|\,U^k\big\rangle,
\end{gather}
where the series converges in $\cH_\theta$. When $\theta =0$ we recover the Fourier series decomposition in $L^2(\T^n)$.

The natural action of $\R^n$ on $\T^n$ by translation gives rise to an action on $\cL\big(L^2(\T^n)\big)$. This induces a $*$-action $(s,u)\rightarrow \alpha_s(u)$ on $A_\theta$ given by
\begin{gather*}
\alpha_s(U^k)= {\rm e}^{{\rm i}s\cdot k} U^k, \qquad \text{for all $k\in \Z^n$ and $s\in \R^n$}.
\end{gather*}
This action is strongly continuous, and so we obtain a $C^*$-dynamical system $\big(A_\theta, \R^n, \alpha\big)$. We are especially interested in the subalgebra $\cA_\theta=C^\infty(\T^n_\theta)$ of smooth elements of this $C^*$-dynamical system (a.k.a.~\emph{smooth noncommutative torus}). Namely,
\begin{gather*}
 \cA_\theta:=\big\{ u \in A_\theta; \, \alpha_s(u) \in C^\infty\big(\R^n; A_\theta\big)\big\}.
\end{gather*}
The unitaries $U^k$, $k\in \Z^n$, are contained in $\cA_\theta$, and so $\cA_\theta$ is a dense subalgebra of $A_\theta$. Denote by~$\cS\big(\Z^n\big)$ the space of rapid-decay sequences with complex entries. In terms of the Fourier series decomposition~(\ref{eq:NCtori.Fourier-series-u}) we have
\begin{gather*}
 \cA_\theta=\bigg\{ u=\sum_{k\in \Z^n} u_k U^k;\, (u_k)_{k\in \Z^n}\in \cS\big(\Z^n\big)\bigg\}.
\end{gather*}
When $\theta=0$ we recover the algebra $C^\infty(\T^n)$ of smooth functions on the ordinary torus $\T^n$ and the Fourier-series description of this algebra.

For $j=1,\ldots, n$, let $\delta_j\colon \cA_\theta\rightarrow \cA_\theta$ be the derivation defined by
\begin{gather*}
 \delta_j(u) = D_{s_j} \alpha_s(u)|_{s=0}, \qquad u\in \cA_\theta,
\end{gather*}
where we have set $D_{s_j}=\frac{1}{i}\partial_{s_j}$. When $\theta=0$ the derivation $\delta_j$ is just the derivation $D_{x_j}=\frac{1}{i}\frac{\partial}{\partial x_j}$ on $C^\infty(\T^n)$. In general, for $j,l=1,\ldots, n$, we have
\begin{gather*}
 \delta_j(U_l) =
 \begin{cases}
 U_j & \text{if $l=j$},\\
 0 & \text{if $l\neq j$}.
\end{cases}
\end{gather*}
More generally, given any multi-order $\beta \in \N_0^n$, define
\begin{gather*}
 \delta^\beta(u) = D_s^\beta \alpha_s(u)|_{s=0} = \delta_1^{\beta_1} \cdots \delta_n^{\beta_n}(u), \qquad u\in \cA_\theta.
\end{gather*}
We endow $\cA_\theta$ with the locally convex topology defined by the semi-norms,
\begin{gather*}
 \cA_\theta \ni u \longrightarrow \big\|\delta^\beta (u)\big\| , \qquad \beta\in \N_0^n.
%\label{eq:NCtori.cAtheta-semi-norms}
\end{gather*}
With the involution inherited from $A_\theta$ this turns $\cA_\theta$ into a (unital) Fr\'echet $*$-algebra. The Fourier series~(\ref{eq:NCtori.Fourier-series-u}) of every $u\in \cA_\theta$ converges in $\cA_\theta$ with respect to this topology. In addition, it can be shown that $\cA_\theta$ is closed under holomorphic functional calculus (see, e.g., \cite{Co:AIM81, HLP:Part1}).

\section{Pseudodifferential operators on noncommutative tori} \label{sec:PsiDOs}
In this section, we recall the main definitions and properties of pseudodifferential operators on noncommutative tori~\cite{Ba:CRAS88-I,Ba:CRAS88-II, Co:CRAS80, HLP:Part1, HLP:Part2}. The exposition closely follows~\cite{HLP:Part1, HLP:Part2} (see also~\cite{Ta:JPCS18}).

\subsection{Symbols on noncommutative tori}
There are various classes of symbols on noncommutative tori.

\begin{Definition}[standard symbols; see~\cite{Ba:CRAS88-I,Ba:CRAS88-II, Co:CRAS80}]
$\stS^m \big(\Rn ; \cA_\theta\big)$, $m\in\R$, consists of maps $\rho(\xi)\in C^\infty \big(\Rn ; \cA_\theta\big)$ such that, for all multi-orders $\alpha$ and $\beta$, there exists $C_{\alpha \beta} > 0$ such that
\begin{gather*}
\big\|\delta^\alpha \partial_\xi^\beta \rho(\xi)\big\| \leq C_{\alpha \beta} ( 1 + | \xi | )^{m - | \beta |} \qquad \forall\, \xi \in \R^n .
\end{gather*}
\end{Definition}

\begin{Definition}
 $\cS\big(\R^n; \cA_\theta\big)$ consists of maps $\rho(\xi)\in C^\infty \big(\Rn ; \cA_\theta\big)$ such that, for all $N\geq 0$ and multi-orders $\alpha$, $\beta$, there exists $C_{N\alpha \beta} > 0$ such that
\begin{gather*}
\big\|\delta^\alpha \partial_\xi^\beta \rho(\xi)\big\| \leq C_{N\alpha \beta} ( 1 + | \xi |)^{-N} \qquad \forall\, \xi \in \R^n .
\end{gather*}
\end{Definition}

\begin{Remark}\label{rmk:Symbols.symbols-intersection}
$\cS\big(\R^n; \cA_\theta\big)=\bigcap\limits_{m\in\R}\stS^m\big( \R^n;\cA_\theta\big)$.
\end{Remark}

\begin{Definition}[homogeneous symbols]
$S_q \big(\R^n; \cA_\theta \big)$, $q \in \C$, consists of maps \[ \rho(\xi) \in C^\infty\big(\R^n\backslash 0;\cA_\theta\big)\] that are homogeneous of degree $q$, i.e.,
$\rho( t \xi ) =t^q \rho(\xi)$ for all $\xi \in \R^n \backslash 0$ and $t > 0$.
\end{Definition}

\begin{Remark}
 If $\rho(\xi)\in S_q\big(\R^n;\cA_\theta\big)$ and $\chi(\xi)\in C^\infty_c\big(\R^n\big)$ is such that $\chi(\xi)=1$ near $\xi=0$, then $(1-\chi(\xi))\rho(\xi)\in \stS^{\Re q}\big( \R^n;\cA_\theta\big)$.
\end{Remark}

\begin{Definition}[classical symbols; see \cite{Ba:CRAS88-I,Ba:CRAS88-II}]\label{def:Symbols.classicalsymbols}
$S^q \big(\R^n; \cA_\theta \big)$, $q \in \C$, consists of maps $\rho(\xi)\in C^\infty\big(\R^n;\cA_\theta\big)$ that admit an asymptotic expansion,
\begin{gather*}
\rho(\xi) \sim \sum_{j \geq 0} \rho_{q-j} (\xi), \qquad \rho_{q-j} \in S_{q-j} \big(\R^n; \cA_\theta \big),
\end{gather*}
where $\sim$ means that, for all $N\geq 0$ and multi-orders $\alpha$, $\beta$, there exists $C_{N\alpha\beta} >0$ such that, for all $\xi \in \R^n$ with $| \xi | \geq 1$, we have
\begin{gather*} %\label{eq:Symbols.classical-estimates}
\bigg\| \delta^\alpha \partial_\xi^\beta \bigg( \rho - \sum_{j<N} \rho_{q-j} \bigg)(\xi)\bigg\| \leq C_{N\alpha\beta} | \xi |^{\Re{q}-N-| \beta |} .
\end{gather*}
\end{Definition}

\begin{Remark} \label{rmk:Symbols.classical-inclusion}
$S^q\big(\R^n;\cA_\theta\big)\subset \stS^{\Re{q}}\big(\R^n;\cA_\theta\big)$.
\end{Remark}

\begin{Example} Any polynomial map $\rho(\xi)=\sum\limits_{|\alpha|\leq m} a_\alpha \xi^\alpha$, $a_\alpha\in \cA_\theta$, $m\in \N_0$, is in $S^m\big(\R^n;\cA_\theta\big)$.
\end{Example}

\begin{Remark} It is also convenient to consider scalar-valued symbols. We shall denote by $\stS^m\big(\R^n\big)$, $\cS\big(\R^n\big)$, $S_q\big(\R^n\big)$ and $S^q\big(\R^n\big)$ the corresponding classes of scalar-valued symbols. We will consider them as sub-classes of the $\cA_\theta$-valued symbol classes via the natural embedding of~$\C$ into~$\cA_\theta$.
\end{Remark}

\subsection[$\Psi$DOs on noncommutative tori]{$\boldsymbol{\Psi}$DOs on noncommutative tori}

Given $\rho(\xi)\in \stS^m\big(\R^n;\cA_\theta\big)$, $m\in \R$, we let $P_\rho\colon \cA_\theta \rightarrow \cA_\theta$ be the operator defined by
\begin{gather*} %\label{eq:PsiDOs.PsiDO-definition}
P_\rho u = \iint {\rm e}^{{\rm i}s\cdot\xi}\rho(\xi)\alpha_{-s}(u){\rm d}s\dbar\xi, \qquad u \in \cA_\theta.
\end{gather*}
The above integral is meant as an oscillating integral (see~\cite{HLP:Part1}). Equivalently, for all $u=\sum\limits_{k\in \Z^n} u_k U^k$ in $\cA_\theta$, we have
\begin{gather} \label{eq:toroidal.Prhou-equation}
 P_{\rho}u = \sum_{k\in \Z^n} u_k \rho(k)U^k.
\end{gather}
In any case, this defines a continuous linear operator $P_\rho\colon \cA_\theta \rightarrow \cA_\theta$.

\begin{Definition}
$\Psi^q(\T^n_\theta)$, $q\in \C$, consists of all linear operators $P_\rho\colon \cA_\theta\rightarrow \cA_\theta$ with $\rho(\xi)$ in $S^q\big(\R^n; \cA_\theta\big)$.
\end{Definition}

\begin{Remark} \label{rem:PsiDOs.symbol-uniqueness}
If $P=P_\rho$ with $\rho(\xi)$ in $S^q\big(\R^n; \cA_\theta\big)$, $\rho(\xi)\sim \sum \rho_{q-j}(\xi)$, then $\rho(\xi)$ is called a \emph{symbol} for $P$. This symbol is not unique, but its restriction to $\Z^n$ and its class modulo $\cS\big(\R^n;\cA_\theta\big)$ are unique (see Remark~\ref{rmk:PsiDOs.rho(k)-P}). As a result, the homogeneous symbols $\rho_{q-j}(\xi)$ are uniquely determined by~$P$. The leading symbol $\rho_q(\xi)$ is called the \emph{principal symbol} of~$P$.
\end{Remark}

\begin{Example}A differential operator on $\cA_\theta$ is of the form $P=\sum\limits_{|\alpha|\leq m}a_\alpha\delta^\alpha$, $a_\alpha\in\cA_\theta$ (see~\cite{Co:CRAS80, Co:NCG}). This is a \psido\ of order~$m$ with symbol $\rho(\xi)= \sum a_\alpha \xi^\alpha$ (see~\cite{HLP:Part1}).
\end{Example}

\subsection[Composition of $\Psi$DOs]{Composition of $\boldsymbol{\Psi}$DOs}

Suppose we are given symbols $\rho_1(\xi)\in\stS^{m_1}\big(\Rn;\cA_\theta\big)$, $m_1\in\R$, and $\rho_2(\xi)\in\stS^{m_2}\big(\Rn;\cA_\theta\big)$, $m_2\in\R$. As $P_{\rho_1}$ and $P_{\rho_2}$ are linear operators on $\cA_\theta$, the composition $P_{\rho_1}P_{\rho_2}$ makes sense as such an operator.
In addition, we define the map $\rho_1\sharp\rho_2\colon \Rn\rightarrow \cA_\theta$ by
\begin{gather*} %\label{eq:Composition.symbol-sharp}
\rho_1\sharp\rho_2(\xi) = \iint {\rm e}^{{\rm i}t\cdot\eta}\rho_1(\xi+\eta)\alpha_{-t}[\rho_2(\xi)]{\rm d}t\dbar\eta , \qquad \xi\in\Rn ,
\end{gather*}
where the above integral is meant as an oscillating integral (see~\cite{Ba:CRAS88-I,Ba:CRAS88-II, HLP:Part2}).

\begin{Proposition}[see \cite{Ba:CRAS88-I,Ba:CRAS88-II, Co:CRAS80, HLP:Part2}] \label{prop:Composition.sharp-continuity-standard-symbol}
Let $\rho_1(\xi)\in \stS^{m_1}\big(\R^n; \cA_\theta\big)$ and $\rho_2(\xi)\in \stS^{m_2}\big(\R^n; \cA_\theta\big)$, $m_1,m_2\in \R$.
\begin{enumerate}\itemsep=0pt
 \item[$1.$] $\rho_1\sharp\rho_2(\xi)\in \stS^{m_1+m_2}\big(\Rn;\cA_\theta\big)$, and we have $ \rho_1\sharp\rho_2(\xi) \sim \sum\frac{1}{\alpha !}\partial_\xi^\alpha\rho_1(\xi)\delta^\alpha\rho_2(\xi)$.

 \item[$2.$] The operators $P_{\rho_1}P_{\rho_2}$ and $P_{\rho_1\sharp \rho_2}$ agree.
\end{enumerate}
\end{Proposition}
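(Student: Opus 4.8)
The plan is to establish part~1 first --- that $\rho_1\sharp\rho_2$ is a standard symbol of order $m_1+m_2$ with the stated asymptotic expansion --- and then to deduce part~2 from it by a direct computation on Fourier series. For part~1, I would first observe that the oscillating integral defining $\rho_1\sharp\rho_2(\xi)$ is a well-defined element of $\cA_\theta$ depending smoothly on $\xi$: the integrand $(t,\eta)\mapsto\rho_1(\xi+\eta)\alpha_{-t}[\rho_2(\xi)]$ and all its derivatives are polynomially bounded in $(t,\eta)$ --- using that $\alpha_{-t}$ is isometric on $\cA_\theta$ and that $\delta^\gamma\rho_2(\xi)$ is $O\big((1+|\xi|)^{m_2}\big)$ --- so the standard theory of $\cA_\theta$-valued oscillating integrals applies. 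The core of the argument is the asymptotic expansion of this oscillating integral. Fixing $N\in\N_0$ and Taylor expanding $\rho_1(\xi+\eta)$ in $\eta$ at $\eta=0$,
\begin{gather*}
\rho_1(\xi+\eta)=\sum_{|\alpha|<N}\frac{1}{\alpha!}\partial_\xi^\alpha\rho_1(\xi)\,\eta^\alpha + R_N(\xi,\eta),\\
R_N(\xi,\eta)=\sum_{|\alpha|=N}\frac{N}{\alpha!}\,\eta^\alpha\int_0^1(1-\tau)^{N-1}\partial_\xi^\alpha\rho_1(\xi+\tau\eta)\,{\rm d}\tau,
\end{gather*}
substituting the polynomial part into the oscillating integral, and using $\eta^\alpha{\rm e}^{{\rm i}t\cdot\eta}=D_t^\alpha{\rm e}^{{\rm i}t\cdot\eta}$, integration by parts in $t$, the identity $D_t^\alpha\big(\alpha_{-t}[a]\big)=(-1)^{|\alpha|}\alpha_{-t}\big[\delta^\alpha a\big]$, and the normalization $\iint{\rm e}^{{\rm i}t\cdot\eta}f(t)\,{\rm d}t\dbar\eta=f(0)$, the $\eta^\alpha$-term contributes exactly $\frac{1}{\alpha!}\partial_\xi^\alpha\rho_1(\xi)\,\delta^\alpha\rho_2(\xi)$ (the two factors $(-1)^{|\alpha|}$ cancelling). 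Since $\partial_\xi^\alpha\rho_1\in\stS^{m_1-|\alpha|}\big(\Rn;\cA_\theta\big)$ and $\delta^\alpha\rho_2\in\stS^{m_2}\big(\Rn;\cA_\theta\big)$, these are the expected terms of the asymptotic expansion, of orders $m_1+m_2-|\alpha|$.

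It remains --- and this is where the main effort lies --- to show that the remainder contribution
\begin{gather*}
r_N(\xi)=\iint{\rm e}^{{\rm i}t\cdot\eta}R_N(\xi,\eta)\,\alpha_{-t}[\rho_2(\xi)]\,{\rm d}t\dbar\eta
\end{gather*}
belongs to $\stS^{m_1+m_2-N}\big(\Rn;\cA_\theta\big)$, i.e., $\big\|\delta^\gamma\partial_\xi^\beta r_N(\xi)\big\|=O\big((1+|\xi|)^{m_1+m_2-N-|\beta|}\big)$ for all multi-orders $\gamma,\beta$. The obstruction is that for $m_1>N$ the factor $\partial_\xi^\alpha\rho_1(\xi+\tau\eta)$ with $|\alpha|=N$ does not decay in $\eta$, so the $\eta$-integral is merely oscillatory. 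I would split it into the regions $|\eta|\le\frac12(1+|\xi|)$ and $|\eta|\ge\frac12(1+|\xi|)$. On the first region $|\xi+\tau\eta|\ge c(1+|\xi|)$ uniformly in $\tau\in[0,1]$, so $\partial_\xi^\alpha\rho_1(\xi+\tau\eta)=O\big((1+|\xi|)^{m_1-N}\big)$, and repeated integration by parts in $\eta$ via ${\rm e}^{{\rm i}t\cdot\eta}=\big(1+|t|^2\big)^{-M}\big(1-\Delta_\eta\big)^M{\rm e}^{{\rm i}t\cdot\eta}$ transfers $\eta$-derivatives onto $R_N$ and produces the decay $\big(1+|t|^2\big)^{-M}$, making the $t$-integral absolutely convergent with an $O\big((1+|\xi|)^{m_2}\big)$ bound; so this part is $O\big((1+|\xi|)^{m_1+m_2-N}\big)$. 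On the second region, one first uses $\eta^\alpha{\rm e}^{{\rm i}t\cdot\eta}=D_t^\alpha{\rm e}^{{\rm i}t\cdot\eta}$ and integration by parts in $t$ to transfer the factor $\eta^\alpha$ onto $\alpha_{-t}[\rho_2(\xi)]$ (whose $t$-derivatives remain $O\big((1+|\xi|)^{m_2}\big)$), and then integrates by parts further in both $t$ and $\eta$ to gain arbitrarily fast decay in $|\eta|$; since $|\eta|\ge c(1+|\xi|)$ there and $\partial_\xi^\alpha\rho_1(\xi+\tau\eta)=O\big((1+|\eta|)^{|m_1-N|}\big)$ by Peetre's inequality, the resulting integral is again $O\big((1+|\xi|)^{m_1+m_2-N}\big)$. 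Differentiating $r_N$ in $\xi$ under the integral sign falls on $\rho_1(\xi+\eta)$ and $\rho_2(\xi)$ and reruns the same estimates with $m_1$ replaced by $m_1-|\beta|$, while the derivations $\delta^\gamma$ act only on $\rho_1,\rho_2$ (commuting with $\alpha_{-t}$) and change nothing essential. This gives $\rho_1\sharp\rho_2=\sum_{|\alpha|<N}\frac{1}{\alpha!}\partial_\xi^\alpha\rho_1\,\delta^\alpha\rho_2+r_N$ with $r_N\in\stS^{m_1+m_2-N}$ for every $N$, which is exactly part~1 (the case $N=1$ yields $\rho_1\sharp\rho_2\in\stS^{m_1+m_2}$).

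For part~2, now that $\sigma:=\rho_1\sharp\rho_2\in\stS^{m_1+m_2}\big(\Rn;\cA_\theta\big)$ the operator $P_\sigma$ is defined and, by~\eqref{eq:toroidal.Prhou-equation}, acts by $P_\sigma u=\sum_k u_k\sigma(k)U^k$ on $u=\sum_k u_kU^k\in\cA_\theta$. By the same formula $P_{\rho_2}u=\sum_k u_k\rho_2(k)U^k$, the series converging in $\cA_\theta$, so by continuity of $P_{\rho_1}$ it suffices to compute $P_{\rho_1}\big(\rho_2(k)U^k\big)$. Using the oscillating-integral definition of $P_{\rho_1}$, the multiplicativity of $\alpha_{-s}$, and $\alpha_{-s}(U^k)={\rm e}^{-{\rm i}s\cdot k}U^k$, one obtains
\begin{gather*}
P_{\rho_1}\big(\rho_2(k)U^k\big)=\bigg(\iint{\rm e}^{{\rm i}s\cdot(\xi-k)}\rho_1(\xi)\,\alpha_{-s}[\rho_2(k)]\,{\rm d}s\dbar\xi\bigg)U^k=\big(\rho_1\sharp\rho_2(k)\big)U^k,
\end{gather*}
the last equality by the change of variables $\xi=k+\eta$. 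Hence $P_{\rho_1}P_{\rho_2}u=\sum_k u_k\,\rho_1\sharp\rho_2(k)\,U^k=P_{\rho_1\sharp\rho_2}u$, which is part~2. The only delicate points here are the interchange of $P_{\rho_1}$ with the convergent sum (justified by continuity of $P_{\rho_1}$ on $\cA_\theta$) and the change of variables inside the oscillating integral (routine within the oscillating-integral formalism); the substantive work of the proof is concentrated in the remainder estimate of part~1.
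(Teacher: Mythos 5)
The paper does not prove this proposition; it is cited from~\cite{Ba:CRAS88-I,Ba:CRAS88-II,Co:CRAS80,HLP:Part2}, where the full argument appears. Your overall plan --- Taylor-expand $\rho_1(\xi+\eta)$ in $\eta$, recognize the leading terms via $D_t^\alpha(\alpha_{-t}[a])=(-1)^{|\alpha|}\alpha_{-t}[\delta^\alpha a]$ and the normalization $\iint e^{it\cdot\eta}f(t)\,dt\,\dbar\eta = f(0)$, and estimate $r_N$ by splitting the $\eta$-integral at $|\eta|\sim 1+|\xi|$ --- is the standard one for symbol composition, and Part~2 is correct as written: the multiplicativity of $\alpha_{-s}$, the relation $\alpha_{-s}(U^k)=e^{-is\cdot k}U^k$, the change of variables $\xi=k+\eta$, and the continuity of $P_{\rho_1}$ to pass through the Fourier series are exactly what is needed.

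The estimate on the low-frequency region $|\eta|\le\frac12(1+|\xi|)$, however, has a genuine gap. You observe $\partial_\xi^\alpha\rho_1(\xi+\tau\eta)=O\big((1+|\xi|)^{m_1-N}\big)$ there and bound the $t$-integral by $O\big((1+|\xi|)^{m_2}\big)$, then conclude $O\big((1+|\xi|)^{m_1+m_2-N}\big)$. But the Taylor remainder $R_N$ still carries the polynomial $\eta^\alpha$, $|\alpha|=N$, which on this region is $O\big((1+|\xi|)^N\big)$, and the $\eta$-integral runs over a ball of volume $\sim(1+|\xi|)^n$; together these contribute an uncontrolled factor $(1+|\xi|)^{N+n}$ that erases the $(1+|\xi|)^{-N}$ you have recorded. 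The $\eta$-integration by parts you invoke, $e^{it\cdot\eta}=(1+|t|^2)^{-M}(1-\Delta_\eta)^M e^{it\cdot\eta}$, gains only $t$-decay; in particular the identity term of $(1-\Delta_\eta)^M$ leaves $R_N$ itself untouched, so the $\eta$-integral is not controlled and the reasoning as stated gives at best $O\big((1+|\xi|)^{m_1+m_2+n}\big)$ on this region, independent of~$N$. You must also gain $\eta$-decay here. Either (a) first convert $\eta^\alpha e^{it\cdot\eta}=D_t^\alpha e^{it\cdot\eta}$ and integrate by parts in $t$ to move $\eta^\alpha$ onto $\rho_2$ as $\delta^\alpha\rho_2$ on this region as well (as you already do for the second region), so the remainder becomes $\int_0^1(1-\tau)^{N-1}\partial_\xi^\alpha\rho_1(\xi+\tau\eta)\,d\tau=O\big((1+|\xi|)^{m_1-N}\big)$ with no $\eta$-growth, and a further $t$-integration by parts producing $(1+|\eta|^2)^{-L}$ makes the $\eta$-integral $O(1)$; or (b) keep $\eta^\alpha$ but combine the $\eta$-integration by parts with a $t$-integration by parts giving $(1+|\eta|^2)^{-L}$ with $2L>N+n$, so that $\int(1+|\eta|^2)^{-L}|\eta|^N\,d\eta<\infty$ dominates both the $\eta^\alpha$ growth and the volume. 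Finally, the inequality $|\xi+\tau\eta|\ge c(1+|\xi|)$ you quote is only half of what the bound on $\partial_\xi^\alpha\rho_1$ needs; for $m_1>N$ it is the two-sided comparability $1+|\xi+\tau\eta|\sim 1+|\xi|$ (which does hold on this region) that is required.
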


In the case of classical \psidos\ we further have the following result.

\begin{Proposition}[see \cite{Ba:CRAS88-I,Ba:CRAS88-II, Co:CRAS80, HLP:Part2}] \label{prop:Composition.composition-PsiDOs}
Let $P_1\!\in \!\Psi^{q_1}(\cA_\theta)$, $q_1\in \C$, have symbol $\rho_1(\xi)\!\sim\!\sum\limits_{j\geq 0}\rho_{1,q_1-j}(\xi)$, and let $P_2\in \Psi^{q_2}(\cA_\theta)$, $q_2\in \C$, have symbol $\rho_2(\xi)\sim\sum\limits_{j\geq 0}\rho_{2,q_2-j}(\xi)$.
\begin{enumerate}\itemsep=0pt
 \item[$1.$] $\rho_1 \sharp \rho_2(\xi)\in S^{q_1+q_2}\big(\R^n; \cA_\theta\big)$ with $\rho_1\sharp\rho_2(\xi) \sim\sum (\rho_1\sharp\rho_2)_{q_1+q_2-j}(\xi)$, where
 \begin{gather*} %\label{eq:Composition-Sym.sharp-homo-asymptotics}
(\rho_1\sharp\rho_2)_{q_1+q_2-j}(\xi)=\sum_{k+l+|\alpha|=j}\frac{1}{\alpha !}\partial_\xi^\alpha \rho_{1,q_1-k}(\xi)\delta^\alpha \rho_{2,q_2-l}(\xi), \qquad j\geq 0.
\end{gather*}

\item[$2.$] The composition $P_1P_2= P_{\rho_1 \sharp \rho_2}$ is contained in $\Psi^{q_1+q_2}(\cA_\theta)$.
\end{enumerate}
\end{Proposition}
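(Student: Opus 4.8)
The plan is to reduce everything to Proposition~\ref{prop:Composition.sharp-continuity-standard-symbol}, taking its two parts as black boxes so that all the oscillatory-integral analysis is already done, and what remains is to reorganize an asymptotic expansion of \emph{standard} symbols into homogeneous components. The second assertion follows at once from the first: since $\rho_i$ is a symbol for $P_i$ we have $P_i=P_{\rho_i}$, hence $P_1P_2=P_{\rho_1}P_{\rho_2}=P_{\rho_1\sharp\rho_2}$ by Proposition~\ref{prop:Composition.sharp-continuity-standard-symbol}(2), and once we know $\rho_1\sharp\rho_2\in S^{q_1+q_2}(\R^n;\cA_\theta)$ the operator $P_{\rho_1\sharp\rho_2}$ lies in $\Psi^{q_1+q_2}(\cA_\theta)$ by the very definition of this class. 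So the whole content is the first assertion.

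For that, I would first use Remark~\ref{rmk:Symbols.classical-inclusion} to get $\rho_i(\xi)\in\stS^{\Re q_i}(\R^n;\cA_\theta)$, so that Proposition~\ref{prop:Composition.sharp-continuity-standard-symbol}(1) applies and yields $\rho_1\sharp\rho_2\in\stS^{\Re q_1+\Re q_2}(\R^n;\cA_\theta)$, in particular $\rho_1\sharp\rho_2\in C^\infty(\R^n;\cA_\theta)$, together with the standard-symbol asymptotics: for every $N$ the difference $\rho_1\sharp\rho_2-\sum_{|\alpha|<N}\frac{1}{\alpha!}\partial_\xi^\alpha\rho_1\,\delta^\alpha\rho_2$ lies in $\stS^{\Re q_1+\Re q_2-N}(\R^n;\cA_\theta)$. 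Next I would \emph{define} $(\rho_1\sharp\rho_2)_{q_1+q_2-j}$ by the formula in the statement and check that each summand $\frac{1}{\alpha!}\partial_\xi^\alpha\rho_{1,q_1-k}\,\delta^\alpha\rho_{2,q_2-l}$ is smooth on $\R^n\setminus0$ and homogeneous of degree $q_1+q_2-j$: the point is that $\partial_\xi^\alpha$ lowers the degree of homogeneity by $|\alpha|$, while the derivation $\delta^\alpha$ acts only on the $\cA_\theta$-factor and hence preserves it, so the product has degree $(q_1-k-|\alpha|)+(q_2-l)=q_1+q_2-j$. Thus each $(\rho_1\sharp\rho_2)_{q_1+q_2-j}$ lies in $S_{q_1+q_2-j}(\R^n;\cA_\theta)$.

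The heart of the matter is the remainder estimate. Fixing $N$, in each term of the standard-symbol expansion with $|\alpha|<N$ I would substitute $\rho_1=\sum_{k<N}\rho_{1,q_1-k}+r_{1,N}$ and $\rho_2=\sum_{l<N}\rho_{2,q_2-l}+r_{2,N}$, valid for $|\xi|\geq1$ (which is all that the relation $\sim$ of Definition~\ref{def:Symbols.classicalsymbols} involves; or, to stay smooth everywhere, multiply the homogeneous pieces by a cutoff vanishing near $\xi=0$, as in the Remark following the definition of homogeneous symbols), where each $r_{i,N}$ satisfies the order-$(q_i-N)$ remainder bounds. Expanding the resulting products by Leibniz (recall $\delta$ is a derivation), the ``pure'' terms $\frac{1}{\alpha!}\partial_\xi^\alpha\rho_{1,q_1-k}\,\delta^\alpha\rho_{2,q_2-l}$ with $k+l+|\alpha|<N$ assemble to exactly $\sum_{j<N}(\rho_1\sharp\rho_2)_{q_1+q_2-j}$, while each of the finitely many leftover terms, namely those carrying a factor $r_{1,N}$ or $r_{2,N}$ and the pure terms with $k+l+|\alpha|\geq N$, is a symbol whose $\stS$-estimates are of order at most $\Re(q_1+q_2)-N$, thanks to the homogeneity of the $\rho_{i,\cdot}$ and the remainder bounds for the $r_{i,N}$. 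Adding the contribution of the $\stS^{\Re q_1+\Re q_2-N}$ term, one obtains, for $|\xi|\geq1$, the bound $\|\delta^\gamma\partial_\xi^\delta(\rho_1\sharp\rho_2-\sum_{j<N}(\rho_1\sharp\rho_2)_{q_1+q_2-j})(\xi)\|\leq C_{N\gamma\delta}|\xi|^{\Re(q_1+q_2)-N-|\delta|}$, which is precisely the asymptotic condition of Definition~\ref{def:Symbols.classicalsymbols}. Hence $\rho_1\sharp\rho_2\in S^{q_1+q_2}(\R^n;\cA_\theta)$ with the claimed homogeneous expansion, and both parts follow.

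I expect the only real obstacle to be the bookkeeping in this last step: organizing the cross terms and the too-low-homogeneity pure terms and checking, uniformly in the derivative multi-orders $\gamma$ and $\delta$, that each lands at order at most $\Re(q_1+q_2)-N$. There is no new analytic input beyond Proposition~\ref{prop:Composition.sharp-continuity-standard-symbol}; the standard device that keeps the combinatorics manageable is to take the truncation order in the standard-symbol expansion equal to $N$ itself, so that every leftover term is manifestly of order at most $\Re(q_1+q_2)-N$.
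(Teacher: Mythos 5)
The paper does not prove this proposition; it is taken as known and cited from the references (most relevantly [HLP:Part2]), so there is no in-paper argument to compare against. Your proposal is correct and is the standard derivation of the classical composition theorem from the standard-symbol one: reduce to Proposition~\ref{prop:Composition.sharp-continuity-standard-symbol}, define the candidate homogeneous components by the displayed formula, check that each summand is smooth on $\R^n\setminus 0$ and homogeneous of degree $q_1+q_2-j$ (since $\partial_\xi^\alpha$ drops the degree by $|\alpha|$ while $\delta^\alpha$ acts on the $\cA_\theta$-factor only and hence preserves it), and then do the remainder bookkeeping by truncating the standard-symbol expansion and the homogeneous expansions of $\rho_1,\rho_2$ all at level $N$, so that every leftover piece --- cross terms carrying a remainder $r_{i,N}$, pure terms with $k+l+|\alpha|\geq N$, and the tail of the $\sharp$-expansion --- manifestly has order at most $\Re(q_1+q_2)-N$. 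One minor cosmetic slip: the step you label ``expanding by Leibniz'' is in fact just bilinearity. You distribute the linear maps $\partial_\xi^\alpha$ and $\delta^\alpha$ over the finite sums $\sum_{k<N}\rho_{1,q_1-k}+r_{1,N}$ and $\sum_{l<N}\rho_{2,q_2-l}+r_{2,N}$ and then distribute the pointwise product over the two resulting sums; the derivation (Leibniz) property of $\delta$ is never invoked here, since $\delta^\alpha$ is only ever applied to $\rho_2$ alone, not to a product of symbols. This does not affect the argument, which is otherwise complete.
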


\subsection[Toroidal symbols and $\Psi$DOs]{Toroidal symbols and $\boldsymbol{\Psi}$DOs}\label{subsec:toroidal}
In this section, we recall the relationship between standard \psidos\ as defined above and the toroidal \psidos\ considered in~\cite{GJP:MAMS17, LNP:TAMS16, RT:Birkhauser10}.

Let $\cA_\theta^{\Z^n}$ denote the space of sequences with values in $\cA_\theta$ that are indexed by $\Z^n$. In addition, let $(e_1, \ldots, e_n)$ be the canonical basis of $\R^n$. For $i=1,\ldots, n$, the (forward) difference operator $\Delta_i\colon \cA_\theta^{\Z^n} \rightarrow \cA_\theta^{\Z^n}$ is defined by
\begin{gather*}
 \Delta_i u_k= u_{k+e_i}-u_k, \qquad (u_k)_{k\in \Z^n} \in \cA_\theta^{\Z^n}.
\end{gather*}
The operators $\Delta_1, \ldots, \Delta_n$ pairwise commute. For $\alpha =(\alpha_1, \ldots, \alpha_n)\in \N_0^n$, we set $\Delta^\alpha =\Delta_1^{\alpha_1} \cdots \Delta_n^{\alpha_n}$. We similarly define backward difference operators $\overline{\Delta}=\overline{\Delta}_1^{\alpha_1} \cdots \overline{\Delta}_n^{\alpha_n}$, where $\overline{\Delta}_i\colon \cA_\theta^{\Z^n}\! \allowbreak \rightarrow \cA_\theta^{\Z^n}$ is defined by
\begin{gather*}
 \overline{\Delta}_i u_k= u_{k}-u_{k-e_i}, \qquad (u_k)_{k\in \Z^n} \in \cA_\theta^{\Z^n}.
 %\label{eq:PsiDOs.backward-difference}
\end{gather*}

We will also denote by $\Delta^\alpha$ and $\overline{\Delta}^\alpha$ the extensions of these operators to operators on $\cA_\theta^{\R^n}$, where $\cA_\theta^{\R^n}$ is the space of maps from $\R^n$ to $\cA_\theta$. Namely, we set
$\Delta^\alpha =\Delta_1^{\alpha_1} \cdots \Delta_n^{\alpha_n}$ and $\overline{\Delta}=\overline{\Delta}_1^{\alpha_1} \cdots \overline{\Delta}_n^{\alpha_n}$, where $\Delta_j\colon \cA_\theta^{\R^n}\rightarrow \cA_\theta^{\R^n}$ and $\overline{\Delta}_j\colon \cA_\theta^{\R^n}\rightarrow \cA_\theta^{\R^n}$ are given by
\begin{gather}
 \Delta_j\rho(x)= \rho(\xi+e_j)-\rho(\xi), \qquad \overline{\Delta}_j\rho(x)= \rho(\xi)-\rho(\xi-e_j), \qquad \rho(\xi)\in \cA_\theta^{\R^n}.
 \label{eq:PsiDOs.finite-differences-functions}
\end{gather}

\begin{Definition}[\cite{LNP:TAMS16, RT:Birkhauser10}] $\stS^m\big(\Z^n; \cA_\theta\big)$, $m\in \R$, consists of sequences $(\rho_k)_{k\in \Z^n} \subset\cA_\theta$ such that, for all multi-orders $\alpha$ and $\beta$, there is $C_{\alpha\beta}>0$ such that
\begin{gather*}
 \big\| \delta^\alpha \Delta^\beta \rho_k \big\| \leq C_{\alpha\beta} (1+|k|)^{m-|\beta|} \qquad \forall\, k \in \Z^n.
\end{gather*}
\end{Definition}

\begin{Definition}
 $\cS\big(\Z^n; \cA_\theta\big)$ consists of sequences $(\rho_k)_{k\in \Z^n} \subset \cA_\theta$ such that, for all $N\in \N_0$ and $\alpha\in \N_0^n$, there is $C_{N\alpha}>0$ such that
 \begin{gather*}
 \|\delta^\alpha \rho_k\|\leq C_{N\alpha}(1+|k|)^{-N}\qquad \forall\, k \in \Z^n.
\end{gather*}
\end{Definition}

\begin{Remark} $\cS(\Z^n; \cA_\theta) = \bigcap\limits_{m\in \R} \stS^m\big(\Z^n; \cA_\theta\big)$.
\end{Remark}

 Following~\cite{LNP:TAMS16} (see also~\cite{GJP:MAMS17}) the \psido\ associated with a toroidal symbol $(\rho_k)_{k{\in}\Z^n}\!\!\in\! \stS^m\big(\Z^n;\! \cA_\theta\big)\!,$ $m\in \R$, is defined as the linear operator $P\colon \cA_\theta \rightarrow \cA_\theta$ given by
\begin{gather*}
 P u = \sum_{k\in \Z^n} u_k\rho_kU^k, \qquad u=\sum u_kU^k\in \cA_\theta.
\end{gather*}
Any standard \psido\ is a toroidal \psido. This is seen by using~(\ref{eq:toroidal.Prhou-equation}) and the following result.

\begin{Proposition}[see \cite{HLP:Part1, RT:Birkhauser10}] \label{prop:toroidal.restriction-of-standard-symbol-is-toroidal-symbol}
 Let $\rho(\xi)\in \stS^m\big(\R^n;\cA_\theta\big)$, $m\in \R$.
\begin{enumerate}\itemsep=0pt
 \item[$1.$] The restriction of $\rho(\xi)$ to $\Z^n$ is a toroidal symbol $(\rho(k))_{k\in \Z^n}$ in $\stS^m\big(\Z^n; \cA_\theta\big)$.

 \item[$2.$] If $(\rho(k))_{k\in \Z^n}\in \cS\big(\Z^n; \cA_\theta\big)$, then $\rho(\xi)\in \cS\big(\R^n;\cA_\theta\big)$.
\end{enumerate}
\end{Proposition}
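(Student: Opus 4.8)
The plan is to prove both statements by comparing the standard symbol $\rho(\xi)$ with its restriction to $\Z^n$ via a Taylor-type expansion that relates finite differences $\Delta^\beta$ to derivatives $\partial_\xi^\beta$, together with the elementary fact that on the unit cube around any lattice point the values of $\rho$ and its derivatives are controlled by those at the lattice point (and conversely). For part~1, I would fix $k\in\Z^n$ and multi-orders $\alpha,\beta$, and estimate $\delta^\alpha\Delta^\beta\rho(k)$. The key identity is that a one-step forward difference is an integral of a derivative,
\begin{gather*}
\Delta_j f(k) = \int_0^1 \partial_{\xi_j} f(k+te_j)\,{\rm d}t,
\end{gather*}
so iterating gives $\Delta^\beta\rho(k) = \int_{[0,1]^{|\beta|}} \partial_\xi^\beta \rho(k+\text{(convex combination)})\,{\rm d}t$, up to reindexing. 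Since $\delta^\alpha$ commutes with evaluation at points and with the $\xi$-integration (it acts only on the $\cA_\theta$-variable), we get
\begin{gather*}
\big\|\delta^\alpha\Delta^\beta\rho(k)\big\| \leq \sup_{\xi\in k+[0,1]^n}\big\|\delta^\alpha\partial_\xi^\beta\rho(\xi)\big\| \leq C_{\alpha\beta}\sup_{\xi\in k+[0,1]^n}(1+|\xi|)^{m-|\beta|}.
\end{gather*}
Finally, $(1+|\xi|)$ and $(1+|k|)$ are comparable uniformly when $|\xi-k|\leq\sqrt n$ (there is a constant $c_n$ with $c_n^{-1}(1+|k|)\le 1+|\xi|\le c_n(1+|k|)$), and one must split into the cases $m-|\beta|\geq 0$ and $m-|\beta|<0$ to absorb the right power of $c_n$; in either case one lands on a bound $C'_{\alpha\beta}(1+|k|)^{m-|\beta|}$, which is exactly membership in $\stS^m(\Z^n;\cA_\theta)$.

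For part~2, suppose $(\rho(k))_{k\in\Z^n}\in\cS(\Z^n;\cA_\theta)$; I must upgrade the hypothesis $\rho(\xi)\in\stS^m(\R^n;\cA_\theta)$ to $\rho(\xi)\in\cS(\R^n;\cA_\theta)$, i.e.\ show rapid decay of every $\delta^\alpha\partial_\xi^\beta\rho(\xi)$ in the continuous variable $\xi$. The idea is to run the comparison in the other direction: for arbitrary $\xi\in\R^n$, write $\xi = k+t$ with $k\in\Z^n$ the nearest lattice point (so $|t|\le\sqrt n/2$, say) and Taylor-expand $\partial_\xi^\beta\rho$ in $t$ around $k$ to some order $N$, with integral remainder. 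The Taylor coefficients are $\partial_\xi^{\beta+\gamma}\rho(k)$; these are not directly controlled by $\cS(\Z^n;\cA_\theta)$ (which only sees finite differences, not $\xi$-derivatives at lattice points), so the cleaner route is instead to express $\partial_\xi^\beta\rho(\xi)$ itself through finite differences of $\rho$. Concretely, by Proposition~\ref{prop:Composition.sharp-continuity-standard-symbol}-type reasoning, or more simply by a discrete Newton-forward-difference formula, one has for each fixed $\xi$ and each $N$
\begin{gather*}
\partial_\xi^\beta\rho(\xi) = \sum_{|\gamma|<N} c_\gamma(t)\,\Delta^\gamma\rho(k) + R_N(\xi),
\end{gather*}
where the remainder $R_N$ is an integral of $\partial_\xi^{\beta+\gamma}\rho$ with $|\gamma|=N$ over a compact set near $\xi$; applying the $\stS^m$ bound to the remainder gives $\|\delta^\alpha R_N(\xi)\|\le C(1+|\xi|)^{m-N}$, while applying the $\cS(\Z^n;\cA_\theta)$ bound to each $\Delta^\gamma\rho(k)$ term gives decay faster than any power of $(1+|k|)\sim(1+|\xi|)$. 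Taking $N$ arbitrarily large shows $\|\delta^\alpha\partial_\xi^\beta\rho(\xi)\|\le C_{N'\alpha\beta}(1+|\xi|)^{-N'}$ for every $N'$, which is precisely $\rho\in\cS(\R^n;\cA_\theta)$.

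The main obstacle is part~2: the definition of $\stS^m(\Z^n;\cA_\theta)$ and $\cS(\Z^n;\cA_\theta)$ only controls \emph{finite differences} $\Delta^\beta\rho_k$, not $\xi$-derivatives, whereas the target space $\cS(\R^n;\cA_\theta)$ is phrased with $\partial_\xi^\beta$. Bridging this requires a quantitative discrete-to-continuous interpolation: one needs a formula (Newton's forward-difference interpolation with explicit remainder, or an iterated-integral representation of $\partial_\xi^\beta$ modulo finite differences) valid uniformly in $\xi$, together with careful bookkeeping that the error term, which still only enjoys the polynomial $\stS^m$ bound, is suppressed by a factor $(1+|\xi|)^{-N}$ that can be made arbitrarily large. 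Part~1, by contrast, is essentially just the mean-value / integral form of finite differences plus the comparability of $1+|\xi|$ and $1+|k|$ on bounded neighborhoods, and should be routine. Throughout, one uses that $\delta^\alpha$ acts only on the $\cA_\theta$-factor and hence commutes with all the $\xi$-operations (evaluation, differentiation, integration, finite differences), so no difficulty arises from the noncommutativity of $\cA_\theta$ itself.
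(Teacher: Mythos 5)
The paper does not prove this proposition; it cites \cite{HLP:Part1,RT:Birkhauser10}, so there is no in-paper proof to compare against. Your outline is correct and is essentially the argument one finds in Ruzhansky--Turunen (scalar case). Two small points worth fixing when you write it out. In Part~1, the iterated integral places the evaluation points in $k+\prod_j[0,\beta_j]$, not $k+[0,1]^n$; this only changes the constant in the comparison of $1+|\xi|$ with $1+|k|$, so the argument is unaffected. In Part~2, your displayed formula mixes two variants: the main term $\sum c_\gamma(t)\,\Delta^\gamma\rho(k)$ comes from differentiating Newton's forward-difference interpolation of $\rho$ itself (so $c_\gamma(t)=\partial_t^\beta\binom{t}{\gamma}$ and the remainder is $\partial_\xi^\beta R_N$, which by Leibniz involves $\partial_\xi^\mu\rho$ for $N\le|\mu|\le N+|\beta|$, all $\mathrm{O}\big((1+|\xi|)^{m-N}\big)$), whereas your description of the remainder ("$\partial_\xi^{\beta+\gamma}\rho$ with $|\gamma|=N$") matches the other variant, namely applying Newton interpolation directly to $\partial_\xi^\beta\rho$, which would put $\Delta^\gamma\partial_\xi^\beta\rho(k)$ (not $\Delta^\gamma\rho(k)$) in the main term and requires an extra reduction to the hypothesis. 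The first variant is the cleaner route and closes the argument as you intend; either way, since $\delta^\alpha$ commutes with all $\xi$-operations, the $\cA_\theta$-valued case goes through verbatim.
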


\begin{Remark}\label{rmk:PsiDOs.rho(k)-P}
 If $\rho(\xi) \in \stS^m(\cA_\theta)$, then~(\ref{eq:toroidal.Prhou-equation}) implies that $P_\rho\big(U^k\big)=\rho(k)U^k$, i.e., we have
\begin{gather*}
 \rho(k)=P_\rho\big(U^k\big)\big(U^{k}\big)^{-1}\qquad \text{for all $k\in \Z^n$}.
\end{gather*}
Thus, the restriction to $\Z^n$ of $\rho(\xi)$ is uniquely determined by $P_\rho$. Combining this with the 2nd part of Proposition~\ref{prop:toroidal.restriction-of-standard-symbol-is-toroidal-symbol} shows that the class of $\rho(\xi)$ modulo $\cS\big(\R^n;\cA_\theta\big)$ is uniquely determined by $P_\rho$.
\end{Remark}

Conversely, toroidal symbols can be extended to standard symbols as follows.

\begin{Lemma}[{\cite[Lemma~4.5.1]{RT:Birkhauser10}}] \label{lem:toroidal.phi} There exists a function $\phi(\xi)\in \cS\big(\Rn\big)$ such that
\begin{enumerate}\itemsep=0pt
\item[$(i)$] $\phi(0)=1$ and $\phi(k)=0$ for all $k\in \Z^n\setminus 0$.

\item[$(ii)$] For every multi-order $\alpha$, there is $\phi_\alpha(\xi)\in \cS\big(\Rn\big)$ such that $\partial_\xi^\alpha \phi(\xi) =\overline{\Delta}^\alpha \phi_\alpha(\xi)$.

\item[$(iii)$] $\int \phi(\xi)=1$.
\end{enumerate}
\end{Lemma}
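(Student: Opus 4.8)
The plan is to build $\phi$ explicitly as a tensor product of one-dimensional functions, so that the multi-dimensional claim reduces to the case $n=1$. In one dimension, I would look for $\phi\in\cS(\R)$ with $\phi(0)=1$, $\phi(k)=0$ for $k\in\Z\setminus 0$, $\int\phi=1$, and with the property that each derivative $\phi^{(\alpha)}$ is a backward finite difference $\overline{\Delta}^\alpha\phi_\alpha$ of some Schwartz function. The natural candidate is to prescribe the Fourier transform: set $\widehat{\phi}$ to be a smooth, compactly supported function supported in a small neighbourhood of $[0,1]$ (say in $(-\tfrac12,\tfrac32)$) that equals $1$ on a neighbourhood of $0$. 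Then $\phi\in\cS(\R)$, and the Poisson summation / interpolation identity $\sum_{m}\widehat\phi(\xi+m)$ being constant equal to $1$ near relevant points will give $\phi(0)=1$ and $\phi(k)=0$ for $k\neq 0$; the normalization $\int\phi = \widehat\phi(0)=1$ comes for free. The cleanest way to get $(i)$ is actually to demand $\sum_{m\in\Z}\widehat\phi(\xi - m)\equiv 1$ (a partition-of-unity condition on the Fourier side), which forces $\phi(k)=\delta_{k,0}$ by the interpolation formula $\phi(k) = \int e^{2\pi i k\xi}\widehat\phi(\xi)\,d\xi$ and periodization.

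The key point is property $(ii)$, relating $\partial_\xi^\alpha\phi$ to a backward difference. On the Fourier side, the backward difference operator $\overline{\Delta}_j$ acts on a function $\psi(\xi)$ with Fourier transform $\widehat\psi$ by multiplication: $\widehat{\overline{\Delta}_j\psi}(\zeta) = (1 - e^{-2\pi i \zeta_j})\widehat\psi(\zeta)$ (up to the normalization conventions fixed by the paper's $\dbar\xi$). So to write $\partial_\xi^\alpha\phi = \overline{\Delta}^\alpha\phi_\alpha$ I need $\widehat{\phi_\alpha}(\zeta) = (2\pi i\zeta)^\alpha \widehat\phi(\zeta) / \prod_j (1-e^{-2\pi i\zeta_j})^{\alpha_j}$ to be a Schwartz function — equivalently, the singularities of $\prod_j(1-e^{-2\pi i\zeta_j})^{-\alpha_j}$ at $\zeta\in\Z^n$ must be cancelled. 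Since $1 - e^{-2\pi i\zeta_j}$ vanishes to first order exactly at $\zeta_j\in\Z$, and $\widehat\phi$ is supported away from all integer points except near $0$, and near $\zeta=0$ the factor $(2\pi i\zeta_j)^{\alpha_j}$ vanishes to order $\alpha_j$ — matching the order of the zero of $(1-e^{-2\pi i\zeta_j})^{\alpha_j}$ — the quotient extends smoothly across $\zeta=0$ and is compactly supported, hence Schwartz. Taking $\phi_\alpha$ to be its inverse Fourier transform gives $(ii)$.

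The main obstacle I anticipate is bookkeeping around the normalization conventions (the paper uses $\dbar\xi = (2\pi)^{-n}d\xi$ in some places and plain Lebesgue measure in others, and the finite differences are with unit step $e_j$, not $2\pi e_j$), so I would need to be careful that the multiplier for $\overline{\Delta}_j$ is exactly $(1-e^{-i\zeta_j})$ or $(1-e^{-2\pi i\zeta_j})$ depending on which Fourier transform is in play, and to choose the support of $\widehat\phi$ accordingly (it must meet the integer lattice only at the origin, with room for a genuine neighbourhood where the partition-of-unity condition can be imposed). A secondary technical point is verifying smoothness of the quotient at $\zeta = 0$: this follows because near the origin $1 - e^{-2\pi i\zeta_j} = 2\pi i\zeta_j\cdot g_j(\zeta_j)$ with $g_j$ smooth and nonvanishing, so $(2\pi i\zeta_j)^{\alpha_j}/(1-e^{-2\pi i\zeta_j})^{\alpha_j} = g_j(\zeta_j)^{-\alpha_j}$ is smooth there — an elementary but necessary observation. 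Everything else (decay, the product structure for general $n$, the fact that all $\phi_\alpha$ land in $\cS(\R^n)$) is then routine.
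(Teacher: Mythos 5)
The paper does not prove this lemma — it simply cites \cite[Lemma~4.5.1]{RT:Birkhauser10} — and your outline reproduces exactly the construction given there: take $\widehat\phi$ a smooth compactly supported bump whose integer translates form a partition of unity and whose support meets the dual lattice only at the origin, so that $(i)$ and $(iii)$ follow from Poisson summation/periodization and $(ii)$ follows because the multiplier $\prod_j\big(\text{linear}\big)^{\alpha_j}/\prod_j\big(1-e^{-i(\cdot)_j}\big)^{\alpha_j}$ is smooth on $\supp\widehat\phi$. One small caution: your illustrative interval $(-\tfrac12,\tfrac32)$ contains the integer $1$, where the denominator $1-e^{-2\pi i\zeta_j}$ vanishes with no compensating zero in the numerator, so that specific choice would break $(ii)$; your later remark that the support must meet the lattice \emph{only} at the origin is the correct constraint, and one should use, e.g., $\supp\widehat\phi\subset(-1,1)^n$.
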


Let $\phi(\xi)\in \cS\big(\Rn\big)$ be a function satisfying the properties (i)--(iii) of Lemma~\ref{lem:toroidal.phi}. Given any toroidal symbol $(\rho_k)_{k\in \Z^n}$ in $\stS^m\big(\Z^n;\cA_\theta\big)$, $m\in \R$, we set
\begin{gather}
\tilde{\rho}(\xi) = \sum_{k\in \Z^n} \phi(\xi-k) \rho_k, \qquad \xi \in \R^n.
\label{eq:toroidal.trho}
\end{gather}
Given any $k\in \Z^n$, we have
 $ \tilde{\rho}(k)= \sum\limits_{\ell \in \Z^n} \phi(k-\ell) \rho_{\ell}= \sum\limits_{\ell \in \Z^n} \delta_{\ell,k} \rho_{\ell}=\rho_k$. Therefore, this provides us with an extension map from $\stS^m\big(\Z^n;\cA_\theta\big)$ to $\stS^m\big(\R^n;\cA_\theta\big)$.

\begin{Proposition}[see~\cite{HLP:Part1,LNP:TAMS16, RT:Birkhauser10}] \label{prop:toroidal.extension-of-toroidal-symbol-is-standard-symbol}
 Let $(\rho_k)_{k\in \Z^n}\in \stS^m\big(\Z^n;\cA_\theta\big)$, $m\in \R$, and denote by $P$ the corresponding toroidal \psido.
\begin{enumerate}\itemsep=0pt
\item[$1.$] $\tilde{\rho}(\xi)$ is a standard symbol in $\stS^m\big(\R^n;\cA_\theta\big)$.

\item[$2.$] The operators $P$ and $P_\rho$ agree.
\end{enumerate}
\end{Proposition}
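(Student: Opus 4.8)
The plan is to read everything off the explicit formula~(\ref{eq:toroidal.trho}), $\tilde\rho(\xi)=\sum_{k\in\Z^n}\phi(\xi-k)\rho_k$, using the three properties of $\phi$ from Lemma~\ref{lem:toroidal.phi} together with the defining estimates of the toroidal symbol class. I would observe first that Part~2 is a consequence of Part~1: the identity already recorded before the statement, $\tilde\rho(k)=\sum_{\ell\in\Z^n}\phi(k-\ell)\rho_\ell=\rho_k$ (property~(i) of $\phi$), shows that once $\tilde\rho$ is known to be a standard symbol, formula~(\ref{eq:toroidal.Prhou-equation}) gives $P_{\tilde\rho}u=\sum_k u_k\tilde\rho(k)U^k=\sum_k u_k\rho_kU^k=Pu$ for every $u=\sum u_kU^k\in\cA_\theta$ (the series converging in $\cA_\theta$), so $P=P_{\tilde\rho}$. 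Thus the substance is Part~1.

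For Part~1, fix multi-orders $\alpha$ and $\beta$. Differentiating~(\ref{eq:toroidal.trho}) term by term gives, at least formally, $\delta^\alpha\partial_\xi^\beta\tilde\rho(\xi)=\sum_{k\in\Z^n}\big(\partial_\xi^\beta\phi\big)(\xi-k)\,\delta^\alpha\rho_k$; the convergence I would justify a posteriori. Invoking property~(ii) of $\phi$, I write $\partial_\xi^\beta\phi=\overline{\Delta}^\beta\phi_\beta$ with $\phi_\beta\in\cS(\R^n)$, and then sum by parts in $k$: for a single backward difference one checks, after reindexing one of the sums, that $\sum_k\big(\overline{\Delta}_j\phi_\beta\big)(\xi-k)\,a_k=\sum_k\phi_\beta(\xi-k)\,(\overline{\Delta}_j a)_k$, and iterating over the components of $\beta$ yields
\begin{gather*}
 \delta^\alpha\partial_\xi^\beta\tilde\rho(\xi)=\sum_{k\in\Z^n}\phi_\beta(\xi-k)\,\delta^\alpha\overline{\Delta}^\beta\rho_k .
\end{gather*}
Since $\overline{\Delta}^\beta\rho_k=\Delta^\beta\rho_{k-\gamma}$ with $\gamma=\sum_j\beta_je_j$, the toroidal estimate for $(\rho_k)$ together with the Peetre inequality gives $\big\|\delta^\alpha\overline{\Delta}^\beta\rho_k\big\|\leq C_{\alpha\beta}(1+|k|)^{m-|\beta|}$.

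It then remains to estimate $\sum_k|\phi_\beta(\xi-k)|(1+|k|)^{m-|\beta|}$ by a multiple of $(1+|\xi|)^{m-|\beta|}$ uniformly in $\xi$. Using Schwartz decay $|\phi_\beta(\eta)|\leq C_N(1+|\eta|)^{-N}$ and the Peetre inequality $(1+|k|)^{m-|\beta|}\leq 2^{|m-|\beta||}(1+|\xi-k|)^{|m-|\beta||}(1+|\xi|)^{m-|\beta|}$, one obtains $\big\|\delta^\alpha\partial_\xi^\beta\tilde\rho(\xi)\big\|\leq C(1+|\xi|)^{m-|\beta|}\sum_k(1+|\xi-k|)^{|m-|\beta||-N}$, and the last sum is bounded uniformly in $\xi$ once $N>|m-|\beta||+n$. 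This is exactly the bound $\big\|\delta^\alpha\partial_\xi^\beta\tilde\rho(\xi)\big\|\leq C_{\alpha\beta}(1+|\xi|)^{m-|\beta|}$ defining $\stS^m$. The same kind of estimate---now with $\partial_\xi^\beta\phi$ (itself Schwartz) in place of $\phi_\beta$ and without the summation by parts---shows that the series $\sum_k\big(\partial_\xi^\beta\phi\big)(\xi-k)\,\delta^\alpha\rho_k$ converges uniformly on $\R^n$ for all $\alpha,\beta$, whence $\tilde\rho\in C^\infty(\R^n;\cA_\theta)$ and the term-by-term differentiation used above is legitimate; together with the displayed bound, $\tilde\rho\in\stS^m(\R^n;\cA_\theta)$.

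The only genuinely delicate point is the summation-by-parts step: one must keep track of the index shifts to see that the finite-difference operators transfer onto $(\rho_k)$ with the correct orientation, and note that $\stS^m(\Z^n;\cA_\theta)$ is insensitive to replacing forward by backward differences (which is immediate from $\overline{\Delta}^\beta\rho_k=\Delta^\beta\rho_{k-\gamma}$ and Peetre). Everything after that is the standard ``Schwartz decay beats polynomial growth via the Peetre inequality'' estimate, so I expect no further obstacle.
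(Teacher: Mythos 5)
Your proof is correct, and it follows the standard route found in the references the paper cites for this proposition (Ruzhansky--Turunen, L\'evy--Neira--Paycha, Ha--Lee--Ponge); the paper itself gives no proof. The essential ingredients you use — property~(ii) of $\phi$ to convert $\partial_\xi^\beta\phi$ into $\overline{\Delta}^\beta\phi_\beta$, the discrete summation by parts $\sum_k(\overline{\Delta}_j\phi_\beta)(\xi-k)a_k=\sum_k\phi_\beta(\xi-k)(\overline{\Delta}_j a)_k$, and Peetre's inequality to move $(1+|k|)^{m-|\beta|}$ onto $(1+|\xi|)^{m-|\beta|}$ at the cost of a factor $(1+|\xi-k|)^{|m-|\beta||}$ absorbed by Schwartz decay — are exactly what makes this work, and your index bookkeeping in the summation-by-parts step and the identity $\overline{\Delta}^\beta\rho_k=\Delta^\beta\rho_{k-\beta}$ check out. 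One very minor cosmetic point: $\gamma=\sum_j\beta_je_j$ is just $\beta$ viewed as a lattice vector, so you could state it that way; and your justification of term-by-term differentiation (uniform convergence of the formally differentiated series) is the standard one and suffices. Part~2 is indeed immediate from Part~1 plus property~(i) of $\phi$ and formula~(\ref{eq:toroidal.Prhou-equation}), as you say.
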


Combining Proposition~\ref{prop:toroidal.restriction-of-standard-symbol-is-toroidal-symbol} with Proposition~\ref{prop:toroidal.extension-of-toroidal-symbol-is-standard-symbol} we arrive at the following result.

\begin{Proposition}[see \cite{HLP:Part1}] \label{prop:toroidal.standard-and-toroidal-psidos-agree}
For every $m\in \R$, the classes of toroidal and standard \psidos\ of order $m$ agree.
\end{Proposition}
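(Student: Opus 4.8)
\textbf{Proof plan for Proposition~\ref{prop:toroidal.standard-and-toroidal-psidos-agree}.}

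The plan is to show the two inclusions of operator classes separately, using the two halves of the symbol-correspondence already set up. First I would argue that every standard \psido\ of order $m$ is a toroidal \psido\ of order $m$. Start from an operator $P_\rho$ with $\rho(\xi)\in\stS^m(\R^n;\cA_\theta)$. By the first part of Proposition~\ref{prop:toroidal.restriction-of-standard-symbol-is-toroidal-symbol}, the restriction $(\rho(k))_{k\in\Z^n}$ is a toroidal symbol in $\stS^m(\Z^n;\cA_\theta)$. The toroidal \psido\ attached to this sequence acts by $u=\sum u_kU^k\mapsto \sum u_k\rho(k)U^k$, which is exactly the formula~(\ref{eq:toroidal.Prhou-equation}) for $P_\rho$. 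Hence $P_\rho$ coincides with the toroidal \psido\ associated with $(\rho(k))_{k\in\Z^n}$, so it belongs to the toroidal class of order $m$.

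Conversely, I would show every toroidal \psido\ of order $m$ is a standard \psido\ of order $m$. Given a toroidal symbol $(\rho_k)_{k\in\Z^n}\in\stS^m(\Z^n;\cA_\theta)$ with associated operator $P$, pick a function $\phi(\xi)$ as in Lemma~\ref{lem:toroidal.phi} and form the extension $\tilde\rho(\xi)=\sum_{k}\phi(\xi-k)\rho_k$ of~(\ref{eq:toroidal.trho}). By the first part of Proposition~\ref{prop:toroidal.extension-of-toroidal-symbol-is-standard-symbol}, $\tilde\rho(\xi)\in\stS^m(\R^n;\cA_\theta)$, and by the second part the standard \psido\ $P_{\tilde\rho}$ agrees with $P$. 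Since $\tilde\rho$ has order $m$, the operator $P$ lies in the standard class of order $m$. Combining the two inclusions gives the equality of the two classes for every $m\in\R$, which is the assertion.

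Essentially no obstacle remains once Propositions~\ref{prop:toroidal.restriction-of-standard-symbol-is-toroidal-symbol} and~\ref{prop:toroidal.extension-of-toroidal-symbol-is-standard-symbol} are in hand: the statement is a formal consequence of the restriction map $\rho(\xi)\mapsto(\rho(k))_{k\in\Z^n}$ and the extension map $(\rho_k)_{k\in\Z^n}\mapsto\tilde\rho(\xi)$ being compatible with passage to operators. The only point worth spelling out is that in the first inclusion one does not need the extension $\phi$ at all — the identity of operators is immediate from~(\ref{eq:toroidal.Prhou-equation}) because both sides are determined by their values $\rho(k)$ on the lattice — whereas in the second inclusion the genuine analytic content (that $\tilde\rho$ is a symbol of the right order) has already been absorbed into Proposition~\ref{prop:toroidal.extension-of-toroidal-symbol-is-standard-symbol}. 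So the proof is a two-line bookkeeping argument citing the preceding two propositions.
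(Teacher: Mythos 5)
Your argument is exactly the paper's: the paper states Proposition~\ref{prop:toroidal.standard-and-toroidal-psidos-agree} as an immediate consequence of combining Proposition~\ref{prop:toroidal.restriction-of-standard-symbol-is-toroidal-symbol} (restriction to a toroidal symbol, together with~(\ref{eq:toroidal.Prhou-equation})) with Proposition~\ref{prop:toroidal.extension-of-toroidal-symbol-is-standard-symbol} (extension via~(\ref{eq:toroidal.trho})), which is precisely your two inclusions. Correct, and no meaningful deviation from the paper's route.
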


The above extension map $(\rho_k)_{k\in\Z^n}\rightarrow \tilde{\rho}(\xi)$ is a right-inverse of the restriction map $\rho(\xi)\rightarrow (\rho(k))_{k\in \Z^n}$. Applying the latter to a symbol $\rho(\xi)\in \stS^m\big(\R^n;\cA_\theta\big)$ and then applying the extension map produces a symbol $\tilde{\rho}(\xi)\in \stS^m\big(\R^n;\cA_\theta\big)$. Namely, $\tilde{\rho}(\xi)$ is given by~(\ref{eq:toroidal.trho}) with $\rho_k=\rho(k)$, $k\in \Z^n$.

\begin{Proposition}[see~\cite{HLP:Part1}]\label{prop:PsiDOs.tilde-symbol-symbol}
 Let $\rho(\xi)\in \stS^m\big(\R^n;\cA_\theta\big)$, $m\in \R$. Then $P_{\tilde{\rho}}=P_\rho$ and $\tilde{\rho}(\xi)-\rho(\xi)\in \cS\big(\R^n;\cA_\theta\big)$.
\end{Proposition}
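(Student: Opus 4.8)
The plan is to prove both assertions at once by producing, for a given standard symbol $\rho(\xi)\in \stS^m(\R^n;\cA_\theta)$, the candidate symbol $\tilde{\rho}(\xi)$ defined by~(\ref{eq:toroidal.trho}) with $\rho_k=\rho(k)$, $k\in\Z^n$, and showing first that $\tilde{\rho}-\rho$ lies in $\cS(\R^n;\cA_\theta)$ and then deducing the operator identity $P_{\tilde{\rho}}=P_\rho$ as an immediate consequence. First I would observe that $\tilde{\rho}(\xi)$ is a legitimate standard symbol in $\stS^m(\R^n;\cA_\theta)$: this is exactly the first part of Proposition~\ref{prop:toroidal.extension-of-toroidal-symbol-is-standard-symbol} applied to the toroidal symbol $(\rho(k))_{k\in\Z^n}$, which belongs to $\stS^m(\Z^n;\cA_\theta)$ by the first part of Proposition~\ref{prop:toroidal.restriction-of-standard-symbol-is-toroidal-symbol}. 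In particular $\tilde{\rho}-\rho\in\stS^m(\R^n;\cA_\theta)$, so to conclude that $\tilde{\rho}-\rho\in\cS(\R^n;\cA_\theta)$ it suffices, by the second part of Proposition~\ref{prop:toroidal.restriction-of-standard-symbol-is-toroidal-symbol} together with the characterization $\cS(\Z^n;\cA_\theta)=\bigcap_{m'}\stS^{m'}(\Z^n;\cA_\theta)$, to check that the restriction of $\tilde{\rho}-\rho$ to $\Z^n$ is the zero sequence. But this is precisely the computation displayed just before the statement: for every $k\in\Z^n$, using property~(i) of $\phi$ in Lemma~\ref{lem:toroidal.phi},
\begin{gather*}
\tilde{\rho}(k)=\sum_{\ell\in\Z^n}\phi(k-\ell)\rho(\ell)=\sum_{\ell\in\Z^n}\delta_{\ell,k}\,\rho(\ell)=\rho(k),
\end{gather*}
so $(\tilde{\rho}-\rho)(k)=0$ for all $k\in\Z^n$, and hence $\tilde{\rho}-\rho\in\cS(\R^n;\cA_\theta)$.

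For the operator identity, I would invoke the description~(\ref{eq:toroidal.Prhou-equation}): for any $u=\sum_{k\in\Z^n}u_kU^k\in\cA_\theta$ one has $P_\rho u=\sum_k u_k\rho(k)U^k$ and $P_{\tilde{\rho}} u=\sum_k u_k\tilde{\rho}(k)U^k$, and since $\tilde{\rho}(k)=\rho(k)$ for all $k$ these two operators coincide on the dense subalgebra $\cA_\theta$, hence are equal. Alternatively, and perhaps more in the spirit of the preceding discussion, $P_{\tilde{\rho}}$ is by construction the toroidal \psido\ attached to $(\rho(k))_{k\in\Z^n}$, which by the second part of Proposition~\ref{prop:toroidal.extension-of-toroidal-symbol-is-standard-symbol} agrees with $P_{\tilde{\rho}}$ itself; comparing with the action of $P_\rho$ on the $U^k$ via Remark~\ref{rmk:PsiDOs.rho(k)-P} again gives $P_{\tilde{\rho}}=P_\rho$. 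Either route is entirely routine once the symbol-level statement is in hand.

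There is essentially no hard step here: the statement is a packaging of results already established, and the only genuine content — that $\tilde{\rho}(\xi)$ is a standard symbol of the same order — has been delegated to Proposition~\ref{prop:toroidal.extension-of-toroidal-symbol-is-standard-symbol}, whose proof in turn rests on property~(ii) of the interpolating function $\phi$ (it lets one trade $\xi$-derivatives of $\phi(\xi-k)$ for backward differences in $k$, which is what keeps the order from degrading and makes the sum over $k$ converge with the right symbol estimates). If anything were to require care it would be the interchange of $\delta^\alpha\partial_\xi^\beta$ with the infinite sum defining $\tilde{\rho}$ and the local finiteness/rapid-decay bookkeeping in the seminorm estimates, but all of that is already subsumed in the cited proposition, so for the present statement I would simply assemble the three ingredients — $\tilde{\rho}\in\stS^m$, $\tilde{\rho}|_{\Z^n}=\rho|_{\Z^n}$, and~(\ref{eq:toroidal.Prhou-equation}) — and be done.
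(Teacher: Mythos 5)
Your proof is correct. The paper gives no proof of this proposition (it simply cites~\cite{HLP:Part1}), but the route you take is exactly the assembly that the preceding subsection sets up: noting that $\tilde{\rho}-\rho$ is a standard symbol in $\stS^m\big(\R^n;\cA_\theta\big)$ (via Proposition~\ref{prop:toroidal.extension-of-toroidal-symbol-is-standard-symbol}, part~1) whose restriction to $\Z^n$ vanishes by Lemma~\ref{lem:toroidal.phi}(i), then invoking part~2 of Proposition~\ref{prop:toroidal.restriction-of-standard-symbol-is-toroidal-symbol} to conclude $\tilde{\rho}-\rho\in\cS\big(\R^n;\cA_\theta\big)$, and finally reading off $P_{\tilde{\rho}}=P_\rho$ from~(\ref{eq:toroidal.Prhou-equation}) since $P_\rho$ depends only on $\rho|_{\Z^n}$.
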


The above results allow us to construct a symbol map $\Psi^q(\T^n_\theta)\rightarrow S^q(\cA_\theta)$ for every $q\in \C$ as follows. Set $m=\Re q$. Given any $P\in \Psi^q(\T^n_\theta)$, it follows from~(\ref{eq:toroidal.Prhou-equation}) and Proposition~\ref{prop:toroidal.restriction-of-standard-symbol-is-toroidal-symbol} show $P$ is the \psido\ associated with the toroidal symbol $(\rho_k)_{k\in \Z^n}\in \stS^m\big(\Z^n;\cA_\theta\big)$, where $\rho_k:=P\big(U^k\big)\big(U^{k}\big)^{-1}$, $k\in \Z^n$.
Then Proposition~\ref{prop:toroidal.extension-of-toroidal-symbol-is-standard-symbol} asserts that $P=P_{\rho_{\!{}_{P}}}$, where $\rho_{\!{}_{P}}(\xi)\in \stS^m\big(\Z^n;\cA_\theta\big)$ is the standard symbol associated with $(\rho_k)_{k\in \Z^n}$ by~(\ref{eq:toroidal.trho}). That is,
\begin{gather}
 \rho_{\!{}_{P}}(\xi)= \sum_{k\in \Z^n} \phi(\xi-k) P\big(U^k\big)\big(U^{k}\big)^{-1}, \qquad \xi\in \R^n.
 \label{eq:PsiDOs.symbol-map}
\end{gather}

Let $\rho(\xi)\in S^q\big(\R^n;\cA_\theta\big)$ be such that $P=P_\rho$. Then $\rho(k)=P_\rho(U_k)\big(U^{k}\big)^{-1}=\rho_{\!{}_{P}}(k)$ for all $k\in \Z^n$, and so $\tilde{\rho}(\xi)=\rho_{\!{}_{P}}(\xi)$. It then follows from Proposition~\ref{prop:PsiDOs.tilde-symbol-symbol} that $\rho_{\!{}_{P}}(\xi)-\rho(\xi)\in \cS\big(\R^n;\cA_\theta\big)$, and so $\rho_{\!{}_{P}}(\xi)\in S^q\big(\R^n;\cA_\theta\big)$. Therefore,
we arrive at the following result.

\begin{Proposition} For every $q\in \C$, the formula~\eqref{eq:PsiDOs.symbol-map} defines a linear map $P \rightarrow \rho_{\!{}_{P}}(\xi)$ from $\Psi^q(\T^n_\theta)$ to $S^q\big(\R^n;\cA_\theta\big)$ that is a right inverse of the quantization map $\rho \rightarrow P_\rho$. That is,
\begin{gather*}
P= P_{\rho_{\!{}_{P}}} \qquad \forall\, P\in \Psi^q(\T^n_\theta).
%\label{eq:PsiDOs.symbol-map-PsiDO}
\end{gather*}
\end{Proposition}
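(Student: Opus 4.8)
The statement essentially collects the preceding results into a single packaged assertion, so the plan is to verify two things: that $P\mapsto\rho_{\!{}_P}(\xi)$ is well-defined as a map into $S^q(\R^n;\cA_\theta)$, and that it is a linear right inverse of quantization. The well-definedness is exactly the content of the paragraph immediately preceding the statement: given $P\in\Psi^q(\T^n_\theta)$, pick any symbol $\rho(\xi)\in S^q(\R^n;\cA_\theta)$ with $P=P_\rho$ (which exists by definition of $\Psi^q(\T^n_\theta)$). By Remark~\ref{rmk:PsiDOs.rho(k)-P} (or directly by~\eqref{eq:toroidal.Prhou-equation}) one has $\rho(k)=P(U^k)(U^k)^{-1}$ for all $k\in\Z^n$, hence $\rho_{\!{}_P}(\xi)=\tilde\rho(\xi)$ by the defining formula~\eqref{eq:toroidal.trho}. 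Proposition~\ref{prop:PsiDOs.tilde-symbol-symbol} then gives $\tilde\rho(\xi)-\rho(\xi)\in\cS(\R^n;\cA_\theta)$, so $\rho_{\!{}_P}(\xi)\in S^q(\R^n;\cA_\theta)$ since $S^q$ is stable under adding a Schwartz symbol. In particular $\rho_{\!{}_P}$ does not depend on the choice of representative $\rho$ (both restrictions to $\Z^n$ coincide with $k\mapsto P(U^k)(U^k)^{-1}$), which is what makes $P\mapsto\rho_{\!{}_P}$ a genuine map.

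For the right-inverse property, I would argue directly from~\eqref{eq:PsiDOs.symbol-map}: with $\rho_{\!{}_P}(\xi)=\tilde\rho(\xi)$ as above, Proposition~\ref{prop:PsiDOs.tilde-symbol-symbol} also yields $P_{\tilde\rho}=P_\rho=P$, i.e., $P_{\rho_{\!{}_P}}=P$ for every $P\in\Psi^q(\T^n_\theta)$. This is the desired identity $P=P_{\rho_{\!{}_P}}$. Equivalently, one can invoke the second part of Proposition~\ref{prop:toroidal.extension-of-toroidal-symbol-is-standard-symbol} applied to the toroidal symbol $(\rho_k)_{k\in\Z^n}$ with $\rho_k=P(U^k)(U^k)^{-1}$: the toroidal $\Psi$DO attached to $(\rho_k)$ is $P$ by~\eqref{eq:toroidal.Prhou-equation}, while its standard extension is $P_{\rho_{\!{}_P}}$, and the proposition says these agree.

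Linearity of $P\mapsto\rho_{\!{}_P}$ is immediate from~\eqref{eq:PsiDOs.symbol-map}, since $P\mapsto P(U^k)(U^k)^{-1}$ is linear for each fixed $k$ and $\tilde\rho$ depends linearly on the coefficient sequence $(\rho_k)$. Finally, the phrase ``right inverse of the quantization map $\rho\mapsto P_\rho$'' should be read on $\Psi^q(\T^n_\theta)$: the composite $\Psi^q(\T^n_\theta)\xrightarrow{P\mapsto\rho_{\!{}_P}}S^q(\R^n;\cA_\theta)\xrightarrow{\rho\mapsto P_\rho}\Psi^q(\T^n_\theta)$ is the identity, which is exactly $P_{\rho_{\!{}_P}}=P$. (It is of course not a left inverse, since distinct symbols differing by a Schwartz term give the same operator; this is already flagged in Remark~\ref{rem:PsiDOs.symbol-uniqueness}.)

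There is no real obstacle here: the statement is a corollary assembled from Propositions~\ref{prop:toroidal.restriction-of-standard-symbol-is-toroidal-symbol}, \ref{prop:toroidal.extension-of-toroidal-symbol-is-standard-symbol}, and~\ref{prop:PsiDOs.tilde-symbol-symbol} together with~\eqref{eq:toroidal.Prhou-equation}. The only point requiring a moment's care is checking that $\rho_{\!{}_P}$ lands in $S^q$ and not merely in $\stS^{\Re q}$ — this is where one genuinely needs that $\rho_{\!{}_P}-\rho$ is Schwartz (Proposition~\ref{prop:PsiDOs.tilde-symbol-symbol}) rather than just that $\rho_{\!{}_P}$ restricts correctly on $\Z^n$, since a standard symbol of order $\Re q$ need not be classical.
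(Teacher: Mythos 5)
Your proposal is correct and follows essentially the same route as the paper: the paper's proof is precisely the paragraph preceding the proposition, which picks a symbol $\rho$ with $P=P_\rho$, observes $\rho(k)=\rho_{\!{}_{P}}(k)$ so that $\tilde\rho=\rho_{\!{}_{P}}$, and then invokes Proposition~\ref{prop:PsiDOs.tilde-symbol-symbol} to get both $\rho_{\!{}_{P}}-\rho\in\cS(\R^n;\cA_\theta)$ (hence $\rho_{\!{}_{P}}\in S^q$) and $P_{\rho_{\!{}_{P}}}=P_\rho=P$. Your extra observations on linearity and on why one needs the Schwartz difference rather than mere agreement on $\Z^n$ are accurate and in the spirit of the paper.
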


\subsection{Smoothing operators}
In what follows we denote by $\cA_\theta'$ the topological dual of $\cA_\theta$ equipped with its strong dual topology. Note that $\cA_\theta$ embeds into $\cA_\theta'$ as a dense subspace (see, e.g., \cite{HLP:Part1}).

\begin{Definition}
 A linear operator $R\colon \cA_\theta \rightarrow \cA_\theta'$ is called \emph{smoothing} when its range is contained in $\cA_\theta$ and it extends to a continuous linear map $R\colon \cA_\theta'\rightarrow \cA_\theta$.
\end{Definition}

In what follows we denote by $\Psi^{-\infty}(\T^n_\theta)$ the class of smoothing operators.

\begin{Proposition}[{\cite[Proposition~6.30]{HLP:Part1}}] \label{prop:PsiDOs.smoothing-operator-characterization}
Let $R\colon \cA_\theta \rightarrow \cA_\theta'$ be a linear map. The following are equivalent.
\begin{enumerate}\itemsep=0pt
\item[$(i)$] $R$ is a smoothing operator.

\item[$(ii)$] $R$ is the toroidal \psido\ associated with some symbol in $\cS\big(\Z^n; \cA_\theta\big)$.

\item[$(iii)$] $R$ is the \psido\ associated with some symbol in $\cS\big(\R^n; \cA_\theta\big)$.
\end{enumerate}
\end{Proposition}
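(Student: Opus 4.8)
The plan is to close the loop $(i)\Rightarrow(ii)\Rightarrow(iii)\Rightarrow(i)$. The equivalence $(ii)\Leftrightarrow(iii)$ is essentially already in hand from the material developed above on toroidal versus standard symbols, so the real content is in connecting the abstract notion of smoothing operator (range in $\cA_\theta$, extends continuously to $\cA_\theta'\to\cA_\theta$) with the concrete Schwartz-class symbol description. I would organize the argument so that the topological-duality input is isolated in a single implication.

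First, I would prove $(ii)\Leftrightarrow(iii)$. For $(iii)\Rightarrow(ii)$, if $R=P_\rho$ with $\rho(\xi)\in\cS(\R^n;\cA_\theta)$, then by Proposition~\ref{prop:toroidal.restriction-of-standard-symbol-is-toroidal-symbol}(1) (applied with $m$ arbitrarily negative, or directly from the Schwartz estimates) the restriction $(\rho(k))_{k\in\Z^n}$ lies in $\cS(\Z^n;\cA_\theta)$, and by~\eqref{eq:toroidal.Prhou-equation} the toroidal \psido\ attached to $(\rho(k))_{k\in\Z^n}$ is exactly $R$. Conversely, for $(ii)\Rightarrow(iii)$, given $(\rho_k)_{k\in\Z^n}\in\cS(\Z^n;\cA_\theta)$ form the extension $\tilde\rho(\xi)=\sum_{k}\phi(\xi-k)\rho_k$ of~\eqref{eq:toroidal.trho}; Proposition~\ref{prop:toroidal.extension-of-toroidal-symbol-is-standard-symbol} (whose proof covers this Schwartz case, since $\cS(\Z^n;\cA_\theta)=\bigcap_m\stS^m$) shows $\tilde\rho(\xi)\in\stS^m(\R^n;\cA_\theta)$ for every $m$, hence $\tilde\rho(\xi)\in\cS(\R^n;\cA_\theta)$ by Remark~\ref{rmk:Symbols.symbols-intersection}, and $P=P_{\tilde\rho}$.

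Next I would prove $(iii)\Rightarrow(i)$. Let $R=P_\rho$ with $\rho\in\cS(\R^n;\cA_\theta)$. On $u=\sum_k u_kU^k\in\cA_\theta$ we have $Ru=\sum_k u_k\rho(k)U^k$. Because $(\rho(k))_{k\in\Z^n}$ is rapidly decaying in $\cA_\theta$ in every seminorm $\|\delta^\beta(\cdot)\|$, while $u\in\cA_\theta'$ pairs against each $U^k$ with at most polynomial growth in $|k|$, the series $\sum_k \langle u\,|\,U^k\rangle\,\rho(k)U^k$ converges in $\cA_\theta$ and defines a continuous map $\cA_\theta'\to\cA_\theta$; this gives the required extension, so $R$ is smoothing. (One should phrase the pairing carefully so that $u_k$ means $\langle u,(U^k)^{*}\rangle$ or the analogous dual bracket, but the growth estimate is the only point.)

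The main obstacle — and the one implication I expect to require genuine work rather than bookkeeping — is $(i)\Rightarrow(ii)$: recovering a Schwartz-class toroidal symbol from the abstract continuity hypothesis. The idea is to set $\rho_k:=R(U^k)(U^k)^{-1}$, which makes sense since $R$ maps into $\cA_\theta$, and then to show $(\rho_k)_{k\in\Z^n}\in\cS(\Z^n;\cA_\theta)$, i.e.\ that $\|\delta^\alpha\rho_k\|=O(|k|^{-N})$ for all $N$, $\alpha$. This is where the extension of $R$ to $\cA_\theta'$ enters: the family $\{U^k\}$ is bounded in $\cA_\theta'$ (indeed $\tau(U^k(U^\ell)^*)=\delta_{k\ell}$ shows they sit on the unit sphere for the relevant dual seminorm), so by continuity of $R\colon\cA_\theta'\to\cA_\theta$ the image $\{R(U^k)\}$ is bounded in $\cA_\theta$; bootstrapping this against a polynomially weighted neighbourhood basis of $0$ in $\cA_\theta'$ — using that multiplication by $U^{-k}$ and the derivations $\delta^\alpha$ are continuous on $\cA_\theta$ and interact predictably with the $\cA_\theta'$-seminorms — upgrades boundedness to rapid decay in every seminorm. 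One then checks, via~\eqref{eq:toroidal.Prhou-equation}, that the toroidal \psido\ with symbol $(\rho_k)_{k\in\Z^n}$ agrees with $R$ on the dense subspace $\cA_\theta\subset\cA_\theta'$ and hence everywhere. Since this is precisely the characterization recorded as \cite[Proposition~6.30]{HLP:Part1}, in the paper itself I would simply cite that reference and present only the short arguments for $(ii)\Leftrightarrow(iii)$ and $(iii)\Rightarrow(i)$, flagging that $(i)\Rightarrow(ii)$ is the substantive content borrowed from \cite{HLP:Part1}.
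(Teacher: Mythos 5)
The paper does not actually prove this proposition: it is stated purely as a citation to \cite[Proposition~6.30]{HLP:Part1}, and your closing remark that ``in the paper itself I would simply cite that reference'' is exactly what happens, so there is no paper proof to compare against. On its own merits your sketch is sound and correctly organized. The equivalence $(ii)\Leftrightarrow(iii)$ really is bookkeeping from the restriction and extension maps of Section~\ref{subsec:toroidal}, and $(iii)\Rightarrow(i)$ is the right use of the rapid decay of $\rho(k)$ against the polynomial growth of the Fourier coefficients of a distribution. The one step that needs to be made precise, and where your phrasing is still heuristic, is $(i)\Rightarrow(ii)$. What actually does the work there is the uniform boundedness principle: for each fixed $N$, the family $\big\{(1+|k|)^N U^k\big\}_{k\in\Z^n}$ is weak\textsuperscript{*}-bounded in $\cA_\theta'$ (because the Fourier coefficients of any $v\in\cA_\theta$ decay rapidly), and since $\cA_\theta$ is a Fr\'echet space, hence barrelled, weak\textsuperscript{*}-bounded sets in its dual are equicontinuous and therefore strongly bounded. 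The continuous extension $R\colon\cA_\theta'\to\cA_\theta$ then maps this family to a bounded subset of $\cA_\theta$, which gives $\sup_k(1+|k|)^N\big\|\delta^\alpha R(U^k)\big\|<\infty$ for all $\alpha$; applying the Leibniz rule to $\rho_k=R(U^k)(U^k)^{-1}$ only contributes polynomial factors from $\delta^\beta\big((U^k)^{-1}\big)$, which are absorbed. Your ``bootstrapping against a polynomially weighted neighbourhood basis'' gestures at this but omits the Banach--Steinhaus input, and the parenthetical about $U^k$ sitting on a ``unit sphere'' in $\cA_\theta'$ is misleading (the identity $\tau\big(U^k(U^\ell)^*\big)=\delta_{k\ell}$ is an $\cH_\theta$ statement, not a statement about the dual seminorms); the correct input is weak\textsuperscript{*}-boundedness upgraded to strong boundedness as above.
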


\begin{Remark}
 As $\cS\big(\R^n;\cA_\theta\big)=\bigcap\limits_{q\in \C}S^q\big(\R^n;\cA_\theta\big)$, we see that
 $\Psi^{-\infty}(\T^n_\theta)= \bigcap\limits_{q\in \C}\Psi^q(\T^n_\theta)$.
\end{Remark}

\subsection{Boundedness and trace-class properties}
An extensive account on Sobolev-mapping properties and spectral theoretic properties of \psidos\ is given in~\cite{HLP:Part2}. In this paper we will only need the following two results.

\begin{Proposition}[see~\cite{HLP:Part2}]
 Let $\rho(\xi)\in \stS^m\big(\R^n;\cA_\theta\big)$, $m\leq 0$. Then the operator $P_\rho$ uniquely extends to a bounded operator
 $P_\rho\colon \cH_\theta \rightarrow \cH_\theta$. We obtain a compact operator when $m<0$.
\end{Proposition}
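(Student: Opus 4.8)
The statement asserts that a standard $\Psi$DO $P_\rho$ with symbol $\rho(\xi)\in\stS^m(\R^n;\cA_\theta)$, $m\le 0$, extends uniquely to a bounded operator on $\cH_\theta$, and to a compact operator when $m<0$. The plan is to reduce to the scalar Fourier-multiplier picture provided by formula~\eqref{eq:toroidal.Prhou-equation}, $P_\rho u=\sum_k u_k\rho(k)U^k$, and then run the classical Calder\'on--Vaillancourt-type argument adapted to the noncommutative torus. First I would treat the case $m=0$: one writes $P_\rho$ as a superposition over frequencies and estimates the operator norm using the uniform bounds $\|\delta^\alpha\partial_\xi^\beta\rho(\xi)\|\le C_{\alpha\beta}(1+|\xi|)^{-|\beta|}$. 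The cleanest route is a Cotlar--Stein almost-orthogonality decomposition: cover $\R^n$ by dyadic annuli (or by a partition of unity subordinate to unit cubes), write $\rho=\sum_j\rho_j$ with each $\rho_j$ supported where $|\xi|\sim 2^j$, and show that the pieces $P_{\rho_j}$ satisfy $\|P_{\rho_i}P_{\rho_j}^*\|+\|P_{\rho_i}^*P_{\rho_j}\|\lesssim 2^{-\delta|i-j|}$ for some $\delta>0$, using that $\rho_i\sharp\rho_j$ (Proposition~\ref{prop:Composition.sharp-continuity-standard-symbol}) decays because the supports are far apart. Summing gives boundedness of $P_\rho$ on $\cH_\theta$ with norm controlled by finitely many of the constants $C_{\alpha\beta}$.

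Alternatively — and this may be the shortest path given what is already available — I would invoke the toroidal picture: by Proposition~\ref{prop:toroidal.restriction-of-standard-symbol-is-toroidal-symbol} the sequence $(\rho(k))_{k\in\Z^n}$ lies in $\stS^m(\Z^n;\cA_\theta)$, and one estimates $\|P_\rho u\|_{\cH_\theta}^2=\sum_k\|u_k\rho(k)\|^2$... but this naive bound fails because $\cA_\theta$ acts on $\cH_\theta$ by left multiplication and $\|au\|$ is controlled by the operator norm $\|a\|$ in $A_\theta$, not by an $L^2$-type quantity, so one cannot simply bound $\|u_k\rho(k)U^k\|$ termwise and sum — the cross terms between different frequencies do not vanish once $\rho$ is operator-valued. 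This is exactly the point where the naive approach breaks, and the Cotlar--Stein / summation-by-parts argument is genuinely needed; it is the main obstacle. The noncommutativity enters only through the $\sharp$-product and the bound $\|ab\|\le\|a\|\|b\|$, both of which behave well, so the classical scheme transfers, but writing the almost-orthogonality estimates requires care with the oscillatory integral defining $\sharp$ and repeated integration by parts in the variables $t,\eta$, generating the derivatives $\delta^\alpha\partial_\xi^\beta\rho$ that the symbol estimates control.

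For the compactness claim when $m<0$, I would argue by approximation. Given $\rho\in\stS^m(\R^n;\cA_\theta)$ with $m<0$, truncate in frequency: let $\chi_R(\xi)$ be a cutoff equal to $1$ on $|\xi|\le R$ and supported in $|\xi|\le 2R$, and set $P_R:=P_{\chi_R\rho}$. Each $P_R$ has symbol in $\cS(\R^n;\cA_\theta)$ after multiplying by the compactly supported cutoff — hence is smoothing (Proposition~\ref{prop:PsiDOs.smoothing-operator-characterization}), and a smoothing operator factors through the continuous inclusions $\cA_\theta'\to\cA_\theta\hookrightarrow\cH_\theta$ with $\cA_\theta\hookrightarrow\cH_\theta$ compact (this compactness follows from the rapid decay of Fourier coefficients, i.e.\ a Rellich-type lemma on $\T^n_\theta$), so $P_R$ is compact on $\cH_\theta$. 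Meanwhile $(1-\chi_R)\rho\in\stS^m(\R^n;\cA_\theta)$ and, since $m<0$, its symbol seminorms on the region $|\xi|\ge R$ are $O(R^m)\to 0$; applying the boundedness estimate from the first part to $P_\rho-P_R=P_{(1-\chi_R)\rho}$ gives $\|P_\rho-P_R\|\to 0$ as $R\to\infty$. Thus $P_\rho$ is a norm-limit of compact operators, hence compact. The uniqueness of the bounded extension in both cases is immediate from the density of $\cA_\theta$ in $\cH_\theta$. The one routine check I am glossing is that the boundedness estimate in the first part depends only on finitely many seminorms $C_{\alpha\beta}$ with $|\alpha|,|\beta|$ below a dimensional threshold, which is what makes the $R\to\infty$ limit work; this is standard in the Calder\'on--Vaillancourt framework.
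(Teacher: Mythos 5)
The paper gives no proof of this proposition; it cites~\cite{HLP:Part2}, so there is no in-paper argument to compare against. Judged on its own, your architecture --- prove $L^2$-boundedness for $m\le 0$, then get compactness for $m<0$ by a frequency truncation together with the operator-norm bound --- is sound, and the compactness half is essentially complete. Your diagnosis that the termwise estimate fails because $\{\rho(k)U^k\}_{k\in\Z^n}$ is not an orthogonal family in $\cH_\theta$ once $\rho(k)$ is $\cA_\theta$-valued is also exactly right.

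The gap is in the boundedness step as you sketch it with dyadic annuli. Cotlar--Stein requires $\sup_i\sum_j\|T_iT_j^*\|^{1/2}<\infty$, and the diagonal term already forces $\sup_j\|P_{\rho_j}\|\le C$; a dyadic annulus at scale $2^j$ meets roughly $2^{jn}$ lattice points, so the crude triangle-inequality bound gives only $\|P_{\rho_j}\|\le C\,2^{jn/2}$, and proving uniform boundedness of one dyadic piece is essentially as hard as the original claim --- you never address this. Your parenthetical unit-cube version does close the gap: then $P_{\rho_j}u=u_j\rho(j)U^j$ is rank one with norm $\|\rho(j)\|_{\cH_\theta}\le\|\rho(j)\|\le C$, and the almost-orthogonality hypotheses reduce to rapid decay of $\scal{\rho(j)U^j}{\rho(i)U^i}$ in $|i-j|$, which the $\delta^\alpha$-derivative bounds give. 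At that point, though, Cotlar--Stein is a sledgehammer; the cleaner route is a direct Schur test on the matrix $a_{lk}:=\scal{P_\rho U^k}{U^l}$. Since $\delta^\alpha U^p=p^\alpha U^p$, the symbol bounds $\|\delta^\alpha\rho(\xi)\|\le C_\alpha$ force rapid decay of the Fourier coefficients $\rho(k)_p$ of $\rho(k)\in\cA_\theta$ uniformly in $k$; and $a_{lk}$ is a unimodular multiple of $\rho(k)_{l-k}$, so $|a_{lk}|\le C_N(1+|l-k|)^{-N}$ and Schur's test concludes. This also makes explicit the finite-seminorm dependence of $\|P_\rho\|$ that your compactness step invokes. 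Finally, the compactness of smoothing operators is most directly seen from the matrix side: their matrix coefficients decay rapidly in $|k|$ and $|l|$ jointly, so the operator is Hilbert--Schmidt; the detour through compactness of the inclusion $\cA_\theta\hookrightarrow\cH_\theta$ requires more care to make precise.
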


This result allows us to identify $\Psi^q(\T^n_\theta)$ with a subspace of $\cL(\cH_\theta)$ when $\Re q\leq 0$.

\begin{Proposition}[\cite{HLP:Part2}; see also~\cite{LM:GAFA16}]\label{prop:PsiDOs.trace-class}
 Let $P\in \Psi^q(\T^n_\theta)$, $\Re q<-n$. Then $P$ is a trace-class operator, and we have
\begin{gather*}
 \Tr ( P ) = \int_{\R^n} \tau\big[\rho_{\!{}_{P}}(\xi)\big]{\rm d}\xi,
 %\label{eq:PsiDOs.trace-formula-rhoP}
\end{gather*}
where $\rho_{\!{}_{P}}(\xi)$ is the symbol of $P$ given by~\eqref{eq:PsiDOs.symbol-map}.
\end{Proposition}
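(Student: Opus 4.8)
The plan is to realise $P$ as a bounded operator on $\cH_\theta$, write down its matrix in the orthonormal basis $\big(U^k\big)_{k\in \Z^n}$, and read off both the trace-class property and the value of the trace from that matrix.

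First I would record the matrix coefficients. Set $m=\Re q$. By the symbol-map construction $P=P_{\rho_{\!{}_{P}}}$ with $\rho_{\!{}_{P}}\in S^q\big(\R^n;\cA_\theta\big)$, and by~\eqref{eq:toroidal.Prhou-equation} together with Remark~\ref{rmk:PsiDOs.rho(k)-P} we have $PU^k=\rho_{\!{}_{P}}(k)U^k$ with $\rho_{\!{}_{P}}(k)=P\big(U^k\big)\big(U^{k}\big)^{-1}\in \cA_\theta$ for all $k\in \Z^n$. Since $\big(U^k\big)_{k\in \Z^n}$ is an orthonormal basis of $\cH_\theta$ and $\scal{u}{v}=\tau(uv^*)$ by~\eqref{eq:NCtori.cAtheta-innerproduct}, the matrix of $P$ in this basis is
\begin{gather*}
 P_{jk}:=\scal{PU^k}{U^j}=\tau\big(\rho_{\!{}_{P}}(k)U^k\big(U^{j}\big)^*\big), \qquad j,k\in \Z^n .
\end{gather*}
Expanding $\rho_{\!{}_{P}}(k)=\sum_{\ell\in \Z^n}\rho_{\!{}_{P}}(k)_\ell\, U^\ell$ in its Fourier series~\eqref{eq:NCtori.Fourier-series-u} and using that $U^\ell U^k\big(U^{j}\big)^*$ equals a unit scalar times $U^{\ell+k-j}$ while $\tau\big(U^p\big)=\delta_{p0}$, one gets $\va{P_{jk}}=\big|\rho_{\!{}_{P}}(k)_{j-k}\big|$.

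Next I would show that $P$ is trace-class by proving that its matrix is absolutely summable. Since $\delta^\alpha U^\ell=\ell^\alpha U^\ell$, the index-$\ell$ Fourier coefficient of $\delta^\alpha\rho_{\!{}_{P}}(k)$ is $\ell^\alpha\rho_{\!{}_{P}}(k)_\ell$; as $\tau$ has norm~$1$ and $\rho_{\!{}_{P}}\in S^q\subset \stS^m$, this gives $\va{\ell^\alpha}\,\big|\rho_{\!{}_{P}}(k)_\ell\big|\leq \big\|\delta^\alpha\rho_{\!{}_{P}}(k)\big\|\leq C_\alpha(1+|k|)^{m}$ for every $\alpha$, whence, by a standard argument, for each $N\geq 0$
\begin{gather*}
 \big|\rho_{\!{}_{P}}(k)_\ell\big|\leq C_N(1+|k|)^{m}(1+|\ell|)^{-N} \qquad \forall\, k,\ell\in \Z^n .
\end{gather*}
Choosing $N>n$ and summing,
\begin{gather*}
 \sum_{j,k\in \Z^n}\va{P_{jk}}=\sum_{k\in \Z^n}\sum_{\ell\in \Z^n}\big|\rho_{\!{}_{P}}(k)_\ell\big|\leq C_N\bigg(\sum_{\ell\in \Z^n}(1+|\ell|)^{-N}\bigg)\bigg(\sum_{k\in \Z^n}(1+|k|)^{m}\bigg)<\infty ,
\end{gather*}
the last factor being finite because $m<-n$. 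Since $\big\|\,\ketbra{U^j}{U^k}\,\big\|_{1}=1$, the series $\sum_{j,k}P_{jk}\ketbra{U^j}{U^k}$ then converges absolutely in trace norm, necessarily to~$P$ (both sides have the same matrix in the basis $\big(U^k\big)$); hence $P$ is trace-class and $\Tr(P)=\sum_{k\in \Z^n}P_{kk}=\sum_{k\in \Z^n}\scal{PU^k}{U^k}$.

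Finally I would identify this sum with the stated integral. Because $U^k\big(U^{k}\big)^*=1$ we have $P_{kk}=\tau\big(\rho_{\!{}_{P}}(k)\big)$, and $\big|\tau\big(\rho_{\!{}_{P}}(k)\big)\big|\leq \big\|\rho_{\!{}_{P}}(k)\big\|\leq C(1+|k|)^m$ is summable over $\Z^n$. On the other hand $\rho_{\!{}_{P}}(\xi)=\sum_{k\in \Z^n}\phi(\xi-k)\rho_{\!{}_{P}}(k)$ by~\eqref{eq:PsiDOs.symbol-map}, where $\phi\in \cS\big(\R^n\big)$ satisfies $\int\phi=1$; since $\sum_k\big|\tau\big(\rho_{\!{}_{P}}(k)\big)\big|\int\va{\phi(\xi-k)}\,{\rm d}\xi=\norm{\phi}_{L^1}\sum_k\big|\tau\big(\rho_{\!{}_{P}}(k)\big)\big|<\infty$, Fubini's theorem yields
\begin{gather*}
 \int_{\R^n}\tau\big[\rho_{\!{}_{P}}(\xi)\big]{\rm d}\xi=\sum_{k\in \Z^n}\tau\big(\rho_{\!{}_{P}}(k)\big)\int_{\R^n}\phi(\xi-k)\,{\rm d}\xi=\sum_{k\in \Z^n}\tau\big(\rho_{\!{}_{P}}(k)\big)=\Tr(P) .
\end{gather*}
The only genuinely technical ingredient is the uniform-in-$k$ rapid decay of the Fourier coefficients $\rho_{\!{}_{P}}(k)_\ell$ and the accompanying absolute-summability bookkeeping; everything else unwinds directly from the definitions, and the choice of cutoff $\phi$ in~\eqref{eq:PsiDOs.symbol-map} is immaterial precisely because $\int\phi=1$.
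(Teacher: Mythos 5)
Your argument is correct, and since the paper only cites the result (attributing it to Ha--Lee--Ponge and Lesch--Moscovici) without reproducing a proof, your derivation is a welcome self-contained account. The structure is the natural one: the matrix of $P$ in the orthonormal basis $\big(U^k\big)_{k\in\Z^n}$ is $P_{jk}=\text{(unit scalar)}\cdot\rho_{\!{}_P}(k)_{j-k}$; the standard symbol estimates $\|\delta^\alpha\rho_{\!{}_P}(k)\|\leq C_\alpha(1+|k|)^m$ give, via the usual monomial-to-rapid-decay conversion, uniform-in-$k$ rapid decay of the Fourier coefficients $\rho_{\!{}_P}(k)_\ell$; absolute summability of the matrix then forces $\cL^1$-membership with $\Tr(P)=\sum_k P_{kk}=\sum_k\tau\big[\rho_{\!{}_P}(k)\big]$ (which is exactly the summation formula recorded in the paper as Remark~\ref{rmk:PsiDOs.trace-formula}); and the last step converts the sum into the integral using $\rho_{\!{}_P}(\xi)=\sum_k\phi(\xi-k)\rho_{\!{}_P}(k)$ and $\int\phi=1$, with Fubini justified by the absolute bounds already in hand. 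All the small points check out: $|\tau(u)|\leq\|u\|$, $U^k(U^k)^*=1$, $P$ is bounded on $\cH_\theta$ since $\Re q<0$ so that the trace-norm limit of $\sum P_{jk}\ketbra{U^j}{U^k}$ is forced to coincide with $P$ by equality of matrix coefficients, and the $k$-sum $\sum(1+|k|)^m$ converges precisely because $m=\Re q<-n$. This is essentially the proof one would expect the cited references to give; there is no gap.
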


\begin{Remark}\label{rmk:PsiDOs.trace-formula}
 In general (cf.~\cite{HLP:Part2}), for any symbol $\rho(\xi)\in \stS^{m}\big(\R^n;\cA_\theta\big)$, $m<-n$, the corresponding operator $P_\rho$ is trace-class, and we have
\begin{gather}
 \Tr\big(P_\rho\big) = \sum_{k\in \Z^n} \tau\big[\rho(k)\big]. \label{eq:PsiDOs.trace-formula}
\end{gather}
\end{Remark}

\section{Noncommutative residue and canonical trace}\label{sec:NCR-TR}
In this section, we review the constructions of the noncommutative residue on integer-order \psidos\ and of the canonical trace on non-integer-order \psidos\ on NC tori. We focus more especially on their trace properties. In what follows, given any subset $X\subset \C$, we set
\begin{gather*}
 \Psi^X(\T^n_\theta):= \bigcup_{q\in X} \Psi^{q}(\T^n_\theta), \qquad S^{X} \big(\R^n;\cA_\theta\big):= \bigcup_{q\in Z} S^q\big(\R^n;\cA_\theta\big).
\end{gather*}
For instance, $\Psi^\C(\T^n_\theta)$ consists of all classical \psidos\ on $\T^n_\theta$.

\subsection{Noncommutative residue}
The noncommutative residue is defined on $\Psi^\Z(\T_\theta^n)$. This is a sub-algebra of $\cL(\cA_\theta)$. Thus, by a~trace on $ \Psi^\Z(\T^n_\theta)$ we shall mean any linear form that is annihilated by the commutator subspace,
\begin{gather}
 \big[ \Psi^\Z(\T^n_\theta),\Psi^\Z(\T^n_\theta)\big]:=\op{Span}\big\{[P_1,P_2]; \, P_j\in \Psi^\Z(\T^n_\theta)\big\}. \label{eq:NCR.PsiX}
\end{gather}

\begin{Definition}
 The \emph{noncommutative residue} is the linear functional $\Res \colon \Psi^\Z(\T^n_\theta)\rightarrow \C$ given by
\begin{gather*}
 \Res(P) = \int_{\bS^{n-1}} \tau [ \rho_{-n}(\xi) ] {\rm d}^{n-1}\xi, \qquad P\in \Psi^{\Z}(\cA_\theta),
\end{gather*}
where $\rho_{-n}(\xi)$ is the symbol of degree~$-n$ of $P$.
\end{Definition}

\begin{Remark} If $P\in \Psi^\Z(\T_\theta^n)$, then the homogeneous symbol $\rho_{-n}(\xi)$ is uniquely determined by~$P$ (cf.~Remark~\ref{rem:PsiDOs.symbol-uniqueness}). We make the convention that $\rho_{-n}(\xi)=0$ when $P$ has order~$<-n$.
\end{Remark}

\begin{Remark} It is immediate from the above definition that $\Res(P)=0$ when its symbol of degree~$-n$ is zero. In particular, the noncommutative residue vanishes on the following classes of operators:
\begin{enumerate}\itemsep=0pt
 \item[(i)] \psidos\ of order~$<-n$, including smoothing operators.

 \item[(ii)] Differential operators (since for such operators the symbols are polynomials without any homogeneous component of negative degree).
\end{enumerate}
\end{Remark}

\begin{Remark}
In dimension $n=2$ (resp., $n=4$) we recover the noncommutative residues of~\cite{FW:JPDOA11} (resp., \cite{FK:JNCG15}).
\end{Remark}

\begin{Remark}
 Under the equivalence between our classes of \psidos\ and the classes of toroidal \psidos\ (cf.~Proposition~\ref{prop:toroidal.standard-and-toroidal-psidos-agree}) the above noncommutative residues agrees with the noncommutative residue for toroidal \psidos\ introduced in~\cite{LNP:TAMS16}.
\end{Remark}

\begin{Proposition}[\cite{FK:JNCG15, FW:JPDOA11, LNP:TAMS16}]\label{Prop2} Let $P\in \Psi^q(\T^n_\theta)$ and $Q\in \Psi^{q'}(\cA_\theta)$ be such that $q+q'\in \Z$. Then
\begin{gather*}
 \Res(PQ)= \Res(QP).
\end{gather*}
 In particular, the noncommutative residue $\Res$ is a trace on the algebra $\Psi^{\Z}(\cA_\theta)$.
\end{Proposition}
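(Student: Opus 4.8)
The plan is to express $\Res(PQ)-\Res(QP)$ as an integral over $\bS^{n-1}$ of a total $\xi$-divergence and then kill it by Stokes' theorem. Since $q+q'\in\Z$, Proposition~\ref{prop:Composition.composition-PsiDOs} shows that $PQ=P_{\rho_1\sharp\rho_2}$ and $QP=P_{\rho_2\sharp\rho_1}$ lie in $\Psi^{q+q'}(\T^n_\theta)$ with integer order $q+q'$, so both residues are defined; and if $P$ has symbol $\rho_1\sim\sum_k\rho_{1,q-k}$ and $Q$ has symbol $\rho_2\sim\sum_l\rho_{2,q'-l}$, then
\begin{gather*}
 (\rho_1\sharp\rho_2)_{-n}(\xi)=\sum_{k+l+|\alpha|=q+q'+n}\frac{1}{\alpha!}\,\partial_\xi^\alpha\rho_{1,q-k}(\xi)\,\delta^\alpha\rho_{2,q'-l}(\xi)
\end{gather*}
is a \emph{finite} sum, with the analogous formula for $QP$. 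Consequently
\begin{gather*}
 \Res(PQ)-\Res(QP)=\sum_{k+l+|\alpha|=q+q'+n}\frac{1}{\alpha!}\int_{\bS^{n-1}}\tau\big[\partial_\xi^\alpha a\cdot\delta^\alpha b-\partial_\xi^\alpha b\cdot\delta^\alpha a\big]{\rm d}^{n-1}\xi ,
\end{gather*}
where, for each term, $a=\rho_{1,q-k}$ and $b=\rho_{2,q'-l}$ are homogeneous with $\deg a+\deg b=|\alpha|-n$, and it is enough to show that each such integral vanishes.

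Here we use only that $\tau$ is tracial and that $\tau\circ\delta_j=0$ for all $j$; the latter holds because $\alpha_s(U^k)={\rm e}^{{\rm i}s\cdot k}U^k$ forces $\tau\circ\alpha_s=\tau$, whence $\tau(\delta_j u)=D_{s_j}\tau(\alpha_s u)|_{s=0}=0$. In particular, inside $\tau$ factors may be cyclically permuted, $\tau[\delta_j(x)y]=-\tau[x\,\delta_j(y)]$, and $\tau[\partial_{\xi_j}(x)y]=\partial_{\xi_j}\tau[xy]-\tau[x\,\partial_{\xi_j}(y)]$. Using cyclicity and the $\delta_j$-rule $|\alpha|$ times, $\tau[\partial_\xi^\alpha a\cdot\delta^\alpha b]=(-1)^{|\alpha|}\tau[\partial_\xi^\alpha\delta^\alpha a\cdot b]$ (no $\xi$-derivatives are created at this stage); moving the $\xi$-derivatives onto $b$ via the $\partial_{\xi_j}$-rule $|\alpha|$ times, $\tau[\partial_\xi^\alpha\delta^\alpha a\cdot b]\equiv(-1)^{|\alpha|}\tau[\delta^\alpha a\cdot\partial_\xi^\alpha b]$ modulo a sum $\sum_j\partial_{\xi_j}(\,\cdot\,)$ of $\xi$-derivatives of scalar symbols; and finally $\tau[\delta^\alpha a\cdot\partial_\xi^\alpha b]=\tau[\partial_\xi^\alpha b\cdot\delta^\alpha a]$ by cyclicity. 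Combining the three steps, the integrand in question equals $\sum_j\partial_{\xi_j}W_j(\xi)$ for scalar symbols $W_j$; since it is homogeneous of degree $-n$ by the degree count, each $W_j$ is smooth on $\R^n\setminus0$ and homogeneous of degree $1-n$.

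It then remains to record the elementary fact that $\int_{\bS^{n-1}}\partial_{\xi_j}W(\xi)\,{\rm d}^{n-1}\xi=0$ whenever $W$ is smooth on $\R^n\setminus0$ and homogeneous of degree $1-n$. Indeed $\partial_{\xi_j}W=\dive(W e_j)$, and by homogeneity the flux of $W e_j$ through a sphere $|\xi|=r$ equals $\int_{\bS^{n-1}}W(\omega)\omega_j\,{\rm d}^{n-1}\omega$ independently of $r>0$, so the net flux across the boundary of the annulus $\{1\le|\xi|\le R\}$ vanishes; by the divergence theorem $\int_{\{1\le|\xi|\le R\}}\partial_{\xi_j}W\,{\rm d}\xi=0$, while this integral also equals $(\log R)\int_{\bS^{n-1}}\partial_{\xi_j}W\,{\rm d}^{n-1}\xi$ because $\partial_{\xi_j}W$ is homogeneous of degree $-n$. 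Applying this to each $W_j$ gives $\Res(PQ)=\Res(QP)$. Finally, any $P_1,P_2\in\Psi^\Z(\T^n_\theta)$ have integer orders $m_1,m_2$ with $m_1+m_2\in\Z$, so $\Res([P_1,P_2])=0$ by what precedes; since such commutators span the commutator subspace~\eqref{eq:NCR.PsiX}, $\Res$ is annihilated by it, i.e., it is a trace on $\Psi^\Z(\cA_\theta)$.

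The only points needing care are the bookkeeping of the two reductions in the middle paragraph — where traciality of $\tau$ must be invoked at the right moments to accommodate the noncommutativity of $\cA_\theta$ — and the Stokes computation; neither constitutes a genuine obstacle. (By contrast, the analogous trace property for the canonical trace on non-integer-order \psidos\ is more subtle; cf.\ Remark~\ref{rmk:Uniqueness.TR-proof}.)
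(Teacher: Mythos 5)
Your argument is correct, but it takes a genuinely different route from the paper's own proof, which appears at the end of Section~\ref{sec:NCR-TR}. There, one first establishes the trace property of the canonical trace $\TR$ (Proposition~\ref{prop:TR.B}) by embedding a commutator $[P_1,P_2]$ into a holomorphic family $[P_1(z/2),P_2(z/2)]$ via Lemma~\ref{lem:NCR-TR.composition}, evaluating $\TR$ in the region where one factor is bounded and the other trace-class (where the ordinary trace vanishes on commutators), and then appealing to unique analytic continuation; the statement $\Res(PQ)=\Res(QP)$ is afterwards read off from the residue formula $\Res_{z=0}\TR[P(z)]=-\Res(P)$ of Proposition~\ref{prop:TR.A}. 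Your proof is instead a direct symbolic computation in the spirit of Wodzicki's original argument and of the commutative case: you isolate the degree-$(-n)$ component of the $\sharp$-product from Proposition~\ref{prop:Composition.composition-PsiDOs}, use $\tau\circ\delta_j=0$ (which does follow from $\tau\circ\alpha_s=\tau$) together with cyclicity of $\tau$ to rewrite the commutator integrand as a total divergence $\sum_j\partial_{\xi_j}W_j$ with each $W_j$ homogeneous of degree $1-n$, and finish with the Stokes/annulus computation. Both routes are complete. The paper's approach has the virtue that the trace property of $\TR$ and that of $\Res$ drop out of the same analytic-continuation machinery, whereas yours is more elementary, entirely self-contained at the level of homogeneous-symbol manipulations, and establishes the trace property of $\Res$ without any reference to $\TR$. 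One small bookkeeping remark: in your middle paragraph the first reduction, $\tau[\partial_\xi^\alpha a\cdot\delta^\alpha b]=(-1)^{|\alpha|}\tau[\partial_\xi^\alpha\delta^\alpha a\cdot b]$, uses only the Leibniz identity $\tau(\delta_j u\cdot v)=-\tau(u\cdot\delta_j v)$ coming from $\tau\circ\delta_j=0$, not cyclicity; cyclicity of $\tau$ enters only at the very last step $\tau[\delta^\alpha a\cdot\partial_\xi^\alpha b]=\tau[\partial_\xi^\alpha b\cdot\delta^\alpha a]$.
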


\begin{Remark}An alternative proof of Proposition~\ref{Prop2} is provided at the end of this section.
\end{Remark}

\subsection{Canonical trace} \label{sec:TR}
The canonical trace is defined on $ \Psi^{\CZ} (\T^n_\theta)$. Note that $ \Psi^{\CZ} (\cA_\theta)$ is not a subspace of $\cL(\cA_\theta)$ and is not closed under the composition of operators. Therefore, we need to explain what is meant by a trace on $ \Psi^{\CZ} (\cA_\theta)$.

\begin{Definition}
 We say that a functional $T\colon \Psi^{\CZ} (\cA_\theta)\rightarrow \C$ is \emph{linear} when
 \begin{gather}
 T(\lambda P)= \lambda T(P) \qquad \forall\, P\in \Psi^{\CZ} (\cA_\theta) \ \forall\, \lambda \in \C,
 \label{eq:TR.linearity1}\\
 T(P_1+P_2)=T(P_1)+T(P_2) \qquad \forall\, P_i\in \Psi^{q_i}(\cA_\theta), \ q_i \not\in \Z, \ q_1-q_2\in \Z.
 \label{eq:TR.linearity2}
 \end{gather}
We say this is a \emph{trace} when it is linear and we have
\begin{gather*}
 T(P_1P_2)=T(P_2P_1) \qquad \forall\, P_i\in \Psi^{q_i}(\cA_\theta), \ q_1+q_2 \not \in \Z.
 %\label{eq:TR.trace-property}
\end{gather*}
\end{Definition}

\begin{Remark}A functional $T\colon \Psi^{\CZ} (\cA_\theta)\rightarrow \C$ is linear if and only if it induces a linear form on each space $\Psi^{q+\Z}(\T^n_\theta)$, $q\in \CZ$. It is trace when we further require it to be annihilated by the commutator subspaces,
\begin{gather*}
 \big[ \Psi^{q_1+\Z}(\T^n_\theta),\Psi^{q_2+\Z}(\T^n_\theta)\big]:=\op{Span}\big\{[P_1,P_2]; \, P_j\in \Psi^{q_j+\Z}(\T^n_\theta)\big\}, \qquad q_1+q_2\in \CZ.
\end{gather*}
 \end{Remark}

Linear traces on non-integer order \psidos\ naturally appear in the analytic extension of the ordinary trace to non-integer order \psidos~ (see~\cite{Gu:AIM93, KV:GDEO, Wo:NCR}). Incidentally, this is a key ingredient in the analytic extension of the zeta functions of elliptic operators and other related spectral functions.

In the setting of noncommutative tori this leads us to the canonical trace $\TR\colon \Psi^{\CZ} (\cA_\theta)\rightarrow \C$ of~\cite{LNP:TAMS16}. Strictly speaking, in~\cite{LNP:TAMS16} the functional is defined on classical toroidal \psidos, but we obtain a functional on $\Psi^{\CZ} (\cA_\theta)$ by using the equality of the class of toroidal \psidos\ with our class of \psidos. Furthermore, the construction in~\cite{LNP:TAMS16} involves the use of extension maps from toroidal symbols to classical symbols. We shall now explain how to define the canonical trace directly on our class of classical \psidos.

In what follows we denote by $ \Psi^{<-n} (\T^n_\theta)$ (resp., $S^{<-n} \big(\R^n;\cA_\theta\big)$) the class of classical \psidos\ (resp., symbols) in~(\ref{eq:NCR.PsiX}) associated with the half-space $X=\{\Re q<-n\}$. By Proposition~\ref{prop:PsiDOs.trace-class} every operator $P\in \Psi^{<-n} (\T^n_\theta)$ is trace-class, and we have
\begin{gather}
 \Tr(P)= \int_{\R^n} \tau\big[\rho_{\!{}_{P}}(\xi)\big] {\rm d}\xi = L [ \tau\circ \rho_{\!{}_{P}} ],
 \label{eq:TR.integral-formula}
\end{gather}
where the symbol $\rho_{\!{}_{P}}(\xi)$ is defined in~(\ref{eq:PsiDOs.symbol-map}) and $L\colon S^{<-n}\big(\R^n\big)\rightarrow \C$ is defined by
\begin{gather}
 L(\rho)= \int_{\R^n} \rho(\xi) {\rm d}\xi . \label{eq:TR.L}
\end{gather}

\begin{Definition} Given any open $\Omega \subset \C$, we shall say that a family of symbols $(\rho(z))_{z\in \Omega}$ in $S^{\C}\big(\R^n;\cA_\theta\big)$ is \emph{holomorphic} when the following properties hold:
 \begin{enumerate}\itemsep=0pt
 \item[(i)] $\rho(z)\in S^{w(z)}\big(\R^n;\cA_\theta\big)$, where $w(z)$ is a holomorphic function on $\Omega$.

 \item[(ii)] The map $\Omega \times \R^n \ni (z,\xi)\rightarrow \rho(z)(\xi)\in \cA_\theta$ is holomorphic with respect to $z$ and $C^\infty$ with respect to $\xi$.

 \item[(iii)] We have an asymptotic expansion,
 \begin{gather}
 \rho(z)(\xi) \sim \sum_{j\geq 0}\rho_{w(z)-j}(z)(\xi), \qquad \rho_{w(z)-j}(z)\in S_{w(z)-j}\big(\R^n;\cA_\theta\big), \label{eq:TR.hol-family-estimates}
 \end{gather}
 in the sense that, for all integers $N\geq 0$, for compact sets $K\subset \Omega$, and multi-orders $\alpha$, $\beta$, there is $C_{NK\alpha\beta}>0$ such that, for all $z\in K$ and $\xi\in \R^n$, $|\xi|\geq 1$, we have
\begin{gather*}
 \bigg\| \delta^\alpha\partial_\xi^\beta\bigg[ \rho(z) -\sum_{j<N}\rho_{w(z)-j}(z)\bigg](\xi) \bigg\| \leq C_{NK\alpha\beta}|\xi|^{\Re w(z)-N-|\beta|}.
\end{gather*}
\end{enumerate}
 \end{Definition}

\begin{Remark}\label{rmk:TR.hol-standard}
 Suppose there is $m\in \R$ such that $\Re w(z)<m$ on $\Omega$. Then the condition (ii) and the estimate~(\ref{eq:TR.hol-family-estimates}) for $N=0$ imply that $(\rho(z))_{z\in \Omega}$ is a holomorphic family in the Fr\'echet space $\stS^{m}\big(\R^n;\cA_\theta\big)$. This can be seen by means of some Cauchy formula argument.
\end{Remark}

\begin{Remark}\label{rmk:TR.A}
 For every symbol $\rho(\xi)\in S^q\big(\R^n;\cA_\theta\big)$ we can find a holomorphic family $(\rho(z))_{z\in \C}$ in $S^{\C}\big(\R^n;\cA_\theta\big)$ such that $\rho(0)=\rho$ and $\ord \rho(z)=z+q$. For instance, we may take
\begin{gather*}
 \rho(z)(\xi):= (1-\chi(\xi) ) |\xi|^z\rho(\xi) + \chi(\xi) \rho(\xi), \qquad \xi\in \R^n, \ z\in \C.
\end{gather*}
Such a family is called a \emph{holomorphic gauging} of~$\rho(\xi)$.
\end{Remark}

In what follows, we shall say that a functional $\varphi\colon S^{\CZ}\big(\R^n\big)\rightarrow \C$ is \emph{analytic} when, for every holomorphic family $(\rho(z))_{z\in \Omega}$ in $S^{\CZ}\big(\R^n\big)$, the function $z\rightarrow \varphi[\rho(z)]$ is analytic on $\Omega$.

\begin{Remark}[analytic continuation principle]\label{rmk:TR.B}
 An analytic functional $\varphi\colon S^{\CZ}\big(\R^n\big)\rightarrow \C$ is uniquely determined by its restriction to symbols of order~$<m$ for any given $m\in \R$. Indeed, suppose that $\varphi$ vanishes on such symbols. Then it must vanish on any holomorphic gauging in the sense of Remark~\ref{rmk:TR.A} by the analytic continuation principle. As explained in Remark~\ref{rmk:TR.A}, any symbol can be embedded into such a family. It then follows that $\varphi$ is identically zero.
\end{Remark}

The key step in the construction of the canonical trace on \psidos\ on (closed) manifolds is the following lemma.

\begin{Lemma}[\cite{Gu:AIM93}; see also~\cite{CM:GAFA95, KV:GDEO, Le:AGAG99, PS:GAFA07}] \label{lem:TR.A}
 The following holds.
\begin{enumerate}\itemsep=0pt
 \item[$(i)$] The functional $L\colon S^{<-n}\big(\R^n\big)\rightarrow \C$ given by~\eqref{eq:TR.L} has a unique analytic extension $\tilde{L}\colon$ $S^{\CZ}\big(\R^n\big)\rightarrow \C$.

 \item[$(ii)$] Let $\rho(\xi)\in S^\Z\big(\R^n\big)$. For every holomorphic gauging $(\rho(z))_{z\in \C}$ for~$\rho$, the composition $\tilde{L}[\rho(z)]$ has at worst a simple pole singularity near $z=0$, and we have
\begin{gather*}
 \Res_{z=0} \tilde{L}\big[\rho(z)\big]=-\int_{\bS^{n-1}} \rho_{-n}(\xi) {\rm d}^{n-1}\xi,
 %\label{eq:TR.residue-formula}
\end{gather*}
where $\rho_{-n}(\xi)$ is the symbol of degree~$-n$ of $\rho(\xi)$.
\end{enumerate}
\end{Lemma}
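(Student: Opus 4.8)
The plan is to dispose of uniqueness first and then construct the extension by hand, concentrating all the analytic-continuation content into one scalar integral in one variable. For uniqueness: if $\tilde{L}_1,\tilde{L}_2\colon S^{\CZ}\big(\R^n\big)\to\C$ are analytic functionals that both restrict to $L$ on symbols of order $<-n$, then $\tilde{L}_1-\tilde{L}_2$ is analytic and vanishes on order $<-n$, hence is identically zero by the analytic continuation principle (Remark~\ref{rmk:TR.B}); so part~(i) reduces to existence. For the model integral, fix a radial cutoff $\chi(\xi)=\psi(|\xi|)$ with $\psi\in C_c^\infty(\R)$ equal to $1$ near $0$, and set $h(w)=\int_0^\infty(1-\psi(r))r^{w+n-1}\,{\rm d}r$ for $w\in\C$. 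Splitting $\int_0^\infty=\int_0^1+\int_1^\infty$ gives $h(w)=\int_0^1(1-\psi(r))r^{w+n-1}{\rm d}r-\tfrac{1}{w+n}$ for $\Re w<-n$, where the first integral is entire in $w$; thus $h$ continues meromorphically to $\C$ with a single simple pole at $w=-n$ of residue $-1$. Polar coordinates give, for a homogeneous symbol $\rho_w\in S_w\big(\R^n\big)$ with $\Re w<-n$, the identity $\int_{\R^n}(1-\chi)\rho_w\,{\rm d}\xi = h(w)\int_{\bS^{n-1}}\rho_w(\omega)\,{\rm d}^{n-1}\omega$.

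Next I would define $\tilde{L}$ directly. Given $\rho(\xi)\in S^q\big(\R^n\big)$ with $q\notin\Z$ and $\rho\sim\sum_{j\geq0}\rho_{q-j}$, choose $N$ with $\Re q-N<-n$ and set
\begin{gather*}
 \tilde{L}[\rho] := \int_{\R^n}\Bigl(\rho-\sum_{j<N}(1-\chi)\rho_{q-j}\Bigr)(\xi)\,{\rm d}\xi \; + \; \sum_{j<N} h(q-j)\int_{\bS^{n-1}}\rho_{q-j}(\omega)\,{\rm d}^{n-1}\omega ,
\end{gather*}
which is legitimate since the first integrand has order $\Re q-N<-n$ and $q-j\neq-n$. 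A short computation shows this is independent of $N$: passing from $N$ to $N+1$ decreases the first summand and increases the second by the same quantity $\int_{\R^n}(1-\chi)\rho_{q-N}\,{\rm d}\xi$, by the polar-coordinate identity above. A similar compensation shows independence of $\chi$, and taking $N=0$ shows $\tilde{L}$ reduces to $L$ on symbols of order $<-n$.

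For analyticity (completing part~(i)), let $(\rho(z))_{z\in\Omega}$ be a holomorphic family in $S^{\CZ}\big(\R^n\big)$, of orders $w(z)\notin\Z$. Its homogeneous components $\rho_{w(z)-j}(z)$ are uniquely determined and depend holomorphically on $z$ — they arise as locally uniform limits $\lim_{t\to\infty}t^{-(w(z)-j)}\big(\cdots\big)(t\xi)$ of holomorphic functions of $z$ — so on compact subsets of $\Omega$, for $N$ large, the remainder $\rho(z)-\sum_{j<N}(1-\chi)\rho_{w(z)-j}(z)$ is a holomorphic family of symbols of order $<-n$; differentiating under the integral (legitimate by the holomorphic-family estimates) makes $z\mapsto\int_{\R^n}(\cdots)\,{\rm d}\xi$ holomorphic, while $w(z)\notin\Z$ forces $w(z)-j\neq-n$, so each $z\mapsto h(w(z)-j)$ is holomorphic too, and hence so is $z\mapsto\tilde{L}[\rho(z)]$. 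For part~(ii), take $\rho(\xi)\in S^\Z\big(\R^n\big)$ of order $m\in\Z$ and a holomorphic gauging $(\rho(z))_{z\in\C}$, so $w(z)=z+m$ and $\rho(0)=\rho$; with $N>m+n$ the displayed formula applies on a punctured neighborhood of $z=0$. The remainder integral is holomorphic near $z=0$, and in the sum $h(z+m-j)$ is singular only at $z=j-m-n$, which meets $z=0$ only for $j=m+n$ and only when $m+n\geq0$; since the corresponding sphere integral is holomorphic in $z$ and $h$ has a simple pole of residue $-1$, $\tilde{L}[\rho(z)]$ has at worst a simple pole at $z=0$ with
\begin{gather*}
 \Res_{z=0}\tilde{L}[\rho(z)] = -\int_{\bS^{n-1}}\rho_{z-n}(z)(\omega)\big|_{z=0}\,{\rm d}^{n-1}\omega = -\int_{\bS^{n-1}}\rho_{-n}(\omega)\,{\rm d}^{n-1}\omega ,
\end{gather*}
$\rho_{-n}$ being the degree-$(-n)$ component of $\rho$; when $m+n<0$ there is no pole and both sides vanish, in accordance with the convention $\rho_{-n}=0$.

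The genuinely delicate point is the holomorphic dependence on $z$ of the homogeneous components $\rho_{w(z)-j}(z)$ of a holomorphic family: this is the bridge between the pointwise construction of $\tilde{L}$ and the required analyticity, and it is precisely where the uniformity built into the definition of a holomorphic family must be used. Everything else — independence of $\tilde{L}[\rho]$ from $N$ and $\chi$, the differentiation under the integral sign, and the bookkeeping of which homogeneous term produces the pole at $z=0$ — is routine.
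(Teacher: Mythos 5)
The paper states Lemma~\ref{lem:TR.A} purely as a citation to Guillemin, Kontsevich--Vishik, Lesch, and Paycha--Scott, and gives no proof of its own, so there is no internal argument to compare against. Your proof is a correct reconstruction of the standard Kontsevich--Vishik/Lesch construction: dispose of uniqueness via the analytic-continuation principle of Remark~\ref{rmk:TR.B}, build $\tilde{L}$ explicitly by subtracting off finitely many homogeneous tails, isolate the entire analytic content in the one-variable meromorphic function $h(w)$ with its single simple pole at $w=-n$ of residue $-1$, verify independence of the cutoff $\chi$ and of the truncation order $N$ by the polar-coordinate cancellation, and then read off the residue at $z=0$ from the $j=m+n$ term. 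The computations in each step check out, including the part~(ii) bookkeeping showing that only $j=m+n$ (when $m+n\geq 0$) can produce a pole at $z=0$ and that the residue is $-\int_{\bS^{n-1}}\rho_{-n}$.

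Two small remarks. First, the identity $h(w)=\int_0^1(1-\psi(r))r^{w+n-1}{\rm d}r-\frac{1}{w+n}$ tacitly assumes $\operatorname{supp}\psi\subset[0,1]$; for a general radial cutoff there is an extra term $-\int_1^\infty\psi(r)r^{w+n-1}{\rm d}r$, which is entire and changes nothing, but you should either fix the support or carry the term. Second, you correctly identify the real technical content as the holomorphic dependence of the homogeneous components $\rho_{w(z)-j}(z)$ on~$z$; your sketch via the locally uniform limits $\rho_{w(z)}(z)(\xi)=\lim_{t\to\infty}t^{-w(z)}\rho(z)(t\xi)$ is right, and it relies precisely on the locally uniform remainder estimates built into condition~(iii) of the paper's definition of a holomorphic family --- in a full write-up this should be done inductively in~$j$, each step peeling off the previously established components. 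With those cosmetic points addressed, the argument is complete and matches the approach of the references cited by the paper.
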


\begin{Remark}\label{rmk:TR.linearity-tL}
The linearity of $L$ and the analytic continuation principle mentioned in Remark~\ref{rmk:TR.B} ensure us that its analytic extension $\tilde{L}$ is linear in the sense of~(\ref{eq:TR.linearity1})--(\ref{eq:TR.linearity2}).
\end{Remark}

\begin{Definition}
 Given any open set $\Omega\subset \C$, a family $(P(z))_{z\in \Omega}\subset \Psi^{\C}(\cA_\theta)$ is \emph{holomorphic} when there is a holomorphic family of symbols $(\rho(z))_{z\in \Omega}$ such that $P(z)=P_{\rho(z)}$ for all $z\in \Omega$.
\end{Definition}

\begin{Remark}\label{rmk:TR.C}
 Given $P\in \Psi^q(\T^n_\theta)$, let $\rho(\xi)\in S^q\big(\R^n;\cA_\theta\big)$ be such that $P=P_{\rho}$ and let $(\rho(z))_{z\in \C}\subset S^{\C}\big(\R^n;\cA_\theta\big)$ be a holomorphic gauging of $\rho$. Such a family exists by Remark~\ref{rmk:TR.A}. Set $P(z)=P_{\rho(z)}$, $z\in \C$. Then $(P(z))_{z\in \C}$ is a holomorphic family in $\Psi^{\C}(\cA_\theta)$ such that $P(0)=P_{\rho(0)}=P_\rho=P$ and $\ord P(z)=\ord \rho(z)=z+q$. Such a family is called a \emph{holomorphic gauging} of $P$.
 \end{Remark}

\begin{Lemma}\label{lem:TR.B}
 Given any family $(P(z))_{z\in\Omega} \subset \Psi^{\C}(\cA_\theta)$, the following are equivalent:
\begin{enumerate}\itemsep=0pt
 \item[$(i)$] $(P(z))_{z\in\Omega}$ is a holomorphic family in $\Psi^{\C}(\cA_\theta)$.

 \item[$(ii)$] The family of symbols $(\rho_{\!{}_{P(z)}})_{z\in \Omega}\subset S^{\C}\big(\R^n;\cA_\theta\big)$ is holomorphic.
\end{enumerate}
\end{Lemma}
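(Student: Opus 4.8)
The plan is to prove Lemma~\ref{lem:TR.B} by unwinding the two definitions involved (holomorphic family of $\Psi$DOs and holomorphic family of symbols) and using the symbol map $P\mapsto\rho_{\!{}_{P}}$ constructed in~\eqref{eq:PsiDOs.symbol-map}. The implication $(ii)\Rightarrow(i)$ is essentially immediate: if $(\rho_{\!{}_{P(z)}})_{z\in\Omega}$ is a holomorphic family of symbols, then, since $P(z)=P_{\rho_{\!{}_{P(z)}}}$ by the Proposition immediately preceding Section~4.5, the family $(P(z))_{z\in\Omega}$ is by definition a holomorphic family in $\Psi^{\C}(\cA_\theta)$.

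For $(i)\Rightarrow(ii)$ I would argue as follows. Suppose $P(z)=P_{\rho(z)}$ for some holomorphic family of symbols $(\rho(z))_{z\in\Omega}$. We must show that the \emph{canonical} symbols $\rho_{\!{}_{P(z)}}(\xi)$ given by~\eqref{eq:PsiDOs.symbol-map} again form a holomorphic family. Fix the auxiliary function $\phi$ from Lemma~\ref{lem:toroidal.phi}. By Remark~\ref{rmk:PsiDOs.rho(k)-P} we have $P(z)(U^k)(U^k)^{-1}=\rho(z)(k)$ for all $k\in\Z^n$, so from~\eqref{eq:PsiDOs.symbol-map},
\begin{gather*}
 \rho_{\!{}_{P(z)}}(\xi)=\sum_{k\in\Z^n}\phi(\xi-k)\,\rho(z)(k),\qquad \xi\in\R^n.
\end{gather*}
That is, $\rho_{\!{}_{P(z)}}=\widetilde{\rho(z)}$, the canonical extension~\eqref{eq:toroidal.trho} of the toroidal symbol obtained by restricting $\rho(z)$ to $\Z^n$. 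By Proposition~\ref{prop:PsiDOs.tilde-symbol-symbol} we then have $\rho_{\!{}_{P(z)}}(\xi)-\rho(z)(\xi)\in\cS\big(\R^n;\cA_\theta\big)$ for each $z$. So the task reduces to two points: first, that this difference depends holomorphically on $z$ as a family in the Fr\'echet space $\cS\big(\R^n;\cA_\theta\big)$; second, that adding it to the holomorphic family $\rho(z)$ preserves the three defining properties (i)--(iii) of a holomorphic family of symbols, in particular that the homogeneous components $\rho_{w(z)-j}(z)(\xi)$ of $\rho(z)$ also serve as the homogeneous components of $\rho_{\!{}_{P(z)}}$ (which is automatic since the difference is Schwartz-class and hence contributes nothing to any homogeneous component).

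For the holomorphy of $z\mapsto \rho_{\!{}_{P(z)}}(\xi)-\rho(z)(\xi)$ in $\cS\big(\R^n;\cA_\theta\big)$, the cleanest route is to show directly that $z\mapsto\rho_{\!{}_{P(z)}}(\xi)=\sum_{k}\phi(\xi-k)\rho(z)(k)$ is holomorphic with values in $\cS\big(\R^n;\cA_\theta\big)$ on each relatively compact $\Omega'\Subset\Omega$ on which $\Re w(z)$ is bounded; the series converges in the Fr\'echet topology because $\phi\in\cS(\R^n)$ decays rapidly while the $\rho(z)(k)$ and their $\delta$-derivatives grow only polynomially in $k$ (uniformly in $z\in\Omega'$, by Remark~\ref{rmk:TR.hol-standard} applied to the holomorphic family $\rho(z)$ in $\stS^{m}$ for $m>\sup_{\Omega'}\Re w(z)$), and each term is obviously holomorphic in $z$; a normal-convergence argument then gives holomorphy of the sum. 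Since $\rho(z)$ is holomorphic in $\stS^{m}$ on $\Omega'$ as well, the difference is a holomorphic $\stS^{m}$-valued function; being moreover Schwartz-valued for each fixed $z$ and given by a locally uniformly convergent Schwartz-valued series, it is holomorphic into $\cS\big(\R^n;\cA_\theta\big)$.

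I expect the main obstacle to be bookkeeping rather than conceptual: one must carefully check that the asymptotic-expansion estimates~\eqref{eq:TR.hol-family-estimates} for the holomorphic family $(\rho_{\!{}_{P(z)}})_{z\in\Omega}$ hold with the \emph{same} homogeneous components $\rho_{w(z)-j}(z)$ and the \emph{same} kind of locally uniform (in $z$) bounds as for $(\rho(z))_{z\in\Omega}$. This follows because $\rho_{\!{}_{P(z)}}-\rho(z)$ lies in $\cS\big(\R^n;\cA_\theta\big)$ with seminorms bounded locally uniformly in $z$ (as just established), so it is absorbed into the remainder term in~\eqref{eq:TR.hol-family-estimates} without affecting any $\rho_{w(z)-j}(z)$; conditions (i) and (ii) of the definition of a holomorphic family are inherited immediately. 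Conversely, the reverse inclusion of the homogeneous components also follows since the two symbols differ by a Schwartz term, giving $(ii)$ in full. Thus both implications are established.
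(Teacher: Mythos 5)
The overall strategy is the same as the paper's: you observe $\rho_{\!{}_{P(z)}}=\widetilde{\rho(z)}$, recall that $\widetilde{\rho(z)}-\rho(z)\in\cS\big(\R^n;\cA_\theta\big)$, and then try to show this Schwartz-valued difference depends holomorphically on $z$ in the Fr\'echet space $\cS\big(\R^n;\cA_\theta\big)$. Where the argument breaks is precisely at this last step. You claim that $z\mapsto\rho_{\!{}_{P(z)}}(\xi)=\sum_{k}\phi(\xi-k)\rho(z)(k)$ is holomorphic with values in $\cS\big(\R^n;\cA_\theta\big)$ and that ``a normal-convergence argument gives holomorphy of the sum.'' But $\rho_{\!{}_{P(z)}}$ is a symbol of order $w(z)$, not a Schwartz function; as $|\xi|\to\infty$ the dominant contribution from $k$ near $\xi$ gives $\rho_{\!{}_{P(z)}}(\xi)\sim|\xi|^{\Re w(z)}$, so the series cannot converge in $\cS$-seminorms. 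It converges in $\stS^m\big(\R^n;\cA_\theta\big)$ (which you do correctly invoke via Remark~\ref{rmk:TR.hol-standard}), but not in the finer topology of $\cS\big(\R^n;\cA_\theta\big)$, which is what you actually need.

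Your subsequent claim that the difference $\rho_{\!{}_{P(z)}}-\rho(z)$ is ``given by a locally uniformly convergent Schwartz-valued series'' is the real missing link, and you never exhibit such a series nor explain why its $\cS$-seminorms would be locally bounded in $z$. Knowing that the difference is holomorphic in $\stS^m\big(\R^n;\cA_\theta\big)$ and happens to be Schwartz-valued for each fixed $z$ is not enough, since $\cS$ is not a closed subspace of $\stS^m$ with the induced topology; one needs a locally uniform bound on the $\cS$-seminorms to upgrade holomorphy. This is exactly the technical content that the paper delegates to \cite[Lemma~5.21]{LP:Resolvent}, which asserts that $\rho\mapsto\tilde\rho-\rho$ is a \emph{continuous} linear map from $\stS^m\big(\R^n;\cA_\theta\big)$ to $\cS\big(\R^n;\cA_\theta\big)$; composing a continuous linear map with a holomorphic family trivially preserves holomorphy, which finishes the proof. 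Establishing that continuity is nontrivial and uses, in particular, property~(ii) of $\phi$ in Lemma~\ref{lem:toroidal.phi} (the identity $\partial_\xi^\alpha\phi=\overline{\Delta}^\alpha\phi_\alpha$), which permits a summation-by-parts argument to trade decay of $\phi$ against the polynomial growth of $\rho(k)$. Your proof would be complete if you either cited such a continuity statement or actually carried out the summation-by-parts estimate; the remainder of your argument (the $(ii)\Rightarrow(i)$ direction, the identification of the homogeneous components, and the restriction to $\Omega'\Subset\Omega$ with $\Re w(z)<m$) is correct and matches the paper.
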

\begin{proof}It is immediate that (ii) implies (i), so we only have to establish the converse. Let $(P(z))_{z\in \Omega}$ be a holomorphic family in $\Psi^{\C}(\cA_\theta)$. Put $P(z)=P_{\rho(z)}$, where $(\rho(z))_{z\in \Omega}$ is a holomorphic family in $S^{\C}\big(\R^n;\cA_\theta\big)$. The order $w(z)$ of $\rho(z)$ is an analytic function on $\Omega$. Thus, for any open set $\Omega'\subsubset \Omega$, there is $m\in \R$ such that $\Re w(z)<m$. As explained in Remark~\ref{rmk:TR.hol-standard} this implies that $(\rho(z))_{z\in \Omega'}$ is a holomorphic family in $\stS^m\big(\R^n;\cA_\theta\big)$.

Given any symbol $\rho(\xi)\in \stS^m\big(\R^n;\cA_\theta\big)$ denote by $\tilde{\rho}(\xi)$ the symbol~(\ref{eq:toroidal.trho}) associated with $\rho_k=\rho(k)$, $k\in \Z^n$.
By~\cite[Lemma~5.21]{LP:Resolvent} this gives rise to a continuous linear map $\rho(\xi) \rightarrow \tilde{\rho}(\xi)-\rho(\xi)$ from $\stS^m\big(\R^n;\cA_\theta\big)$ to $\cS\big(\R^n;\cA_\theta\big)$. It then follows that $(\tilde{\rho}(z)-\rho(z))_{z\in \Omega'}$ is a holomorphic family in $\cS\big(\R^n;\cA_\theta\big)$. As mentioned in \S\ref{subsec:toroidal} we have $\tilde{\rho}(z)(\xi)=\rho_{\!{}_{P_{\rho(z)}}}(\xi)=\rho_{\!{}_{P(z)}}(\xi)$. Therefore, we see that $(\rho_{\!{}_{P(z)}})_{z\in \Omega'}$ and $(\rho(z))_{z\in \Omega'}$ differ by a holomorphic family in $\cS\big(\R^n;\cA_\theta\big)$. We then deduce that $(\rho_{\!{}_{P(z)}})_{z\in \Omega'}$ is a holomorphic family in $S^{\C}\big(\R^n;\cA_\theta\big)$ over every set open set $\Omega'\subsubset \Omega$, and so this is a holomorphic family over all~$\Omega$. This shows that~(i) implies~(ii). The proof is complete.
\end{proof}

\begin{Definition}A functional $T\colon \Psi^{\CZ}(\T^n_\theta)\rightarrow \C$ is \emph{analytic} when, for every holomorphic family $(P(z))_{z\in \Omega}\subset \Psi^{\CZ}(\T^n_\theta)$, the composition $T[P(z)]$ is an analytic function on $\Omega$.
\end{Definition}

\begin{Remark}[analytic continuation principle] \label{rmk:TR.D} As every $P\in \Psi^{\C}(\cA_\theta)$ can be embedded in a~holomorphic gauging, in the same way as in Remark~\ref{rmk:TR.B} we see that any analytic functional on $\Psi^{\C}(\cA_\theta)$ is uniquely determined by its restriction to operators of order~$< m$ for any given $m\in \R$.
\end{Remark}

\begin{Definition}The \emph{canonical trace} $\TR\colon \Psi^{\CZ}\rightarrow \C$ is defined by
\begin{gather}
 \TR(P)= \tilde{L}[ \tau\circ\rho_{\!{}_{P}}], \qquad P\in \Psi^{\CZ}(\T^n_\theta),
 \label{eq:NCR-TR.def-TR}
\end{gather}
where the symbol $\rho_{\!{}_{P}}(\xi)$ is defined as in~(\ref{eq:PsiDOs.symbol-map}).
\end{Definition}

\begin{Remark}
 The linearity of $\tilde{L}$ (cf.~Remark~\ref{rmk:TR.linearity-tL}) and the linearity of the symbol map $P\rightarrow \rho_{\!{}_{P}}$ ensure us that
 $\TR$ satisfies the linearity properties~(\ref{eq:TR.linearity1})--(\ref{eq:TR.linearity2}). We will see later that the canonical trace is indeed a trace (see Proposition~\ref{prop:TR.B} below).
\end{Remark}

\begin{Proposition}\label{prop:TR.A}
The following holds.
\begin{enumerate}\itemsep=0pt
 \item[$1.$] The functional $\TR\colon \Psi^{\CZ}(\T^n_\theta)\rightarrow \C$ is the unique analytic continuation to $\CZ$ of the ordinary trace.

 \item[$2.$] Let $P\in \Psi^{\Z}(\cA_\theta)$. For every holomorphic gauging $(P(z))_{z\in \C}$ of $P$, the composition $\TR[P(z)]\!$ has at worst a simple pole singularity near $z=0$, and we have
 \begin{gather}
 \Res_{z=0} \TR [ P(z) ]=- \Res (P).
 \label{eq:TR.residue}
\end{gather}
\end{enumerate}
 \end{Proposition}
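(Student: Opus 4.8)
The plan is to reduce both statements to the corresponding facts about the scalar functional $\tilde L$ established in Lemma~\ref{lem:TR.A}, using the definition~\eqref{eq:NCR-TR.def-TR} together with the trace formula of Proposition~\ref{prop:PsiDOs.trace-class}. For part~1, I would first check that $\TR$ is analytic in the sense of the definition just given: if $(P(z))_{z\in\Omega}$ is a holomorphic family in $\Psi^{\CZ}(\T^n_\theta)$, then by Lemma~\ref{lem:TR.B} the symbol family $(\rho_{\!{}_{P(z)}})_{z\in\Omega}$ is holomorphic in $S^{\CZ}\big(\R^n;\cA_\theta\big)$, hence $(\tau\circ\rho_{\!{}_{P(z)}})_{z\in\Omega}$ is a holomorphic family in $S^{\CZ}\big(\R^n\big)$ (the state $\tau$ being continuous and linear), and therefore $z\mapsto \tilde L[\tau\circ\rho_{\!{}_{P(z)}}] = \TR[P(z)]$ is analytic on $\Omega$ because $\tilde L$ is analytic. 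Next, on operators of order $<-n$ we have $\TR(P)=\tilde L[\tau\circ\rho_{\!{}_{P}}] = L[\tau\circ\rho_{\!{}_{P}}] = \Tr(P)$ by part~(i) of Lemma~\ref{lem:TR.A} and the trace formula~\eqref{eq:TR.integral-formula}; so $\TR$ is an analytic extension of the ordinary trace. Uniqueness then follows from the analytic continuation principle of Remark~\ref{rmk:TR.D}: any two analytic extensions agree on operators of order $<-n$, hence everywhere.

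For part~2, let $P\in\Psi^\Z(\cA_\theta)$ and let $(P(z))_{z\in\C}$ be a holomorphic gauging of $P$, realized (via Remark~\ref{rmk:TR.C}) as $P(z)=P_{\rho(z)}$ for a holomorphic gauging $(\rho(z))_{z\in\C}$ of some symbol $\rho(\xi)\in S^q\big(\R^n;\cA_\theta\big)$ representing $P$, with $\ord\rho(z)=z+q$. By Lemma~\ref{lem:TR.B} the family $(\rho_{\!{}_{P(z)}})_{z\in\C}$ is holomorphic in $S^\C\big(\R^n;\cA_\theta\big)$, and — as noted in the proof of that lemma — it differs from $(\rho(z))_{z\in\C}$ by a holomorphic family in $\cS\big(\R^n;\cA_\theta\big)$; in particular $\rho_{\!{}_{P(z)}}$ and $\rho(z)$ have the same homogeneous component of degree $-n$ at $z=0$, namely $\rho_{-n}(\xi)$, the degree-$(-n)$ symbol of $P$. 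Applying $\tau$, the scalar family $\tau\circ\rho_{\!{}_{P(z)}}$ is a holomorphic gauging of $\tau\circ\rho_{\!{}_{P}}\in S^\Z\big(\R^n\big)$ whose degree-$(-n)$ component at $z=0$ is $\tau[\rho_{-n}(\xi)]$. Part~(ii) of Lemma~\ref{lem:TR.A} then gives that $\TR[P(z)] = \tilde L[\tau\circ\rho_{\!{}_{P(z)}}]$ has at worst a simple pole at $z=0$ with
\[
 \Res_{z=0}\TR[P(z)] = -\int_{\bS^{n-1}} \tau[\rho_{-n}(\xi)]\,{\rm d}^{n-1}\xi = -\Res(P),
\]
which is~\eqref{eq:TR.residue}.

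The main obstacle, and the only point requiring genuine care, is making precise that $\tau\circ\rho_{\!{}_{P(z)}}$ is a \emph{holomorphic gauging} in the scalar sense demanded by Lemma~\ref{lem:TR.A}(ii) — i.e.\ that it arises (up to something harmless) from a holomorphic gauging of $\tau\circ\rho_{\!{}_{P}}$ with order $z+q$, so that Lemma~\ref{lem:TR.A}(ii) literally applies. Here one uses that $\tilde L$ is linear in the sense of Remark~\ref{rmk:TR.linearity-tL} and that two holomorphic gaugings of the same symbol differ by a holomorphic family of order $<-n$ (on which $\tilde L = L$ is entire), so the residue at $z=0$ is independent of the choice of gauging; this lets one replace $\tau\circ\rho_{\!{}_{P(z)}}$ by the standard holomorphic gauging $(1-\chi)|\xi|^z(\tau\circ\rho_{\!{}_{P}}) + \chi(\tau\circ\rho_{\!{}_{P}})$ of Remark~\ref{rmk:TR.A} without changing either the pole order or the residue. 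Everything else is bookkeeping: continuity of $\tau$, the identity $\rho_{\!{}_{P(z)}}\equiv\rho(z)$ modulo $\cS$, and matching homogeneous components at $z=0$.
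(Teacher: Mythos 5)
Your proof is correct and follows essentially the same route as the paper: for part~1, show $\TR$ agrees with $\Tr$ on operators of order $<-n$ via Lemma~\ref{lem:TR.A}(i), show it is analytic via Lemma~\ref{lem:TR.B} and the continuity of $\tau$, and invoke the analytic continuation principle of Remark~\ref{rmk:TR.D}; for part~2, pass to the scalar family $\tau\circ\rho_{\!{}_{P(z)}}$ and apply Lemma~\ref{lem:TR.A}(ii). This is exactly how the paper argues.

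One small caveat about your closing paragraph. The claim that ``two holomorphic gaugings of the same symbol differ by a holomorphic family of order $<-n$'' is false in general: if $\rho_1(z)$ and $\rho_2(z)$ are two gaugings of $\rho\in S^q$ with $\ord=z+q$, their difference vanishes at $z=0$ but typically has order $z+q$ for $z\neq 0$, which near $z=0$ is close to $q$ and need not be $<-n$. Fortunately this detour is not needed at all: Lemma~\ref{lem:TR.A}(ii) is stated for \emph{every} holomorphic gauging of $\rho$, not just the standard one of Remark~\ref{rmk:TR.A}, and $(\tau\circ\rho_{\!{}_{P(z)}})_{z\in\C}$ is already a holomorphic gauging of $\tau\circ\rho_{\!{}_{P}}$ in the required sense (continuity of $\tau$ plus Lemma~\ref{lem:TR.B} give holomorphy; $\tau\circ\rho_{\!{}_{P(0)}}=\tau\circ\rho_{\!{}_{P}}$; and the order function $w(z)=z+q$ still works even if $\tau$ happens to annihilate some leading homogeneous components, since Definition~\ref{def:Symbols.classicalsymbols} permits vanishing terms in the asymptotic expansion). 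So Lemma~\ref{lem:TR.A}(ii) applies directly, without the gauging-independence argument, and that is what the paper does. Your main chain of reasoning is sound; you have simply introduced an unnecessary (and, as written, incorrect) auxiliary step to resolve a worry that the hypotheses of Lemma~\ref{lem:TR.A}(ii) already dispose of.
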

\begin{proof}
If $P\in \Psi^{<-n}(\T^n_\theta)\cap \Psi^{\CZ}(\T^n_\theta)$, then $\rho_{\!{}_{P}}(\xi)\in S^{<-n}\big(\R^n;\cA_\theta\big)$, and so by Lemma~\ref{lem:TR.A} and the integral formula~(\ref{eq:TR.integral-formula}) we have
\begin{gather*}
 \TR(P)= \tilde{L} [ \tau\circ\rho_{\!{}_{P}} ] = L [ \tau\circ\rho_{\!{}_{P}} ] = \Tr(P).
\end{gather*}
Moreover, if $(P(z))_{z\in \Omega}$ is a holomorphic family in $\Psi^{\CZ}(\T^n_\theta)$, then $(\rho_{\!{}_{P(z)}})_{z\in \Omega}$ is a holomorphic family in $S^{\CZ}\big(\R^n;\cA_\theta\big)$. As $\tau$ is a~continuous linear form on $\cA_\theta$ the composition $(\tau\circ \rho_{\!{}_{P(z)}})_{z\in \Omega}$ is a~holomorphic family in $S^{\CZ}\big(\R^n\big)$. The first part of Lemma~\ref{lem:TR.A} then ensures us that $\TR[P(z)]=\tilde{L}[ \tau\circ\rho_{\!{}_{P(z)}}]$ is analytic on~$\CZ$. This shows that $\TR$ is an analytic extension of the ordinary trace to $\Psi^{\CZ}(\T^n_\theta)$. The unique continuation principle mentioned in Remark~\ref{rmk:TR.D} then ensures us that this is the unique such functional on $\Psi^{\CZ}(\T^n_\theta)$.

Finally, given $P\in \Psi^\Z(\T^n_\theta)$, let $(P(z))_{z\in \C}$ be a holomorphic gauging of $P$. It follows from Lemma~\ref{lem:TR.B} and the continuity of $\tau$ that $(\tau[\rho_{\!{}_{P(z)}}])_{z\in \C}$ is a holomorphic gauging of $\tau[\rho_{\!{}_{P}}(\xi)]$ in~$S^{\C}\big(\R^n\big)$. Therefore, by the 2nd part of Lemma~\ref{lem:TR.A} the function $\TR[P(z)]$ has at worst a~simple pole singularity near $z=0$, and we have
\begin{gather*}
 \Res_{z=0} \TR [ P(z) ] = \Res_{z=0} \tilde{L} [ \tau\circ \rho_{\!{}_{P(z)}} ]= - \int_{\bS^{n-1}} \tau [\rho_{-n}(\xi) ] = -\Res(P),
\end{gather*}
where $\rho_{-n}(\xi)$ is the symbol of degree~$-n$ of $P$. The proof is complete.
\end{proof}

\begin{Remark}The right-hand side of~(\ref{eq:NCR-TR.def-TR}) involves the symbol map $P\rightarrow \rho_{\!{}_{P}}$ whose definition~(\ref{eq:PsiDOs.symbol-map}) \emph{a priori} depends on the choice of the function~$\phi$. It follows from the uniqueness contents of the 1st part of Proposition~\ref{prop:TR.A} that the canonical trace is independent of the choice of~$\phi$.
\end{Remark}

\begin{Remark}In~\cite{FGK:MPAG17} the authors attempted to define a canonical trace on NC 2-tori (see~\cite[Definition~2]{FGK:MPAG17}). This definition is ambiguous, since it depends on the choice of symbol, which is not unique. Moreover, on operators of order~$<-n$ this does not agree with the ordinary trace. Namely, for operators $P_\rho$ with $\rho(\xi)\in S^{<-2}\big(\R^2;\cA_\theta\big)$ the definition gives $\TR P_\rho=\int \tau[\rho(\xi)]{\rm d}\xi$, which need not agree with the trace of $P_\rho$ in general (cf.~Remark~\ref{rmk:PsiDOs.trace-formula}). Therefore, this not does not give an analytic extension of the ordinary trace.
\end{Remark}

\begin{Proposition}\label{prop:TR.B}
 The functional $\TR$ is a trace on $\Psi^{\CZ}(\T^n_\theta)$.
\end{Proposition}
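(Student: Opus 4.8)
The plan is to adapt the classical argument of Guillemin and Kontsevich--Vishik (see~\cite{Gu:AIM85, KV:GDEO}): reduce the trace identity to operators of order~$<-n$, where $\TR$ coincides with the ordinary operator trace~$\Tr$ by the first part of Proposition~\ref{prop:TR.A}, and then propagate it by analytic continuation.

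Concretely, I would fix $P_i\in \Psi^{q_i}(\cA_\theta)$ with $q_1+q_2\notin \Z$, and choose holomorphic gaugings $(P_1(z_1))_{z_1\in \C}$ and $(P_2(z_2))_{z_2\in \C}$ of $P_1$ and $P_2$ as in Remark~\ref{rmk:TR.C}, so that $P_i(0)=P_i$ and $\ord P_i(z_i)=q_i+z_i$. Using Proposition~\ref{prop:Composition.composition-PsiDOs} and Lemma~\ref{lem:TR.B}, the families $z_1\mapsto P_1(z_1)P_2(z_2)$ and $z_1\mapsto P_2(z_2)P_1(z_1)$ (for fixed~$z_2$), and likewise in~$z_2$ for fixed~$z_1$, are holomorphic families in $\Psi^{\C}(\cA_\theta)$ of order $q_1+q_2+z_1+z_2$. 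Hence on the set where $q_1+q_2+z_1+z_2\notin \Z$ the quantities $\TR[P_1(z_1)P_2(z_2)]$ and $\TR[P_2(z_2)P_1(z_1)]$ are defined, and by the analyticity of $\TR$ the function
\[
 F(z_1,z_2):=\TR\big[P_1(z_1)P_2(z_2)\big]-\TR\big[P_2(z_2)P_1(z_1)\big]
\]
is analytic in each variable separately on that set.

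The next step is to evaluate $F$ where $\Re z_1$ and $\Re z_2$ are very negative. There $\Re(q_1+q_2+z_1+z_2)<-n$, so by Proposition~\ref{prop:PsiDOs.trace-class} the operators $P_1(z_1)P_2(z_2)$ and $P_2(z_2)P_1(z_1)$ are trace-class, and $\TR$ agrees with $\Tr$ on them by Proposition~\ref{prop:TR.A} together with the integral formula~(\ref{eq:TR.integral-formula}). Moreover, for $\Re z_i$ negative enough the operators $P_1(z_1)$ and $P_2(z_2)$ are themselves bounded on~$\cH_\theta$, so the cyclicity of the operator trace gives $\Tr(P_1(z_1)P_2(z_2))=\Tr(P_2(z_2)P_1(z_1))$. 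Thus $F$ vanishes whenever $\Re z_1$ and $\Re z_2$ are sufficiently negative (and $q_1+q_2+z_1+z_2\notin\Z$).

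Finally I would invoke the one-variable analytic continuation principle of Remark~\ref{rmk:TR.D} twice: fixing $z_2$ with $\Re z_2\ll 0$ and $q_1+q_2+z_2\notin\Z$, the analytic function $z_1\mapsto F(z_1,z_2)$ vanishes for $\Re z_1\ll 0$, hence vanishes identically, so $F(0,z_2)=0$; as this holds for all such $z_2$, the analytic function $z_2\mapsto F(0,z_2)$ vanishes for $\Re z_2\ll 0$, hence identically, and in particular $F(0,0)=0$, which is the desired identity $\TR(P_1P_2)=\TR(P_2P_1)$. (Equivalently, $F$ is holomorphic on the connected open set $\{(z_1,z_2)\in\C^2: q_1+q_2+z_1+z_2\notin\Z\}$ and vanishes on a nonempty open subset, hence identically, by Hartogs' theorem and the identity principle.) I expect the only point requiring genuine care to be the claim that $\sharp$ sends holomorphic families of symbols to holomorphic families --- i.e.\ that the constants in the symbol expansion of Proposition~\ref{prop:Composition.composition-PsiDOs} may be chosen locally uniform in the holomorphic parameters; everything else is either recalled earlier in the paper or is standard trace-class theory.
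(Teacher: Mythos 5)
Your proposal is essentially the paper's proof: both reduce the trace identity to the region where the operators are trace-class (so that $\TR$ coincides with $\Tr$ and the cyclicity of the operator trace applies), then propagate it by analytic continuation using holomorphic gaugings. The paper is slightly more streamlined in using a single gauge parameter --- it sets $R(z)=[P_1(z/2),P_2(z/2)]$ so that one application of the analytic continuation principle suffices --- instead of your two independent parameters, and the ``only point requiring genuine care'' you flag (that composition preserves holomorphic families) is precisely what Lemma~\ref{lem:NCR-TR.composition} provides, which the paper cites to~\cite{Lee:PhD, LP:Powers}.
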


\begin{Remark}\label{rmk:Uniqueness.TR-proof}
Proposition~\ref{prop:TR.B} is mentioned in~\cite{LNP:TAMS16}. However, some further arguments are required to complete the proof. It is a bit lengthly to complete the proof by the same approach~\cite{Pa:Private}. For sake of completeness we supply a proof by an alternative approach in the next subsection. The approach is similar to the classical approach of~\cite{Gu:AIM85, KV:GDEO}. This also shows that the noncommutative residue is a trace.
\end{Remark}

\subsection{Proofs of Propositions~\ref{Prop2} and~\ref{prop:TR.B}} The approach relies on the following result.

\begin{Lemma}[\cite{Lee:PhD, LP:Powers}]\label{lem:NCR-TR.composition}
Let $(P(z))_{z\in \C}$ and $(Q(z))_{z\in \C}$ be holomorphic families in $\Psi^\C(\T^n_\theta)$. Then the composition $(P(z)Q(z))_{z\in \C}$ is a holomorphic family in $\Psi^{\C}(\T^n_\theta)$ as well.
\end{Lemma}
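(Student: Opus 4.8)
The plan is to pass from operators to symbols, apply the composition formula for classical symbols, and then verify the three defining conditions of a holomorphic family of symbols while tracking the dependence on the parameter~$z$. By Lemma~\ref{lem:TR.B} the families $\big(\rho_{\!{}_{P(z)}}\big)_{z\in\C}$ and $\big(\rho_{\!{}_{Q(z)}}\big)_{z\in\C}$ are holomorphic families in $S^{\C}\big(\R^n;\cA_\theta\big)$, and by the second part of Proposition~\ref{prop:Composition.composition-PsiDOs} we have $P(z)Q(z)=P_{\rho_{\!{}_{P(z)}}\sharp\,\rho_{\!{}_{Q(z)}}}$ for every $z$. Invoking Lemma~\ref{lem:TR.B} once more, it is thus enough to prove the following purely symbolic statement: if $(\rho_1(z))_{z\in\C}$ and $(\rho_2(z))_{z\in\C}$ are holomorphic families of classical symbols of orders $w_1(z)$ and $w_2(z)$, then $(\rho_1(z)\sharp\rho_2(z))_{z\in\C}$ is a holomorphic family of classical symbols of order $w_1(z)+w_2(z)$.

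To establish this I would check conditions~(i)--(iii) of the definition of a holomorphic family. Condition~(i), that the order is the holomorphic function $w_1(z)+w_2(z)$, is exactly the first part of Proposition~\ref{prop:Composition.composition-PsiDOs}. For condition~(ii) I would localize: on any $\Omega'\subsubset\C$ the analytic functions $w_1,w_2$ are bounded, so there are $m_1,m_2\in\R$ with $\Re w_i(z)<m_i$ on $\Omega'$, and by Remark~\ref{rmk:TR.hol-standard} each $(\rho_i(z))_{z\in\Omega'}$ is a holomorphic family in the Fr\'echet space $\stS^{m_i}\big(\R^n;\cA_\theta\big)$. Since $\sharp\colon\stS^{m_1}\times\stS^{m_2}\to\stS^{m_1+m_2}$ is bilinear and continuous (Proposition~\ref{prop:Composition.sharp-continuity-standard-symbol}), it carries these holomorphic families to a holomorphic family in $\stS^{m_1+m_2}\big(\R^n;\cA_\theta\big)$ over $\Omega'$: the difference quotient of $z\mapsto\rho_1(z)\sharp\rho_2(z)$ splits by bilinearity into two terms, each of which converges by joint continuity of $\sharp$, so this map is continuous and complex-differentiable, hence holomorphic. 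Composing with the continuous linear maps ``evaluate at $\xi$'' and ``differentiate in $\xi$'' then shows that $(z,\xi)\mapsto\rho_1(z)\sharp\rho_2(z)(\xi)$ is holomorphic in $z$ and $C^\infty$ in $\xi$, which is condition~(ii).

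The only slightly delicate step, and the main obstacle, is condition~(iii): the homogeneous expansion must hold with constants uniform for $z$ in compact subsets. Here I would combine the two stages of the composition formula. First, the expansion $\rho_1(z)\sharp\rho_2(z)\sim\sum_\alpha\frac{1}{\alpha!}\partial_\xi^\alpha\rho_1(z)\,\delta^\alpha\rho_2(z)$ of Proposition~\ref{prop:Composition.sharp-continuity-standard-symbol} is obtained, via Taylor's formula applied inside the oscillatory integral defining~$\sharp$, with remainder bounds whose constants are products of continuous $\stS^{m_i}$-seminorms of $\rho_1(z)$ and $\rho_2(z)$; these seminorms are bounded for $z$ in any compact subset of $\Omega'$ precisely because the $(\rho_i(z))$ are holomorphic families in $\stS^{m_i}$. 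Second, expanding each $\partial_\xi^\alpha\rho_i(z)$ into homogeneous components and regrouping as in Proposition~\ref{prop:Composition.composition-PsiDOs} produces the homogeneous components of $\rho_1(z)\sharp\rho_2(z)$, namely $\sum_{k+l+|\alpha|=j}\frac{1}{\alpha!}\partial_\xi^\alpha\rho_{1,w_1(z)-k}(z)\,\delta^\alpha\rho_{2,w_2(z)-l}(z)$, which are products and $\xi$-derivatives of the homogeneous components of $\rho_1(z)$ and $\rho_2(z)$ and hence holomorphic families of homogeneous symbols; and the remainder estimates for the regrouped expansion, with the required $z$-dependent exponent $\Re(w_1(z)+w_2(z))-N-|\beta|$, follow from the $z$-locally-uniform remainder estimates built into the hypothesis that $(\rho_1(z))$ and $(\rho_2(z))$ are holomorphic families of classical symbols, by the usual bookkeeping. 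Keeping all these constants uniform in $z$ while merging the two stages is the point that requires care; granting it, $(\rho_1(z)\sharp\rho_2(z))_{z\in\C}$ satisfies~(i)--(iii), and the lemma follows.
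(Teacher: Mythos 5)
The paper does not prove this lemma; it is cited to~\cite{Lee:PhD, LP:Powers}, so there is no in-paper argument to compare against. Your plan is the natural one and its skeleton is sound: reduce to symbols (in fact you do not need Lemma~\ref{lem:TR.B} here --- by definition a holomorphic family of operators already comes with a holomorphic family of symbols $\rho_i(z)$ with $P(z)=P_{\rho_1(z)}$ and $Q(z)=P_{\rho_2(z)}$), note that $P(z)Q(z)=P_{\rho_1(z)\sharp\rho_2(z)}$, and check conditions~(i)--(iii) of a holomorphic family of symbols. Conditions~(i) and~(ii) go through as you describe, granted the joint continuity of $\sharp\colon\stS^{m_1}\times\stS^{m_2}\to\stS^{m_1+m_2}$, which is not literally recorded in the statement of Proposition~\ref{prop:Composition.sharp-continuity-standard-symbol} in this paper (though it is true and is established in~\cite{HLP:Part2}).

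The genuine gap is the one you flag yourself: condition~(iii). Writing that keeping the constants uniform in~$z$ ``requires care; granting it, the lemma follows'' does not establish the lemma --- it restates the difficulty under another name. The locally-uniform-in-$z$ remainder estimates are precisely what make $(\rho_1(z)\sharp\rho_2(z))_{z}$ a holomorphic family of \emph{classical} symbols, and proving them requires reopening the oscillatory-integral proof of $\rho_1\sharp\rho_2\sim\sum\frac{1}{\alpha!}\partial_\xi^\alpha\rho_1\,\delta^\alpha\rho_2$ in~\cite{HLP:Part2} to verify that the remainder after $N$ terms is bounded by a product of finitely many $\stS^{m_i}$-seminorms of $\rho_1$, $\rho_2$ with constants depending only on $N$, $m_1$, $m_2$, $\alpha$, $\beta$, and then combining those bounds with the $z$-uniform homogeneous remainder estimates built into the hypothesis on each $\rho_i(z)$. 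Your sketch correctly identifies this two-stage structure, but a proof has to carry it out rather than grant it; this is almost certainly the content being delegated to the cited references~\cite{Lee:PhD, LP:Powers}.
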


We first prove Proposition~\ref{prop:TR.B} since the result will be used to prove Proposition~\ref{Prop2}.

\begin{proof}[Proof of Proposition~\ref{prop:TR.B}]
Let $P_1\in \Psi^{q_1}(\T^n_\theta)$ and $P_2\in \Psi^{q_1}(\T^n_\theta)$ with $q_1+q_2\in \CZ$. Let $(P_1(z))_{z\in \C}$ and $(P_2(z))_{z\in \C}$ be holomorphic gaugings of $P_1$ and $P_2$, respectively. For $z\in \C$ set $R(z):=[P_1(z/2),P_2(z/2)]$. It follows from Lemma~\ref{lem:NCR-TR.composition} that $(R(z))_{z\in \C}$ is a holomorphic gauging of the commutator $R(0)=[P_1,P_2]$, and so $\TR(R(z))$ is an analytic function on $\Omega:=\C\setminus (-q_1-q_2+\Z)$.

Without any loss of generality we may assume $\Re q_1\geq \Re q_2$. Let $z\in \Omega$ be such that $\Re (z+q_1)\leq 0$ and $\Re (z+q_2)<-2n$. This ensures us that $P_1(z/2)$ has non-positive order, and hence is bounded, while $P_2(z)$ and $R(z)$ have order~$<-n$, and hence are trace-class. Thus,
\begin{gather*}
 \TR( R(z))= \Tr( R(z))= \Tr( [P_1(z/2),P_2(z/2)])=0.
\end{gather*}
 The unique analytic continuation principle then implies that $\TR(R(z))=0$ for all $z\in \Omega$. In particular, for $z=0$ we get $\TR([P_1,P_2])=\TR (R(0))=0$. This proves Proposition~\ref{prop:TR.B}.
\end{proof}

\begin{proof}[Proof of Proposition~\ref{Prop2}]
Let $P_1\in \Psi^{q_1}(\T^n_\theta)$ and $P_2\in \Psi^{q_1}(\T^n_\theta)$ with $q_1+q_2\in \Z$. Let $(P_1(z))_{z\in \C}$ and $(P_2(z))_{z\in \C}$ be holomorphic gaugings of $P_1$ and $P_2$, respectively. In the same way as in the proof of Proposition~\ref{prop:TR.B}, the family $([P_1(z/2),P_2(z/2)])_{z\in \C}$ is a holomorphic gauging of $[P_1,P_2]$. Therefore, by using the residue formula~(\ref{eq:TR.residue}) and the trace property of $\TR$ we get
\begin{gather*}
 \Res \big([P_1,P_2]\big) =- \Res_{z=0} \TR\big( [P_1(z/2),P_2(z/2)]\big)=0.
\end{gather*}
This proves Proposition~\ref{Prop2}.
\end{proof}

\section{Pseudodifferential commutators and derivatives}\label{sec:commutators-mod-smoothing}
In this section, we derive a decomposition of \psidos\ as sums of \psido\ commutators modulo smoothing operators.
As with classical \psidos\ on $\R^n$ (see, e.g., \cite{FGLS:JFA96, Gu:AIM93, Le:AGAG99}) this is done by looking at the relationships between derivatives and commutators with elementary differential operators.

\subsection[Sums of $\delta_j$-derivatives]{Sums of $\boldsymbol{\delta_j}$-derivatives}

We have the following relationships between $\delta_j$-derivatives and commutators.

\begin{Lemma}\label{lemA}
 Let $\rho(\xi)\in \stS^m\big(\R^n; \cA_\theta\big)$, $m\in \R$. Then, for $j=1, \ldots, n$, we have
\begin{gather*}
 [\delta_j, P_\rho]=P_{\delta_j\rho}.
\end{gather*}
\end{Lemma}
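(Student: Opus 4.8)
The plan is to verify the identity $[\delta_j, P_\rho] = P_{\delta_j\rho}$ directly on the canonical orthonormal basis $\{U^k\}_{k\in\Z^n}$ of $\cA_\theta$, and then invoke density. First I would recall two facts that make the computation essentially formal. On one hand, by~(\ref{eq:toroidal.Prhou-equation}) we have $P_\rho(U^k) = \rho(k)U^k$ for every $k\in\Z^n$, and since $\delta_j$ is a derivation with $\delta_j(U^k) = k_j\,U^k$ (which follows from $\delta_j(U_l) = \delta_{jl}\,U_l$ together with the Leibniz rule applied to $U^k = U_1^{k_1}\cdots U_n^{k_n}$), we get $\delta_j(P_\rho u)$ in closed form on Fourier modes. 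On the other hand, $\delta_j\rho(\xi)$ is still a symbol in $\stS^m(\R^n;\cA_\theta)$ — indeed $\delta_j$ commutes with $\partial_\xi^\beta$ and with all the $\delta^\alpha$ appearing in the symbol estimates — so $P_{\delta_j\rho}$ is a well-defined \psido\ and acts by $P_{\delta_j\rho}(U^k) = (\delta_j\rho)(k)\,U^k$.

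Next I would carry out the mode-by-mode comparison. For $u = \sum_k u_k U^k \in \cA_\theta$, on the one side
\begin{gather*}
\delta_j\big(P_\rho u\big) = \sum_{k\in\Z^n} u_k\,\delta_j\big(\rho(k)U^k\big) = \sum_{k\in\Z^n} u_k\Big(\big(\delta_j\rho(k)\big)U^k + \rho(k)\,k_j\,U^k\Big),
\end{gather*}
using that $\delta_j$ is a derivation. On the other side,
\begin{gather*}
P_\rho\big(\delta_j u\big) = P_\rho\bigg(\sum_{k\in\Z^n} u_k\,k_j\,U^k\bigg) = \sum_{k\in\Z^n} u_k\,k_j\,\rho(k)\,U^k.
\end{gather*}
Subtracting, the terms $\sum_k u_k\,k_j\,\rho(k)U^k$ cancel, leaving $[\delta_j,P_\rho]u = \sum_k u_k\,(\delta_j\rho(k))\,U^k = P_{\delta_j\rho}u$, which is the claim. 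The interchange of $\delta_j$ with the (convergent) Fourier series is justified because the series~(\ref{eq:NCtori.Fourier-series-u}) of $u\in\cA_\theta$ converges in the Fréchet topology of $\cA_\theta$ and $\delta_j$, $P_\rho$, $P_{\delta_j\rho}$ are all continuous on $\cA_\theta$; so it suffices that the identity hold on each basis vector $U^k$, which is what the two displays above establish once one reads off the coefficient of $U^k$.

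There is no serious obstacle here; the only point requiring a word of care is the verification that $\delta_j\rho(\xi) \in \stS^m(\R^n;\cA_\theta)$, so that the right-hand side $P_{\delta_j\rho}$ genuinely makes sense — but this is immediate from the definition of the symbol seminorms, since applying $\delta_j$ to $\rho$ only permutes with the $\delta^\alpha$ and $\partial_\xi^\beta$ in the estimates $\|\delta^\alpha\partial_\xi^\beta\rho(\xi)\| \le C_{\alpha\beta}(1+|\xi|)^{m-|\beta|}$ and does not change the order $m$. Alternatively, one could avoid Fourier modes entirely and compute with the oscillatory-integral formula $P_\rho u = \iint {\rm e}^{{\rm i}s\cdot\xi}\rho(\xi)\alpha_{-s}(u)\,{\rm d}s\dbar\xi$, using that $\delta_j = D_{s_j}\alpha_s(\cdot)|_{s=0}$ and integrating by parts in $s$ (the phase contributes $\xi_j$, matching $\delta_j$ acting through $\alpha_{-s}$), but the Fourier-series argument is cleaner and rigorous without any additional justification of oscillatory-integral manipulations.
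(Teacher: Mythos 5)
Your proof is correct and takes essentially the same route as the paper: both verify the identity mode-by-mode via $P_\rho(U^k)=\rho(k)U^k$, $\delta_j(U^k)=k_jU^k$, and the Leibniz rule, so the $k_j\rho(k)U^k$ terms cancel in the commutator leaving $P_{\delta_j\rho}$. Your added remarks (stability of $\stS^m(\R^n;\cA_\theta)$ under $\delta_j$, and the density/continuity justification for interchanging $\delta_j$ with the Fourier series) are sensible bookkeeping the paper leaves implicit; note that the paper's displayed intermediate line has the commutator written in the wrong order, and your version has the correct sign.
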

\begin{proof} Given $j \in \{1,\ldots,n\}$, let $u= \sum\limits_{k\in \Z^n} u_k U^k$ be in $\cA_\theta$. As $\delta_j u = \sum u_k \delta_j\big(U^k\big)= \sum k_j u_k U^k$, we have
 $P_\rho (\delta_j u)= \sum u_k k_j\rho(k) U^k$. We also have
\begin{gather*}
 \delta_j ( P_\rho u ) = \sum_{k\in \Z^n} u_k \delta_j\big( \rho(k) U^k\big) =
 \sum_{k\in \Z^n} u_k (\delta_j \rho(k) )U^k + \sum_{k\in \Z^n} u_k\rho(k) k_jU^k.
\end{gather*}
Thus,
\begin{gather*}
 P_\rho (\delta_j u) -\delta_j ( P_\rho u ) = \sum_{k\in \Z^n} u_k (\delta_j \rho(k) )U^k = P_{\delta_j\rho}u.
\end{gather*}
This shows that $[\delta_j, P_\rho]= P_{\delta_j\rho}$. The proof is complete.
\end{proof}

The above lemma implies that if the symbol of a \psido\ is a sum of $\delta_j$-derivatives, then the operator is a sum of \psido\ commutators. In order to exhibit symbols that are sums of $\delta_j$-derivatives we will need the following lemma.

\begin{Lemma}\label{lemB1}
 Let $\Phi\colon \cA_\theta \rightarrow \cA_\theta$ be a continuous linear map. Then the following holds.
 \begin{enumerate}\itemsep=0pt
 \item[$(i)$] $\Phi$ induces a continuous linear map from $\stS^m\big(\R^n;\cA_\theta\big)$ to itself for every $m\in \R$.

 \item[$(ii)$] $\Phi$ induces a linear map from $S^q\big(\R^n;\cA_\theta\big)$ to itself for every $q\in \C$.
\end{enumerate}
\end{Lemma}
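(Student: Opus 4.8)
The plan is to work directly from the symbol estimates, observing that a continuous linear map $\Phi\colon\cA_\theta\to\cA_\theta$ commutes with the oscillating-integral constructions because it acts ``pointwise in $\xi$'' and commutes with $\partial_\xi$, while its interaction with the derivations $\delta^\alpha$ is controlled by continuity. Concretely, for $\rho(\xi)\in\stS^m(\R^n;\cA_\theta)$ one sets $(\Phi\rho)(\xi):=\Phi[\rho(\xi)]$; since $\Phi$ is linear and continuous on the Fr\'echet space $\cA_\theta$, the map $\xi\mapsto\Phi[\rho(\xi)]$ is again smooth with $\partial_\xi^\beta(\Phi\rho)(\xi)=\Phi[\partial_\xi^\beta\rho(\xi)]$. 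So the only thing to check for part~(i) is that the seminorm estimates are preserved, and for part~(ii) that asymptotic expansions and homogeneity are preserved.

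For part~(i), I would invoke continuity of $\Phi$ in the form of the standard estimate for maps between Fr\'echet spaces: there exist $N\in\N_0$ and $C>0$ such that $\|\Phi(a)\|\le C\max_{|\gamma|\le N}\|\delta^\gamma a\|$ for all $a\in\cA_\theta$. Applying this with $a=\delta^\alpha\partial_\xi^\beta\rho(\xi)$ and using $\delta^\gamma\delta^\alpha=\delta^{\gamma+\alpha}$ together with the defining bounds $\|\delta^{\alpha+\gamma}\partial_\xi^\beta\rho(\xi)\|\le C_{(\alpha+\gamma)\beta}(1+|\xi|)^{m-|\beta|}$, one gets
\begin{gather*}
 \big\|\delta^\alpha\partial_\xi^\beta(\Phi\rho)(\xi)\big\|=\big\|\Phi\big[\delta^\alpha\partial_\xi^\beta\rho(\xi)\big]\big\|\le C\max_{|\gamma|\le N}\big\|\delta^{\alpha+\gamma}\partial_\xi^\beta\rho(\xi)\big\|\le C'_{\alpha\beta}(1+|\xi|)^{m-|\beta|},
\end{gather*}
which is exactly the statement that $\Phi\rho\in\stS^m(\R^n;\cA_\theta)$; the same chain of inequalities shows the induced map is continuous for the Fr\'echet topologies. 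For part~(ii), if $\rho(\xi)\sim\sum_{j\ge0}\rho_{q-j}(\xi)$ with $\rho_{q-j}\in S_{q-j}(\R^n;\cA_\theta)$, I would first note that each $\Phi\rho_{q-j}$ is still smooth on $\R^n\setminus0$ and homogeneous of degree $q-j$ (homogeneity is immediate since $\Phi$ is linear: $\Phi[\rho_{q-j}(t\xi)]=t^{q-j}\Phi[\rho_{q-j}(\xi)]$), hence lies in $S_{q-j}(\R^n;\cA_\theta)$; then apply the same continuity estimate to the remainder $\big(\rho-\sum_{j<N}\rho_{q-j}\big)(\xi)$ to see that $\Phi\rho$ admits the asymptotic expansion $\sum_j\Phi\rho_{q-j}$ with the required bounds $|\xi|^{\Re q-N-|\beta|}$, so $\Phi\rho\in S^q(\R^n;\cA_\theta)$.

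I do not expect a serious obstacle here — the lemma is essentially a bookkeeping exercise. The one point that deserves care is the quantitative form of continuity for a linear map on $\cA_\theta$: one must cite (or recall) that the topology of $\cA_\theta$ is the projective-limit topology given by the seminorms $u\mapsto\|\delta^\beta u\|$, so that continuity of $\Phi$ yields a single $N$ and constant $C$ dominating $\|\Phi(a)\|$ by a maximum of finitely many seminorms of $a$. Once that is in hand, everything reduces to substituting $a=\delta^\alpha\partial_\xi^\beta\rho(\xi)$ and using that the $\delta$'s and $\partial_\xi$'s commute with each other and pass through $\Phi$ appropriately. (Note that in part~(ii) no continuity statement is claimed — and indeed none is available in general, since $S^q(\R^n;\cA_\theta)$ carries no natural Fr\'echet topology making this map continuous — so only the set-theoretic ``maps into itself'' assertion is needed.)
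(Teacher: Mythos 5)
Your plan is the right one and part~(ii) is handled exactly as the paper does it (expand $\rho$, observe each $\Phi[\rho_{q-j}]$ is again homogeneous and smooth on $\R^n\setminus 0$, control the remainder by part~(i)), but there is a genuine error in your part~(i): the displayed equality
\begin{gather*}
\big\|\delta^\alpha\partial_\xi^\beta(\Phi\rho)(\xi)\big\|=\big\|\Phi\big[\delta^\alpha\partial_\xi^\beta\rho(\xi)\big]\big\|
\end{gather*}
asserts that $\delta^\alpha\Phi=\Phi\delta^\alpha$, and a generic continuous linear map on $\cA_\theta$ does \emph{not} commute with the derivations $\delta_j$. (For instance $\Phi(u)=au$ with $a\in\cA_\theta$: then $\delta_j\Phi(u)=\delta_j(a)u+a\delta_j(u)\ne\Phi(\delta_j u)$ in general. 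In Lemma~\ref{lemB} the map of interest is $\Phi=\delta_j\Delta^{-1}$, which does happen to commute with all $\delta_k$, but the lemma as stated must work for arbitrary continuous $\Phi$.) Only the $\partial_\xi^\beta$'s pass through $\Phi$, not the $\delta^\alpha$'s. As written, your estimate therefore bounds the wrong quantity.

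The repair is easy and does not change the spirit of your argument: instead of a single continuity estimate you need one for each target seminorm. Continuity of $\Phi$ on the Fr\'echet space $\cA_\theta$, whose topology is generated by the seminorms $u\mapsto\|\delta^\alpha u\|$, means that for \emph{every} multi-order $\alpha$ there exist $N_\alpha\in\N_0$ and $C_\alpha>0$ with $\|\delta^\alpha\Phi(a)\|\le C_\alpha\max_{|\gamma|\le N_\alpha}\|\delta^\gamma a\|$ for all $a\in\cA_\theta$. Applying this to $a=\partial_\xi^\beta\rho(\xi)$ (using only $\partial_\xi^\beta(\Phi\rho)=\Phi[\partial_\xi^\beta\rho]$) gives
\begin{gather*}
\big\|\delta^\alpha\partial_\xi^\beta(\Phi\rho)(\xi)\big\|=\big\|\delta^\alpha\Phi\big[\partial_\xi^\beta\rho(\xi)\big]\big\|\le C_\alpha\max_{|\gamma|\le N_\alpha}\big\|\delta^\gamma\partial_\xi^\beta\rho(\xi)\big\|\le C'_{\alpha\beta}(1+|\xi|)^{m-|\beta|},
\end{gather*}
which is what was wanted, and the same chain shows continuity of the induced map on $\stS^m(\R^n;\cA_\theta)$. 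Once this is fixed your argument is essentially the paper's, and in fact more self-contained: for part~(i) the paper simply refers to the proof of~\cite[Lemma~4.16(i)]{HLP:Part1}, whereas you spell out the seminorm bookkeeping. One small further polish in part~(ii): the paper inserts a cut-off $(1-\chi(\xi))$ so that the truncated remainder $\rho-\sum_{j<N}(1-\chi)\rho_{q-j}$ is a bona fide element of $\stS^{m-N}(\R^n;\cA_\theta)$ to which part~(i) applies as a black box; without the cut-off the remainder only satisfies the $\stS^{m-N}$ bounds for $|\xi|\ge 1$, which is all the definition of a classical expansion requires, but it is cleaner to route through~(i) directly.
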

\begin{proof}
 As $\Phi\colon \cA_\theta \rightarrow \cA_\theta$ is a continuous linear map, for every open set $U\subset \R^n$, we get a~(continuous) linear map
$C^\infty(U; \cA_\theta) \ni \rho(\xi) \rightarrow \Phi [ \rho(\xi) ] \in C^\infty(U; \cA_\theta)$. By arguing along similar lines as that of the proof of~\cite[Lemma~4.16(i)]{HLP:Part1} it can be shown that this induces a continuous linear map from $\stS^m\big(\R^n;\cA_\theta\big)$ to itself for every $m\in \R$. This gives~(i).

It remains to prove~(ii). Let $\rho(\xi)\in S^q\big(\R^n;\cA_\theta\big)$, $q\in \C$. We have $\rho(\xi)\sim \sum\limits_{j\geq 0} \rho_{q-j}(\xi)$ with $\rho_{q-j}(\xi) \in S_{q-j}\big(\R^n;\cA_\theta\big)$. Set $m=\Re q$, and let $\chi(\xi)\in C^\infty_c \big(\R^n\big)$ be such that $\chi(\xi)=1$ near $\xi=0$. Then $\rho(\xi)\sim \sum\limits_{j\geq 0} (1-\chi(\xi))\rho_{q-j}(\xi)$, and so $\rho(\xi) -\sum\limits_{j<N} (1-\chi(\xi))\rho_{q-j}(\xi)$ is contained in $\stS^{m-N}\big(\R^n;\cA_\theta\big)$ for every $N\geq 1$. Applying the linear map $\Phi$ and using~(i) we then deduce that
\begin{gather}
 \Phi\big[\rho(\xi)\big] -\sum_{j<N} (1-\chi(\xi))\Phi [\rho_{q-j}(\xi) ]\in \stS^{m-N}\big(\R^n;\cA_\theta\big) \qquad \forall\, N\geq 1.
 \label{eq:delta.Phi-expansion}
\end{gather}
Note that $\Phi[\rho_{q-j}(\xi)]$ is in $C^\infty\big(\R^n\setminus 0; \cA_\theta\big)$ and is homogeneous of degree $q-j$, and so this is a symbol in $S_{q-j}\big(\R^n;\cA_\theta\big)$. Together with~(\ref{eq:delta.Phi-expansion}) this implies that $\Phi[\rho(\xi)]\sim \sum\limits_{j\geq 0} \Phi[\rho_{q-j}(\xi)]$, and hence $\Phi[\rho(\xi)]$ is a symbol in $S^q\big(\R^n;\cA_\theta\big)$. This proves~(ii) and completes the proof.
\end{proof}

\begin{Lemma}\label{lemB}
 Let $\rho(\xi)\in S^q\big(\R^n;\cA_\theta\big)$, $q\in \C$. Then there are symbols $\rho_1(\xi), \ldots, \rho_n(\xi)$ in $S^q\big(\R^n;\cA_\theta\big)$ such that
\begin{gather}
\rho(\xi) = \tau [ \rho(\xi) ] + \delta_1\rho_1(\xi) + \cdots + \delta_n\rho_n(\xi).
\label{eq:delta.decomposition-taurho}
\end{gather}
\end{Lemma}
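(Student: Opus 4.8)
The plan is to reduce the statement to a purely algebraic fact about $\cA_\theta$ and then transport it to symbols by means of Lemma~\ref{lemB1}. The algebraic fact I want is the following: there are continuous linear maps $\Phi_1,\dots,\Phi_n\colon\cA_\theta\to\cA_\theta$ such that
\begin{gather*}
 a = \tau(a)\,1 + \delta_1(\Phi_1 a) + \cdots + \delta_n(\Phi_n a)\qquad\forall\, a\in\cA_\theta .
\end{gather*}
Granting this, I set $\rho_j(\xi):=\Phi_j[\rho(\xi)]$. Since each $\Phi_j$ is a continuous linear map on $\cA_\theta$, part~(ii) of Lemma~\ref{lemB1} gives $\rho_j(\xi)\in S^q\big(\R^n;\cA_\theta\big)$; evaluating the displayed identity at $a=\rho(\xi)$ for each fixed $\xi$ then yields~\eqref{eq:delta.decomposition-taurho}. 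Here $\tau[\rho(\xi)]$ is a scalar-valued function of $\xi$, which (being $\tau$ applied to a symbol, $\tau$ continuous linear) lies in $S^q\big(\R^n\big)$, and which we regard inside $S^q\big(\R^n;\cA_\theta\big)$ via $\C\hookrightarrow\cA_\theta$.

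To construct the $\Phi_j$ I would use the Fourier decomposition. For $a=\sum_{k\in\Z^n}a_kU^k$ in $\cA_\theta$ set
\begin{gather*}
 \Phi_j(a):=\sum_{k\in\Z^n\setminus 0}\frac{k_j}{|k|^2}\,a_kU^k .
\end{gather*}
Because $|k_j|\,|k|^{-2}\le|k|^{-1}\le 1$, the sequence $\big(k_j|k|^{-2}a_k\big)_{k\neq 0}$ is again rapid‑decay, so $\Phi_j$ maps $\cA_\theta$ into itself. Using $\delta_j\big(U^k\big)=k_jU^k$ and $k_1^2+\cdots+k_n^2=|k|^2$ one computes
\begin{gather*}
 \tau(a)\,1+\sum_{j=1}^{n}\delta_j(\Phi_j a) = a_0\,1 + \sum_{k\neq 0}\frac{k_1^2+\cdots+k_n^2}{|k|^2}\,a_kU^k = \sum_{k\in\Z^n}a_kU^k = a ,
\end{gather*}
which is exactly the identity above (recall $\tau(a)=a_0$).

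The one substantive point is the continuity of $\Phi_j$ on $\cA_\theta$. The quickest route is to recall that the Fourier‑coefficient map $\cA_\theta\ni a\mapsto(a_k)_{k\in\Z^n}$ is a topological isomorphism of Fr\'echet spaces onto the rapid‑decay sequence space $\cS\big(\Z^n\big)$, the latter carrying the seminorms $(a_k)\mapsto\sup_k(1+|k|)^N|a_k|$ (see~\cite{HLP:Part1}); under this identification $\Phi_j$ is just multiplication by the bounded sequence $\big(k_j|k|^{-2}\big)_{k\neq 0}$, which is visibly continuous. Should one prefer to avoid that identification, one checks directly from the Fourier formulas that $\Phi_j$ commutes with every $\delta^\beta$, so that it suffices to bound the $C^*$-norm $\|\Phi_j b\|\le\sum_{k\neq 0}|k|^{-1}|b_k|$ by a seminorm of $b$; and this follows from $\big|k^\gamma b_k\big|\le\big\|\delta^\gamma b\big\|$, which gives $|b_k|\le C_N|k|^{-2N}\max_{|\gamma|\le 2N}\big\|\delta^\gamma b\big\|$ for $k\neq 0$, the series $\sum_{k\neq 0}|k|^{-1-2N}$ converging once $2N+1>n$. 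Either way $\Phi_j$ is continuous, Lemma~\ref{lemB1}(ii) applies, and the proof is complete. I expect this continuity verification (or the citation replacing it) to be the only place requiring care; everything else is formal bookkeeping with Fourier series.
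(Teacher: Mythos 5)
Your proof is correct and is essentially the paper's proof: your $\Phi_j$ is exactly $\delta_j\Delta^{-1}$, where $\Delta=\delta_1^2+\cdots+\delta_n^2$ is the flat Laplacian and $\Delta^{-1}$ its partial inverse ($\Delta^{-1}U^k=|k|^{-2}U^k$ for $k\neq 0$, $\Delta^{-1}1=0$), and you then apply Lemma~\ref{lemB1}(ii) to the $\Phi_j$ just as the paper does. The only presentational difference is that the paper obtains continuity of $\Phi_j$ at once by observing that $\Delta^{-1}=P_{(1-\chi(\xi))|\xi|^{-2}}$ is a $\Psi$DO of order $-2$, whereas you verify continuity by hand from the Fourier coefficients.
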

\begin{proof} Let $\Delta= \delta_1^2 + \cdots + \delta_n^2$ be the flat Laplacian on $\cA_\theta$. This is a selfadjoint unbounded operator on $\cH_\theta$ with domain $\cH_\theta^{(2)}= \big\{ u= \sum u_k U^k\in \cH_\theta;\, \sum |k|^4|u_k|^2 <\infty\big\}$. The unitaries~$U^k$, $k\in \Z^n$, forms an orthonormal eigenbasis for $\Delta$, since we have
 \begin{gather*}
 \Delta\big( U^k\big) = |k|^2U^k \qquad \forall\, k\in \Z^n.
\end{gather*}
Let $\Delta^{-1}\colon \cH_\theta \rightarrow \cH_\theta$ be the partial inverse of $\Delta$. Namely, we have
\begin{gather}
 \Delta^{-1} \big( U^k\big) =
\begin{cases}
 0 & \text{if $k=0$}, \\
 |k|^{-2}U^k & \text{if $k\neq0$} .
\end{cases}
\label{eq:delta.Delta-inverse}
\end{gather}
Note that $\Delta \Delta^{-1}(1)=0$ and $\Delta \Delta^{-1}\big(U^k\big)=|k|^{-2} \Delta\big(U^k\big)=U^k$ if $k\neq 0$. Thus, for all $u = \sum u_k U^k$ in $\cA_\theta$, we have
\begin{gather}
 \Delta \Delta^{-1} u = \sum_{k\in \Z^n} u_k \Delta \Delta^{-1}\big(U^k\big)= \sum_{k\in \Z^n\setminus 0} u_k U^k= u-u_0 = u-\tau(u).
 \label{eq:delta.Delta-Delta-inverse}
\end{gather}
We also observe that~(\ref{eq:delta.Delta-inverse}) implies that $\Delta^{-1}=P_{\rho}$ with $\rho(\xi)=(1-\chi(\xi))|\xi|^{-2}$, where $\chi(\xi)$ is any function in $C^\infty_c\big(\R^n\big)$ such that $\chi(\xi)=1$ near $\xi=0$ and $\chi(\xi)=0$ for $|\xi|\geq 1$. This shows that~$\Delta^{-1}$ is a~\psido\ of order~$-2$, and so it induces a continuous linear map from $\cA_\theta$ to itself.

Bearing all this in mind, let $\rho(\xi)\in S^q\big(\R^n;\cA_\theta\big)$, $q\in \C$. For $j=1, \ldots, n$, set
\begin{gather*}
 \rho_j(\xi)= \delta_j \Delta^{-1} \big(\rho(\xi)\big), \qquad \xi \in \R^n.
\end{gather*}
As $\delta_j \Delta^{-1}\colon \cA_\theta \rightarrow \cA_\theta$ is a continuous linear map, it follows from Lemma~\ref{lemB1} that $\rho_j(\xi) \in S^q\big(\R^n;\cA_\theta\big)$. Moreover, by using~(\ref{eq:delta.Delta-Delta-inverse}) we get
\begin{gather*}
 \sum_{1\leq j \leq n} \delta_j \rho_j(\xi) = \sum_{1\leq j \leq n} \delta_j^2 \Delta^{-1} (\rho(\xi) )= \Delta \Delta^{-1} (\rho(\xi) )= \rho(\xi) - \tau [ \rho(\xi) ].
\end{gather*}
That is, $\rho(\xi) = \tau [ \rho(\xi) ] + \delta_1\rho_1(\xi) + \cdots + \delta_n\rho_n(\xi)$.
The proof is complete.
\end{proof}

\subsection{Sums of finite differences}
The decomposition~(\ref{eq:delta.decomposition-taurho}) reduces the decomposition of \psidos\ as sums of commutators to the case of \psidos\ with scalar symbols.
For classical symbols on $\R^n$ it is natural to attempt to get a decomposition as sums of $\partial_{\xi_j}$-derivatives, since this amounts to taking commutators with coordinates functions $x_1,\ldots,x_n$.

In the noncommutative setting, however, coordinate functions do not exist. Furthermore, even on ordinary tori the coordinate functions $x_j$ do not acts on $C^\infty(\T^n)$ since they are not periodic. Nevertheless, as the following lemma shows, there is a simple relationship between finite differences of symbols and commutators with the unitaries $U_1, \ldots, U_n$.

\begin{Lemma}\label{lemC}
 Let $\rho(\xi)\in \stS^{m}\big(\R^n\big)$, $m \in \R$. Then, for $j=1, \ldots, n$, we have
\begin{gather*}
 P_{\Delta_j \rho} =\big[P_{U_j^{-1}\rho},U_j\big].
\end{gather*}
\end{Lemma}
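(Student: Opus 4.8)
The plan is to compute both sides on an arbitrary basis element $U^k$, $k \in \Z^n$, and check they agree. Recall that a scalar symbol $\rho(\xi)\in\stS^m(\R^n)$ acts diagonally: $P_\rho(U^k) = \rho(k)U^k$ by~\eqref{eq:toroidal.Prhou-equation} (with scalar coefficients, so there is no issue of left/right multiplication). The finite-difference operator on functions is $\Delta_j\rho(\xi) = \rho(\xi+e_j) - \rho(\xi)$ as in~\eqref{eq:PsiDOs.finite-differences-functions}, so $P_{\Delta_j\rho}(U^k) = (\rho(k+e_j) - \rho(k))U^k$.

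For the right-hand side, first note that $U_j^{-1}\rho(\xi)$ is again a scalar symbol in $\stS^m(\R^n)$ only in the sense that the scalar $\rho(\xi)$ is multiplied by the fixed unitary $U_j^{-1}\in\cA_\theta$; strictly this is the $\cA_\theta$-valued symbol $\xi\mapsto U_j^{-1}\rho(\xi)$, and $P_{U_j^{-1}\rho}(U^k) = U_j^{-1}\rho(k)U^k = \rho(k)U_j^{-1}U^k$ since $\rho(k)$ is a scalar. Then I would compute the two terms of the commutator: on one hand $P_{U_j^{-1}\rho}(U_j U^k)$, and on the other $U_j\, P_{U_j^{-1}\rho}(U^k)$. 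The key observation is that left-multiplication by $U_j$ shifts the "frequency'' by $e_j$: concretely, $U_jU^k$ is (up to a scalar phase from~\eqref{eq:NCtori.unitaries-relations}) a multiple of $U^{k+e_j}$, so $P_{U_j^{-1}\rho}(U_jU^k)$ picks up $\rho(k+e_j)$, whereas $U_j P_{U_j^{-1}\rho}(U^k)$ picks up $\rho(k)$. The difference is exactly $(\rho(k+e_j)-\rho(k))U^k$ after the phase factors and the $U_j^{-1}$ cancel.

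The cleanest way to organize the bookkeeping is to write, for $u = \sum_k u_k U^k \in \cA_\theta$,
\begin{gather*}
 \big[P_{U_j^{-1}\rho}, U_j\big]u = P_{U_j^{-1}\rho}(U_j u) - U_j P_{U_j^{-1}\rho}(u),
\end{gather*}
and use that $U_j u = \sum_k u_k U_j U^k$ together with the fact that conjugation/multiplication by $U_j$ on the standard quantization implements the shift $\xi \mapsto \xi + e_j$ on symbols—this is precisely the reason finite differences $\Delta_j$ appear on the toroidal side (cf.~\S\ref{subsec:toroidal}). Expanding $P_{U_j^{-1}\rho}(U_jU^k)$ and $U_jP_{U_j^{-1}\rho}(U^k)$ explicitly, the commutation relation~\eqref{eq:NCtori.unitaries-relations} contributes the \emph{same} scalar phase to both terms (because in both we ultimately multiply by $U_j$ on the same side), so the phases cancel in the difference, leaving $\sum_k u_k(\rho(k+e_j)-\rho(k))U^k = P_{\Delta_j\rho}u$.

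I expect the only real obstacle to be the careful tracking of the noncommutative phase factors from~\eqref{eq:NCtori.unitaries-relations}: one must verify that the $e^{2i\pi\theta}$-twists coming from moving $U_j$ past $U^k$ occur symmetrically in the two terms of the commutator and hence cancel, so that only the genuine shift $\rho(k)\leadsto\rho(k+e_j)$ survives. Since $\rho$ is scalar-valued, $\rho(k)$ commutes with everything, which is what makes this cancellation clean—this is exactly why the lemma is stated for scalar symbols. Once the basis-element identity is established, it extends to all of $\cA_\theta$ by linearity and continuity, and both sides are genuine \psidos\ (the left by Proposition~\ref{prop:Composition.sharp-continuity-standard-symbol}, the right by the same composition result applied to $P_{U_j^{-1}\rho}$ and $U_j$), so the operator identity follows.
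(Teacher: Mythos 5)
Your proposal is correct and matches the paper's argument, which likewise computes on the basis $U^k$: the paper simply organizes the bookkeeping through the intermediate operator identities $P_{U_j^{-1}\rho}=U_j^{-1}P_\rho$ and $U_j^{-1}P_\rho U_j = P_{\rho(\cdot+e_j)}$, so that $[P_{U_j^{-1}\rho},U_j]=U_j^{-1}P_\rho U_j-P_\rho=P_{\Delta_j\rho}$. One small imprecision in your sketch: the phase from $U_jU^k={\rm e}^{-2{\rm i}\pi c_{j,k}}U^{k+e_j}$ does not appear symmetrically in the two commutator terms, but rather cancels internally against the $U_j^{-1}$ in $P_{U_j^{-1}\rho}(U_jU^k)={\rm e}^{-2{\rm i}\pi c_{j,k}}\rho(k+e_j)U_j^{-1}U^{k+e_j}=\rho(k+e_j)U^k$, while the second term $U_jP_{U_j^{-1}\rho}(U^k)=U_jU_j^{-1}\rho(k)U^k=\rho(k)U^k$ introduces no phase at all; this does not affect the conclusion.
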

\begin{proof}
 Let $j\in \{1,\ldots, n\}$. Note that $U_j^{-1}\rho(\xi)\in \stS^m\big(\R^n;\cA_\theta\big)$. Moreover, for all $u= \sum\limits_{k\in \Z^n}u_kU^k$ in $\cA_\theta$, we have{\samepage
\begin{gather*}
 P_{U_j^{-1} \rho}u= \sum_{k\in \Z^n} u_k U_j^{-1} \rho(k)U^k= U_j \bigg( \sum_{k\in \Z^n} u_k \rho(k)U^k\bigg)= U_j^{-1}P_\rho u.
\end{gather*}
This shows that $P_{U_j^{-1}\rho}=U_j^{-1} P_\rho$.}

Given any $k\in \Z^n$ there is $c_{j,k}\in \R$ such that $U_jU^k={\rm e}^{-2{\rm i}\pi c_{j,k}}U^{k+e_j}$ (see, e.g., \cite[equation~(2.4)]{HLP:Part1}). Thus,
\begin{gather*}
 P_\rho\big(U_jU^k\big)={\rm e}^{-2{\rm i}\pi c_{j,k}} P_\rho\big(U^{k+e_j}\big)= {\rm e}^{-2{\rm i}\pi c_{j,k}} \rho(k+e_j)U^{k+e_j}= \rho(k+e_j)U_jU^{k}.
\end{gather*}
This implies that $U_j^{-1}P_{\rho}\big(U_jU^k\big) =\rho(k+e_j)U^k$. It then follows that, for all $u= \sum\limits_{k\in \Z^n}u_kU^k$ in~$\cA_\theta$, we have
\begin{gather*}
 U_j^{-1}\big(P_{\rho}U_j\big)u= \sum_{k\in \Z^n} u_k U_j^{-1}P_{\rho}\big( U-jU^k\big)= \sum_{k\in \Z^n} u_k \rho(k+e_j)U^k=P_{\rho(\cdot+e_j)}u.
\end{gather*}
This shows that $U_j^{-1}P_{\rho}U_j= P_{\rho(\cdot+e_j)}$. Therefore, we have
\begin{gather*}
 \big[P_{U_j^{-1}\rho}, U_j\big]= \big[U_j^{-1}P_{\rho}, U_j\big]=U_j^{-1}P_{\rho}U_j-P_\rho= P_{\rho(\cdot+e_j)}-P_\rho=P_{\Delta_j\rho}.
\end{gather*}
The result is proved.
\end{proof}

The above lemma suggests to seek for a decomposition of symbols as sums of finite differences. The strategy is to use the decomposition as sums of $\partial_{\xi_j}$-derivatives and re-express the $\partial_{\xi_j}$-derivatives as finite differences.

\begin{Lemma}[\cite{FGLS:JFA96, Gu:AIM85}]\label{lemE} Let $\rho(\xi)\in S^q(\R^n)$, $q\in \C$. Suppose that one of the following conditions are satisfied:
\begin{enumerate}\itemsep=0pt
 \item[$(i)$] $q$ is not an integer~$\geq -n$.

 \item[$(ii)$] $q$ is an integer~$\geq -n$ and $\int_{\bS^{n-1}} \rho_{-n}(\xi){\rm d}^{n-1}\xi=0$.
\end{enumerate}
Then there are symbols $\rho_1(\xi), \ldots, \rho_n(\xi)$ in $S^{q+1}\big(\R^n\big)$ such that
\begin{gather*}
 \rho(\xi)= \partial_{\xi_1}\rho(\xi) + \cdots + \partial_{\xi_n}\rho(\xi) \quad \bmod \cS\big(\R^n\big).
\end{gather*}
\end{Lemma}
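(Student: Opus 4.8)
The plan is to reduce to the homogeneous components of $\rho(\xi)$ and treat each of them separately, with the component of degree exactly $-n$ handled by a Poisson-type equation on the sphere. Write $\rho(\xi)\sim\sum_{j\geq0}\rho_{q-j}(\xi)$ with $\rho_{q-j}(\xi)\in S_{q-j}(\R^n)$. For each $j$ I aim to produce homogeneous functions $\rho_{1,q+1-j}(\xi),\ldots,\rho_{n,q+1-j}(\xi)\in S_{q+1-j}(\R^n)$ such that $\sum_{i}\partial_{\xi_i}\rho_{i,q+1-j}(\xi)=\rho_{q-j}(\xi)$ on $\Rno$. Granting this, I would invoke the asymptotic completeness of classical symbols to choose $\rho_i(\xi)\in S^{q+1}(\R^n)$ with $\rho_i(\xi)\sim\sum_{j\geq0}\rho_{i,q+1-j}(\xi)$; differentiating the asymptotic expansions termwise then gives $\sum_i\partial_{\xi_i}\rho_i(\xi)\sim\sum_{j\geq0}\rho_{q-j}(\xi)\sim\rho(\xi)$, so that $\rho(\xi)-\sum_i\partial_{\xi_i}\rho_i(\xi)$ has vanishing asymptotic expansion and hence lies in $\bigcap_{m}\stS^{m}(\R^n)=\cS(\R^n)$ (Definition~\ref{def:Symbols.classicalsymbols} and Remark~\ref{rmk:Symbols.symbols-intersection}). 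This is exactly the asserted congruence modulo $\cS(\R^n)$.

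For the non-critical degrees---that is, whenever $q-j\neq-n$---I would use Euler's identity: if $h$ is smooth and homogeneous of degree $a$ on $\Rno$, then $\sum_i\xi_i\partial_{\xi_i}h=ah$, hence $\sum_i\partial_{\xi_i}(\xi_i h)=(n+a)h$. Applied with $h=\rho_{q-j}$ and $a=q-j$, and dividing by the nonzero scalar $n+q-j$, this suggests setting $\rho_{i,q+1-j}(\xi):=(n+q-j)^{-1}\xi_i\rho_{q-j}(\xi)$, which is smooth on $\Rno$, homogeneous of degree $q+1-j$, and satisfies $\sum_i\partial_{\xi_i}\rho_{i,q+1-j}=\rho_{q-j}$. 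Under hypothesis~(i) every homogeneous degree is of this kind: if $q\notin\Z$ then $q-j\notin\Z$, while if $q\in\Z$ with $q<-n$ then $q-j\leq q<-n$; in both cases $q-j\neq-n$, so this step alone settles case~(i).

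The remaining piece, occurring only under hypothesis~(ii), is the component of degree exactly $-n$, say $\rho_{-n}=\rho_{q-j_0}$ with $j_0=q+n\geq0$. This is where the hypothesis $\int_{\bS^{n-1}}\rho_{-n}(\xi){\rm d}^{n-1}\xi=0$ must be used, and it is the main obstacle. Writing $\rho_{-n}(\xi)=|\xi|^{-n}\phi(\xi/|\xi|)$ with $\phi\in C^\infty(\bS^{n-1})$ (so $\phi=\rho_{-n}$ on $\bS^{n-1}$), the hypothesis says exactly that $\int_{\bS^{n-1}}\phi\,{\rm d}^{n-1}\xi=0$, i.e., $\phi$ is orthogonal to the constants. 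Since $n\geq2$ the sphere is connected, so by elliptic theory there is $\psi\in C^\infty(\bS^{n-1})$ with $\Delta_{\bS^{n-1}}\psi=\phi$. I would then set $g(\xi):=|\xi|^{2-n}\psi(\xi/|\xi|)$, smooth on $\Rno$ and homogeneous of degree $2-n$. A short computation in polar coordinates $\xi=r\omega$, using $\Delta_\xi=\partial_r^2+\frac{n-1}{r}\partial_r+\frac1{r^2}\Delta_{\bS^{n-1}}$, shows that the two radial contributions cancel and $\Delta_\xi g=r^{-n}\Delta_{\bS^{n-1}}\psi=\rho_{-n}$ on $\Rno$. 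Hence, taking $\rho_{i,-n+1}(\xi):=\partial_{\xi_i}g(\xi)$, homogeneous of degree $-n+1=q+1-j_0$, gives $\sum_i\partial_{\xi_i}\rho_{i,-n+1}=\Delta_\xi g=\rho_{-n}$. Feeding this component, together with the Euler-trick components for all other degrees, into the assembly step of the first paragraph finishes the proof.

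The points I am glossing over are routine: termwise differentiation of an asymptotic expansion follows at once from the symbol estimates of Definition~\ref{def:Symbols.classicalsymbols}, and the cutoff bookkeeping inherent in the asymptotic summation is harmless because $\xi$-derivatives falling on the cutoffs contribute compactly supported---hence Schwartz---terms. The one genuinely delicate point is the degree $-n$ component: the solvability of $\Delta_{\bS^{n-1}}\psi=\phi$ under condition~(ii), and the radial cancellation that upgrades it to $\Delta_\xi g=\rho_{-n}$.
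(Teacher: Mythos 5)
Your proof is correct. The paper does not give its own proof of Lemma~\ref{lemE}; it cites it to Fedosov--Golse--Leichtnam--Schrohe and Guillemin, and your argument reproduces the standard classical proof found there: the Euler identity $\sum_i\partial_{\xi_i}(\xi_ih)=(n+a)h$ disposes of every homogeneous degree $a=q-j\neq-n$ by division by the nonzero scalar $n+q-j$, and the one critical degree $-n$ (which under hypothesis~(ii) is the only one that resists) is resolved by solving $\Delta_{\bS^{n-1}}\psi=\phi$ on the sphere, where solvability is exactly the orthogonality condition $\int_{\bS^{n-1}}\rho_{-n}=0$ and the radial terms in $\Delta_\xi\bigl(|\xi|^{2-n}\psi(\xi/|\xi|)\bigr)$ cancel because $2-n$ is the harmonic exponent. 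Your Borel-summation assembly and the remark that derivatives hitting the excision cutoffs produce compactly supported, hence Schwartz, errors are the right way to convert the degree-by-degree identity into the congruence modulo $\cS(\R^n)$.
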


The following lemma provides us with a relationship between partial differential derivatives and finite differences for classical symbols. Its proof is postponed to Section~\ref{sec:lemD}.

\begin{Lemma}\label{lemD}
 Let $\rho(\xi)\in S^{q}\big(\R^n\big)$, $q\in \C$. Then, for $j=1,\ldots, n$, there is a symbol $\rho_j(\xi) \in S^{q}\big(\R^n\big)$ such that
$\partial_{\xi_j}\rho(\xi) = \Delta_j\rho_j(\xi) \bmod \cS\big(\R^n\big)$.
\end{Lemma}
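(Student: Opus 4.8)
The plan is to reduce to the case of a homogeneous symbol and then use a Taylor expansion of the finite difference operator $\Delta_j$ around the differential operator $\partial_{\xi_j}$. First I would write $\rho(\xi)\sim\sum_{k\geq 0}\rho_{q-k}(\xi)$ with $\rho_{q-k}\in S_{q-k}(\R^n)$. Since the assertion is modulo $\cS(\R^n)$ and $\partial_{\xi_j}$ maps $\cS(\R^n)$ into itself, it suffices to treat each homogeneous piece $\rho_{q-k}$ separately, provided one controls the orders so that the resulting series still defines a classical symbol of order~$q$. So the heart of the matter is: given $\sigma(\xi)\in S_p(\R^n)$, produce $\sigma_j(\xi)\in S^p(\R^n)$ with $\partial_{\xi_j}\sigma=\Delta_j\sigma_j$ modulo a more negative (eventually smoothing after summation) remainder.

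The key idea is to invert the relation $\Delta_j=\mathrm{e}^{\partial_{\xi_j}}-1$ formally. Writing $\partial_{\xi_j}=\log(1+\Delta_j)=\sum_{\ell\geq 1}\frac{(-1)^{\ell-1}}{\ell}\Delta_j^\ell$ suggests that we should \emph{not} try to express $\partial_{\xi_j}\sigma$ directly as $\Delta_j(\text{something})$, but rather go the other way: set $\sigma_j=\big(\frac{\partial_{\xi_j}}{\mathrm{e}^{\partial_{\xi_j}}-1}\big)\sigma$, interpreted as the asymptotic series $\sigma_j(\xi)\sim\sum_{\ell\geq 0}\frac{B_\ell}{\ell!}\partial_{\xi_j}^\ell\sigma(\xi)$ where $B_\ell$ are the Bernoulli numbers. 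For a homogeneous $\sigma$ of degree $p$, the term $\partial_{\xi_j}^\ell\sigma$ is homogeneous of degree $p-\ell$, so this is (after cutting off near $\xi=0$) a genuine asymptotic expansion of a classical symbol in $S^p(\R^n)$, and by construction $\Delta_j\sigma_j\sim(\mathrm{e}^{\partial_{\xi_j}}-1)\sigma_j$ has asymptotic expansion $\partial_{\xi_j}\sigma$ to all orders, since the formal identity $(\mathrm{e}^{t}-1)\cdot\frac{t}{\mathrm{e}^t-1}=t$ holds at the level of formal power series in $t=\partial_{\xi_j}$. Concretely, one checks that $\Delta_j\sigma_j(\xi)=\sum_{m\geq 1}\frac{1}{m!}\partial_{\xi_j}^m\sigma_j(\xi)$ term-by-term collapses, using the defining recurrence for Bernoulli numbers $\sum_{m=0}^{\ell}\binom{\ell+1}{m}B_m=0$, to $\partial_{\xi_j}\sigma(\xi)$ modulo symbols of arbitrarily negative order; summing the contributions over all the homogeneous components $\rho_{q-k}$ and invoking Borel summation of the resulting asymptotic series then gives the claimed $\rho_j(\xi)\in S^q(\R^n)$ with $\partial_{\xi_j}\rho(\xi)-\Delta_j\rho_j(\xi)\in\cS(\R^n)$.

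The step I expect to be the main obstacle is making the asymptotic-summation bookkeeping rigorous: one must verify that the series $\sum_{\ell}\frac{B_\ell}{\ell!}\partial_{\xi_j}^\ell\rho$, applied to the full classical symbol $\rho$ (not just a single homogeneous term), defines an element of $S^q(\R^n)$ — here one chooses a cutoff $\chi$ vanishing near $0$, multiplies each homogeneous piece $\partial_{\xi_j}^\ell\rho_{q-k}$ by $(1-\chi)$, and uses a Borel-type argument to assemble a symbol with the prescribed asymptotic expansion — and that the telescoping identity $\Delta_j\rho_j\sim\partial_{\xi_j}\rho$ survives passage from formal power series to asymptotic expansions, i.e. that the remainder after $N$ terms genuinely lies in $S^{q-N}(\R^n)$. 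The estimates on $\Delta_j$ acting on symbols follow from the mean value theorem applied repeatedly (as in the proof of Lemma~\ref{lemC}'s toroidal counterparts), and the only genuinely combinatorial input is the Bernoulli-number recurrence, which guarantees the exact cancellation. Everything else is routine symbol calculus of the kind already used in Lemmas~\ref{lemB1} and~\ref{lemB}.
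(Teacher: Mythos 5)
Your proposal is correct, and it takes a genuinely different route from the paper's. Both proofs start from the formal identity $\Delta_j=\mathrm{e}^{\partial_{\xi_j}}-1$ and aim to ``invert'' it, but the paper expands $\partial_{\xi_j}=\log(1+\Delta_j)$ and builds $\rho_j$ as a Newton-type series in the finite-difference operator, $\rho_j\sim\sum_{\ell\geq0}\frac{(-1)^\ell}{\ell+1}\Delta_j^{\ell}\rho$. This requires two supporting facts that you do not need: Lemma~\ref{lemD2} (the Ruzhansky--Turunen estimate comparing $\varphi'(t)$ with the alternating sum $\sum\frac{(-1)^{\ell-1}}{\ell}\Delta^\ell\varphi$, which gives Lemma~\ref{lemD3}), and the conclusion of Lemma~\ref{lemD4} that $\Delta_j^\ell\rho\in S^{q-\ell}$ for classical $\rho$ (needed so that the Newton series consists of classical symbols of decreasing order and can be Borel-summed inside $S^q$). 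Your version instead sets $\rho_j\sim\sum_{\ell\geq0}\frac{B_\ell}{\ell!}\partial_{\xi_j}^\ell\rho$, where the terms $\partial_{\xi_j}^\ell\rho\in S^{q-\ell}$ are trivially classical, and then verifies $\Delta_j\rho_j\sim\partial_{\xi_j}\rho$ by substituting the Taylor asymptotic $\Delta_j\sigma\sim\sum_{m\geq1}\frac{1}{m!}\partial_{\xi_j}^m\sigma$ (which is precisely the estimate proved inside the paper's Lemma~\ref{lemD4}, and is the one shared ingredient), reorganising the double series, and invoking the Bernoulli recurrence $\sum_{\ell=0}^{k-1}\binom{k}{\ell}B_\ell=0$ for $k\geq2$ to collapse everything to the $k=1$ term. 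In effect you trade Lemma~\ref{lemD2} for the Bernoulli recurrence, which is arguably a more elementary and self-contained input, and you avoid needing to know that $\Delta_j^\ell\rho$ is classical. Two small remarks on presentation: the initial reduction to homogeneous pieces is unnecessary --- as you yourself notice later, the series $\sum_\ell\frac{B_\ell}{\ell!}\partial_{\xi_j}^\ell\rho$ applied directly to the full classical symbol $\rho$ already has terms in $S^{q-\ell}$, so a single Borel summation suffices --- and the displayed relation $\Delta_j\sigma_j(\xi)=\sum_{m\geq1}\frac{1}{m!}\partial_{\xi_j}^m\sigma_j(\xi)$ should be an asymptotic $\sim$ rather than an equality, as the paragraph's conclusion makes clear you intend.
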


By combining Lemmas~\ref{lemE} and~\ref{lemD} we arrive at the following description of sums of finite differences modulo $\cS\big(\R^n\big)$.

\begin{Lemma}\label{lemE'} Let $\rho(\xi)\in S^q\big(\R^n\big)$, $q\in \C$, satisfy one of the conditions $(i)$ or $(ii)$ of Lemma~{\rm \ref{lemE}}. Then there are symbols
$\rho_1(\xi), \ldots, \rho_n(\xi)$ in $S^{q+1}\big(\R^n\big)$ such that
\begin{gather*}
 \rho(\xi)=\Delta_1\rho_1(\xi) + \cdots + \Delta_n\rho_n(\xi) \quad \bmod \cS\big(\R^n\big).
\end{gather*}
\end{Lemma}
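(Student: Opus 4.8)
The plan is to chain together the two preceding lemmas in the obvious way. Given $\rho(\xi)\in S^q(\R^n)$ satisfying condition $(i)$ or $(ii)$ of Lemma~\ref{lemE}, first apply Lemma~\ref{lemE} to obtain symbols $\sigma_1(\xi),\ldots,\sigma_n(\xi)$ in $S^{q+1}(\R^n)$ with
\begin{gather*}
 \rho(\xi) = \partial_{\xi_1}\sigma_1(\xi) + \cdots + \partial_{\xi_n}\sigma_n(\xi) \quad \bmod \cS\big(\R^n\big).
\end{gather*}
(Note the statement of Lemma~\ref{lemE} as typeset has a typo — it should read $\partial_{\xi_j}\rho_j$ on the right — but the intended content is clear.) Then, for each $j$, apply Lemma~\ref{lemD} to the symbol $\sigma_j(\xi)\in S^{q+1}(\R^n)$: there is $\rho_j(\xi)\in S^{q+1}(\R^n)$ with $\partial_{\xi_j}\sigma_j(\xi) = \Delta_j\rho_j(\xi) \bmod \cS(\R^n)$.

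Summing these $n$ congruences and substituting into the first display gives
\begin{gather*}
 \rho(\xi) = \Delta_1\rho_1(\xi) + \cdots + \Delta_n\rho_n(\xi) \quad \bmod \cS\big(\R^n\big),
\end{gather*}
which is exactly the desired conclusion, with all the $\rho_j$ in $S^{q+1}(\R^n)$ as required. The only points to verify are bookkeeping ones: that $\cS(\R^n)$ is closed under the finite-difference operators $\Delta_j$ (immediate from the definition, since a rapid-decay symbol remains rapid-decay after shifting the argument by $e_j$ and subtracting) and under finite sums, so that combining the "$\bmod \cS(\R^n)$" statements is legitimate; and that the orders match, so that each $\rho_j$ genuinely lies in $S^{q+1}(\R^n)$.

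There is no real obstacle here — the lemma is a formal corollary, and essentially all the work has already been done in Lemmas~\ref{lemE} and~\ref{lemD}. If anything, the mild subtlety is simply making sure one applies Lemma~\ref{lemD} to the right symbols (the $\sigma_j$ of order $q+1$, not $\rho$ itself) so that the output orders are consistent; but this is routine. I would write the proof in three or four lines.
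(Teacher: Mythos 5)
Your proof is correct and is exactly what the paper does: the paper states Lemma~\ref{lemE'} with no separate proof, introducing it with the phrase ``By combining Lemmas~\ref{lemE} and~\ref{lemD} we arrive at the following description,'' which is precisely your chaining argument. You are also right that the display in Lemma~\ref{lemE} has a typo (it should read $\partial_{\xi_j}\rho_j$), and your order bookkeeping — applying Lemma~\ref{lemD} to the $\sigma_j\in S^{q+1}(\R^n)$ so that the resulting $\rho_j$ land in $S^{q+1}(\R^n)$ — is the only point requiring care.
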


\subsection[Sums of $\Psi$DOs commutators]{Sums of $\boldsymbol{\Psi}$DOs commutators}
We are in a position to prove the main result of this section.

\begin{Proposition}\label{Prop3}
Let $P\in \Psi^q(\T^n_\theta)$, $q\in \C$. Assume that either $q\not \in \Z$, or $q\in \Z$ and $\Res (P)=0$. Then there are operators $P_1, \ldots, P_n$ in $\Psi^{q+1}(\T^n_\theta)$ and operators $Q_1, \ldots, Q_n$ in $\Psi^{q}(\cA_\theta)$ such that
\begin{gather}
 P= [P_1,U_1]+\cdots + [P_n,U_n] + [\delta_1, Q_1] +\cdots + [\delta_n, Q_n] \quad \bmod \Psi^{-\infty}(\T^n_\theta).
 \label{eq:delta.sum-commutators}
\end{gather}
\end{Proposition}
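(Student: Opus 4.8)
The plan is to reduce the statement to the two key algebraic lemmas already established, Lemma~\ref{lemA} (which turns symbol $\delta_j$-derivatives into commutators with $\delta_j$) and Lemma~\ref{lemC} (which turns symbol finite differences $\Delta_j\rho$ into commutators with the unitaries $U_j$), and to feed into them the symbol decomposition coming from Lemmas~\ref{lemB} and~\ref{lemE'}. First I would pick a symbol $\rho(\xi)\in S^q\big(\R^n;\cA_\theta\big)$ with $P=P_\rho$; this is legitimate by the symbol map of~\eqref{eq:PsiDOs.symbol-map}. Applying Lemma~\ref{lemB} gives symbols $q_1(\xi),\ldots,q_n(\xi)\in S^q\big(\R^n;\cA_\theta\big)$ with
\begin{gather*}
 \rho(\xi)=\tau[\rho(\xi)]+\delta_1 q_1(\xi)+\cdots+\delta_n q_n(\xi).
\end{gather*}
The scalar-valued part $\sigma(\xi):=\tau[\rho(\xi)]\in S^q\big(\R^n\big)$ carries all the remaining difficulty, because the $\delta_j q_j$ terms are already handled: by Lemma~\ref{lemA} each $P_{\delta_j q_j}=[\delta_j,P_{q_j}]$, so setting $Q_j:=P_{q_j}\in\Psi^q(\cA_\theta)$ accounts for the $[\delta_j,Q_j]$ summands in~\eqref{eq:delta.sum-commutators}.

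Next I would verify that $\sigma(\xi)$ satisfies the hypotheses of Lemma~\ref{lemE'}. If $q\notin\Z$, condition~$(i)$ of Lemma~\ref{lemE} holds outright. If $q\in\Z$, I need the vanishing of $\int_{\bS^{n-1}}\sigma_{-n}(\xi)\,{\rm d}^{n-1}\xi$; but $\tau$ is linear and commutes with taking the homogeneous component of degree $-n$, so $\sigma_{-n}(\xi)=\tau[\rho_{-n}(\xi)]$ and hence this integral equals $\int_{\bS^{n-1}}\tau[\rho_{-n}(\xi)]\,{\rm d}^{n-1}\xi=\Res(P)=0$ by hypothesis. (One subtlety to note: $\tau$ restricted to scalars is the identity, and $\rho_{-n}$ is the unique homogeneous symbol of degree $-n$ of $P$, independent of the choice of $\rho$ — this is Remark~\ref{rem:PsiDOs.symbol-uniqueness}.) Either way Lemma~\ref{lemE'} applies and produces scalar symbols $r_1(\xi),\ldots,r_n(\xi)\in S^{q+1}\big(\R^n\big)$ with
\begin{gather*}
 \sigma(\xi)=\Delta_1 r_1(\xi)+\cdots+\Delta_n r_n(\xi)\quad\bmod\cS\big(\R^n\big).
\end{gather*}

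Finally I would translate this back to operators. By Lemma~\ref{lemC}, $P_{\Delta_j r_j}=\big[P_{U_j^{-1}r_j},U_j\big]$, so setting $P_j:=P_{U_j^{-1}r_j}$ gives operators in $\Psi^{q+1}(\T^n_\theta)$ (note $U_j^{-1}r_j(\xi)\in S^{q+1}\big(\R^n;\cA_\theta\big)$ since multiplication by the fixed element $U_j^{-1}\in\cA_\theta$ preserves symbol classes — this is the case $\Phi=U_j^{-1}\cdot$ of Lemma~\ref{lemB1}), and these yield the $[P_j,U_j]$ summands. Assembling the pieces: modulo $\cS\big(\R^n;\cA_\theta\big)$ we have $\rho=\sum_j\Delta_j r_j+\sum_j\delta_j q_j$, and since a symbol in $\cS\big(\R^n;\cA_\theta\big)$ quantizes to a smoothing operator (Proposition~\ref{prop:PsiDOs.smoothing-operator-characterization}), passing to operators gives exactly~\eqref{eq:delta.sum-commutators}. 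The main obstacle — really the only nontrivial analytic input — is bundled inside Lemma~\ref{lemE'} (i.e., Lemmas~\ref{lemE} and~\ref{lemD}): re-expressing $\partial_{\xi_j}$-derivatives of a classical symbol as finite differences $\Delta_j(\cdot)$ modulo Schwartz, which is where the homogeneity of $q$ relative to $-n$ and the obstruction $\int_{\bS^{n-1}}\rho_{-n}$ genuinely enter; everything else here is bookkeeping with the two commutator identities and the fact that left multiplication by elements of $\cA_\theta$ and the operators $\delta_j\Delta^{-1}$, $U_j^{-1}$ act continuously on symbol classes.
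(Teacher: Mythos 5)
Your proposal is correct and follows essentially the same route as the paper's proof: quantize, split $\rho=\tau[\rho]+\sum\delta_j\sigma_j$ via Lemma~\ref{lemB}, handle the $\delta_j$-derivatives via Lemma~\ref{lemA}, verify the obstruction on the scalar part $\tau[\rho]$ so Lemma~\ref{lemE'} applies, and convert the resulting finite differences into $[\,\cdot\,,U_j]$ commutators via Lemma~\ref{lemC}, discarding Schwartz-class errors as smoothing operators. You are a bit more careful than the paper's write-up on two minor bookkeeping points --- identifying $\sigma_{-n}=\tau[\rho_{-n}]$ via Lemma~\ref{lemB1}(ii), and correctly setting $P_j=P_{U_j^{-1}r_j}$ where the paper's final line misprints $P_j=P_{U_j\rho}$.
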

\begin{proof}
 Let us write $P=P_\rho$ with $\rho(\xi)\in S^{q}\big(\R^n; \cA_\theta\big)$. By Lemma~\ref{lemB} there are symbols $\sigma_1(\xi),\ldots, \sigma_n(\xi)$ in
 $S^{q}\big(\R^n;\cA_\theta\big)$ such that
\begin{gather*}
 \rho(\xi) = \tau [ \rho(\xi) ]+\delta_1\sigma_1(\xi) + \cdots + \delta_n\sigma_n(\xi).
\end{gather*}
 Here $\tau[\rho(\xi)]\in S^q\big(\R^n\big)$. Moreover, by assumption either $q\not \in \Z$, or $q\in \Z$ and $\int_{\bS^{n-1}}\tau[\rho_{-n}(\xi)]{\rm d}^{n-1}\xi\allowbreak =\Res(P)=0$. This implies that
 $\tau[\rho(\xi)]$ satisfies the assumptions of Lemma~\ref{lemE'}. Therefore, there are symbols $\rho_1(\xi),\ldots, \rho_n(\xi)$ in $S^{q+1}\big(\R^n\big)$ such that
 \begin{gather*}
 \tau[\rho(\xi)]= \Delta_1\rho_1(\xi) + \cdots + \Delta_n\rho_n(\xi) \quad \bmod \cS\big(\R^n\big).
\end{gather*}
 Thus,
\begin{gather*}
 \rho(\xi) = \Delta_1\rho_1(\xi) + \cdots + \Delta_n\rho_n(\xi)+\delta_1\sigma_1(\xi) + \cdots + \delta_n\sigma_n(\xi) \quad \bmod \cS\big(\R^n\big).
\end{gather*}
 Combining this with Lemma~\ref{lemA} and Lemma~\ref{lemC} we then obtain
\begin{gather*}
 P=P_\rho= [P_1,U_1]+\cdots + [P_n,U_n] + [\delta_1, Q_1] +\cdots + [\delta_n, Q_n] \quad \bmod \Psi^{-\infty}(\T^n_\theta),
\end{gather*}
where we have set $P_j=P_{U_j\rho}\in \Psi^{q+1}(\T^n_\theta)$ and $Q_j=P_{\sigma_j}\in \Psi^{q}(\cA_\theta)$. The proof is complete.
\end{proof}

As a consequence of Proposition~\ref{Prop3} we can easily recover the uniqueness result of~\cite{LNP:TAMS16} for the noncommutative residue. Recall that a trace on $\Psi^\Z(\T^n_\theta)$ is called \emph{singular} when it is annihilated by smoothing operators.

\begin{Corollary}[\cite{LNP:TAMS16}]\label{cor:Comm-mod-smooth.unique-singular-trace}
 The noncommutative residue is the unique singular trace up to constant multiple on $\Psi^\Z(\T^n_\theta)$.
\end{Corollary}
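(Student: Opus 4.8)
The plan is to show that any singular trace $\varphi$ on $\Psi^\Z(\T^n_\theta)$ is a constant multiple of the noncommutative residue $\Res$, the converse being immediate since $\Res$ is a trace (Proposition~\ref{Prop2}) that vanishes on smoothing operators. First I would observe that $\Res$ is not identically zero: for instance $\Res(\Delta^{-n/2})\neq 0$ when $n$ is even, or more robustly one can exhibit an order $-n$ \psido\ whose symbol of degree $-n$ integrates to a nonzero scalar over $\bS^{n-1}$. So fix any $P_0\in\Psi^\Z(\T^n_\theta)$ with $\Res(P_0)\neq 0$, and set $c=\varphi(P_0)/\Res(P_0)$. The goal is then to prove $\varphi(P)=c\,\Res(P)$ for all $P\in\Psi^\Z(\T^n_\theta)$, equivalently $\varphi(Q)=0$ whenever $\Res(Q)=0$, where I write $Q=P-(\Res(P)/\Res(P_0))P_0$ for a general $P$; note $Q\in\Psi^\Z(\T^n_\theta)$ and $\Res(Q)=0$ by construction.

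The heart of the argument is Proposition~\ref{Prop3}. Given $Q\in\Psi^\Z(\T^n_\theta)$ with $\Res(Q)=0$, that proposition provides operators $P_1,\dots,P_n\in\Psi^{q+1}(\T^n_\theta)$ and $Q_1,\dots,Q_n\in\Psi^{q}(\T^n_\theta)$ (where $q\in\Z$ is the order of $Q$, and we may take all these operators in $\Psi^\Z(\T^n_\theta)$) such that
\begin{gather*}
 Q = [P_1,U_1]+\cdots+[P_n,U_n] + [\delta_1,Q_1]+\cdots+[\delta_n,Q_n] \quad \bmod \Psi^{-\infty}(\T^n_\theta).
\end{gather*}
Since $U_j\in\cA_\theta\subset\Psi^0(\T^n_\theta)$ and $\delta_j\in\Psi^1(\T^n_\theta)$ are themselves in $\Psi^\Z(\T^n_\theta)$, every bracket on the right-hand side lies in the commutator subspace $[\Psi^\Z(\T^n_\theta),\Psi^\Z(\T^n_\theta)]$, hence is annihilated by the trace $\varphi$. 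Therefore $\varphi(Q)=\varphi(R)$ where $R\in\Psi^{-\infty}(\T^n_\theta)$ is the smoothing remainder. But $\varphi$ is \emph{singular}, so $\varphi(R)=0$, and thus $\varphi(Q)=0$. This gives $\varphi(P)=c\,\Res(P)$ for every $P$, completing the proof.

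I do not expect a genuine obstacle here, since the deep work is already packaged in Propositions~\ref{Prop3} and~\ref{Prop2}. The only point requiring a moment's care is the non-triviality of $\Res$ (so that the normalizing constant $c$ is well defined): one must exhibit a single classical \psido\ of order $-n$ whose degree $-n$ homogeneous symbol has nonzero average over the unit sphere after applying $\tau$, which is easy using a scalar symbol such as $(1-\chi(\xi))|\xi|^{-n}$. The other bookkeeping point is simply to note that $U_j$ and $\delta_j$, and hence all the commutators produced by Proposition~\ref{Prop3}, genuinely live in the algebra $\Psi^\Z(\T^n_\theta)$ on which the trace identity is available — this is clear since $\delta_j$ has order $1$ and $U_j$ has order $0$.
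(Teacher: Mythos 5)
Your proof is correct and follows essentially the same route as the paper: choose $P_0$ with $\Res(P_0)\neq 0$, reduce a general $P$ to the case $\Res=0$ by subtracting a multiple of $P_0$, and then apply Proposition~\ref{Prop3} to express the result as commutators modulo $\Psi^{-\infty}(\T^n_\theta)$, both of which are annihilated by a singular trace. The only (welcome) extra care you add is explicitly noting the non-triviality of $\Res$ via a scalar symbol such as $(1-\chi(\xi))|\xi|^{-n}$, a point the paper leaves implicit.
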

\begin{proof}
The noncommutative residue is a singular trace. Conversely, let $\varphi$ be a singular trace on $\Psi^\Z(\T^n_\theta)$. Let $P_0\in \Psi^\Z(\T^n_\theta)$ be such that $\Res (P_0)=1$. Let $P\in \Psi^\Z(\T^n)$, and set $\tilde{P}=P-\Res (P)P_0$. Then $\Res (\tilde{P})=\Res(P)-\Res(P)\Res(P_0)=0$, and so it follows from
Proposition~\ref{Prop3} that $P=\Res(P)P_0$ modulo $\big[\Psi^\Z(\T^n_\theta), \Psi^\Z(\T^n_\theta)\big]+\Psi^{-\infty}(\T^n_\theta)$. As $\varphi$ is annihilated by $\big[\Psi^\Z(\T^n_\theta), \Psi^\Z(\T^n_\theta)\big]+\Psi^{-\infty}(\T^n_\theta)$, we deduce that $\varphi(P)=\Res(P)\varphi(P_0)$. That is, $\varphi =c\Res$, with $c:=\varphi(P_0)$. Thus, every singular trace on $\Psi^\Z(\T^n_\theta)$ is a scalar multiple of the noncommutative residue. The proof is complete.
\end{proof}

\begin{Remark}
 It was shown in~\cite{FK:JNCG15, FW:JPDOA11} that in dimension $n=2$ and $n=4$ the noncommutative residue is the unique exotic trace up to constant multiple on $\Psi^\Z(\T^n_\theta)$. In the terminology of~\cite{LNP:TAMS16} a trace on $\Psi^\Z(\T^n_\theta)$ is called \emph{exotic} when it vanishes on \psidos\ of order~$\leq m$ for some given $m\in \Z$.
\end{Remark}

\section{Commutator structure of smoothing operators}\label{sec:commutator-smoothing}
The decomposition~(\ref{eq:delta.sum-commutators}) holds modulo smoothing operators. In this section, we study the commutator structure of smoothing operators.

It is well known that on a closed manifold a smoothing operator is a sum of commutators of smoothing operators if and only if its trace vanishes (see~\cite{Gu:JFA93, Wo:HDR}). We shall now establish the analogue of this result for smoothing operators on $\T_\theta^n$. To reach this end we need the following characterization of smoothing operators.

\begin{Lemma}
 Let $R\in \cL(\cH_\theta)$. The following are equivalent:
\begin{enumerate}\itemsep=0pt
\item[$(i)$] $R$ is a smoothing operator.

\item[$(ii)$] For every $N\geq 0$, there is $C_N>0$ such that
\begin{gather}
\big| \big\langle R\big(U^k\big)| U^{\ell}\big\rangle \big| \leq C_N (1+|k|+|\ell| )^{-N} \qquad \forall\, k,\ell \in \Z^n.
 \label{eq:Smoothing.smoothing-estimates}
\end{gather}
\end{enumerate}
\end{Lemma}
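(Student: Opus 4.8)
The plan is to prove the two implications separately, using the toroidal symbol picture from Proposition~\ref{prop:PsiDOs.smoothing-operator-characterization} and the matrix coefficients $\big\langle R(U^k)\,|\,U^\ell\big\rangle$ as the bridge between the two conditions. The key observation is that for a toroidal \psido\ $P$ with symbol $(\rho_k)_{k\in\Z^n}$ we have $P(U^k)=\rho_k U^k$, so that
\begin{gather*}
 \big\langle P\big(U^k\big)\,\big|\,U^\ell\big\rangle = \big\langle \rho_k U^k\,\big|\,U^\ell\big\rangle = \tau\big(\rho_k U^k\big(U^\ell\big)^{-1}\big),
\end{gather*}
and, expanding $\rho_k=\sum_{m\in\Z^n}(\rho_k)_m U^m$ in its Fourier series, the right-hand side picks out a single Fourier coefficient of $\rho_k$, up to a unimodular scalar coming from the commutation relations~\eqref{eq:NCtori.unitaries-relations}. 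Thus the matrix coefficients of $R$ in the basis $(U^k)$ are, up to unimodular factors, exactly the Fourier coefficients $(\rho_k)_{\ell-k}$ of the symbol (with the appropriate reindexing).

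For the implication $(i)\Rightarrow(ii)$: assume $R$ is smoothing, so by Proposition~\ref{prop:PsiDOs.smoothing-operator-characterization} it is the toroidal \psido\ attached to a symbol $(\rho_k)_{k\in\Z^n}\in\cS\big(\Z^n;\cA_\theta\big)$. By definition of $\cS\big(\Z^n;\cA_\theta\big)$, for every $N$ and every multi-order $\alpha$ we have $\|\delta^\alpha\rho_k\|\leq C_{N\alpha}(1+|k|)^{-N}$. On the other hand, a Fourier coefficient $(\rho_k)_m=\big\langle\rho_k\,|\,U^m\big\rangle$ of an element of $\cA_\theta$ decays faster than any power of $|m|$, with constants controlled by finitely many of the semi-norms $\|\delta^\alpha\rho_k\|$ (this is the standard fact that $\delta^\alpha$ acts as multiplication by $k^\alpha$ on $U^k$, so $\|\delta^\alpha\rho_k\|\geq |m^\alpha|\,|(\rho_k)_m|$). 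Combining a polynomial-decay estimate in $k$ with a polynomial-decay estimate in $m=\ell-k$ and using $(1+|k|)(1+|\ell-k|)\gtrsim 1+|\ell|$, one gets a bound $(1+|k|+|\ell|)^{-N}$ as in~\eqref{eq:Smoothing.smoothing-estimates}. The unimodular scalar factors are harmless since they have modulus~$1$.

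For the converse $(ii)\Rightarrow(i)$: given the rapid decay~\eqref{eq:Smoothing.smoothing-estimates}, define the candidate symbol by $\rho_k:=\sum_{\ell\in\Z^n}\big\langle R(U^k)\,|\,U^\ell\big\rangle\,U^{\ell-k}$ (inserting the correct unimodular correction so that $\rho_k U^k$ recovers $R(U^k)$; the exact scalar is read off from $U_jU^k=\mathrm{e}^{-2\mathrm{i}\pi c_{j,k}}U^{k+e_j}$ as in the proof of Lemma~\ref{lemC}). The rapid decay in $\ell$ for fixed $k$ shows $\rho_k\in\cA_\theta$, and differentiating term by term — $\delta^\alpha$ multiplies the $U^{\ell-k}$ term by $(\ell-k)^\alpha$ — together with the joint decay in $|k|+|\ell|$ shows $\|\delta^\alpha\rho_k\|\leq C_{N\alpha}(1+|k|)^{-N}$ for all $N$, i.e.\ $(\rho_k)_{k\in\Z^n}\in\cS\big(\Z^n;\cA_\theta\big)$. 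By construction the toroidal \psido\ attached to $(\rho_k)$ sends $U^k$ to $R(U^k)$ for every $k$; since $(U^k)_{k\in\Z^n}$ is an orthonormal basis of $\cH_\theta$ and $R\in\cL(\cH_\theta)$ is bounded (and the toroidal \psido\ of a Schwartz symbol is bounded with the same action on the basis), the two operators agree on $\cH_\theta$. Proposition~\ref{prop:PsiDOs.smoothing-operator-characterization} then gives that $R$ is smoothing.

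The main obstacle I expect is bookkeeping rather than conceptual: keeping track of the unimodular cocycle factors $\mathrm{e}^{-2\mathrm{i}\pi c_{j,k}}$ that relate $U^{k+\ell}$ to $U^k U^\ell$, and checking that the identification $\rho_k\leftrightarrow\big(\big\langle R(U^k)\,|\,U^\ell\big\rangle\big)_\ell$ is consistent with the Fréchet topologies, so that differentiating under the (rapidly convergent) sum defining $\rho_k$ is legitimate. Everything else is the routine equivalence between rapid decay of a function on $\Z^n\times\Z^n$ and rapid decay of two separate variables after the change of indices $(k,\ell)\mapsto(k,\ell-k)$, which is where the inequality $(1+|k|+|\ell|)\leq(1+|k|)(1+|\ell-k|)$ (and its reverse-direction companion) does all the work.
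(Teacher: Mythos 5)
Your plan is essentially the paper's argument repackaged: both use Proposition~\ref{prop:PsiDOs.smoothing-operator-characterization} to pass to the toroidal symbol $(\rho_k)$, the identity $R\big(U^k\big)=\rho_kU^k$ to identify the matrix coefficients $\big\langle R\big(U^k\big)\,|\,U^{\ell}\big\rangle$ (up to a unimodular cocycle) with the Fourier coefficients $(\rho_k)_{\ell-k}$, the topological identification $\cA_\theta\cong\cS\big(\Z^n\big)$ (equivalently, $\delta^\alpha U^m=m^\alpha U^m$ plus $|\tau(x)|\leq\|x\|$) to pass between Fr\'echet semi-norms and Fourier decay, and Peetre-type inequalities to switch between separate and joint rapid decay in $(k,\ell)$; the paper simply packages the Fourier-decay step as a standalone Claim about membership in $\cS\big(\Z^n;\cA_\theta\big)$ and invokes {\cite[Lemma~3.29]{HLP:Part1}} for the product $\rho_kU^k$, whereas you expand the matrix coefficients directly. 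One small slip in your closing summary: the inequality $(1+|k|+|\ell|)\leq(1+|k|)(1+|\ell-k|)$ is false (take $\ell=k\neq0$); the correct Peetre inequality is $1+|\ell|\leq(1+|k|)(1+|\ell-k|)$, and you must combine it with $1+|k|+|\ell|\leq(1+|k|)(1+|\ell|)$ and the separate $(1+|k|)^{-N}$ factor you already have from the symbol estimates --- which is in fact what your main argument does, so the proof stands.
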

\begin{proof} The proof relies on the following claim.

\begin{claim*}
 A sequence $(a_k)_{k\in \Z^n}\subset \cA_\theta$ is contained in $\cS\big(\Z^n;\cA_\theta\big)$ if and only if, for every $N\geq 0$, there is $C_{N}>0$ such that
\begin{gather}
 \big| \big \langle a_k| U^{\ell}\big \rangle \big| \leq C_N (1+|k|+|\ell| )^{-N} \qquad \forall\, k,\ell \in \Z^n.
 \label{eq:Smoothing.smoothing-estimates-ak}
\end{gather}
\end{claim*}
\begin{proof}[Proof of the claim]
The Fourier series decomposition $u=\sum u_k U^k$ provides us with a topological isomorphism between $\cA_\theta$ and $\cS\big(\Z^n\big)$ (see~\cite{Co:Foliations82, HLP:Part1}). This means that the topology of $\cA_\theta$ is generated by the semi-norms,
\begin{gather*}
 u \longrightarrow \sup_{\ell \in \Z^n} (1+|\ell|)^{-M} \big|\big\langle u| U^{\ell}\big\rangle \big|, \qquad M\geq 0.
\end{gather*}
Therefore, a sequence $(a_k)_{k\in \Z^n}\subset \cA_\theta$ is contained in $\cS\big(\Z^n;\cA_\theta\big)$ if and only if, for all $M,N\geq 0$ there is $C_{MN}>0$ such that
\begin{gather*}
 \sup_{l\in \Z^n} (1+|\ell| )^{-M} \big|\big\langle u|U^{\ell}\big\rangle \big|\leq C_{MN} (1+|k| )^{-N} \qquad \forall\, k\in \Z^n
\end{gather*}
That is,
\begin{gather}
\big|\big\langle u|U^{\ell}\big\rangle \big|\leq C_{MN} (1+|k| )^{-N} (1+|\ell| )^{-M} \qquad \forall\, k,\ell \in \Z^n.
\label{eq:Smoothing.smoothing-estimates-ak-MN}
\end{gather}
The proof of the claim is then completed by observing that the estimates~(\ref{eq:Smoothing.smoothing-estimates-ak-MN}) are equivalent to the estimates~(\ref{eq:Smoothing.smoothing-estimates-ak}) thanks to the inequalities
\[
(1+|k|)^{\frac12}(1+|\ell|)^{\frac12}\leq (1+|k|+|\ell|) \leq (1+|k|)(1+|\ell|).\tag*{\qed}
\]\renewcommand{\qed}{}
\end{proof}

Suppose that $R$ is a smoothing operator. By Proposition~\ref{prop:PsiDOs.smoothing-operator-characterization} this is the toroidal \psido\ associated with a symbol $(\rho_k)_{k\in \Z^n}$ in
$\cS\big(\Z^n;\cA\big)$. We then have $R\big(U^k\big)=\rho_kU^k$ for all $k\in \Z^n$. Note that if a sequence $(a_k)_{k\in \Z^n}$ is in $\cS\big(\Z^n;\cA_\theta\big)$, then
$\big(a_kU^k\big)_{k\in \Z^n}$ is in $\cS\big(\Z^n;\cA\big)$ as well (see~\cite[Lemma~3.29]{HLP:Part1}). Therefore, we see that $\big(R\big(U^k\big)\big)_{k\in \Z^n}\in \cS\big(\Z^n;\cA_\theta\big)$. The claim above then ensures that~$R$ satisfies the estimates~(\ref{eq:Smoothing.smoothing-estimates}).

Conversely, suppose that the estimates~(\ref{eq:Smoothing.smoothing-estimates}) are satisfied. The above claim implies that the sequence $\big(R\big(U^k\big)\big)_{k\in \Z^n}$ is in $\cS\big(\Z^n;\cA_\theta\big)$. Set $\rho_k= R\big(U^k\big)\big(U^k\big)^{-1}$, $k\in \Z^n$. Then $(\rho_k)_{k\in \Z^n}$ is in $\cS\big(\Z^n;\cA_\theta\big)$ as well.
Let $R'$ be the associated toroidal \psido. This is a smoothing operator by Proposition~\ref{prop:PsiDOs.smoothing-operator-characterization}.
Moreover, by using the continuity of~$R$, for all $u=\sum\limits_{k\in \Z^n} u_kU^k$ in $\cA_\theta$, we have
\begin{gather*}
 Ru= \sum_{k\in \Z^n} u_k R\big(U^k\big) = \sum_{k\in \Z^n} u_k \rho_kU^k = R'u.
\end{gather*}
 Thus, $R'=R$, and hence $R$ is a smoothing operator. The proof is complete.
\end{proof}

In the terminology of~\cite[Appendix~A]{Gu:JFA93}, the estimates~(\ref{eq:Smoothing.smoothing-estimates}) mean that the operator $R$ is smoothing with respect to the orthonormal basis $\big\{U^k\big\}_{k\in \Z^n}$. Therefore, by using~\cite[Theorem~A.1]{Gu:JFA93} we arrive at the following statement.

\begin{Proposition}\label{lemF}
For every $R\in \Psi^{-\infty}(\T^n_\theta)$ such that $\Tr(R)=0$ there are operators $R_1,\ldots, R_4$ in $\Psi^{-\infty}(\T^n_\theta)$ such that $R=[R_1,R_2]+[R_3,R_4]$.
\end{Proposition}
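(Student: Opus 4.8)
The plan is to recast $R$ in the abstract framework of~\cite[Appendix~A]{Gu:JFA93} and then quote Guillemin's commutator theorem. By the Lemma preceding the statement, an operator $R\in \cL(\cH_\theta)$ lies in $\Psi^{-\infty}(\T^n_\theta)$ if and only if its matrix coefficients relative to the orthonormal basis $\big\{U^k\big\}_{k\in \Z^n}$ of $\cH_\theta$ obey the rapid-decay estimates~(\ref{eq:Smoothing.smoothing-estimates}), i.e., precisely when $R$ is smoothing with respect to the basis $\big\{U^k\big\}_{k\in \Z^n}$ in Guillemin's sense. The proof then consists of three bookkeeping steps: verify the hypotheses of~\cite[Theorem~A.1]{Gu:JFA93}, apply that theorem, and translate its conclusion back via the same Lemma.

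The one hypothesis needing a check, beyond the smoothing property just described, is that the trace functional appearing in~\cite[Theorem~A.1]{Gu:JFA93} --- the sum of the diagonal matrix coefficients of $R$ --- vanishes. First I would record that this functional coincides with the operator trace: writing $R$ as the toroidal \psido\ associated with a symbol $(\rho_k)_{k\in \Z^n}\in \cS\big(\Z^n;\cA_\theta\big)$ (Proposition~\ref{prop:PsiDOs.smoothing-operator-characterization}), so that $R\big(U^k\big)=\rho_kU^k$, the trace formula~(\ref{eq:PsiDOs.trace-formula}) of Remark~\ref{rmk:PsiDOs.trace-formula} gives
\begin{gather*}
 \Tr(R)=\sum_{k\in \Z^n}\tau\big[\rho_k\big]=\sum_{k\in \Z^n}\big\langle R\big(U^k\big)\,\big|\,U^k\big\rangle.
\end{gather*}
Hence the assumption $\Tr(R)=0$ is exactly what is required. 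Then I would invoke~\cite[Theorem~A.1]{Gu:JFA93}, which produces operators $R_1,\ldots,R_4$, each smoothing with respect to $\big\{U^k\big\}_{k\in \Z^n}$ (and, having rapidly decaying matrix entries, bounded on $\cH_\theta$), such that $R=[R_1,R_2]+[R_3,R_4]$. Finally, applying the preceding Lemma in the reverse direction shows $R_i\in \Psi^{-\infty}(\T^n_\theta)$ for each $i$, which completes the argument.

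I expect the only point deserving care --- and the reason the characterization Lemma was isolated beforehand --- to be the matching of the two frameworks: confirming that the combinatorial setup of~\cite[Appendix~A]{Gu:JFA93} applies to the index set $\Z^n$ (it does, $\Z^n$ being a lattice, with the shifts $U^k\mapsto U^{k+m}$ serving as Guillemin's elementary building blocks) and that being smoothing with respect to a basis there is the very notion of membership in $\Psi^{-\infty}(\T^n_\theta)$, as established by that Lemma together with the trace identity above. There is no genuine analytic obstacle here; everything else is bookkeeping.
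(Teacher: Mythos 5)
Your proposal is correct and follows essentially the same route as the paper: characterize $\Psi^{-\infty}(\T^n_\theta)$ via the matrix-coefficient estimates of the preceding Lemma, observe that this is precisely Guillemin's notion of ``smoothing with respect to the orthonormal basis $\{U^k\}_{k\in\Z^n}$,'' and invoke \cite[Theorem~A.1]{Gu:JFA93}. The one thing you spell out that the paper takes for granted --- that the diagonal-sum functional in Guillemin's theorem agrees with $\Tr(R)$, via $\Tr(R)=\sum_k\tau[\rho_k]=\sum_k\big\langle R\big(U^k\big)\,\big|\,U^k\big\rangle$ --- is a genuine, correctly verified detail. One small caveat: your closing remark about ``shifts $U^k\mapsto U^{k+m}$ serving as Guillemin's elementary building blocks'' reads more structure into \cite[Theorem~A.1]{Gu:JFA93} than is actually used; that theorem only needs a separable Hilbert space with a fixed countable orthonormal basis, so no lattice structure on the index set is involved. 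This does not affect the validity of your argument.
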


Thanks to the above lemma we see that in order to show that every smoothing operators is a~sum of \psido\ commutators we only need to exhibit one such operator that has a non-zero trace. For smoothing operators in~$\R^n$ this is done in~\cite{Po:JAM10} by using the Green function of the Laplacian to construct a smoothing operator whose symbol is a sum of derivatives. We shall proceed somewhat similarly by exhibiting a smoothing operator with non-zero trace whose symbol is a~sum of finite differences.

\begin{Lemma}\label{lemN5}
 Let $\chi(\xi) \in \cS\big(\R^n\big)$ be such that $\check{\chi}(x)=1$ near $x=0$ and $\supp \check{\chi} \subset (-2\pi,2\pi)^n$. Then there are symbols $\rho_1(\xi), \ldots, \rho_{n}(\xi)$ in $S^{-n+2}\big(\R^n\big)$ such that $\chi=\Delta_1\rho_1 +\cdots +\Delta_n\rho_n$.
\end{Lemma}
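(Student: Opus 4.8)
The plan is to use the Green function / partial-inverse trick for the flat Laplacian on $\R^n$, analogously to Lemma~\ref{lemB} but now in the $\xi$-variable rather than in $\cA_\theta$. Write $\Delta_\xi = \partial_{\xi_1}^2 + \cdots + \partial_{\xi_n}^2$ for the Euclidean Laplacian acting on symbols. Since $\chi(\xi)\in\cS(\R^n)$, there is $\psi(\xi)\in\cS(\R^n)$ with $\Delta_\xi\psi = \chi$ modulo a polynomial obstruction; more concretely, pick $\psi$ so that $\widehat\psi(x) = -|x|^{-2}\widehat\chi(x)$ away from $x=0$, which is legitimate because $\widehat\chi = \check\chi$ (up to normalization) vanishes to infinite order at $0$ — wait, it does not, since $\check\chi(x)=1$ near $x=0$. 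So one must be slightly more careful: choose a cutoff $\kappa(x)\in C_c^\infty(\R^n)$ equal to $1$ near $0$, and set $\widehat{\psi}(x) = -(1-\kappa(x))|x|^{-2}\widehat\chi(x)$. Then $\psi\in\cS(\R^n)$ and $\Delta_\xi\psi = \chi + r$ where $\widehat r(x) = \kappa(x)\widehat\chi(x) - (\text{lower order})$ has compact support, hence $r\in\cS(\R^n)$ is itself a Schwartz function whose Fourier transform is compactly supported in $(-2\pi,2\pi)^n$; absorbing $r$ into $\chi$ at the end (it is again of the required type), we may as well assume $\Delta_\xi\psi = \chi$ exactly with $\psi\in\cS(\R^n)$.

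Next, set $\sigma_j(\xi) := \partial_{\xi_j}\psi(\xi)\in\cS(\R^n)\subset S^{-N}(\R^n)$ for all $N$, so that $\partial_{\xi_1}\sigma_1 + \cdots + \partial_{\xi_n}\sigma_n = \Delta_\xi\psi = \chi$. Now I would invoke Lemma~\ref{lemD}: for each $j$ there is $\rho_j(\xi)\in S^{q}(\R^n)$ with $\partial_{\xi_j}\sigma_j(\xi) = \Delta_j\rho_j(\xi) \bmod \cS(\R^n)$. However, a direct application of Lemma~\ref{lemD} only gives the identity modulo $\cS(\R^n)$, whereas the statement of Lemma~\ref{lemN5} asks for an \emph{exact} equality $\chi=\Delta_1\rho_1+\cdots+\Delta_n\rho_n$. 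So the real work is to run the argument at the level of Fourier transforms, where finite differences become multiplication operators and everything can be made exact. Under Fourier transform, $\Delta_j$ corresponds to multiplication by $({\rm e}^{{\rm i}x_j}-1)$, so solving $\chi = \sum_j \Delta_j\rho_j$ amounts to writing $\widehat\chi(x) = \sum_j ({\rm e}^{{\rm i}x_j}-1)\widehat{\rho_j}(x)$. Since $\widehat\chi = \check\chi$ is supported in the cube $(-2\pi,2\pi)^n$, the functions ${\rm e}^{{\rm i}x_j}-1$ have no common zero on the support of $\widehat\chi$ except the single point $x=0$, and $\check\chi\equiv 1$ near $0$ is the obstruction — but note $x=0$ is where we are free, because $\Delta_j$ kills constants: the constant term of $\chi$ as a ``symbol'' plays no role. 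The clean way: fix a smooth partition $\widehat\chi(x) = \sum_{j=1}^n \lambda_j(x)$ with $\lambda_j\in C_c^\infty((-2\pi,2\pi)^n)$ supported where $|{\rm e}^{{\rm i}x_j}-1|$ is bounded below — e.g. $\lambda_j$ supported in a conic neighbourhood of the $x_j$-axis away from the origin, plus handle the region near $0$ separately using that $\check\chi$ is locally constant there so contributes nothing after applying $\Delta_j$ to a suitable primitive. Then $\widehat{\rho_j}(x) := ({\rm e}^{{\rm i}x_j}-1)^{-1}\lambda_j(x)$ is smooth and compactly supported, so $\rho_j := (\widehat{\rho_j})^{\vee}\in\cS(\R^n)$. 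One checks $\rho_j\in S^{-n+2}(\R^n)$ trivially since $\cS(\R^n)\subset S^{q}(\R^n)$ for every $q$, in particular for $q=-n+2$.

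The main obstacle is the treatment of the region near $x=0$: there $({\rm e}^{{\rm i}x_j}-1)^{-1}$ has a genuine simple pole, so one cannot naively divide. The resolution exploits that $\check\chi\equiv 1$ on a neighbourhood of $0$, so near $0$ we need $\sum_j ({\rm e}^{{\rm i}x_j}-1)\widehat{\rho_j}(x) = 1$; choosing, say, $\widehat{\rho_1}(x) = ({\rm e}^{{\rm i}x_1}-1)^{-1}\eta(x)$ near $0$ with $\eta\in C_c^\infty$ equal to $1$ near $0$ does \emph{not} work because of the pole, but $\widehat{\rho_1}(x) = \eta(x)\cdot g(x)$ where $g$ is smooth with $({\rm e}^{{\rm i}x_1}-1)g(x)\to 1$ as $x\to 0$ forces $g$ to blow up. The correct fix is to realise that we do not need to produce the constant $1$: since $\chi$ is being regarded as a symbol and all we want is an identity of symbols, and since $\Delta_j(\text{const in }\xi)=0$, we may freely subtract from $\chi$ any symbol whose Fourier transform is supported at $\{0\}$, i.e. any polynomial in $\xi$; but $\chi\in\cS(\R^n)$ has no polynomial part, so instead we use a different device — we note that near $x=0$, $\widehat\chi(x)-1$ vanishes, hence $\widehat\chi(x) = 1 + O(x^\infty)$ there is \emph{wrong}; rather $\widehat\chi\equiv 1$ exactly near $0$, so $\widehat\chi - \kappa$ vanishes near $0$ for the cutoff $\kappa$ above, and $\kappa$ itself, being the Fourier transform of a Schwartz function, can be handled by writing $\kappa(x) = \sum_j ({\rm e}^{{\rm i}x_j}-1)h_j(x) + c$ with $c = \kappa(0) = 1$ the obstruction and $h_j\in C_c^\infty$; the constant $c$ corresponds to the symbol $c\,\delta_0^\vee = c$ (a constant symbol), which is killed by every $\Delta_j$ — so it simply does not appear in $\Delta_1\rho_1+\cdots+\Delta_n\rho_n$ and we must instead choose $\chi$ at the outset to have $\check\chi(0)=\int\chi = $ irrelevant. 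Concretely: since $\Delta_j$ applied to any $\rho_j$ produces a symbol $\mu$ with $\widehat\mu(0)=0$ (as $\widehat{\Delta_j\rho_j}(0) = ({\rm e}^{0}-1)\widehat{\rho_j}(0)=0$), a necessary condition is $\widehat\chi(0)=0$, i.e. $\check\chi(0)=0$; but the hypothesis says $\check\chi=1$ near $0$, so $\check\chi(0)=1\neq0$! I would therefore point out that the statement is to be read with $\chi$ normalized so that one first replaces $\chi$ by $\chi - \chi_0$ where $\chi_0\in\cS(\R^n)$ is a fixed symbol with $\check\chi_0 = 1$ near $0$ and $\supp\check\chi_0\subset(-2\pi,2\pi)^n$ handled once and for all — or, more likely, the intended reading (consistent with the application in Proposition~\ref{lemF} where only the \emph{existence} of a smoothing operator with nonzero trace matters) is that Lemma~\ref{lemN5} produces $\rho_j$ with $\chi = \Delta_1\rho_1+\cdots+\Delta_n\rho_n$ holding as an identity of symbols after the harmless modification, and the trace $\sum_k\tau[\chi(k)] = \sum_k\check\chi(\text{at lattice points, via Poisson})$ is computed directly from the support and normalization of $\check\chi$. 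I would carry out the Fourier-side construction on $\R^n\setminus\{0\}$ via the explicit division $\widehat{\rho_j} = ({\rm e}^{{\rm i}x_j}-1)^{-1}\lambda_j$ with a conic partition of unity avoiding the origin, verify $\rho_j\in\cS(\R^n)\subset S^{-n+2}(\R^n)$, and finish by checking the identity on the Fourier side, treating the origin using that $\widehat\chi$ is locally constant (hence locally polynomial of degree $0$) there and a degree-$0$ symbol contributes nothing to a sum of $\Delta_j$'s — so the identity $\chi = \sum\Delta_j\rho_j$ holds modulo constants, and then absorbing the constant into the choice of $\chi$ (as in the remark above) makes it exact.
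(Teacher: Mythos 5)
You correctly detect the obstruction: if all $\rho_j$ were Schwartz, or even integrable, then $\int\Delta_j\rho_j\,d\xi=0$ for each $j$ (translation invariance of Lebesgue measure), while $\int\chi\,d\xi=\check\chi(0)=1\neq 0$ by hypothesis. So the conic-partition construction — which would give $\widehat{\rho_j}\in C^\infty_c(\R^n)$, hence $\rho_j\in\cS(\R^n)$ — cannot possibly close, and neither can the opening paragraph's attempt to solve $\Delta_\xi\psi=\chi$ with $\psi\in\cS(\R^n)$, since the remainder $r$ there again has $\int r=1$, making "absorbing $r$ into $\chi$" circular. What is wrong is the conclusion you draw from this: the identity is not "to hold modulo constants" with the constant then "absorbed into $\chi$". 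A constant cannot be subtracted from a Schwartz function and stay in the class, and the normalization $\check\chi(0)=1$ is precisely what Lemma~\ref{lemN} needs to get $\Tr(R_0)=1\neq 0$; the lemma must be proved as stated.

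The resolution you never reach is to take $\rho_j$ \emph{non-integrable} — a classical symbol of order $-n+1$ rather than a Schwartz function — so that $\check\rho_j$ is allowed a mild singularity at $x=0$ and the point-evaluation $\widehat{\rho_j}(0)$ in your necessary-condition argument is simply undefined. The paper's device is the function $\nu(x)=\sum_{j}\big|{\rm e}^{{\rm i}x_j}-1\big|^2$, which is analytic on $\R^n$, vanishes exactly on $2\pi\Z^n$, and satisfies $\nu(x)\sim|x|^2$ near $0$. One then sets $K_j(x):=\big({\rm e}^{{\rm i}x_j}-1\big)\nu(x)^{-1}\check\chi(x)$: since $\supp\check\chi\subset(-2\pi,2\pi)^n$ the only zero of $\nu$ on the support is the origin, so $K_j$ is smooth on $\R^n\setminus 0$ with bounded support and $K_j(x)=\op{O}\big(|x|^{-1}\big)$ near $0$. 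The Fourier characterization of classical symbols (Lemmas~\ref{lemN3} and~\ref{lemN4}) then shows $\rho_j:=\widehat{K_j}\in S^{-n+1}(\R^n)\subset S^{-n+2}(\R^n)$, and the algebraic identity $\sum_j\big({\rm e}^{-{\rm i}x_j}-1\big)\big({\rm e}^{{\rm i}x_j}-1\big)=\nu(x)$ yields $\sum_j(\Delta_j\rho_j)^\vee(x)=\check\chi(x)$ for $x\neq 0$, hence as distributions, hence $\sum_j\Delta_j\rho_j=\chi$ exactly. Note that no individual factor $\big({\rm e}^{{\rm i}x_j}-1\big)$ is ever inverted; the division is by $\nu$, whose $|x|^{-2}$ singularity is tamed to $|x|^{-1}$ by the numerator, and this $|x|^{-1}$ is exactly what pushes $\rho_j$ out of $\cS(\R^n)$ and into the symbol class of order $-n+1$. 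Your proposal identifies the right obstruction but supplies no valid way past it, so the gap is genuine.
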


The proof of Lemma~\ref{lemN5} is postponed to Section~\ref{sec:lemN5}. Assuming this lemma we shall prove the following result.

 \begin{Lemma}\label{lemN}
There exists $R_0\in \Psi^{-\infty}(\T^n_\theta)$ with $\Tr(R_0)=1$ for which there are $P_1,\ldots, P_n$ in $\Psi^{-n+1}(\cA_\theta)$ such that
$ R_0= [P_1,U_1]+ \cdots +[P_n,U_n]$.
\end{Lemma}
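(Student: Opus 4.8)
The plan is to use Lemma~\ref{lemN5} together with Lemma~\ref{lemC} to manufacture the desired operator $R_0$. Start with a function $\chi(\xi)\in\cS(\R^n)$ satisfying the hypotheses of Lemma~\ref{lemN5}, so that $\chi\in S^{-n+2}(\R^n)$ (in particular $\chi\in\cS(\R^n)$, hence $P_\chi$ is smoothing by Proposition~\ref{prop:PsiDOs.smoothing-operator-characterization}). Set $R_0=P_\chi$. By Remark~\ref{rmk:PsiDOs.trace-formula} (the trace formula for Schwartz-class symbols) we have $\Tr(R_0)=\sum_{k\in\Z^n}\tau[\chi(k)]=\sum_{k\in\Z^n}\chi(k)\tau(1)=\sum_{k\in\Z^n}\chi(k)$, since $\chi$ is scalar-valued. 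By the Poisson summation formula, $\sum_{k\in\Z^n}\chi(k)=(2\pi)^{-n}\sum_{m\in\Z^n}\check\chi(2\pi m)$ (with the normalization consistent with the paper's $\dbar\xi$ convention), and because $\supp\check\chi\subset(-2\pi,2\pi)^n$ only the $m=0$ term survives, giving $\sum_{k\in\Z^n}\chi(k)=(2\pi)^{-n}\check\chi(0)=(2\pi)^{-n}$. So after rescaling $\chi$ by the constant $(2\pi)^n$ (which preserves all the hypotheses of Lemma~\ref{lemN5} and of being Schwartz-class) we arrange $\Tr(R_0)=1$.

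Next, apply Lemma~\ref{lemN5} to this $\chi$: there are symbols $\rho_1(\xi),\ldots,\rho_n(\xi)\in S^{-n+2}(\R^n)$ with $\chi=\Delta_1\rho_1+\cdots+\Delta_n\rho_n$. Since this identity holds exactly (not merely modulo $\cS(\R^n)$), we get
\begin{gather*}
 R_0=P_\chi=P_{\Delta_1\rho_1}+\cdots+P_{\Delta_n\rho_n}.
\end{gather*}
By Lemma~\ref{lemC} (applied to each scalar symbol $\rho_j\in S^{-n+2}(\R^n)\subset\stS^{-n+2}(\R^n)$), we have $P_{\Delta_j\rho_j}=[P_{U_j^{-1}\rho_j},U_j]$. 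Setting $P_j:=P_{U_j^{-1}\rho_j}$ and noting that $U_j^{-1}\rho_j(\xi)\in S^{-n+2}(\R^n;\cA_\theta)$ — hence actually $P_j\in\Psi^{-n+2}(\T^n_\theta)$, which is a fortiori in $\Psi^{-n+1}(\cA_\theta)$ — we obtain $R_0=[P_1,U_1]+\cdots+[P_n,U_n]$ with $P_j$ of the required order, completing the proof.

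The only subtle point is matching conventions in the Poisson summation step: the paper normalizes the inverse Fourier transform with a $\dbar\xi=(2\pi)^{-n}{\rm d}\xi$ measure (compare the oscillatory-integral formula for $P_\rho$), so one must be careful that $\check\chi$ is the Fourier transform with the convention making $\check\chi(x)=\int {\rm e}^{{\rm i}x\cdot\xi}\chi(\xi)\dbar\xi$ and hence $\sum_k\chi(k)=\sum_m\check\chi(2\pi m)$ with no stray $(2\pi)^{-n}$ — in which case the normalization constant one divides by is simply $\check\chi(0)=1$ and no rescaling is needed. Either way the constant is explicit and nonzero, so the argument goes through; this bookkeeping is the main (minor) obstacle, everything else being a direct appeal to Lemma~\ref{lemN5}, Lemma~\ref{lemC}, and Remark~\ref{rmk:PsiDOs.trace-formula}.
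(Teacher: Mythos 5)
Your approach is the same as the paper's: take $\chi$ as in Lemma~\ref{lemN5}, normalize $P_\chi$ so that its trace (computed via the toroidal trace formula and Poisson summation, using the support condition on $\check\chi$) equals $1$, then write $\chi$ as a sum of finite differences and convert to commutators with the $U_j$'s via Lemma~\ref{lemC}. Your normalization bookkeeping in the Poisson step is off, but you correctly observe that any nonzero explicit constant suffices, so that is harmless.

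There is, however, a genuine error in the final step: you write that ``$P_j\in\Psi^{-n+2}(\T^n_\theta)$, which is a fortiori in $\Psi^{-n+1}(\cA_\theta)$.'' This inclusion runs the \emph{wrong} way. For classical symbols, a symbol of order $q-1$ is in particular a symbol of order $q$ (with vanishing top homogeneous component), so $S^{q-1}(\R^n;\cA_\theta)\subset S^{q}(\R^n;\cA_\theta)$ and hence $\Psi^{-n+1}(\T^n_\theta)\subset\Psi^{-n+2}(\T^n_\theta)$, not the reverse. An operator of order $-n+2$ is therefore \emph{not} automatically of order $-n+1$, and your argument as written does not deliver the stated order $-n+1$ for the $P_j$'s. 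The fix is that the proof of Lemma~\ref{lemN5} actually establishes the stronger conclusion $\rho_j(\xi)\in S^{-n+1}(\R^n)$ (see its final sentence), even though the displayed statement of that lemma only asserts $S^{-n+2}$; using the stronger order, one gets $U_j^{-1}\rho_j(\xi)\in S^{-n+1}(\R^n;\cA_\theta)$ and hence $P_j\in\Psi^{-n+1}(\T^n_\theta)$ as required. You should cite this stronger conclusion explicitly rather than invoke the nonexistent inclusion $\Psi^{-n+2}\subset\Psi^{-n+1}$.

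A small additional wrinkle: you propose to ``rescale $\chi$ by a constant'' so that $\Tr(P_\chi)=1$, but after rescaling, $\check\chi$ no longer equals $1$ near the origin, so the rescaled $\chi$ no longer satisfies the hypotheses of Lemma~\ref{lemN5}. It is cleaner (and is what the paper does) to keep $\chi$ fixed, apply Lemma~\ref{lemN5} to that $\chi$, and instead set $R_0=c\,P_\chi$ for the appropriate scalar $c$, rescaling the $\rho_j$'s correspondingly. This is essentially cosmetic, but as written your rescaling step quietly invalidates the very hypothesis you need.
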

\begin{proof} Let $\chi(\xi)\in \cS\big(\R^n\big)$ be such that $\check{\chi}(x)=1$ near $x=0$ and $\supp \check{\chi} \subset (-2\pi, 2\pi)^n$. Set $R_0=(2\pi)^{-n}P_\chi$. Combining the trace formula~(\ref{eq:PsiDOs.trace-formula}) with Poisson's summation formula gives
\begin{gather*}
 \Tr(R_0)= (2\pi)^{-n} \sum_{k\in \Z^n} \chi(k) = \sum_{k\in \Z^n} \check{\chi}(2\pi k) = \check{\chi}(0)=1.
\end{gather*}
 Moreover, by Lemma~\ref{lemN5} there are symbols $\rho_1(\xi), \ldots, \rho_{n}(\xi)$ in $S^{-n+2}\big(\R^n\big)$ such that $\chi=\Delta_1\rho_1 +\cdots +\Delta_n\rho_n$.
Combining this with Lemma~\ref{lemC} we then get
\begin{gather*}
 R_0= (2\pi)^{-n} P_\chi= [U_1,P_1]+\cdots +[U_n,P_n],
\end{gather*}
where we have set $P_j=(2\pi)^{-n}P_{U_j^{-1}\rho_j}\in \Psi^{-n+1}(\T_\theta^n)$. The result is proved.
\end{proof}

We are now in a position to prove the main result of this section.

\begin{Proposition}\label{Prop4}
 For every $R\in \Psi^{-\infty}(\T^n_\theta)$, there are operators $P_1,\ldots, P_n$ in $\Psi^{-n+1}(\cA_\theta)$ and operators $R_1,\ldots, R_4$ in $\Psi^{-\infty}(\T^n_\theta)$ such that
\begin{gather*}
 R = [P_1,U_1]+ \cdots + [P_n,U_n]+ [R_1,R_2]+[R_3,R_4].
\end{gather*}
In particular, every smoothing operator is contained in $\big[\Psi^\Z(\T^n_\theta),\Psi^\Z(\T^n_\theta)\big]$.
\end{Proposition}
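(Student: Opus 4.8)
The plan is to reduce to the zero-trace case handled by Proposition~\ref{lemF}, using the reference operator $R_0$ of Lemma~\ref{lemN} to absorb the trace. First I would observe that any $R\in\Psi^{-\infty}(\T^n_\theta)$ has order $<-n$ and is therefore trace-class, so that $c:=\Tr(R)$ is well defined. Let $R_0\in\Psi^{-\infty}(\T^n_\theta)$ and $P_1',\dots,P_n'\in\Psi^{-n+1}(\cA_\theta)$ be as in Lemma~\ref{lemN}, so that $\Tr(R_0)=1$ and $R_0=[P_1',U_1]+\cdots+[P_n',U_n]$. Then $R-cR_0$ is again smoothing, and by linearity of the trace $\Tr(R-cR_0)=\Tr(R)-c\,\Tr(R_0)=c-c=0$. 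Applying Proposition~\ref{lemF} to $R-cR_0$ produces $R_1,\dots,R_4\in\Psi^{-\infty}(\T^n_\theta)$ with $R-cR_0=[R_1,R_2]+[R_3,R_4]$.

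Setting $P_1:=c\,P_1'$ and $P_j:=P_j'$ for $j\geq2$ (all still in $\Psi^{-n+1}(\cA_\theta)$) and rearranging, I obtain
\[
R=[P_1,U_1]+\cdots+[P_n,U_n]+[R_1,R_2]+[R_3,R_4],
\]
which is the asserted decomposition. For the final sentence, $-n+1$ is an integer, so each $P_j$ lies in $\Psi^{-n+1}(\cA_\theta)\subset\Psi^{\Z}(\T^n_\theta)$, each $U_j$ lies in $\Psi^0(\T^n_\theta)\subset\Psi^{\Z}(\T^n_\theta)$, and each $R_i$ lies in $\Psi^{-\infty}(\T^n_\theta)\subset\Psi^{\Z}(\T^n_\theta)$; hence every term on the right-hand side is a commutator of integer-order \psidos, and therefore $R\in\big[\Psi^{\Z}(\T^n_\theta),\Psi^{\Z}(\T^n_\theta)\big]$.

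There is essentially no obstacle left at this stage: all the substance has been pushed into Lemma~\ref{lemN} (hence Lemma~\ref{lemN5}) and Proposition~\ref{lemF} (hence~\cite[Theorem~A.1]{Gu:JFA93}). The only point deserving a moment of care is that the $P_j'$ of Lemma~\ref{lemN} are genuine pseudodifferential operators on $\cA_\theta$, so that the commutators $[P_j,U_j]$ are honest elements of $\Psi^{\Z}(\T^n_\theta)$ rather than merely bounded operators on $\cH_\theta$; this is however already built into the statement of that lemma.
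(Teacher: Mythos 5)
Your proof is essentially the same as the paper's: reduce modulo $R_0$ to kill the trace, invoke Proposition~\ref{lemF} for the zero-trace smoothing part, and import the commutator decomposition of $R_0$ from Lemma~\ref{lemN}. There is, however, a small arithmetic slip in the last step: since $cR_0=c\sum_j[P_j',U_j]=\sum_j[cP_j',U_j]$, you must set $P_j:=c\,P_j'$ for \emph{every} $j$, not only $j=1$; with $P_1:=cP_1'$ and $P_j:=P_j'$ for $j\geq 2$ the sum $\sum_j[P_j,U_j]$ does not equal $cR_0$ unless $c=1$. Once this is corrected (as in the paper, which sets $\tilde P_j=\Tr(R)\,P_j$ for all $j$), the argument is complete.
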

\begin{proof}
 Let $R_0\in \Psi^{-\infty}(\T^n_\theta)$ be as in Lemma~\ref{lemN}. Let $R\in \Psi^{-\infty}(\T^n_\theta)$, and set $\tilde{R}=R-\Tr(R)R_0$. Then $\tilde{R}\in \Psi^{-\infty}(\T^n_\theta)$ and, as $\Tr(R_0)=1$, we have $\Tr(\tilde{R})=0$. Therefore, by Lemma~\ref{lemF} there are operators $R_1,\ldots, R_4$ in $\Psi^{-\infty}(\T^n_\theta)$ such that $\tilde{R}=[R_1,R_2]+[R_3,R_4]$. Thus,
\begin{gather*}
 R= \Tr(R)R_0+\tilde{R} = \Tr(R)R_0+ [R_1,R_2]+[R_3,R_4].
\end{gather*}
 We also know by Lemma~\ref{lemN} that there are operators $P_1,\ldots, P_n$ in $\Psi^{-n+1}(\cA_\theta)$ such that $R_0=\sum [P_j,U_j]$. It then follows that we have
\begin{gather*}
 R= \Tr(R)R_0+\tilde{R} = \sum_{1\leq j \leq n} [\tilde{P}_j,U_j] + [R_1,R_2]+[R_3,R_4],
\end{gather*}
 where we have set $\tilde{P}_j=\Tr(R) P_j\in \Psi^{-n+1}(\cA_\theta)$. The proof is complete.
 \end{proof}

\section{Uniqueness theorems}\label{sec:Uniqueness}
In this section, we prove our main uniqueness results for the noncommutative residue and the canonical trace on noncommutative tori.

\subsection[Integer-order $\Psi$DOs]{Integer-order $\boldsymbol{\Psi}$DOs}
For integer-order \psidos\ we have the following decomposition modulo sums of \psido\ commutators.

\begin{Proposition}\label{prop:Uniqueness.NCR-Com} Let $P_0\in \Psi^{-n}(\cA_\theta)$ be such that $\Res (P_0)=1$. Then, for every $P \in \Psi^m(\cA_\theta)$, $m\in \Z$, there are operators $P_1,\ldots, P_n$ in $\Psi^{m'+1}(\cA_\theta)$ with $m'\!=\!\max(m,-n)$, operators $Q_1,\ldots, Q_n$ in $\Psi^{m}(\T^n_\theta)$, and smoothing operators $R_1, \ldots, R_4$ such that
\begin{gather*}
 P= \Res (P) P_0 + \sum_{1\leq j \leq n} [P_j,U_j] + \sum_{1\leq j \leq n} [\delta_j,Q_j]+[R_1,R_2]+[R_3,R_4].
\end{gather*}
In particular, for all $P\in \Psi^\Z(\T^n_\theta)$, we have
\begin{gather}
 P= \Res (P) P_0 \quad \bmod \big[\Psi^\Z(\T^n_\theta), \Psi^\Z(\T^n_\theta)\big].
 \label{eq:Uniqueness.NCR-Com}
\end{gather}
\end{Proposition}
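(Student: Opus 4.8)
The plan is to combine the decomposition modulo smoothing operators from Proposition~\ref{Prop3} with the commutator structure of smoothing operators from Proposition~\ref{Prop4}, using the fixed operator $P_0$ to absorb the obstruction. The argument proceeds in the same spirit as the proof of Corollary~\ref{cor:Comm-mod-smooth.unique-singular-trace}, but now we must track the smoothing remainder and express it, too, as a sum of $\Psi$DO commutators.

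First I would set $\tilde P = P - \Res(P) P_0$. Since $\Res$ is a trace (hence linear) and $\Res(P_0)=1$, we get $\Res(\tilde P) = \Res(P) - \Res(P)\Res(P_0) = 0$, and $\tilde P$ has integer order $m' = \max(m,-n)$ (taking the convention that the order of $P_0$ contributes $-n$; if $m < -n$ then $\tilde P$ still has order $\leq m' = -n$). Next I would apply Proposition~\ref{Prop3} to $\tilde P$: there are $P_1',\ldots,P_n' \in \Psi^{m'+1}(\T^n_\theta)$ and $Q_1,\ldots,Q_n \in \Psi^{m'}(\cA_\theta) \subset \Psi^m(\T^n_\theta)$ (after absorbing, or noting $m'\leq m$ when $m\geq -n$, and $m'=m$ otherwise — one checks $\Psi^{m'}\subseteq\Psi^m$ fails when $m<-n$, so here one instead simply notes $Q_j$ lands in $\Psi^m$ because $\tilde P$ has order $m$ in that case; the statement allows $Q_j\in\Psi^m$) such that
\begin{gather*}
 \tilde P = \sum_{1\leq j\leq n}[P_j',U_j] + \sum_{1\leq j\leq n}[\delta_j,Q_j] + R \quad\text{for some } R\in\Psi^{-\infty}(\T^n_\theta).
\end{gather*}
Then I would apply Proposition~\ref{Prop4} to the smoothing remainder $R$: there are $\hat P_1,\ldots,\hat P_n \in \Psi^{-n+1}(\cA_\theta)$ and $R_1,\ldots,R_4 \in \Psi^{-\infty}(\T^n_\theta)$ with $R = \sum_j [\hat P_j,U_j] + [R_1,R_2]+[R_3,R_4]$.

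Finally I would combine everything: setting $P_j = P_j' + \hat P_j \in \Psi^{m'+1}(\T^n_\theta)$ (valid since $-n+1 \leq m'+1$), we obtain
\begin{gather*}
 P = \Res(P)P_0 + \sum_{1\leq j\leq n}[P_j,U_j] + \sum_{1\leq j\leq n}[\delta_j,Q_j] + [R_1,R_2]+[R_3,R_4],
\end{gather*}
which is the first assertion. The second assertion~\eqref{eq:Uniqueness.NCR-Com} is then immediate: $P_0$, the $P_j$, the $\delta_j$, and the smoothing operators $R_i$ all lie in $\Psi^\Z(\T^n_\theta)$ (smoothing operators have order $-\infty \in \Z$ by convention, and $\delta_j$ has order $1$), so every commutator on the right-hand side lies in $[\Psi^\Z(\T^n_\theta),\Psi^\Z(\T^n_\theta)]$, and reducing modulo that subspace leaves $P \equiv \Res(P)P_0$.

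**The main obstacle** I anticipate is purely bookkeeping rather than conceptual: getting the orders exactly right so that $P_j \in \Psi^{m'+1}$ with $m'=\max(m,-n)$, and $Q_j\in\Psi^m$, in all cases (in particular when $m<-n$, where one must be careful that Proposition~\ref{Prop3} applied to an operator of order $m<-n$ still yields $Q_j$ of order $\leq m$ and $P_j'$ of order $\leq m+1 \leq m'+1$, and that merging with the $\Psi^{-n+1}$ operators from Proposition~\ref{Prop4} does not raise the order above $m'+1$ — this is fine since $-n+1\leq m'+1$ always). All the real work has already been done in Propositions~\ref{Prop3} and~\ref{Prop4}; this proposition is the assembly step.
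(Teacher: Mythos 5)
Your proof is correct and follows essentially the same route as the paper's: subtract off $\Res(P)P_0$, apply Proposition~\ref{Prop3} to the residue-free remainder, then absorb the smoothing error using Proposition~\ref{Prop4}, and merge the two families of $\Psi$DO commutators. The mid-proof parenthetical about orders is a bit tangled (when $m<-n$ one simply uses that $\tilde P=P\in\Psi^m$ directly), but you arrive at the correct bookkeeping and the final assembly matches the paper's argument.
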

\begin{proof} Set $\tilde{P}=P-\Res(P)P_0$. We have $\Res \big(\tilde{P}\big)=\Res (P)-\Res (P)\Res (P_0)=0$. Moreover, if $m\geq -n$, then $\tilde{P}\in \Psi^{m}(\T^n_\theta)$. If $m<-n$, then $\tilde{P}$ is in $\Psi^{m}(\T^n_\theta)$, since $\Res (P)= 0$, and so $\tilde{P}=P\in \Psi^{m}(\T^n_\theta)$. In any case, it follows from Proposition~\ref{Prop3} that there are operators $P_1,\ldots, P_n$ in $\Psi^{m+1}(\T^n_\theta)$, operators $Q_1,\ldots, Q_n$ in $\Psi^{m}(\T^n_\theta)$, and an operator $R\in \Psi^{-\infty}(\T^n_\theta)$ such that
 \begin{gather*}
 \tilde{P}= [P_1,U_1] +\cdots + [P_n,U_n] +[\delta_1,Q_1]+\cdots + [\delta_n,Q_n]+ R.
\end{gather*}
Moreover, by Proposition~\ref{Prop4} there are operators $P_1',\ldots, P_n'$ in $\Psi^{-n+1}(\cA_\theta)$ and operators $R_1, \ldots,\allowbreak R_4$ in $\Psi^{-\infty}$ such that
\begin{gather*}
 R=[P_1,U_1] +\cdots + [P_n,U_n] +[R_1,R_2]+[R_3,R_4].
\end{gather*}
 As $P=\Res(P) P_0 +\tilde{P}$ we then deduce that
\begin{gather*}
 \Res (P) P_0 + \sum_{1\leq j \leq n} [\tilde{P}_j,U_j] + \sum_{1\leq j \leq n} [\delta_j,Q_j]+[R_1,R_2]+[R_3,R_4],
\end{gather*}
where we have set $\tilde{P}_j=P_j+P_j'$. Note that $\tilde{P}_j\in \Psi^{m'+1}(\cA_\theta)$ with $m'=\max(m,-n)$. The proof is complete.
\end{proof}

It follows from~(\ref{eq:Uniqueness.NCR-Com}) that if $\Res(P)=0$, then $P\in \big[\Psi^\Z(\T^n_\theta), \Psi^\Z(\T^n_\theta)\big]$. Conversely, we know from Proposition~\ref{Prop2} that the noncommutative residues is annihilated by the commutator space $ \big[\Psi^\Z(\T^n_\theta), \Psi^\Z(\T^n_\theta)\big]$. Therefore, we arrive at the following statement.

\begin{Corollary}
 Let $P\in \Psi^{\Z}(\cA_\theta)$. Then $P\in \big[\Psi^\Z(\T^n_\theta), \Psi^\Z(\T^n_\theta)\big]$ if and only if $\Res(P)=0$.
\end{Corollary}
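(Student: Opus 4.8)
The statement is an immediate consequence of the two preceding results, so the plan is simply to combine them. The forward direction ($\Res(P)=0 \Rightarrow P\in [\Psi^\Z(\T^n_\theta),\Psi^\Z(\T^n_\theta)]$) follows directly from~\eqref{eq:Uniqueness.NCR-Com} in Proposition~\ref{prop:Uniqueness.NCR-Com}: once $\Res(P)=0$, that congruence says $P$ is congruent to $0$ modulo the commutator subspace, i.e., $P$ lies in $\big[\Psi^\Z(\T^n_\theta),\Psi^\Z(\T^n_\theta)\big]$. (One should note that the right-hand side of~\eqref{eq:Uniqueness.NCR-Com} makes sense since $P_0\in\Psi^{-n}(\cA_\theta)\subset\Psi^\Z(\T^n_\theta)$, so we may indeed pick such a $P_0$ with $\Res(P_0)=1$ — the noncommutative residue is nonzero, for instance it does not vanish on an operator with symbol of degree $-n$ equal to a nonzero constant.)

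The reverse direction ($P\in [\Psi^\Z(\T^n_\theta),\Psi^\Z(\T^n_\theta)] \Rightarrow \Res(P)=0$) is exactly the trace property of $\Res$ established in Proposition~\ref{Prop2}: the noncommutative residue is a linear functional on $\Psi^\Z(\T^n_\theta)$ annihilated by every commutator $[P_1,P_2]$ with $P_i\in\Psi^\Z(\T^n_\theta)$, hence annihilated by their span, which is precisely $\big[\Psi^\Z(\T^n_\theta),\Psi^\Z(\T^n_\theta)\big]$ as defined in~\eqref{eq:NCR.PsiX}.

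There is essentially no obstacle here — the corollary is a packaging of Propositions~\ref{Prop2} and~\ref{prop:Uniqueness.NCR-Com}, all of whose substance has already been proved. If anything, the only point worth a sentence is the existence of $P_0$ with $\Res(P_0)=1$ so that Proposition~\ref{prop:Uniqueness.NCR-Com} is applicable in the form~\eqref{eq:Uniqueness.NCR-Com}; this is clear from the definition of $\Res$. The proof I would write is two sentences long:
\begin{proof}
If $\Res(P)=0$, then~\eqref{eq:Uniqueness.NCR-Com} in Proposition~\ref{prop:Uniqueness.NCR-Com} gives $P\in \big[\Psi^\Z(\T^n_\theta), \Psi^\Z(\T^n_\theta)\big]$. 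Conversely, by Proposition~\ref{Prop2} the noncommutative residue is annihilated by $\big[\Psi^\Z(\T^n_\theta), \Psi^\Z(\T^n_\theta)\big]$, so $\Res(P)=0$ whenever $P$ lies in this subspace.
\end{proof}
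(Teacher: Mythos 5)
Your proof is correct and is essentially identical to the paper's own argument: the forward implication from~\eqref{eq:Uniqueness.NCR-Com} in Proposition~\ref{prop:Uniqueness.NCR-Com}, the reverse from the trace property in Proposition~\ref{Prop2}. The remark about the existence of $P_0$ with $\Res(P_0)=1$ is a fine point to note, though the paper takes it for granted.
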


We are now in a position to prove our first uniqueness result.

\begin{Theorem}\label{thm:Uniqueness.NCR}
 Every trace on the algebra $\Psi^{\Z}(\cA_\theta)$ is a constant multiple of the noncommutative residue.
\end{Theorem}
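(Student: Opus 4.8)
The plan is to read the theorem off directly from Proposition~\ref{prop:Uniqueness.NCR-Com}, which already identifies the noncommutative residue as the unique obstruction to being a sum of $\Psi^\Z$-commutators. All the real work has been front-loaded into that proposition (which itself rests on Propositions~\ref{Prop3} and~\ref{Prop4}), so what remains is a two-step argument: produce one operator on which $\Res$ does not vanish, then use the commutator decomposition.

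First I would fix once and for all an operator $P_0\in\Psi^{-n}(\cA_\theta)$ with $\Res(P_0)=1$. Such an operator exists by an elementary construction: let $\chi(\xi)\in C^\infty_c\big(\R^n\big)$ be equal to $1$ near $\xi=0$, put $c=\Vol\big(\bS^{n-1}\big)^{-1}$, and set $P_0=P_{\rho_0}$ with $\rho_0(\xi)=c\,(1-\chi(\xi))|\xi|^{-n}$. Then $\rho_0(\xi)\in S^{-n}\big(\R^n\big)\subset S^{-n}\big(\R^n;\cA_\theta\big)$ and the homogeneous symbol of degree $-n$ of $P_0$ is $c|\xi|^{-n}$, so that $\Res(P_0)=\int_{\bS^{n-1}}\tau\big[c|\xi|^{-n}\big]\,{\rm d}^{n-1}\xi=c\,\Vol\big(\bS^{n-1}\big)=1$, using $\tau(1)=1$.

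Next, let $\varphi$ be any trace on $\Psi^\Z(\cA_\theta)$, i.e.\ a linear functional annihilated by the commutator subspace $\big[\Psi^\Z(\T^n_\theta),\Psi^\Z(\T^n_\theta)\big]$. Given any $P\in\Psi^\Z(\cA_\theta)$, formula~\eqref{eq:Uniqueness.NCR-Com} of Proposition~\ref{prop:Uniqueness.NCR-Com} supplies an element $C\in\big[\Psi^\Z(\T^n_\theta),\Psi^\Z(\T^n_\theta)\big]$ with $P=\Res(P)\,P_0+C$. Applying $\varphi$ and using $\varphi(C)=0$ together with linearity yields $\varphi(P)=\Res(P)\,\varphi(P_0)$. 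Hence $\varphi=c_0\,\Res$ with $c_0:=\varphi(P_0)$, which is the assertion.

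There is essentially no obstacle in this final step; the only point beyond a one-line deduction is the non-vanishing of $\Res$, handled by the explicit $P_0$ above. (Conversely, $\Res$ is itself a trace by Proposition~\ref{Prop2}, so the space of traces on $\Psi^\Z(\cA_\theta)$ is exactly the line $\C\cdot\Res$.) If one wanted to be maximally self-contained one could also note that every $P_0'$ with $\Res(P_0')=1$ gives the same $\varphi$, since $P_0-P_0'\in\big[\Psi^\Z(\T^n_\theta),\Psi^\Z(\T^n_\theta)\big]$ by~\eqref{eq:Uniqueness.NCR-Com}, so $c_0$ is well defined; but this is not needed for the statement.
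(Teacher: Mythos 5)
Your proof is correct and follows essentially the same route as the paper: it invokes Proposition~\ref{prop:Uniqueness.NCR-Com} (specifically \eqref{eq:Uniqueness.NCR-Com}), fixes $P_0\in\Psi^{-n}(\cA_\theta)$ with $\Res(P_0)=1$, and reads off $\varphi=\varphi(P_0)\,\Res$. The only addition is your explicit construction of $P_0$ via $\rho_0(\xi)=c\,(1-\chi(\xi))|\xi|^{-n}$, which the paper leaves implicit; this is a harmless and correct supplement.
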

\begin{proof}
 We know by Proposition~\ref{Prop2} that $\Res$ is a trace on the algebra $\Psi^{\Z}(\cA_\theta)$. Let $\varphi$ be another trace. Let $P_0\in \Psi^{-n}(\cA_\theta)$ be such that $\Res(P_0)=1$. As $\varphi$ vanishes on the commutator space $\big[\Psi^\Z(\T^n_\theta), \Psi^\Z(\T^n_\theta)\big]$, by using~(\ref{eq:Uniqueness.NCR-Com}) we see that, for all $P\in \Psi^\Z(\T^n_\theta)$, we have
\begin{gather*}
 \varphi(P)=\varphi [\Res(P)P_0 ]=\varphi(P_0)\Res(P).
\end{gather*}
This shows that $\varphi =\varphi(P_0)\Res$, i.e., $\varphi$ is a scalar multiple of $\Res$. The proof is complete.
\end{proof}

\begin{Remark} The first uniqueness result for the noncommutative residue was established by Wodzicki~\cite{Wo:HDR} for classical \psidos\ on closed manifolds (see also~\cite{Le:AGAG99, LN:JNCG13, LP:PLMS07, Po:JAM10}). The result was extended to Fourier integral operators by Guillemin~\cite{Gu:JFA93} and \psidos\ with log-polyhomogeneous symbols by Lesch~\cite{Le:AGAG99}. It was also extended to Heisenberg \psidos\ in~\cite{Po:JFA07}.
\end{Remark}

Let $\HH^\bt\big(\Psi^\Z(\T^n_\theta)\big)$ be the Hoschchild cohomology of the algebra $\Psi^\Z(\T^n_\theta)$ with coefficients in $\Psi^\Z(\T^n_\theta)^*$ (see, e.g., \cite{Co:NCG}). As $\HH^0\big(\Psi^\Z(\T^n_\theta)\big)$ is precisely the space of traces on $\Psi^\Z(\T^n_\theta)$ we immediately arrive at the following statement

\begin{Corollary}\label{cor:Uniqueness.HH0}
 $\HH^0\big(\Psi^\Z(\T^n_\theta)\big)=\C$.
\end{Corollary}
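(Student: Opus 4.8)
The plan is to unwind the definition of $\HH^0$ and then quote the uniqueness theorem. By definition, the Hochschild cohomology group $\HH^0\big(\Psi^\Z(\T^n_\theta)\big)$ with coefficients in the dual module $\Psi^\Z(\T^n_\theta)^*$ is the space of $0$-cocycles, i.e., the linear functionals $\varphi\colon \Psi^\Z(\T^n_\theta)\rightarrow \C$ whose Hochschild coboundary vanishes; explicitly $(b\varphi)(P_1,P_2)=\varphi(P_1P_2)-\varphi(P_2P_1)$, so $b\varphi=0$ means precisely that $\varphi$ is annihilated by the commutator subspace~\eqref{eq:NCR.PsiX}. In other words, $\HH^0\big(\Psi^\Z(\T^n_\theta)\big)$ is canonically identified with the vector space of traces on the algebra $\Psi^\Z(\T^n_\theta)$.

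First I would note that this space is nonzero: by Proposition~\ref{Prop2} the noncommutative residue $\Res$ is a trace on $\Psi^\Z(\T^n_\theta)$, and it is not identically zero since there exist operators $P_0\in \Psi^{-n}(\cA_\theta)$ with $\Res(P_0)=1$ (such a $P_0$ is used throughout Section~\ref{sec:Uniqueness}, and its existence is immediate from the definition of $\Res$, e.g.\ by choosing a classical symbol whose component of degree $-n$ integrates to a nonzero constant over $\bS^{n-1}$). Hence the space of traces has dimension at least~$1$.

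Next I would invoke Theorem~\ref{thm:Uniqueness.NCR}, which asserts that every trace on $\Psi^\Z(\cA_\theta)$ is a constant multiple of $\Res$. This gives the reverse inclusion: the space of traces is spanned by the single element $\Res$, hence has dimension at most~$1$. Combining the two bounds, the space of traces is exactly one-dimensional, and therefore $\HH^0\big(\Psi^\Z(\T^n_\theta)\big)\cong \C$, completing the proof.

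There is no real obstacle here: the content is entirely in Theorem~\ref{thm:Uniqueness.NCR} (and ultimately in Proposition~\ref{Prop3} and Proposition~\ref{Prop4}), and the corollary is just the reformulation of that theorem in cohomological language together with the trivial remark that $\Res\neq 0$. The only point requiring a word of care is the identification of $\HH^0$ with the trace space, which is the standard fact that degree-zero Hochschild cohomology with coefficients in the dual computes traces.
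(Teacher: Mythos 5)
Your proof is correct and follows essentially the same route as the paper: identify $\HH^0$ with the trace space, then quote Theorem~\ref{thm:Uniqueness.NCR}. The only difference is that you explicitly note $\Res\neq 0$ so that the dimension is exactly one rather than at most one; the paper treats this as immediate, but making it explicit is harmless and correct.
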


\subsection[Non-integer order $\Psi$DOs]{Non-integer order $\boldsymbol{\Psi}$DOs}
We have the following version of Proposition~\ref{prop:Uniqueness.NCR-Com} for non-integer order \psidos.

\begin{Proposition}\label{prop:Uniqueness.TR-Com}
 Let $R_0\in \Psi^{-\infty}(\T^n_\theta)$ be such that $\Tr(R_0)=1$. For every $P\in \Psi^{q}(\cA_\theta)$, $q\in \CZ$, there are operators $P_1,\ldots, P_n$ in $\Psi^{q+1}(\T^n_\theta)$, operators $Q_1,\ldots, Q_n$ in $\Psi^{q}(\cA_\theta)$, and smoothing operators $R_1, \ldots, R_4$ such that
\begin{gather}
 P= \TR (P) R_0 + \sum_{1\leq j \leq n} [P_j,U_j] + \sum_{1\leq j \leq n} [\delta_j,Q_j]+[R_1,R_2]+[R_3,R_4].
 \label{eq:Uniqueness.TR-Com}
\end{gather}
In particular, we have
\begin{gather}
 P= \TR(P) R_0 \quad \bmod \big[\Psi^\Z(\T^n_\theta), \Psi^{q+\Z}(\T^n_\theta)\big].
 \label{eq:Uniqueness.NCR-Com-CZ}
\end{gather}
\end{Proposition}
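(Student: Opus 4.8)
The plan is to transcribe the proof of Proposition~\ref{prop:Uniqueness.NCR-Com}, with the canonical trace playing the role of the noncommutative residue and Proposition~\ref{lemF} that of Proposition~\ref{Prop4}. First I would invoke Proposition~\ref{Prop3}: since $q\notin\Z$, no residue condition is needed, and the proposition applies directly to $P$, producing operators $P_1,\dots,P_n\in\Psi^{q+1}(\T^n_\theta)$, operators $Q_1,\dots,Q_n\in\Psi^{q}(\cA_\theta)$, and a smoothing operator $R\in\Psi^{-\infty}(\T^n_\theta)$ such that
\[
 P=\sum_{1\le j\le n}[P_j,U_j]+\sum_{1\le j\le n}[\delta_j,Q_j]+R .
\]

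The first substantive point is to identify $\Tr(R)$. I would apply $\TR$ to the identity above. All the summands on the right have orders lying in $q+\Z$ — with $R$ regarded, say, as an element of $\Psi^{q-1}(\T^n_\theta)$ — and these orders are non-integers differing by integers, so the linearity property~\eqref{eq:TR.linearity2} lets me write $\TR(P)=\sum_j\TR([P_j,U_j])+\sum_j\TR([\delta_j,Q_j])+\TR(R)$. By Proposition~\ref{prop:TR.B} the canonical trace annihilates each $[P_j,U_j]$ (a commutator of operators of orders $q+1$ and $0$, whose sum $q+1$ is not an integer) and each $[\delta_j,Q_j]$ (orders $1$ and $q$, whose sum $q+1$ is again not an integer), while on the smoothing operator $R$ one has $\TR(R)=\Tr(R)$ by Proposition~\ref{prop:TR.A}. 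Hence $\TR(P)=\Tr(R)$.

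The rest follows the integer-order argument verbatim. I would set $\tilde R:=R-\Tr(R)R_0=R-\TR(P)R_0$, which is smoothing and, since $\Tr(R_0)=1$, satisfies $\Tr(\tilde R)=\Tr(R)-\TR(P)=0$. Proposition~\ref{lemF} then furnishes smoothing operators $R_1,\dots,R_4$ with $\tilde R=[R_1,R_2]+[R_3,R_4]$, and substituting this back yields~\eqref{eq:Uniqueness.TR-Com}. For~\eqref{eq:Uniqueness.NCR-Com-CZ} I would simply observe that each commutator on the right-hand side lies in $[\Psi^\Z(\T^n_\theta),\Psi^{q+\Z}(\T^n_\theta)]$: in $[P_j,U_j]$ one has $U_j\in\Psi^0(\T^n_\theta)$ and $P_j\in\Psi^{q+1}(\T^n_\theta)$; in $[\delta_j,Q_j]$ one has $\delta_j\in\Psi^1(\T^n_\theta)$ and $Q_j\in\Psi^q(\T^n_\theta)$; and each $[R_i,R_{i+1}]$ is a commutator of smoothing operators, which belong to both $\Psi^\Z(\T^n_\theta)$ and $\Psi^{q+\Z}(\T^n_\theta)$.

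I do not expect a genuine obstacle: Propositions~\ref{Prop3}, \ref{prop:TR.B}, \ref{prop:TR.A}, and~\ref{lemF} supply all the inputs, and the only step requiring a little care is the bookkeeping that makes $\TR$ additive on the decomposition of $P$ and hence produces the identity $\TR(P)=\Tr(R)$.
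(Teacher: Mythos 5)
Your proof is correct and takes essentially the same approach as the paper: apply Proposition~\ref{Prop3} to obtain the decomposition modulo a smoothing remainder $R$, use the trace property of $\TR$ and its agreement with $\Tr$ on smoothing operators to identify $\TR(P)=\Tr(R)$, then invoke Proposition~\ref{lemF} to write $R-\Tr(R)R_0$ as a sum of two smoothing commutators. The only cosmetic difference is that you compute $\TR(P)=\Tr(R)$ by applying $\TR$ directly to the Proposition~\ref{Prop3} decomposition, while the paper first rewrites $R=\Tr(R)R_0+[R_1,R_2]+[R_3,R_4]$ and applies $\TR$ to the resulting identity; the underlying argument is identical.
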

\begin{proof}
 Let $P\in \Psi^q(\T^n_\theta)$, $q\in \CZ$. By Proposition~\ref{Prop3} there are operators $P_1,\ldots, P_n$ in $\Psi^{m+1}(\T_\theta^n)$, operators $Q_1,\ldots, Q_n$ in $\Psi^{m}(\T^n_\theta)$, and an operator $R\in \Psi^{-\infty}(\T^n_\theta)$ such that
 \begin{gather*}
P= [P_1,U_1] +\cdots + [P_n,U_n] +[\delta_1,Q_1]+\cdots + [\delta_n,Q_n]+ R.
\end{gather*}

Set $\tilde{R}=R-\Tr(R)R_0$. Then $\tilde{R}\in \Psi^{-\infty}(\T^n_\theta)$ and $\Tr\big(\tilde{R}\big)=\Tr(R) -\Tr(R)\Tr(R_0)=0$. Therefore, by Proposition~\ref{lemF} there are operators $R_1, \ldots, R_4$ in $\Psi^{-\infty}(\T^n_\theta)$ such that $\tilde{R}= [R_1,R_2]+[R_3,R_4]$, i.e., $R=\Tr(R)R_0+[R_1,R_2]+[R_3,R_4]$. Thus,
\begin{gather}
 P= \Tr (R) R_0 + \sum_{1\leq j \leq n} [P_j,U_j] + \sum_{1\leq j \leq n} [\delta_j,Q_j]+[R_1,R_2]+[R_3,R_4].
\label{eq:Uniqueness.TR-Com-TrR}
\end{gather}

As $\TR$ is a trace functional on $\Psi^{\CZ}(\T^n_\theta)$ that agrees with the ordinary trace on smoothing operators, we get
\begin{gather*}
 \TR(P)= \Tr(R)\TR(R_0) = \Tr(R)\Tr(R_0)=\Tr(R).
\end{gather*}
Combining this with~(\ref{eq:Uniqueness.TR-Com-TrR}) gives the decomposition~(\ref{eq:Uniqueness.TR-Com}). The proof is complete.
\end{proof}

It follows from~(\ref{eq:Uniqueness.NCR-Com-CZ}) that if $\TR(P)=0$, then $P\in \big[\Psi^\Z(\T^n_\theta), \Psi^{q+\Z}(\T^n_\theta)\big]$. Conversely, it follows from Proposition~\ref{Prop2} that the canonical trace is annihilated by $ \big[\Psi^\Z(\T^n_\theta), \Psi^{q+\Z}(\T^n_\theta)\big]$. Therefore, we obtain the following characterization of $ \big[\Psi^\Z(\T^n_\theta), \Psi^{q+\Z}(\T^n_\theta)\big]$.

\begin{Corollary}
 Let $P\in \Psi^{q}(\T^n_\theta)$, $q\in \CZ$. Then $P\in \big[\Psi^\Z(\T^n_\theta), \Psi^{q+\Z}(\T^n_\theta)\big]$ if and only if
 \mbox{$\TR(P)=0$}.
\end{Corollary}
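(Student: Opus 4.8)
The statement to prove is the Corollary characterizing $\big[\Psi^\Z(\T^n_\theta), \Psi^{q+\Z}(\T^n_\theta)\big]$ for $q \in \CZ$: namely $P \in \big[\Psi^\Z(\T^n_\theta), \Psi^{q+\Z}(\T^n_\theta)\big]$ if and only if $\TR(P) = 0$.

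The proof is short and follows immediately from two facts already established in the excerpt, so the plan is essentially to assemble them. First, for the ``only if'' direction, suppose $P$ lies in the commutator space $\big[\Psi^\Z(\T^n_\theta), \Psi^{q+\Z}(\T^n_\theta)\big]$, i.e.\ $P$ is a finite linear combination of commutators $[A,B]$ with $A \in \Psi^\Z(\T^n_\theta)$ and $B \in \Psi^{q+\Z}(\T^n_\theta)$ (so that $\ord A + \ord B \in q + \Z \subset \CZ$). By Proposition~\ref{prop:TR.B}, $\TR$ is a trace on $\Psi^{\CZ}(\T^n_\theta)$, hence it is annihilated by every such commutator; by linearity of $\TR$ (properties~\eqref{eq:TR.linearity1}--\eqref{eq:TR.linearity2}) it therefore vanishes on $P$. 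Thus $\TR(P) = 0$.

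For the ``if'' direction, suppose $\TR(P) = 0$. Apply Proposition~\ref{prop:Uniqueness.TR-Com}: choosing any fixed $R_0 \in \Psi^{-\infty}(\T^n_\theta)$ with $\Tr(R_0) = 1$ (such an $R_0$ exists by Lemma~\ref{lemN}), the decomposition~\eqref{eq:Uniqueness.NCR-Com-CZ} gives
\begin{gather*}
 P = \TR(P) R_0 \quad \bmod \big[\Psi^\Z(\T^n_\theta), \Psi^{q+\Z}(\T^n_\theta)\big].
\end{gather*}
Since $\TR(P) = 0$, the leading term drops out and we conclude $P \in \big[\Psi^\Z(\T^n_\theta), \Psi^{q+\Z}(\T^n_\theta)\big]$, as required.

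There is no real obstacle here: the substantive work has already been done in Propositions~\ref{prop:TR.B} and~\ref{prop:Uniqueness.TR-Com}, and this Corollary is merely their logical consequence. The only point requiring a moment's care is bookkeeping with orders --- one should check that the various commutators appearing in~\eqref{eq:Uniqueness.TR-Com} (namely $[P_j, U_j]$ with $P_j \in \Psi^{q+1}(\T^n_\theta)$ and $U_j \in \Psi^0$, $[\delta_j, Q_j]$ with $\delta_j \in \Psi^1$ and $Q_j \in \Psi^q(\T^n_\theta)$, and the smoothing commutators $[R_i, R_{i+1}]$) indeed all lie in $\big[\Psi^\Z(\T^n_\theta), \Psi^{q+\Z}(\T^n_\theta)\big]$, which they do since in each case one factor has integer order and the total order lies in $q + \Z$. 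Modulo this routine check, the proof is complete in a few lines.
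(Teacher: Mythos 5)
Your proof is correct and follows the same route as the paper: the ``only if'' direction comes from the trace property of $\TR$, and the ``if'' direction comes from the decomposition~\eqref{eq:Uniqueness.NCR-Com-CZ} of Proposition~\ref{prop:Uniqueness.TR-Com}. One small remark: the paper's one-line justification of the ``only if'' direction cites Proposition~\ref{Prop2} (which concerns $\Res$, not $\TR$), evidently a slip for Proposition~\ref{prop:TR.B}; your version invokes the correct proposition.
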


We are now in a position to prove our second uniqueness result.

\begin{Theorem}\label{thm:Uniqueness.TR}
 Every trace on $\Psi^{\CZ}(\T^n_\theta)$ is a scalar multiple of the canonical trace $\TR$.
\end{Theorem}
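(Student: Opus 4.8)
The plan is to follow the same pattern as the proof of Theorem~\ref{thm:Uniqueness.NCR}, replacing the noncommutative residue by the canonical trace and Proposition~\ref{prop:Uniqueness.NCR-Com} by its non-integer analogue, Proposition~\ref{prop:Uniqueness.TR-Com}. One direction is free: $\TR$ is a trace on $\Psi^{\CZ}(\T^n_\theta)$ by Proposition~\ref{prop:TR.B}. For the converse, let $\varphi$ be an arbitrary trace on $\Psi^{\CZ}(\T^n_\theta)$.

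First I would fix a reference operator $R_0\in \Psi^{-\infty}(\T^n_\theta)$ with $\Tr(R_0)=1$, which exists by Lemma~\ref{lemN}. Since $\Psi^{-\infty}(\T^n_\theta)\subset \Psi^q(\T^n_\theta)$ for every $q\in\C$, we have $R_0\in \Psi^{\CZ}(\T^n_\theta)$, so the number $c:=\varphi(R_0)$ is defined. Next, for any $P\in \Psi^q(\T^n_\theta)$ with $q\in\CZ$, formula~\eqref{eq:Uniqueness.NCR-Com-CZ} of Proposition~\ref{prop:Uniqueness.TR-Com} gives
\[
 P-\TR(P)R_0\in \big[\Psi^\Z(\T^n_\theta),\Psi^{q+\Z}(\T^n_\theta)\big].
\]
As $0+q\in\CZ$, this is one of the commutator subspaces annihilated by every trace on $\Psi^{\CZ}(\T^n_\theta)$, so applying $\varphi$ yields $\varphi(P)=\TR(P)\varphi(R_0)=c\,\TR(P)$. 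Since $q\in\CZ$ and $P\in\Psi^q(\T^n_\theta)$ are arbitrary, this gives $\varphi=c\,\TR$ on $\Psi^{\CZ}(\T^n_\theta)$, as desired.

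The substantive content is by now already in place — it is the commutator decomposition of Proposition~\ref{prop:Uniqueness.TR-Com} (itself resting on Propositions~\ref{Prop3}, \ref{lemF}, and~\ref{Prop4}) together with the trace property of $\TR$. So I do not expect a genuine obstacle; the closest thing to one is a bookkeeping point I would take care to state, namely that $c=\varphi(R_0)$ is unambiguous. This holds because a non-smoothing classical \psido\ lies in exactly one coset $\Psi^{q+\Z}(\T^n_\theta)$, its symbol being unique modulo $\cS\big(\R^n;\cA_\theta\big)$ (Remark~\ref{rem:PsiDOs.symbol-uniqueness}), whereas $R_0$ is smoothing and thus a single element of the set $\Psi^{\CZ}(\cA_\theta)$ on which $\varphi$ is a function; one should also note that the commutator subspace appearing above indeed matches the one in the definition of a trace on $\Psi^{\CZ}(\T^n_\theta)$ (with $q_1=0$, $q_2=q$). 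Everything else is routine.
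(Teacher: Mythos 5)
Your proposal is correct and follows essentially the same route as the paper's proof: fix $R_0\in\Psi^{-\infty}(\T^n_\theta)$ with $\Tr(R_0)=1$, apply the commutator decomposition~\eqref{eq:Uniqueness.NCR-Com-CZ} from Proposition~\ref{prop:Uniqueness.TR-Com} to write $P=\TR(P)R_0$ modulo $\big[\Psi^\Z(\T^n_\theta),\Psi^{q+\Z}(\T^n_\theta)\big]$, and evaluate the arbitrary trace $\varphi$ on both sides to get $\varphi=\varphi(R_0)\,\TR$. Your added care about why the constant $c=\varphi(R_0)$ is the same across all cosets $\Psi^{q+\Z}(\T^n_\theta)$, because $R_0$ is smoothing and hence a single common element of all of them, is a reasonable clarification that the paper leaves implicit.
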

\begin{proof}
We know from Proposition~\ref{prop:TR.B} that $\TR$ is a trace on $\Psi^{\CZ}(\T^n_\theta)$. Let $T$ be another trace functional. In addition, let $R_0\in \Psi^{-\infty}(\T^n_\theta)$ be such that $\Tr(R_0)=1$, and let $P\in \Psi^{q}(\T^n_\theta)$, $q\in \Z$. As $T$ is annihilated by $\big[\Psi^\Z(\T^n_\theta), \Psi^{q+\Z}(\T^n_\theta)\big]$ it follows from~(\ref{eq:Uniqueness.NCR-Com-CZ}) that we have
\begin{gather}
T(P)=T [\TR(P)R_0 ]=T(R_0)\TR(P).
\label{eq:Uniqueness.T-TR}
\end{gather}
This shows that $T =T(R_0)\TR$, i.e., $T$ is a scalar multiple of $\TR$. The proof is complete.
\end{proof}

\begin{Remark}
The uniqueness of the canonical trace for classical \psidos\ on a closed manifolds was established by Maniccia--Seiler--Schrohe~\cite{MSS:PAMS08} (see also~\cite{LN:JNCG13, Po:JAM10}).
\end{Remark}

We observe that~(\ref{eq:Uniqueness.T-TR}) implies that a trace on $\Psi^{\CZ}(\T^n_\theta)$ is uniquely determined by its restriction to smoothing operators. Therefore, we have the following consequence of the proof of Theorem~\ref{thm:Uniqueness.TR}.

\begin{Corollary}\label{cor:Uniqueness.TR-Tr}
 The canonical trace $\TR$ is the unique trace on $\Psi^{\CZ}(\T^n_\theta)$ that agrees with the ordinary trace on smoothing operators.
\end{Corollary}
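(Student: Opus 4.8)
The plan is to read the statement off from the proof of Theorem~\ref{thm:Uniqueness.TR}, after first checking that the canonical trace itself has the required property. So I would begin by verifying the existence half: by the first part of Proposition~\ref{prop:TR.A} we have $\TR(P)=\Tr(P)$ for every $P\in\Psi^{<-n}(\T^n_\theta)\cap\Psi^{\CZ}(\T^n_\theta)$, and since $\Psi^{-\infty}(\T^n_\theta)=\bigcap_{q\in\C}\Psi^q(\T^n_\theta)$ is contained in this intersection, $\TR$ restricts to the ordinary trace $\Tr$ on $\Psi^{-\infty}(\T^n_\theta)$. Thus $\TR$ is a trace on $\Psi^{\CZ}(\T^n_\theta)$ agreeing with $\Tr$ on smoothing operators.

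For uniqueness, let $T$ be any trace on $\Psi^{\CZ}(\T^n_\theta)$ agreeing with the ordinary trace on smoothing operators. Fix $R_0\in\Psi^{-\infty}(\T^n_\theta)$ with $\Tr(R_0)=1$, which exists by Lemma~\ref{lemN}. The identity~\eqref{eq:Uniqueness.T-TR} established in the proof of Theorem~\ref{thm:Uniqueness.TR}, namely $T(P)=T(R_0)\,\TR(P)$ for all $P\in\Psi^{\CZ}(\T^n_\theta)$, was derived there using only the trace property of $T$ through the decomposition~\eqref{eq:Uniqueness.NCR-Com-CZ} of Proposition~\ref{prop:Uniqueness.TR-Com} together with the trace property of $\TR$, so it applies verbatim to the present $T$. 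Since $T$ agrees with $\Tr$ on smoothing operators, $T(R_0)=\Tr(R_0)=1$, and therefore $T(P)=\TR(P)$ for all $P$, i.e., $T=\TR$.

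The argument is essentially immediate once Theorem~\ref{thm:Uniqueness.TR} is in hand, so I do not expect a genuine obstacle; the only point requiring a moment's care is the computation of the normalizing constant $T(R_0)$, which forces one to exhibit a concrete smoothing operator of unit trace. That is exactly the content of Lemma~\ref{lemN}, which in turn rests on Lemma~\ref{lemN5} and Poisson summation. Beyond that, one should simply confirm that smoothing operators genuinely lie in $\Psi^{\CZ}(\T^n_\theta)$, so that both the hypothesis on $T$ and the values $\TR(R_0)$ and $T(R_0)$ are meaningful; this is clear from $\Psi^{-\infty}(\T^n_\theta)\subset\Psi^{\CZ}(\T^n_\theta)$.
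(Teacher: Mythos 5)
Your proof is correct and follows essentially the same route as the paper: the paper also derives the corollary directly from the identity $T(P)=T(R_0)\,\TR(P)$ established in the proof of Theorem~\ref{thm:Uniqueness.TR}, and then evaluates the normalizing constant $T(R_0)=\Tr(R_0)=1$ using the hypothesis that $T$ agrees with $\Tr$ on smoothing operators. Your additional checks — that $\TR$ itself coincides with $\Tr$ on $\Psi^{-\infty}(\T^n_\theta)$ via Proposition~\ref{prop:TR.A}, and that $\Psi^{-\infty}(\T^n_\theta)\subset\Psi^{\CZ}(\T^n_\theta)$ so the hypothesis is meaningful — are accurate and in the spirit of the paper, which leaves them implicit.
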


\section{Proof of Lemma~\ref{lemD}}\label{sec:lemD}
In this section, we prove Lemma~\ref{lemD}. The proof is a consequence of a few intermediate results.

We record the following version of Peetre's inequality.

\begin{Lemma}[{see, e.g., \cite[Lemma~I.8.2]{AG:AMS07}}]\label{lemD1} Let $m\in \R$. Then, for all $\xi, \eta\in \R^n$, we have
\begin{gather}
 \big( 1+ |\xi+\eta|\big)^m \leq \big( 1+ |\xi|\big)^m \big( 1+ |\eta|\big)^{|m|}.
 \label{eq:delta.Peetre}
\end{gather}
 \end{Lemma}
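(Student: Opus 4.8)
The plan is to prove the stated inequality
\begin{gather*}
 (1+|\xi+\eta|)^m \leq (1+|\xi|)^m (1+|\eta|)^{|m|}
\end{gather*}
by first reducing to the case $m\geq 0$ and then using the triangle inequality. First I would treat $m\geq 0$. The triangle inequality gives $|\xi+\eta|\leq |\xi|+|\eta|$, hence
\begin{gather*}
 1+|\xi+\eta| \leq 1 + |\xi| + |\eta| \leq (1+|\xi|)(1+|\eta|),
\end{gather*}
the last step because $(1+|\xi|)(1+|\eta|) = 1+|\xi|+|\eta|+|\xi||\eta| \geq 1+|\xi|+|\eta|$. Raising to the power $m\geq 0$ (which preserves the inequality between nonnegative reals) yields
\begin{gather*}
 (1+|\xi+\eta|)^m \leq (1+|\xi|)^m (1+|\eta|)^m = (1+|\xi|)^m (1+|\eta|)^{|m|},
\end{gather*}
since $|m|=m$ in this case.

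Next I would handle $m<0$. Write $m=-k$ with $k>0$. Apply the already-established case (with exponent $k>0$) but with the roles adjusted: substitute $\xi \rightsquigarrow \xi+\eta$ and $\eta \rightsquigarrow -\eta$, so that $(\xi+\eta) + (-\eta) = \xi$. This gives
\begin{gather*}
 (1+|\xi|)^k \leq (1+|\xi+\eta|)^k (1+|-\eta|)^k = (1+|\xi+\eta|)^k (1+|\eta|)^k.
\end{gather*}
Dividing both sides by $(1+|\xi+\eta|)^k (1+|\eta|)^k$ (all factors are $\geq 1$, so this is legitimate) and rearranging yields
\begin{gather*}
 (1+|\xi+\eta|)^{-k} \leq (1+|\xi|)^{-k} (1+|\eta|)^{k},
\end{gather*}
which is exactly the claim, since $m=-k$ and $|m|=k$.

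There is no real obstacle here; the only point to be careful about is the direction of the inequality when the exponent is negative, which is why the $m<0$ case is obtained from the $m\geq 0$ case by the substitution trick rather than by a direct estimate. Combining the two cases completes the proof.
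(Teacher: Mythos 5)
Your proof is correct. The paper does not actually prove Lemma~\ref{lemD1}; it simply cites \cite[Lemma~I.8.2]{AG:AMS07} for this standard version of Peetre's inequality, so there is no in-paper argument to compare against. Your two-step argument (triangle inequality plus $1+|\xi|+|\eta|\leq(1+|\xi|)(1+|\eta|)$ for $m\geq 0$, then the substitution $\xi\rightsquigarrow\xi+\eta$, $\eta\rightsquigarrow-\eta$ to handle $m<0$) is the textbook proof and works. One small wording slip: in the $m<0$ case you say you divide $(1+|\xi|)^k \leq (1+|\xi+\eta|)^k(1+|\eta|)^k$ by $(1+|\xi+\eta|)^k(1+|\eta|)^k$, but to land directly on $(1+|\xi+\eta|)^{-k}\leq(1+|\xi|)^{-k}(1+|\eta|)^k$ you should divide by $(1+|\xi|)^k(1+|\xi+\eta|)^k$ instead; the ``rearranging'' you mention covers this, and the conclusion is unaffected.
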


The Taylor formula allows us to approximate finite differences $\Delta_j f(\xi)= f(\xi+e_j)-f(\xi)$ by Taylor polynomials $\sum\limits_{1\leq \ell \leq N} \frac{1}{\ell !} \partial_{\xi_j}^\ell f(\xi)$. Conversely, partial derivatives $\partial_{\xi_j}f(\xi)$ can be approximated by finite differences. In particular, for one-variable functions we have the following result.

\begin{Lemma}[{\cite[Theorem~3.3.39]{RT:Birkhauser10}}]\label{lemD2} Let $N$ be an integer~$\geq 2$. Then there is $C_N>0$ such that, for all $\varphi \in C^\infty(\R)$ and $t\in \R$, we have
\begin{gather*}
 \bigg| \varphi'(t) - \sum_{1\leq \ell \leq N-1} \frac{(-1)^{\ell-1}}{\ell} \Delta_1^{\ell}\varphi(t)\bigg| \leq C_N \sup_{s\in [0,N-1]} \big| \varphi^{(N)}(t+s)\big|.
\end{gather*}
 \end{Lemma}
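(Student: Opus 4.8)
The plan is to prove the estimate by writing $\Delta_1^\ell\varphi(t)$ as an $\ell$-fold integral of $\varphi^{(\ell)}$ and then matching the alternating sum $\sum_{1\le \ell\le N-1}\frac{(-1)^{\ell-1}}{\ell}\Delta_1^\ell\varphi(t)$ against $\varphi'(t)$ via the Taylor expansion of $\varphi$ at $t$. Concretely, for each $\ell$ one has
\begin{gather*}
 \Delta_1^\ell\varphi(t)=\int_{[0,1]^\ell}\varphi^{(\ell)}(t+s_1+\cdots+s_\ell)\,{\rm d}s_1\cdots{\rm d}s_\ell,
\end{gather*}
so $\Delta_1^\ell\varphi(t)$ is supported (as far as the arguments of $\varphi$ go) in $[t,t+\ell]\subset[t,t+N-1]$. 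I would then Taylor-expand $\varphi$ around $t$ to order $N$: $\varphi(t+s)=\sum_{k<N}\frac{s^k}{k!}\varphi^{(k)}(t)+R_N(t,s)$ with $|R_N(t,s)|\le \frac{|s|^N}{N!}\sup_{[0,N-1]}|\varphi^{(N)}(t+\cdot)|$ for $s\in[0,N-1]$. Substituting into the integral representation of $\Delta_1^\ell\varphi(t)$ expresses each $\Delta_1^\ell\varphi(t)$ as a linear combination of $\varphi^{(k)}(t)$, $k=1,\dots,N-1$, plus an error controlled by $\sup_{[0,N-1]}|\varphi^{(N)}(t+\cdot)|$.

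The key combinatorial point is then purely numerical: one must check that the coefficients $c_{\ell,k}$ defined by $\Delta_1^\ell\varphi(t)=\sum_{k\ge \ell}c_{\ell,k}\varphi^{(k)}(t)+O(\varphi^{(N)})$ satisfy $\sum_{1\le\ell\le N-1}\frac{(-1)^{\ell-1}}{\ell}c_{\ell,k}=\delta_{k,1}$ for $k=1,\dots,N-1$. This is the identity underlying the classical formal expansion $\partial=\log(1+\Delta_1)=\sum_{\ell\ge1}\frac{(-1)^{\ell-1}}{\ell}\Delta_1^\ell$ applied to polynomials of degree $<N$; equivalently, it follows from the fact that the operator $\log(1+\Delta_1)$ agrees with $\partial_t$ on the space of polynomials of degree $\le N-1$, truncated at order $N-1$. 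I would verify this by working in the algebra of formal power series in $\Delta_1$ (or in $s$), where $\Delta_1$ acts on $\varphi(t+\cdot)|_{0}$ as the shift-minus-identity, and reading off coefficients; alternatively one can cite that this is exactly the defining property used in \cite{RT:Birkhauser10}. Once the polynomial part collapses to $\varphi'(t)$, the remaining terms are the $N$-th order Taylor remainders integrated over the simplices $[0,1]^\ell\subset[0,N-1]$, and bounding them gives a constant $C_N$ depending only on $N$ (coming from the number of terms, the binomial coefficients, and the volumes of the integration domains).

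The main obstacle is bookkeeping the alternating-sum identity cleanly rather than any analytic difficulty: one needs to be careful that the Taylor remainder, when inserted into the multiple integral for $\Delta_1^\ell\varphi$, only ever samples $\varphi^{(N)}$ at points $t+s$ with $s\in[0,N-1]$ (which holds since $\ell\le N-1$ and each $s_i\in[0,1]$), and that the finitely many constants assembled from $\frac1\ell$, the multinomial factors, and the simplex volumes can be absorbed into a single $C_N$. Since the statement is quoted from \cite{RT:Birkhauser10}, in the paper itself it would suffice to invoke that reference; the sketch above is how I would reconstruct it if a self-contained argument were wanted.
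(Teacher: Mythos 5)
Your proposal is correct. The paper itself gives no proof of this lemma; it simply cites~\cite[Theorem~3.3.39]{RT:Birkhauser10}, so there is no ``paper proof'' to compare against, and what you have written is a self-contained reconstruction.

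The key ideas in your sketch are all sound and are the standard ones: the exact identity $\partial_t p(t)=\sum_{\ell=1}^{N-1}\frac{(-1)^{\ell-1}}{\ell}\Delta_1^\ell p(t)$ for polynomials $p$ of degree $\le N-1$ (which is precisely the coefficient identity $\sum_{\ell}\frac{(-1)^{\ell-1}}{\ell}\,\ell!\,S(k,\ell)/k!=\delta_{k,1}$, $1\le k\le N-1$, coming from $\log(1+\Delta_1)=\partial_t$ applied to polynomials of bounded degree, on which both series truncate), together with the bound $|\Delta_1^\ell R(t,\cdot)|\le 2^\ell\max_{0\le m\le\ell}|R(t,m)|$ for the Taylor remainder $R$, which is $O\big(\sup_{[0,N-1]}|\varphi^{(N)}(t+\cdot)|\big)$. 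Assembling the finitely many constants gives a single $C_N$, exactly as you say.

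One small cleanup in the bookkeeping: your integral representation $\Delta_1^\ell\varphi(t)=\int_{[0,1]^\ell}\varphi^{(\ell)}(t+s_1+\cdots+s_\ell)\,{\rm d}s$ involves $\varphi^{(\ell)}$, so if you want to substitute a Taylor expansion into it you should expand $\varphi^{(\ell)}$ to order $N-\ell$ (whose remainder again involves only $\varphi^{(N)}$), rather than expanding $\varphi$ itself. Alternatively it is slightly cleaner to bypass the integral representation altogether and work with $\Delta_1^\ell\varphi(t)=\sum_{m=0}^\ell\binom{\ell}{m}(-1)^{\ell-m}\varphi(t+m)$, Taylor-expanding each $\varphi(t+m)$ around $t$; both routes give the same estimate, and the integral representation is only genuinely needed if one wants an alternate route to the vanishing $c_{\ell,k}=0$ for $k<\ell$.
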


By~\cite[Lemma~6.10]{HLP:Part1} if $\rho(\xi)\in \stS^{m}\big(\R^n; \cA_\theta\big)$, then $\Delta^\alpha \rho(\xi)\in \stS^{m-|\alpha|}\big(\R^n; \cA_\theta\big)$ for every multi-order~$\alpha$. Keeping this in mind, we have the following result.

\begin{Lemma}\label{lemD3} Let $\rho(\xi)\in \stS^{m}\big(\R^n\big)$, $m\in \R$. Then, for $j=1, \ldots, n$, we have
\begin{gather*}
 \partial_{\xi_j}\rho(\xi) \sim \sum_{\ell \geq 0} \frac{(-1)^{\ell}}{\ell+1} \Delta_j^{\ell+1} \rho(\xi).
\end{gather*}
\end{Lemma}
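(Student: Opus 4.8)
The plan is to prove the asymptotic expansion $\partial_{\xi_j}\rho(\xi) \sim \sum_{\ell\geq 0} \frac{(-1)^\ell}{\ell+1}\Delta_j^{\ell+1}\rho(\xi)$ by verifying the defining estimates for $\sim$: namely, for each integer $N\geq 1$ and each multi-order $\beta$, the remainder $\partial_{\xi_j}\rho(\xi) - \sum_{0\leq\ell<N}\frac{(-1)^\ell}{\ell+1}\Delta_j^{\ell+1}\rho(\xi)$ should lie in $\stS^{m-N}(\R^n)$, up to controlling lower-order symbol estimates. First I would observe that each term $\Delta_j^{\ell+1}\rho(\xi)$ belongs to $\stS^{m-\ell-1}(\R^n)$ by the cited \cite[Lemma~6.10]{HLP:Part1}, so the series is formally of decreasing order and the statement makes sense.

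The heart of the argument is a one-variable Taylor-type estimate applied in the $\xi_j$ direction with the other variables frozen. Fixing $\xi'=(\xi_1,\dots,\widehat{\xi_j},\dots,\xi_n)$ and writing $\varphi(t) = \rho(\xi_1,\dots,t,\dots,\xi_n)$, Lemma~\ref{lemD2} gives, for any integer $N\geq 2$,
\begin{gather*}
 \bigg| \partial_{\xi_j}\rho(\xi) - \sum_{1\leq\ell\leq N-1}\frac{(-1)^{\ell-1}}{\ell}\Delta_j^\ell\rho(\xi)\bigg| \leq C_N \sup_{s\in[0,N-1]}\big|\partial_{\xi_j}^N\rho(\xi+se_j)\big|.
\end{gather*}
Here I should reconcile indexing: the term $\frac{(-1)^{\ell-1}}{\ell}\Delta_j^\ell\rho$ with $\ell\geq 1$ is exactly $\frac{(-1)^\ell}{\ell+1}\Delta_j^{\ell+1}\rho$ after the shift $\ell\mapsto\ell+1$, so the partial sums match. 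Then I would use the symbol estimate $|\partial_{\xi_j}^N\rho(\xi+se_j)|\leq C(1+|\xi+se_j|)^{m-N}$ together with Peetre's inequality (Lemma~\ref{lemD1}) to bound $(1+|\xi+se_j|)^{m-N}$ by $C(1+|\xi|)^{m-N}(1+|s|)^{|m-N|}\leq C_N'(1+|\xi|)^{m-N}$ uniformly for $s\in[0,N-1]$. This shows the remainder (without derivatives) is $O((1+|\xi|)^{m-N})$.

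To upgrade this pointwise bound to a genuine $\stS^{m-N}$ estimate I would differentiate: applying $\partial_\xi^\beta$ to the remainder commutes with both $\partial_{\xi_j}$ and $\Delta_j$ (finite differences and partial derivatives in the same set of variables commute, and the scalar symbol class is stable), so $\partial_\xi^\beta$ of the remainder is itself the remainder in Lemma~\ref{lemD2} applied to $\partial_\xi^\beta\rho$, which lies in $\stS^{m-|\beta|}(\R^n)$; repeating the Peetre estimate then yields the bound $C_{N\beta}(1+|\xi|)^{m-|\beta|-N}$. Since $N$ is arbitrary this establishes the claimed asymptotic expansion. The main obstacle is purely bookkeeping — matching the two indexing conventions and making sure the remainder after truncating at $N$ terms in the stated series corresponds to truncating at $N-1$ terms in Lemma~\ref{lemD2}, and that the commutation of $\partial_\xi^\beta$ with the difference operators is handled cleanly — rather than anything analytically deep.
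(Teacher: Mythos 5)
Your proposal follows essentially the same route as the paper's proof: apply the one-variable Taylor estimate Lemma~\ref{lemD2} to $\varphi(t)=\rho(\xi+te_j)$, bound the remainder via the symbol estimate for $\partial_{\xi_j}^N\rho$ together with Peetre's inequality (Lemma~\ref{lemD1}), and then upgrade to full $\stS^{m-N}$ membership by noting that $\partial_\xi^\alpha$ commutes with $\Delta_j$ so the same argument applies to $\partial_\xi^\alpha\rho\in\stS^{m-|\alpha|}$. Your handling of the index shift from $\sum_{\ell\geq 1}\frac{(-1)^{\ell-1}}{\ell}\Delta_j^\ell$ to $\sum_{\ell\geq 0}\frac{(-1)^\ell}{\ell+1}\Delta_j^{\ell+1}$ is correct and matches the paper.
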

\begin{proof}
 Let $j\in\{1,\ldots, n\}$ and $N\geq 2$. Given any $\xi\in \R^n$, by applying Lemma~\ref{lemD2} to $\varphi(t)=\rho(\xi+te_j)$ we get
\begin{gather}
 \bigg| \partial_{\xi_j} \rho(\xi) - \sum_{1\leq \ell \leq N-1} \frac{(-1)^{\ell-1}}{\ell}\Delta_j^{\ell} \rho(\xi) \bigg| \leq C_N \sup_{0\leq t\leq N-1} \big| \partial_{\xi_j}^N \rho(\xi+te_j)\big|.
 \label{eq:delta.Taylor-partialxi}
\end{gather}
As $\rho(\xi)\in \stS^{m}\big(\R^n\big)$ there is $C>0$ such that $| \partial_{\xi_j}^N \rho(\xi)|\leq C(1+|\xi|)^{m-N}$ for all $\xi \in \R^n$. Together with Peetre's inequality~(\ref{eq:delta.Peetre}) this implies that, for all $\xi\in \R^n$ and $t\in [0,N-1]$, we have
\begin{align*}
\big| \partial_{\xi_j}^N \rho(\xi+te_j)\big| &\leq C(1+|\xi+t_j|)^{m-N}\\
& \leq C(1+|\xi|)^{m-N} (1+t)^{|m-N|}\\
 & \leq 2^{|m-N|} C(1+|\xi|)^{m-N}.
\end{align*}
Combining this with~(\ref{eq:delta.Taylor-partialxi}) we see that there is $C_{N0}>0$ such that, for all $\xi\in \R^n$, we have
\begin{gather*}
 \bigg| \partial_{\xi_j} \rho(\xi) - \sum_{1\leq \ell \leq N-1} \frac{(-1)^{\ell-1}}{\ell}\Delta_j^{\ell} \rho(\xi) \bigg| \leq C_{N0} C(1+|\xi|)^{m-N}.
\end{gather*}

Likewise, given any multi-order $\alpha$, as $\partial_{\xi}^\alpha \rho(\xi)\in \stS^{m-|\alpha|}\big(\R^n\big)$, there is $C_{N\alpha}>0$ such that, for all $\xi\in \R^n$, we have
\begin{align*}
 \bigg| \partial_\xi^\alpha\bigg(\partial_{\xi_j} \rho - \sum_{1\leq \ell \leq N-1} \frac{(-1)^{\ell-1}}{\ell}\Delta_j^{\ell} \rho\bigg)(\xi) \bigg|
 &= \bigg| \partial_{\xi_j} \partial_\xi^\alpha\rho(\xi) - \sum_{1\leq \ell \leq N-1} \frac{(-1)^{\ell-1}}{\ell}\Delta_j^{\ell} \partial_\xi^\alpha\rho(\xi) \bigg| \\
 & \leq C_{N0} C(1+|\xi|)^{m-|\alpha|-N}.
\end{align*}
This shows that
\begin{gather*}
 \partial_{\xi_j} \rho(\xi) - \sum_{1\leq \ell \leq N-1} \frac{(-1)^{\ell-1}}{\ell}\Delta_j^{\ell} \rho(\xi) \in \stS^{m-N}\big(\R^n\big) \qquad \forall\, N\geq 2.
\end{gather*}
It then follows that $ \partial_{\xi_j} \rho(\xi) \sim \sum\limits_{\ell \geq 1 } \frac{(-1)^{\ell-1}}{\ell}\Delta_j^{\ell} \rho(\xi)$. This proves the result.
\end{proof}

As mentioned above, if $\rho(\xi)\in \stS^{m}\big(\R^n; \cA_\theta\big)$, then $\Delta^\alpha \rho(\xi)\in \stS^{m-|\alpha|}\big(\R^n; \cA_\theta\big)$ for every multi-order~$\alpha$. For classical symbols we further have the following result.

\begin{Lemma}\label{lemD4}
 Let $\rho(\xi)\in S^q\big(\R^n; \cA_\theta\big)$, $q\in \C$. Then $\Delta^\alpha \rho(\xi)\in S^{q-|\alpha|}\big(\R^n; \cA_\theta\big)$ for every $\alpha\in \N_0^n$.
\end{Lemma}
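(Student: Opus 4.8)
The plan is to reduce to the case of a single forward difference, and then use Taylor's formula to trade $\Delta_j$ for $\xi$-derivatives, which manifestly preserve the classical symbol classes.

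Since $\Delta^\alpha=\Delta_1^{\alpha_1}\cdots\Delta_n^{\alpha_n}$ and the $\Delta_j$ pairwise commute, it is enough by induction on $|\alpha|$ to treat the case $|\alpha|=1$, i.e.\ to show that $\Delta_j\rho(\xi)\in S^{q-1}\big(\R^n;\cA_\theta\big)$ for every $\rho(\xi)\in S^q\big(\R^n;\cA_\theta\big)$ and $j=1,\ldots,n$. I will also use the elementary observation that $\xi$-differentiation maps $S^q\big(\R^n;\cA_\theta\big)$ into $S^{q-1}\big(\R^n;\cA_\theta\big)$, sending the homogeneous component of degree $q-k$ to one of degree $q-k-1$; this is immediate from Definition~\ref{def:Symbols.classicalsymbols}.

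Fix $j$ and an integer $M\geq 1$. Applying Taylor's formula with integral remainder to the $\cA_\theta$-valued function $t\mapsto\rho(\xi+te_j)$ yields
\begin{gather*}
 \Delta_j\rho(\xi)=\sum_{1\leq\ell<M}\frac{1}{\ell!}\partial_{\xi_j}^\ell\rho(\xi)+r_M(\xi), \qquad r_M(\xi)=\frac{1}{(M-1)!}\int_0^1(1-t)^{M-1}\partial_{\xi_j}^M\rho(\xi+te_j)\,{\rm d}t.
\end{gather*}
Because $\rho(\xi)\in\stS^{\Re q}\big(\R^n;\cA_\theta\big)$ by Remark~\ref{rmk:Symbols.classical-inclusion}, for all multi-orders $\beta,\gamma$ there is $C>0$ with $\big\|\delta^\gamma\partial_\xi^\beta\partial_{\xi_j}^M\rho(\eta)\big\|\leq C(1+|\eta|)^{\Re q-|\beta|-M}$ for all $\eta\in\R^n$. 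Differentiating under the integral sign and using Peetre's inequality~\eqref{eq:delta.Peetre} to pass from $\eta=\xi+te_j$ to $\xi$ uniformly for $t\in[0,1]$, one gets $\big\|\delta^\gamma\partial_\xi^\beta r_M(\xi)\big\|\leq C'(1+|\xi|)^{\Re q-M-|\beta|}$, i.e.\ $r_M(\xi)\in\stS^{\Re q-M}\big(\R^n;\cA_\theta\big)$. Hence
\begin{gather*}
 \Delta_j\rho(\xi)-\sum_{1\leq\ell<M}\frac{1}{\ell!}\partial_{\xi_j}^\ell\rho(\xi)\in\stS^{\Re q-M}\big(\R^n;\cA_\theta\big)\qquad\forall\, M\geq 1.
\end{gather*}

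Now each $\partial_{\xi_j}^\ell\rho$ is a classical symbol of order $q-\ell$, hence has a homogeneous expansion; regrouping the homogeneous components of the symbols $\frac{1}{\ell!}\partial_{\xi_j}^\ell\rho$, $\ell\geq 1$, according to their degree defines homogeneous symbols $\mu_{q-1-m}(\xi)\in S_{q-1-m}\big(\R^n;\cA_\theta\big)$ (each a finite sum, with leading term $\mu_{q-1}=\partial_{\xi_j}\rho_q$). Taking $M=N+1$ above and subtracting from each $\partial_{\xi_j}^\ell\rho$ ($1\leq\ell\leq N$) enough homogeneous terms that the resulting remainder lies in $\stS^{\Re q-N-1}\big(\R^n;\cA_\theta\big)$, the orders telescope and one obtains $\Delta_j\rho(\xi)-\sum_{m<N}\mu_{q-1-m}(\xi)\in\stS^{\Re(q-1)-N}\big(\R^n;\cA_\theta\big)$ for every $N$. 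By Definition~\ref{def:Symbols.classicalsymbols} this says exactly that $\Delta_j\rho\sim\sum_{m\geq 0}\mu_{q-1-m}$, so $\Delta_j\rho\in S^{q-1}\big(\R^n;\cA_\theta\big)$, which completes the induction. (Equivalently, having shown that $\Delta_j\rho$ agrees modulo $\stS^{\Re q-M}$ with a classical symbol of order $q-1$ for every $M$, one may conclude from the asymptotic completeness of the classical symbol classes, cf.~\cite{HLP:Part1}.)

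The only step requiring real care is this last regrouping: one must check that combining the standard-symbol remainder estimate with the finitely many classical expansions of the $\partial_{\xi_j}^\ell\rho$ reproduces, order by order, the asymptotic expansion of $\Delta_j\rho$. The bookkeeping works out precisely because, after removing the appropriate number of homogeneous terms from each $\partial_{\xi_j}^\ell\rho$, all the remainders land in the same class $\stS^{\Re q-N-1}$; everything else is routine.
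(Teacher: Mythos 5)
Your proof is correct and follows essentially the same approach as the paper's: reduce to $|\alpha|=1$, apply Taylor's formula with integral remainder to write $\Delta_j\rho$ as a sum of $\partial_{\xi_j}^\ell\rho$ plus a remainder, control the remainder via Peetre's inequality, and conclude that $\Delta_j\rho\sim\sum_{\ell\geq1}\frac{1}{\ell!}\partial_{\xi_j}^\ell\rho$ is classical of order $q-1$. The paper leaves the final regrouping of homogeneous components implicit, whereas you spell it out; otherwise the arguments coincide.
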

\begin{proof}
 It is enough to prove the result when $|\alpha|=1$. Let $j\in\{1,\ldots, n\}$ and $N\geq 1$. Then by Taylor's formula for differential maps with values in locally convex spaces (see, e.g., \cite[Proposition~C.15]{HLP:Part1}), for any $\xi\in \R^n$, we have
\begin{align*}
 \Delta_j \rho(\xi) - \sum_{1\leq \ell \leq N} \frac{1}{\ell!} \partial_{\xi_j}^{\ell}\rho(\xi) & = \rho(x+e_j) - \sum_{0\leq l \leq N} \frac{1}{\ell!} \partial_{\xi_j}^{\ell}\rho(\xi)\\
 &= \frac{1}{N!} \int_0^1 (1-t)^N \partial_{\xi_j}^{N+1}\rho(\xi+te_j){\rm d}t.
\end{align*}
Thus,
\begin{align}
 \bigg\| \Delta_j \rho(\xi) - \sum_{1\leq \ell \leq N} \frac{1}{\ell!} \partial_{\xi_j}^{\ell}\rho(\xi)\bigg\| & \leq
 \frac{1}{N!} \int_0^1 (1-t)^N \big\|\partial_{\xi_j}^{N+1}\rho(\xi+te_j)\big\|{\rm d}t \nonumber\\
 & \leq \frac{1}{(N+1)!} \sup_{0\leq t \leq 1} \big\|\partial_{\xi_j}^{N+1}\rho(\xi+te_j)\big\|.
 \label{eq:delta.Taylor-Deltaj}
\end{align}

Set $m=\Re q$. As $\rho(\xi)\in \stS^m\big(\R^n; \cA_\theta\big)$. There is $C>0$ such that
\begin{gather*}
 \big\|\partial_{\xi_j}^{N+1}\rho(\xi)\big\| \leq C\big(1+|\xi|)^{m-N-1} \qquad \forall\, \xi \in \R^n.
\end{gather*}
Together with Peetre's inequality~(\ref{eq:delta.Peetre}) this implies that, for all $\xi \in \R^n$ and $t\in [0,1]$, we have
\begin{align*}
 \big\|\partial_{\xi_j}^{N+1}\rho(\xi+te_j)\big\| & \leq C\big(1+|\xi+te_j|)^{m-N-1}\\
 & \leq C(1+|\xi|)^{m-N-1} (1+t)^{|m-N-1|} \\
 & \leq 2^{|m-N-1|} C(1+|\xi|)^{m-N-1}.
\end{align*}
Combining this with~(\ref{eq:delta.Taylor-Deltaj}) then shows there is $C_N>0$ such that, for all $\xi \in \R^n$, we have
\begin{gather*}
 \bigg\| \Delta_j \rho(\xi) - \sum_{1\leq \ell \leq N} \frac{1}{\ell!} \partial_{\xi_j}^{\ell}\rho(\xi)\bigg\| \leq C_N (1+|\xi|)^{m-N-1}.
\end{gather*}

Likewise, given any multi-orders $\alpha$ and $\beta$, as $\delta^\alpha \partial_\xi^\beta \rho(\xi)\in S^{q-|\beta|}\big(\R^n;\cA_\theta\big)$, there is $C_{N\alpha \beta}>0$ such that, for all $\xi \in \R^n$, we have
\begin{align*}
 \bigg\| \delta^\alpha \partial_\beta\bigg(\Delta_j \rho - \sum_{1\leq \ell \leq N} \frac{1}{\ell!} \partial_{\xi_j}^{\ell}\rho\bigg)(\xi)\bigg\|
 & = \bigg\| \Delta_j \delta^\alpha \partial_\beta\rho(\xi) - \sum_{1\leq \ell \leq N} \frac{1}{\ell!} \partial_{\xi_j}^{\ell}\delta^\alpha \partial_\beta\rho(\xi)\bigg\| \\
 & \leq C_{N\alpha \beta} (1+|\xi|)^{m-|\beta|-N-1}.
\end{align*}
This shows that
\begin{gather*}
 \Delta_j \rho(\xi) - \sum_{1\leq \ell \leq N} \frac{1}{\ell!} \partial_{\xi_j}^{\ell}\rho(\xi) \in \stS^{m-N-1}\big(\R^n;\cA_\theta\big) \qquad \forall\, N\geq 1.
\end{gather*}
It then follows that $ \Delta_j \rho(\xi) \sim \sum\limits_{\ell \geq 1} \frac{1}{\ell!} \partial_{\xi_j}^{\ell}\rho(\xi)$. As $\partial_{\xi_j}^{\ell}\rho(\xi)\in S^{q-\ell}\big(\R^n;\cA_\theta\big)$ for every $\ell \geq 1$, we then deduce that $ \Delta_j \rho(\xi) \in S^{q-1}\big(\R^n;\cA_\theta\big)$. This proves the result when $|\alpha|=1$. The proof is complete.
\end{proof}

We are now in a position to prove Lemma~\ref{lemD}.

\begin{proof}[Proof of Lemma~\ref{lemD}]
 Let $\rho(\xi)\in S^{q}\big(\R^n\big)$, $q\in \C$, and $j \in \{1,\ldots, n\}$. We know by Lemma~\ref{lemD3} that $\partial_{\xi_j}\rho(\xi) \!\sim\! \sum\limits_{\ell \geq 0}\! \frac{(-1)^{\ell}}{\ell+1}\! \Delta_j^{\ell+1} \rho(\xi).\!$ Moreover, it follows from Lemma~\ref{lemD4} that $\Delta_j^{\ell} \rho(\xi)\!\in\! S^{q-\ell}\!\big(\R^n; \cA_\theta\big).\!$ By the Borel lemma for scalar-valued symbols (see, e.g., \cite[Proposition~I.2.3]{AG:AMS07}) there is $\rho_j(\xi) \in S^q\big(\R^n\big)$ such that $\rho_j(\xi) \sim \sum\limits_{\ell \geq 0}\frac{(-1)^{\ell}}{\ell+1} \Delta_j^{\ell} \rho(\xi)$.

Set $m=\Re q$. The fact that $\rho_j(\xi) \sim \sum\limits_{\ell \geq 0}\frac{(-1)^{\ell}}{\ell} \Delta_j^{\ell+1} \rho(\xi)$ means that, for every $N\geq 1$, we have
\begin{gather*}
 \rho_j(\xi) - \sum_{\ell <N} \frac{(-1)^{\ell}}{\ell+1} \Delta_j^{\ell} \rho(\xi) \in \stS^{m-N}\big(\R^n\big).
\end{gather*}
 As $\Delta_j$ maps $\stS^{m-N}\big(\R^n\big)$ to $ \stS^{m-N-1}\big(\R^n\big)$, we deduce that
\begin{gather*}
 \Delta_j \rho_j(\xi) - \sum_{\ell <N} \frac{(-1)^{\ell}}{\ell+1} \Delta_j^{\ell+1} \rho(\xi) \in \stS^{m-N-1}\big(\R^n\big)\qquad \forall\, N \geq 1.
\end{gather*}
This means that $\Delta_j \rho_j(\xi)\sim \sum\limits_{\ell \geq 0} \frac{(-1)^{\ell}}{\ell+1} \Delta_j^{\ell+1} \rho(\xi)$. As $\partial_{\xi_j}\rho(\xi) \sim \sum\limits_{\ell \geq 0} \frac{(-1)^{\ell}}{\ell+1} \Delta_j^{\ell+1} \rho(\xi)$, we then deduce that $\partial_{\xi_j}\rho(\xi)-\Delta_j \rho(\xi) \in \cS\big(\R^n\big)$. This proves the result.
\end{proof}

\section{Proof of Lemma~\ref{lemN5}}\label{sec:lemN5}
In this section, we prove Lemma~\ref{lemN5}. The proof is based on the characterization of the Fourier transforms of symbols on~$\R^n$.

In what follows we say that a continuous function $K(x)$ on $\R^n\setminus 0$ has \emph{bounded support} when its support is a bounded subset of $\R$. Equivalently, there is $R>0$ such that $K(x)=0$ for $|x|>R$. In particular, if $K(x)$ is a such a function and is integrable near $x=0$, then $K(x)$ defines a~compactly supported distribution on~$\R^n$, and so it has a smooth Fourier trans\-form~$\hat{K}(\xi)$.

\begin{Lemma}\label{lemN3} Let $K(x) \in C^\infty\big(\R^n\setminus 0\big)$ have bounded support. Assume that near $x=0$ there are analytic functions $f(x)$ and $g(x)$ such that
\begin{gather}
 K(x)=\frac{f(x)}{g(x)}, \qquad f(x)=\op{O}\big(|x|^a\big), \qquad g(x) \sim |x|^{2b},
 \label{eq:lemN5.condition-N3a}
\end{gather}
where $a$ and $b$ are non-negative integers such that $a-2b>-n$. Then $\hat{K}(\xi) \in \stS^{-m+1}\big(\R^n\big)$, with $m=n+a-2b$.
\end{Lemma}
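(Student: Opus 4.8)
The plan is to reduce the statement to a standard fact about Fourier transforms of compactly supported distributions: if a distribution $K$ on $\R^n$ is smooth away from the origin and, near $x=0$, is homogeneous-like of degree $\lambda$ with $\lambda > -n$ (more precisely, agrees with a locally integrable function whose behaviour near $0$ is controlled as in~\eqref{eq:lemN5.condition-N3a}), then $\hat K(\xi)$ is a classical (or at least standard) symbol of order $-n-\lambda$. Here $\lambda = a - 2b$, so the claimed order $-m+1 = -(n+a-2b)+1$ is off by one from the naive prediction $-(n+a-2b)$; this shift by $1$ is exactly the slack one loses in passing from a precise asymptotic of $K$ near $0$ to the crude bound $f(x) = \op{O}(|x|^a)$ and $g(x)\sim |x|^{2b}$, rather than a genuine homogeneous leading term. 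So I expect the lemma to be proved with standard-symbol estimates (membership in $\stS^{-m+1}$) and not classical-symbol estimates.

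First I would localize: write $K = \chi_0 K + (1-\chi_0)K$ where $\chi_0\in C^\infty_c(\R^n)$ is supported in a small ball on which $f/g$ is represented by the analytic formula, and equals $1$ near $0$. The piece $(1-\chi_0)K$ is smooth with compact support, hence its Fourier transform is Schwartz, in particular in $\stS^{-m+1}(\R^n)$ (indeed in $\cS(\R^n)$). So it suffices to treat $K_0 := \chi_0 K$, which near $0$ is $f(x)/g(x)$ with $f = \op{O}(|x|^a)$, $g\sim |x|^{2b}$, and which is globally integrable because $a - 2b > -n$. The main work is to estimate $\partial_\xi^\beta \hat K_0(\xi)$ for $|\xi|\ge 1$.

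The key step is the differentiation-under-the-integral / integration-by-parts argument. For a multi-index $\beta$, $\partial_\xi^\beta \hat K_0(\xi)$ is the Fourier transform of $(-ix)^\beta K_0(x)$, which near $0$ behaves like $|x|^{a+|\beta|}/|x|^{2b}$, i.e.\ improves the vanishing order. To get decay in $\xi$ one integrates by parts against $e^{-ix\cdot\xi}$: each application of a suitable first-order operator (e.g.\ $|\xi|^{-2}\xi\cdot D_x$) gains a factor $|\xi|^{-1}$ at the cost of one $x$-derivative falling on $x^\beta K_0(x)$. Differentiating $f(x)/g(x)$ and using $g\sim |x|^{2b}$, each $x$-derivative worsens the singularity at $0$ by one power of $|x|$; after $\ell$ derivatives one has an integrand that is $\op{O}(|x|^{a+|\beta|-2b-\ell})$ near $0$ plus terms with faster vanishing away from $0$ where $\chi_0$ is differentiated (those latter terms are again smooth compactly supported and give Schwartz contributions). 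The near-$0$ integral $\int_{|x|\le R}|x|^{a+|\beta|-2b-\ell}\,dx$ converges as long as $a+|\beta|-2b-\ell > -n$, i.e.\ $\ell < n + a - 2b + |\beta| = m + |\beta|$. Choosing $\ell$ to be the largest admissible integer, $\ell = m + |\beta| - 1$ (this is where $m\ge 1$, guaranteed by $a-2b>-n$, is used), we obtain
\begin{gather*}
 \big| \partial_\xi^\beta \hat K_0(\xi) \big| \le C_\beta\, |\xi|^{-\ell} = C_\beta\, |\xi|^{-m-|\beta|+1}, \qquad |\xi|\ge 1,
\end{gather*}
which is exactly the estimate defining $\stS^{-m+1}(\R^n)$ (for $|\xi|\le 1$, $\hat K_0$ is smooth and bounded with all derivatives). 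Combining with the Schwartz contribution from $(1-\chi_0)K$ gives $\hat K \in \stS^{-m+1}(\R^n)$.

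The main obstacle is bookkeeping the commutation of the integration-by-parts operator with the multiplication by $x^\beta$ and with differentiation of $f/g$: one must check that all intermediate terms either (i) vanish at the requisite order near $x=0$ so that the integral converges with the claimed $|\xi|$-power, or (ii) come from derivatives hitting the cutoff $\chi_0$ and are therefore harmless Schwartz-class remainders. Handling (i) cleanly is best done by an explicit bound on $\partial_x^\gamma (f/g)$ near $0$ using the analyticity of $f$ and $g$ and the lower bound $g \sim |x|^{2b}$ (so that $1/g$ and its derivatives are $\op{O}(|x|^{-2b-|\gamma|})$), which is elementary but requires care with the constants depending on $\gamma$. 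Once that pointwise bound is in hand, the estimate above follows by the standard stationary-phase-free integration-by-parts lemma for Fourier transforms of singular-at-one-point compactly supported distributions.
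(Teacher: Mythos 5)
Your proposal is correct and follows essentially the same strategy as the paper: both proofs hinge on the pointwise bound $\partial_x^\alpha K(x)=\op{O}\big(|x|^{-n+m-|\alpha|}\big)$ near $0$ (derived from the analyticity of $f,g$ and the growth/decay hypotheses), then trade $x$-derivatives for $\xi$-decay via the Fourier transform, subject to the integrability threshold near the origin, and handle $\partial_\xi^\beta$ by passing to $x^\beta K$ which satisfies the same hypotheses with $m$ replaced by $m+|\beta|$. The only cosmetic difference is that the paper packages the derivative-spending as the identity $\xi^\alpha\hat K=\widehat{\partial_x^\alpha K}$ together with a multinomial expansion of $(1+|\xi_1|+\cdots+|\xi_n|)^{m-1}$, while you use the explicit integration-by-parts operator $|\xi|^{-2}\xi\cdot D_x$; and you insert a cutoff $\chi_0$ near $0$ which is harmless but unnecessary since $K$ is already assumed to have bounded support.
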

\begin{proof}
Set $m=n+a-2b \in \N$. The analyticity of $f(x)$ and $g(x)$ and~(\ref{eq:lemN5.condition-N3a}) ensure us that, for every multi-order $\alpha$, we have
\begin{gather}
 \partial_x^\alpha K(x) = \op{O}\big(|x|^{-n+m-|\alpha|}\big) \qquad \text{near $x=0$}.
 \label{eq:lemN5.condition-N3}
\end{gather}
In particular, for $\alpha=0$ we see that $K(x)$ is integrable near $x=0$. As $K(x)$ is smooth outside the origin and has bounded support, we see that $K(x)\in L^1\big(\R^n\big)$, and so the Fourier transform $\hat{K}(\xi)$ is bounded on $\R^n$. Furthermore, the boundedness of the support of~$K$ implies that $\hat{K}(\xi)$ is smooth on~$\R^n$.

More generally, the asymptotic~(\ref{eq:lemN5.condition-N3}) ensures us that, for every multi-order $\alpha$ with $|\alpha|\leq m-1$, the partial derivative $\partial^\alpha_x K(x)$ is integrable, and hence has a bounded Fourier transform. Thus, there is $C_\alpha>0$ such that, for all $\xi \in \R^n$, we have
\begin{gather*}
 \big| \xi^\alpha \hat{K}(\xi)\big|= \big| \big(\widehat{\partial_x^\alpha K}\big)(\xi)\big| \leq C_\alpha.
\end{gather*}
As $(1+|\xi|+\cdots + |\xi_n|)^{m-1}$ is a universal linear combination of monomials $|\xi_1|^{\alpha_1} \cdots|\xi_n|^{\alpha_n}= |\xi^\alpha|$ with $|\alpha|\leq m-1$ we deduce there is $C>0$ such that, for all $\xi\in \R^n$, we have
\begin{gather*}
 \big| \hat{K}(\xi)\big| \leq C \big( 1+|\xi_1|+ \cdots + |\xi_n|\big)^{-m+1} \leq C ( 1+ |\xi| )^{-m+1}.
\end{gather*}

 Let $\beta\in \N_0^n$. We have $D_\xi^\beta \hat{K}(\xi)=\big(\widehat{x^\beta K}\big)(\xi)$. Note that $x^\beta K(x)$ is a $C^\infty$ function on $\R^n\setminus 0$ with bounded support.
 Moreover, the asymptotic~(\ref{eq:lemN5.condition-N3}) for $K(x)$ implies that $x^\beta K(x)$ satisfies the corresponding asymptotic for $m+|\beta|$. Therefore, by the first part of the proof there is $C_\beta>0$ such that, for all $\xi\in \R^n$, we have
 \begin{gather*}
\big| D_\xi^\beta \hat{K}(\xi)|= \big| \big(\widehat{x^\beta K}\big)(\xi)\big| \leq C_\beta ( 1+ |\xi| )^{-m-|\beta|+1}.
\end{gather*}
This shows that $\hat{K}(\xi)$ is contained in $\stS^{-m+1}\big(\R^n\big)$. The proof is complete.
\end{proof}

 \begin{Lemma}\label{lemN4}
 Let $K(x)\in C^\infty\big(\R^n\setminus 0\big)$ be such that there is $f(x)\in C_c^\infty\big(\R^n\big)$ such that
 \begin{gather*}
 K(x)= |x|^{-2b}f(x), \qquad f(x)=\op{O}\big(|x|^a\big) \quad \text{near $x=0$},
\end{gather*}
where $a$ and $b$ are non-negative integers such that $a-2b>-n$. Then $\hat{K}(\xi) \in S^{-n-a+2b}\big(\R^n\big)$.
\end{Lemma}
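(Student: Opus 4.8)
The plan is to combine Taylor's formula for $f$ at the origin with Lemma~\ref{lemN3} and the explicit Fourier transforms of the homogeneous functions $x^\beta|x|^{-2b}$. First I would localize: fix $\chi\in C^\infty_c(\R^n)$ with $\chi\equiv 1$ near $0$ and write $K=\chi K+(1-\chi)K$. Since $(1-\chi(x))|x|^{-2b}$ belongs to $C^\infty(\R^n)$ (it vanishes near the origin) and $f\in C^\infty_c(\R^n)$, the function $(1-\chi)K$ lies in $C^\infty_c(\R^n)$, so $\widehat{(1-\chi)K}\in\cS(\R^n)\subset S^q(\R^n)$ for every $q\in\C$; thus it is enough to handle $\chi K$, which has bounded support, is smooth on $\R^n\setminus 0$, and equals $|x|^{-2b}f$ near $0$. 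Then, for each integer $N\geq 0$, Taylor's formula gives $f=p_N+r_N$, where $p_N(x)=\sum_{a\leq|\beta|<a+N}\frac{1}{\beta!}\partial^\beta f(0)\,x^\beta$ (the expansion starts in degree $a$ because $f=\op{O}(|x|^a)$) and $r_N\in C^\infty(\R^n)$ satisfies $\partial^\gamma r_N(x)=\op{O}\big(|x|^{a+N-|\gamma|}\big)$ near $0$. This yields the decomposition $\chi K=\sum_{a\leq|\beta|<a+N}\frac{1}{\beta!}\partial^\beta f(0)\,\chi(x)x^\beta|x|^{-2b}+\chi(x)|x|^{-2b}r_N(x)$.

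For the remainder term, the Leibniz rule shows that $\chi(x)|x|^{-2b}r_N(x)$ has bounded support, is smooth on $\R^n\setminus 0$, and satisfies $\partial^\gamma\big(\chi|x|^{-2b}r_N\big)(x)=\op{O}\big(|x|^{(a-2b+N)-|\gamma|}\big)$ near $0$; this is exactly the estimate exploited in the proof of Lemma~\ref{lemN3}, and that argument then gives $\widehat{\chi|x|^{-2b}r_N}\in\stS^{-n-a+2b+1-N}(\R^n)$. As $N$ is arbitrary, these remainders have order tending to $-\infty$.

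The heart of the matter is the homogeneous pieces: I claim $\widehat{\chi(x)x^\beta|x|^{-2b}}\in S^{-n-|\beta|+2b}(\R^n)$ for every $\beta$ with $|\beta|\geq a$ (note $|\beta|-2b\geq a-2b>-n$). Indeed $x^\beta|x|^{-2b}$ is a locally integrable homogeneous distribution of degree $|\beta|-2b>-n$, so its Fourier transform $H_\beta$ is homogeneous of degree $-n-|\beta|+2b$ and, on $\R^n\setminus 0$, a smooth function — a standard fact about Fourier transforms of homogeneous distributions, made explicit by $\widehat{x^\beta|x|^{-2b}}=i^{|\beta|}\partial_\xi^\beta\widehat{|x|^{-2b}}=c_{n,b}\,i^{|\beta|}\partial_\xi^\beta\big(|\xi|^{2b-n}\big)$ when $2b<n$. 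Splitting $x^\beta|x|^{-2b}=\chi x^\beta|x|^{-2b}+(1-\chi)x^\beta|x|^{-2b}$, the function $(1-\chi(x))x^\beta|x|^{-2b}$ is smooth on all of $\R^n$, and for $|\gamma|$ large $\partial^\gamma\big[(1-\chi)x^\beta|x|^{-2b}\big]\in L^1(\R^n)$; applying this also to $(-ix)^\alpha(1-\chi)x^\beta|x|^{-2b}$ shows that $\widehat{(1-\chi)x^\beta|x|^{-2b}}$ is rapidly decreasing together with all its derivatives. Hence $\widehat{\chi x^\beta|x|^{-2b}}=H_\beta-\widehat{(1-\chi)x^\beta|x|^{-2b}}$ differs by a Schwartz function from $(1-\tilde\chi(\xi))H_\beta(\xi)$, where $\tilde\chi$ is a cutoff near $\xi=0$, and $(1-\tilde\chi)H_\beta$ is a classical symbol of order $-n-|\beta|+2b$. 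Feeding this back, one obtains $\hat K-\sum_{a\leq|\beta|<a+N}\frac{1}{\beta!}\partial^\beta f(0)\,\widehat{\chi x^\beta|x|^{-2b}}\in\stS^{-n-a+2b+1-N}(\R^n)$ for every $N\geq 0$; collecting, for each $j\geq 0$, the homogeneous components of degree $-n-a+2b-j$ contributed by the terms with $|\beta|=a+j$ assembles a polyhomogeneous expansion of $\hat K$ with remainders of order $\to-\infty$, i.e.\ $\hat K\in S^{-n-a+2b}(\R^n)$.

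The step I expect to be the main obstacle is this last claim about $\widehat{\chi x^\beta|x|^{-2b}}$: one must simultaneously improve the standard-symbol bound of Lemma~\ref{lemN3} by one order and promote it to a genuinely polyhomogeneous symbol, which requires pinning down the smooth homogeneous leading part $H_\beta$ correctly — including the edge cases in which $-n-|\beta|+2b$ is a negative integer — and checking that the cutoff correction is Schwartz rather than merely of low order. The concluding bookkeeping that assembles the full asymptotic expansion of $\hat K$ out of the contributions of the different $\beta$ is routine but should be done with some care.
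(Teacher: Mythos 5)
Your proof is correct in outline, but it takes a longer, more self-contained route than the paper. Both proofs begin identically, with the Taylor expansion of $f$ to degree $N$ and the resulting decomposition of $K$ into homogeneous pieces $a_\alpha\,x^\alpha|x|^{-2b}$ (degrees $\geq a-2b>-n$) plus a remainder of high regularity near $0$. At that point the paper simply invokes the characterization of Fourier transforms of classical symbols of negative integer order, namely Taylor's Proposition~7.3.2 in \cite{Ta:PDE2}: a function of bounded support that is smooth off the origin and admits an asymptotic expansion in homogeneous terms of degree $>-n$ near $0$ has a Fourier transform that is a classical symbol of the predicted order. You, by contrast, reprove this characterization from scratch: you control the remainder with the Lemma~\ref{lemN3}-type estimate, and you identify $\widehat{\chi\,x^\beta|x|^{-2b}}$ directly as a classical symbol of order $-n-|\beta|+2b$ by comparing with the homogeneous distribution $H_\beta=\widehat{x^\beta|x|^{-2b}}$ and showing the cutoff correction is Schwartz. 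This is more work but is entirely self-contained, and it is essentially the standard argument that underlies the cited proposition.

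A few remarks on the details you flagged as potential obstacles. (1) Your localization $K=\chi K+(1-\chi)K$ is harmless but redundant: since $f\in C^\infty_c(\R^n)$ the function $K$ already has bounded support. (2) The formula $\widehat{x^\beta|x|^{-2b}}=i^{|\beta|}\partial_\xi^\beta\widehat{|x|^{-2b}}$ requires $|x|^{-2b}$ itself to be tempered, i.e., $2b<n$; in general you only have that $x^\beta|x|^{-2b}$ is locally integrable and tempered because $|\beta|-2b>-n$, so you should compute its Fourier transform $H_\beta$ directly as a homogeneous distribution of degree $-n-|\beta|+2b$ that is a smooth function on $\R^n\setminus 0$. (3) The key point — that $\widehat{\chi x^\beta|x|^{-2b}}$ differs from $(1-\tilde\chi)H_\beta$ by a Schwartz function — does hold, but the cleanest justification is to note that on $|\xi|\geq 1$ all three terms in $\widehat{\chi x^\beta|x|^{-2b}}+\widehat{(1-\chi)x^\beta|x|^{-2b}}=H_\beta$ are smooth functions, the first being the Fourier transform of a compactly supported $L^1$ function and the last being smooth off $0$; the middle term then decays rapidly with all derivatives because $\partial^\gamma\big[(1-\chi)x^\beta|x|^{-2b}\big]\in L^1$ for $|\gamma|$ large, and likewise after multiplying by $x^\alpha$. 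Any distributional contribution supported at $\xi=0$ is irrelevant once the cutoff $1-\tilde\chi$ is inserted. With these points made precise, the proof goes through; the paper's citation to \cite[Proposition~7.3.2]{Ta:PDE2} simply packages this bookkeeping.
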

\begin{proof}
As $f(x)$ is $C^\infty$ and is $\op{O}\big(|x|^a\big)$ near $x=0$, by Taylor's formula, for every $N> a$, we have
\begin{gather*}
 f(x) = \sum_{a\leq |\alpha| <N}a_\alpha x^\alpha + \sum_{|\alpha|<}x^\alpha r_{N\alpha}(x),
\end{gather*}
where $a_\alpha:= \frac{1}{\alpha !} \partial_x^\alpha f(0)$ and $r_{N\alpha} (x)\in C^\infty\big(\R^n\big)$. Thus, for all $x\in \R^n\setminus 0$, we have
\begin{gather*}
 K(x) = |x|^{-2b}f(x)= \sum_{a\leq |\alpha| <N}a_\alpha x^\alpha |x|^{-2b} + \sum_{|\alpha|<} x^\alpha |x|^{-2b}r_{N\alpha}(x).
\end{gather*}
Note that if $N>2b$ and $|\alpha|=N$, then $x^\alpha |x|^{-2b}r_{N\alpha}(x)$ is $C^{N-2b-1}$. Thus,
\begin{gather*}
 K(x)- \sum_{a\leq |\alpha| <N}a_\alpha x^\alpha |x|^{-2b} \in C^{N-2b-1}\big(\R^n\big) \qquad \forall\, N>2b.
\end{gather*}
Here $K(x)$ is smooth on $\R^n\setminus 0$ and has bounded support. Moreover, $x^\alpha |x|^{-2b}$ is smooth on $\R^n\setminus 0$ and homogeneous of degree~$|\alpha|-2b\geq a-2b>-n$ for $|\alpha|\geq a$. Therefore, it follows from the characterization of the Fourier transforms of classical symbols of negative integer order (see~\cite[Proposition~7.3.2]{Ta:PDE2}) that $\hat{K}(\xi)\in S^{-(n+a-2b)}\big(\R^n\big)$. The proof is complete.
\end{proof}

We are now in a position to prove Lemma~\ref{lemN5}

\begin{proof}[Proof of Lemma~\ref{lemN5}]
Let $\rho(\xi)\in \cS\big(\R^n\big)$. For $j=1, \ldots, n$, we have
\begin{align*}
 (\Delta_j \rho)^\vee(x) & = \int {\rm e}^{{\rm i}x\cdot \xi} ( \rho(\xi+e_j) - \rho(\xi) ) \dbar\xi,\\
 & = \int {\rm e}^{{\rm i}x\cdot (\xi-e_j)} \rho(\xi) \dbar \xi - \int {\rm e}^{{\rm i}x\cdot \xi} \rho(\xi) \dbar \xi\\
 & = \big({\rm e}^{-x_j}-1\big) \int {\rm e}^{{\rm i}x\cdot \xi} \rho(\xi) \dbar \xi,\\
 & = \big({\rm e}^{-x_j}-1\big) \check{\rho}(x).
\end{align*}
 That is, we have
 \begin{gather}
 \Delta_j \rho(\xi) = \big[\big({\rm e}^{-{\rm i}x_j}-1\big) \check{\rho} \big]^\wedge(\xi) .
 \label{eq:lemN5.Deltaj-Fourier}
\end{gather}
This formula extends by continuity to all tempered distributions $\rho(\xi)\in \cS'\big(\R^n\big)$, where the extension of the operator $\Delta_j$ to $\cS'\big(\R^n\big)$ is given by
\begin{gather*}
 \acou{\Delta_j \rho}{u}=-\acou{\rho}{\overline{\Delta}_ju}, \qquad \rho \in \cS'\big(\R^n\big), \ u \in \cS\big(\R^n\big).
\end{gather*}
Here $\overline{\Delta}_j$ is the $j$-th backward finite difference~(\ref{eq:PsiDOs.finite-differences-functions}) on $\cS\big(\R^n\big)$.

Let $\nu\colon \R^n\rightarrow \R$ be the function defined by
\begin{gather*}
 \nu(x)= \sum_{1\leq j \leq n} \big| {\rm e}^{{\rm i}x_j}-1\big|^2, \qquad x \in \R^n.
\end{gather*}
 In particular, we have
 \begin{gather*}
 \nu(x)=0 \Longleftrightarrow \sum_{1\leq j \leq n} \big| {\rm e}^{{\rm i}x_j}-1\big|^2 =0 \Longleftrightarrow {\rm e}^{{\rm i}x_1}= \cdots = {\rm e}^{{\rm i}x_n} = 1 \Longleftrightarrow
 x\in 2\pi \Z^n.
\end{gather*}
We also have
\begin{gather*}
 \nu(x)= \sum_{1\leq j \leq n} \big({\rm e}^{-{\rm i}x_j}-1\big)\big({\rm e}^{{\rm i}x_j}-1\big) = \sum_{1\leq j \leq n} 2(1-\cos (x_j)).
\end{gather*}
Thus, $\nu(x)$ is an analytic function on $\R^n$. Moreover, as $2(1-\cos (t))=t^2 +\op{O}\big(t^4\big)$ near $t=0$, we see that, near $x=0$, we have
\begin{gather}
 \nu(x)= x_1^2 + \cdots + x_n^2 + \op{O}\big(x_1^4 + \cdots + x_n^4\big)= |x|^2 +\op{O}\big(|x|^4\big).
 \label{eq:lemN5.asymptotic-nu}
\end{gather}
In particular, this shows that $\nu(x)\sim |x|^2$ near $x=0$.

Bearing this in mind, let $\chi(\xi)\in \cS^\infty\big(\R^n\big)$ be such that $\check\chi(x)=1$ near $x=0$ and $\supp \check{\chi}\subset (-2\pi, 2\pi)^n$. For $j=1,\ldots, n$, define
\begin{gather*}
 K_j(x)=\frac{{\rm e}^{{\rm i}x_j}-1}{\nu(x)}\check{\chi}(x), \qquad x\neq 0.
\end{gather*}
As $\supp \check{\chi}\subset (-2\pi, 2\pi)^n$ and the only zero of $\nu(x)$ on $(-2\pi, 2\pi)^n$ is the origin, this defines a smooth function with bounded support on
$\R^n\setminus 0$. Moreover, as $\check{\chi}(x)=1$ near $x=0$, we see that $K_j(x)=\big({\rm e}^{{\rm i}x_j}-1\big)\nu(x)^{-1}$ near $x=0$. Here ${\rm e}^{-{\rm i}x_j}-1$ is analytic and is $\op{O}(|x|)$ near $x=0$. As mentioned above, $\nu(x)$ is analytic function such that $\nu(x)\sim |x|^2$ near $x=0$. Therefore, we see that $K_j(x)$ satisfies the assumptions of Lemma~\ref{lemN3} with $a=b=1$, and so $\rho_j(\xi):=\widehat{K}_j(\xi)$ is a symbol in $\stS^{-n+2}(\R^n)$.

Moreover, by using~(\ref{eq:lemN5.Deltaj-Fourier}) we see that $(\Delta_j \rho_j)^\vee(x)$ is equal to
\begin{gather*}
 \big({\rm e}^{-{\rm i}x_j}-1\big)\check{\rho}_j(x)= \big({\rm e}^{-{\rm i}x_j}-1\big) \frac{\big({\rm e}^{{\rm i}x_j}-1\big)}{\nu(x)}\check{\chi}(x) = \big|{\rm e}^{{\rm i}x_j}-1\big|^2\nu(x)^{-1}\check{\chi}(x).
\end{gather*}
Thus,
\begin{gather*}
 \sum_{1\leq j \leq n} \big(\Delta_j \rho_j\big)^\vee(x)= \check{\chi}(x)\nu(x)^{-1}\sum_{1\leq j \leq n}\big|{\rm e}^{{\rm i}x_j}-1\big|^2=\check{\chi}(x)\nu(x)^{-1}\nu(x)=\check{\chi}(x).
\end{gather*}
Taking Fourier transforms then shows that $\chi(\xi)=\Delta_1\rho_1(\xi)+\cdots+ \Delta_n\rho_n(\xi)$.

In order to complete the proof it remains to show that $\rho_j(\xi)\in S^{-n+1}\big(\R^n\big)$ for $j=1,\ldots, n$. Set $\tilde{\nu}(x)=\nu(x)-|x|^2$. Then $\tilde{\nu}(x)$ is analytic function on $\R^n$. By~(\ref{eq:lemN5.asymptotic-nu}) it is $\op{O}\big(|x|^4\big)$ near $x=0$. Given any $N\geq 1$ and $x\in (-2\pi, 2\pi)^n\setminus 0$, we have
\begin{align*}
 \frac{1}{\nu(x)}= \frac{|x|^2}{1-|x|^{-2}\tilde{\nu}(x)}& = \sum_{\ell<N} |x|^{-2\ell-2}\tilde{\nu}(x)^{\ell} + \frac{|x|^{-2N-2}\tilde{\nu}(x)^N}{1-|x|^{-2}\tilde{\nu}(x)}\\
 & = \sum_{\ell<N} \frac{\tilde{\nu}(x)^{\ell}}{|x|^{2\ell+2}} + \frac{\tilde{\nu}(x)^N}{|x|^{2N}\nu(x)}.
\end{align*}
Thus, for all $x\in \R^n\setminus 0$, we have
\begin{gather}
 K_j(x)= \frac{{\rm e}^{{\rm i}x_j}-1}{\nu(x)} \check{\chi}(x)=\sum_{\ell<N}K_{j,\ell}(x) + r_{j,N}(x),
 \label{eq:lemN5.asymptotic-K}
\end{gather}
where we have set
\begin{gather*}
 K_{j,\ell}(x): = \frac{\big({\rm e}^{{\rm i}x_j}-1\big)\tilde{\nu}(x)^{\ell}}{|x|^{2\ell+2}} \check{\chi}(x) \qquad \text{and} \qquad r_{j,N}(x):= \frac{\big({\rm e}^{{\rm i}x_j}-1\big)\tilde{\nu}(x)^N}{|x|^{2N}\nu(x)}\check{\chi}(x).
\end{gather*}
By definition $K_{j,\ell}(x)$ and $r_{j,N}(x)$ are functions in $C^\infty\big(\R^n\setminus 0\big)$ with bounded support, and, near $x=0$ we have
\begin{gather*}
 K_{j,\ell}(x) =\frac{\big({\rm e}^{{\rm i}x_j}-1\big)\tilde{\nu}(x)^{\ell}}{|x|^{2\ell+2}} \qquad \text{and} \qquad r_{j,N}(x)= \frac{\big({\rm e}^{{\rm i}x_j}-1\big)\tilde{\nu}(x)^N}{|x|^{2N}\nu(x)}.
\end{gather*}

Recall that $\tilde{\nu}(x)$ is analytic and $\op{O}\big(|x|^4\big)$ near $x=0$. Thus, $\big({\rm e}^{{\rm i}x_j}-1\big)\tilde{\nu}(x)^{\ell}$ is smooth (and even analytic), and it is
$\op{O}\big(|x|^{4\ell +1}\big)$ near $x=0$. It then follows from Lemma~\ref{lemN4} that $\rho_{j,\ell}(\xi):=\widehat{K_{j,\ell}}(\xi)$ is a classical symbol on $\R^n$ of order
$-n-(4\ell+1)+2(\ell+1)=-n+1-2\ell$.

As mentioned above $\big({\rm e}^{{\rm i}x_j}-1\big)\tilde{\nu}(x)^{N}$ is analytic and $\op{O}\big(|x|^{4N+1}\big)$ near $x=0$. In addition, $|x|^{2N}\nu(x)$ is analytic and is equivalent to
$|x|^{2N+2}$ near $x=0$. It then follows from Lemma~\ref{lemN3} that the Fourier transform $\widehat{r}_{j,N}(\xi)$ is a standard symbol on $\R^n$ of order
 $-n-(4N+1)+ (2N+2)+1=-n+2-2N$. Combining this with~(\ref{eq:lemN5.asymptotic-K}) we see that, for all $N\geq 1$, we have
\begin{gather*}
 \rho_j(\xi)-\sum_{\ell<N} \rho_{j,\ell}(\xi)= \bigg( K - \sum_{\ell<N}K_{j,\ell}\bigg)^{\wedge}\! (\xi)=\widehat{r_{j,N}}(\xi)\in \stS^{-n+2-2N}\big(\R^n\big).
\end{gather*}
This shows that $\rho_j(\xi) \sim \sum\limits_{\ell \geq 0} \rho_{j,\ell}(\xi)$. As $\rho_{j,\ell}S^{-n+1+\ell}\big(\R^n\big)$ for $\ell\geq 0$, it then follows that $\rho_j(\xi)$ is a symbol in $S^{-n+1}\big(\R^n\big)$. This completes the proof of Lemma~\ref{lemN5}.
\end{proof}

\subsection*{Acknowlegements}
The research for this article was partially supported by the NSFC under Grant No.~11971328 (China). The author thanks Sylvie Paycha for discussions related to the subject matter of the paper. He also thanks University of Qu\'ebec of Montr\'eal (Canada) for its hospitality during the writing of the paper.

\pdfbookmark[1]{References}{ref}
\LastPageEnding

\end{document}